\newcommand{\reff}[1]{(\ref{#1})}
\theoremstyle{plain}
\newtheorem{theo}{Theorem}[section]
\newtheorem*{theo*}{Theorem}
\newtheorem{cor}[theo]{Corollary}
\newtheorem{prop}[theo]{Proposition}
\newtheorem{lem}[theo]{Lemma}
\newtheorem{defi}[theo]{Definition}
\newtheorem*{defi*}{Definition}
\theoremstyle{remark}
\newtheorem{rem}[theo]{Remark}
\newcommand{\ca}{{\mathcal A}}
\newcommand{\cc}{{\mathcal C}}
\newcommand{\cd}{{\mathcal D}}
\newcommand{\ce}{{\mathcal E}}
\newcommand{\cf}{{\mathcal F}}
\newcommand{\cg}{{\mathcal G}}
\newcommand{\ch}{{\mathcal H}}
\newcommand{\ci}{{\mathcal I}}
\newcommand{\ck}{{\mathcal K}}
\newcommand{\cl}{\varphi}
\newcommand{\ff}{f}
\newcommand{\cm}{{\mathcal M}}
\newcommand{\cs}{{\mathscr S}}
\newcommand{\cu}{{\mathcal U}}
\newcommand{\C}{{\mathbb C}}
\newcommand{\E}{{\mathbb E}}
\newcommand{\N}{{\mathbb N}}
\newcommand{\Nz}{{\mathbb N}}
\newcommand{\Np}{{\mathbb N}^*}
\newcommand{\Ni}{\bar \Nz}
\renewcommand{\P}{{\mathbb P}}
\newcommand{\R}{{\mathbb R}}
\newcommand{\T}{{\mathbb T}}
\newcommand{\Tf}{{\mathbb T}_{\rm f}}
\newcommand{\rn}{{\rm n }}
\newcommand{\re}{{\rm e}}
\newcommand{\rs}{{\rm s}}
\newcommand{\bt}{{\mathbf t}}
\newcommand{\bs}{{\mathbf s}}
\newcommand{\ind}{{\bf 1}}
\newcommand{\fc}{\mathfrak{c}}
\newcommand{\fa}{\mathfrak{a}}
\newcommand{\fb}{\mathfrak{b}}
\newcommand{\fg}{\mathfrak{g}}
\newcommand{\fm}{\mathfrak{m}}
\newcommand{\fp}{\mathfrak{p}}
\newcommand{\fR}{\mathfrak{R}}
\newcommand{\ske}{{\rm Ske}} 
\newcommand{\anc}{{\rm Anc}}
\newcommand{\val}[1]{\mathop{\left| #1 \right|}\nolimits}
\newcommand{\inv}[1]{\mathop{\frac{1}{ #1}}\nolimits}
\newcommand{\expp}[1]{\mathop {\mathrm{e}^{ #1}}}
\renewcommand{\mod}{{\rm mod}\;}
\newcommand{\lb}{[\![}
\newcommand{\rb}{]\!]}
\title[Expansive GW trees]{Asymptotic properties of expansive Galton-Watson trees}
\keywords{conditioned Galton-Watson trees, local limits}
\subjclass[2010]{60J80,60F15}
\date{\today}
\author{Romain Abraham} 
\address{
Romain Abraham,
Laboratoire MAPMO, CNRS, UMR 7349,
F\'ed\'eration Denis Poisson, FR 2964,
 Université d'Orléans,
B.P. 6759,
45067 Orléans cedex 2,
France.
}
\email{romain.abraham@univ-orleans.fr}
\author{Jean-François Delmas}
\address{
Jean-Fran\c cois Delmas,
Université Paris-Est, \'Ecole des Ponts, CERMICS, 6-8
av. Blaise Pascal, 
  Champs-sur-Marne, 77455 Marne La Vallée, France.}
\email{delmas@cermics.enpc.fr}
\begin{document}

\begin{abstract}
  We consider a super-critical  Galton-Watson tree whose non-degenerate
  offspring distribution has  finite mean. We consider  the random trees
  $\tau_n$ distributed  as $\tau$ conditioned on  the $n$-th generation,
  $Z_n$, to be of size $a_n\in \N$.   We identify the possible local limits of
  $\tau_n$  as $n$  goes to  infinity according  to the  growth rate  of
  $a_n$. In  the low regime,  the local  limit $\tau^0$ is  the Kesten
  tree, in the moderate regime  the family of local limits, $\tau^\theta$
  for $\theta\in (0, +\infty )$,  is distributed as $\tau$ conditionally
  on  $\{W=\theta\}$,  where  $W$  is the  (non-trivial)  limit  of  the
  renormalization  of $Z_n$.   In the  high regime,  we prove  the local
  convergence towards $\tau^\infty $ in  the Harris case (finite support
  of  the offspring  distribution)  and  we give  a  conjecture for  the
  possible limit  when the  offspring distribution has  some exponential
  moments.  When the offspring distribution  has a fat tail, the problem
  is  open.   The  proof  relies   on  the  strong  ratio   theorem  for
  Galton-Watson processes. Those latter results  are new in the low regime
  and high regime,  and they can be used to  complete the description of
  the (space-time) Martin boundary of Galton-Watson
  processes. Eventually, we consider the continuity in distribution of
  the local limits $(\tau^\theta, \theta\in [0, \infty ])$. 
\end{abstract}

\maketitle

\section{Introduction}

The study  of Galton-Watson  (GW) processes and  more generally  GW trees
conditioned to be  non extinct goes back to  Kesten \cite{k:sbrwrc}, see
Lemma 1.14 therein. In the sub-critical and non-degenerate critical case
the extinction event $\ce$ being of  probability one, there are many non
equivalent limiting  procedure to  define a GW  tree conditioned  on the
non-extinction  event. Those  so-called local  limits of  GW trees  have
received  a renewed  interest  recently because  of  the possibility  of
condensation phenomenon:  a node  in the limiting  tree has  an infinite
degree.  This  appears when  conditioning sub-critical  GW trees  to be
large,   see   Jonsson   and  Stef\'anson   \cite{js:cnt}   and   Janson
\cite{j:sgtcgwrac}  when  conditioning  on large  total  population  and
Abraham  and  Delmas  \cite{ad:llcgwtcc},  when  conditioning  on  large
sub-population or \cite{ad:hapi} for a survey from the same authors. The
other typical behavior for the local limit  of GW trees is to exhibit an
infinite spine  on which  are grafted  independent finite  GW sub-trees,
such as  in \cite{k:sbrwrc}.  Various  conditionings lead to  such local
limit, which we  call Kesten tree, for critical or  subcritical GW tree,
see Abraham and Delmas \cite{ad:llcgwtisc}  and references therein for a
general study  and \cite{ad:hapi}  for other  recent references  in this
direction also. Intuitively, the local limit is the Kesten tree when the
events approximating the non-extinction event decrease in probability at
polynomial rate.  One of the  motivations of the current work is to
present local  limits of sub-critical  GW trees with  different behavior
(that is other than an infinite spine or a node of infinite degree), see
the  partial  results  from  Section  \ref{sec:sub-critical},  where  we
present a family  of local limits with an infinite  backbone not reduced
to a spine.

Recently,  with Bouaziz,  we  considered in  \cite{abd:llfgt} the  local
limits of   GW trees  $\tau$ with geometric offspring  distribution (see
Section \ref{sec:geom} for a precise  definition) conditioned on the size $Z_n$ of
the population at  generation $n$ being equal to $a_n\in  \N$.  Because the
distribution  of   $Z_n$  is   explicit  for  the   geometric  offspring
distribution, it  is possible to  compute all the possible  local limits
(if any) for the sub-critical, critical and super-critical cases and for
all the  possible sequences  $(a_n, n\in  \N^*)$. The  local limit  is a
random   tree   which   depends   on  the   rate   of   convergence   of
$(a_n, n\in  \N^*)$ towards  infinity.  When  this sequence  is positive
bounded or grows slowly to infinity,  the limit is still the Kesten tree.
This result already appears in the critical case in \cite{ad:llcgwtisc},
see Section 6.  When the growth  to infinity is moderate, then the local
limit  can be  described as  an infinite  random backbone  on which  are
grafted independent finite GW trees.  Surprisingly the backbone does not
enjoy the branching  property as the numbers of  children of individuals
at generation $n$ on the backbone are not independent and depend also on
the size of  the backbone at generation $n$.  If  the growth to infinity
is high, then the local limit exhibits the condensation phenomenon: the
root, and only  the root, of the  local limit has an  infinite number of
children. The aim of the present  work is to extend those results mainly
to  general  super-critical  offspring distribution  and  marginally  to
sub-critical offspring distribution.

\subsection{The main results}

Let $p=(p(k), k\in \N)$ be  a non-degenerate offspring distribution with
finite  mean   $\mu=\sum_{k\in  \N}   k  p(k)$.   Let  $f$   denote  the
corresponding generating  function so that $f'(1)=\mu$, and let $R_c\geq
1$ be its radius of convergence.  We shall mainly
consider   the    super-critical   case    $\mu\in (1, +\infty )$,   but    in   Section
\ref{sec:sub-critical}  where  we  consider  a  particular  sub-critical
offspring distribution  (that is  $\mu\in (0,1)$).

We  recall the  local convergence  of random  ordered rooted  tree.  The
ordered rooted trees, defined in  Section \ref{sec:tree}, are subsets of
the    set     of    finite     sequences    of     positive    integers
$\cu=\bigcup_{n\geq      0}(\Np)^{n}$      with      the      convention
$(\Np)^{0}=\{\partial\}$,  and $\partial$  being the  root of  the tree.
For    a    tree    $\bt$    and    $u\in    \cu$,    we    denote    by
$k_u(\bt)\in \bar  \N=\N\bigcup \{\infty  \}$ the  out-degree of  a node
$u\in \bt$ or equivalently the number  of children of $u$ in $\bt$, with
the  convention that  $k_u(\bt)=-1$ if  $u\not \in  \bt$.  We  denote by
$z_h(\bt)$ the size of $\bt$ at generation  $h\in \N$.  A sequence of trees
$\bt_n$ converges locally  to a tree $\bt$ if  $k_u(\bt_n)$ converges to
$k_u(\bt)$ for  all $u\in \cu$.   And we say  that a sequence  of random
trees $T_n$  converges locally in distribution  to a random tree  $T$ if
$(k_u(T_n), u\in \cu)$ converges in distribution to $(k_u(T), u\in \cu)$
for the finite dimensional  marginals. See Section \ref{sec:cv-of-t} for
a precise  setting.

We  consider  the  random  tree  $\tau$ defined  as  the  GW  tree  with
super-critical non-degenerate offspring distribution $p$ and finite mean
$\mu$, and we  define $Z=(Z_n=z_n(\tau), n\in \N)$  the corresponding GW
process, with $Z_n$ being the size  of $\tau$ at generation $n$, strating at
$Z_0=1$.  Let $\fa\in \N$ and $\fb\in \bar \N$ be respectively the lower
and upper  bound of  the support of  $p$.  We have  $\fa<\fb$ as  $p$ is
non-degenerate.  Let $\fc=\P(\ce)$ be  the probability of the extinction
event.  We  recall that $\fc\in [0,1)$  is the only root  of $f(r)=r$ on
$[0,  1)$.  Notice  that  $\fc=0$  if and  only  if  $\fa\geq 1$.   When
$\P(Z_n=a_n)>0$,  we denote  by $\tau_n$  a random  tree distributed  as
$\tau$ conditioned on $\{Z_n=a_n\}$.  We  study the local convergence in
distribution of $(\tau_n, n\in \N^*)$ according to the rate of growth of
the sequence $(a_n,  n\in \N^*)$.  According to  Seneta \cite{s:sgwp} or
Asmussen  and Hering  \cite{ah:bp}, we  shall consider  the Seneta-Heyde
norming  $(c_n,  n\in \N)$  which  is  a  sequence such  that  $Z_n/c_n$
converges a.s.  to a limit $W$  and $\P(W=0)=\fc$, see its definition in
Section \ref{sec:gen-gen}.   When $\mu=+\infty  $, then such a normalization does not exists and  when the  $L\log(L)$ condition  holds, that  is
$\sum_{k\in \N^*} p_k \log(p_k)<+$, then  $c_n$ is equivalent to $\mu^n$
up  to  an  arbitrary   positive  multiplicative  constant,  see  Seneta
\cite{s:fegwp}.  However, we stress out that the $L\log(L)$ condition is
not  assumed in  this paper  and that  we only  consider the  case $\mu$
finite. It  is well known  that the  distribution of $W$,  restricted to
$(0, +\infty )$,  has a continuous positive density $w$  with respect to
the Lebesgue measure, see the seminal work of Harris \cite{h:bp} and the
general result from Dubuc \cite{d:pri}. However, $w$ is explicitly known
in only  two cases:  the geometric  offspring distribution,  see Section
\ref{sec:geom} below and the  example developed by Hambly \cite{h:ctbl}.
\medskip

We now  introduce the possible local limiting
trees. 
\begin{defi}
\label{defi:tau}
  Let $\tau$ be a  GW tree with non-degenerate super-critical offspring
  distribution $p$ with finite mean.
\begin{itemize}
   \item If $\fc>0$, we denote by $\tau^{0, 0} $ a random
  tree distributed as $\tau$ conditionally on the extinction event
  $\ce$.
\item If $\fc>0$,  we denote by $\tau^0$ the  corresponding Kesten tree,
  see Definition \ref{def:Kesten}. If $\fc=0$  (that is $\fa\geq 1$), we
  denote by $\tau^0$ the deterministic regular $\fa$-ary tree.
\item For $\theta\in (0, +\infty )$, we denote by $\tau^\theta$ a random
  tree distributed as $\tau$ conditioned on $\{W=\theta\}$. 
\item If $\fb<\infty $, we denote by $\tau^\infty $ the deterministic regular
  $\fb$-ary tree.  If $\fb=+\infty $ and $R_c>1$, we denote by $\tau^\infty
  $ the random tree $T^{(\lambda_c)}$ given in Section
    \ref{sec:tinfini}. 
\end{itemize}
\end{defi}
According   to   Remark    \ref{rem:decomp-E},   the   distribution   of
$\tau^\theta$,  which is  defined  in Section  \ref{sec:extremal}, is  a
regular  version   of  the  distribution  of   $\tau$  conditioned  on
$\{W=\theta\}$ for  $\theta\in (0,  +\infty )$.  The  tree $\tau^\theta$
can  be intuitively  described  as a  random  (non-homogeneous in  time)
infinite backbone on which, if $\fc>0$, are grafted independent GW trees
distributed as  $\tau^{0, 0}$.  The  description of the backbone  and of
its  offspring distribution  is one  of  the main  contribution of  this
paper.  The infinite backbone does not enjoy the branching property, and
the offspring  distribution $\rho_{\theta,r}$ of the  individuals of the
current generation  depends on the  size $r$ of the  current generation,
see   Definition   \reff{eq:def-rho}.   The   probability   distribution
$\rho_{\theta, r}$ is  a function of the density  $w$.  The distribution
of  $\tau^\theta$ is  in a  sense a  generalization of  the Kesten  tree
distribution. 

The  tree   $\tau^\infty  $  appears   as  a  natural  local   limit  of
non-homogeneous   GW  trees   $T^{(\lambda)}$   introduced  in   Section
\ref{sec:tau-lambda}  and with  a nice  representation given  in Section
\ref{sec:tau-2-type} using two-type GW trees.

The  condensation   holds  for   $\tau^\infty  $,  defined   in  Section
\ref{sec:tinfini},  at  least   at  the  root  if   $\fb=+\infty  $  and
$f(R_c)=+\infty  $.   Furthermore,  the   tree  $\tau^\infty  $   is  not
homogeneous in general.

We can now give the first main result on the local convergence in
distribution of $\tau_n$ according to the growth rate of $(a_n,n\in \N^*)$. 
\begin{theo}
\label{theo:main-cv-loc}
  Let $\tau$ be a  GW tree with non-degenerate super-critical offspring
  distribution $p$ with finite mean.
 We assume that the
  sequence $(a_n, n\in \N^*)$ is such that $\tau_n$ is well defined for
  all $n\in \N ^*$. 
\begin{itemize}
\item   \textbf{Extinction  case:}     $a_n=0$  for   all
  $n\ge n_0$ for some $n_0\in\N^*$.   If $\fc=0$, then $\tau_n$ is not
  defined. If $\fc>0$, then $\tau_n$ is well defined and  we have:
\[
\tau_n\; \xrightarrow[n\rightarrow \infty ]{\textbf{(d)}}\; \tau^{0,0}.
\]
\item         \textbf{Low          regime:}  
  $\lim_{n\rightarrow\infty } a_n  /c_n=0$ and $a_n>0$ for  all $n\in \N^*$.
 Then, we have:
\[
\tau_n\; \xrightarrow[n\rightarrow \infty ]{\textbf{(d)}}\; \tau^{0}.
\]
\item        \textbf{Moderate        regime:} 
  $\lim_{n\rightarrow\infty  } a_n  /c_n=\theta\in  (0,  +\infty )$.  
 Then, we have:
\[
\tau_n\; \xrightarrow[n\rightarrow \infty ]{\textbf{(d)}}\; \tau^{\theta}.
\]
\item         \textbf{High         regime:} $\lim_{n\rightarrow\infty  }
  a_n  /c_n=+\infty  $.  (Partial results.)        If  $\fb<\infty $
  (Harris case) or if  $p$ is
  geometric,  then we have:
\[
\tau_n\; \xrightarrow[n\rightarrow \infty ]{\textbf{(d)}}\; \tau^{\infty }.
\]
\end{itemize}
\end{theo}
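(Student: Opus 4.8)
The plan is to reduce the local convergence, in every regime, to a ratio limit theorem for the Galton--Watson process $Z$, and then to read off the limit from Definition~\ref{defi:tau}. For $h\in\N$ let $\ske_h(\bt)$ denote the first $h$ generations of a tree $\bt$; by the description of local convergence in Section~\ref{sec:cv-of-t} it is enough to establish the convergence in distribution of $\ske_h(\tau_n)$ for each fixed $h$. Fix a finite tree $t$ whose vertices all lie at generation at most $h$, and set $\ell=z_h(t)$. By the branching property, conditionally on $\{\ske_h(\tau)=t\}$ the $\ell$ subtrees rooted at generation $h$ are independent copies of $\tau$, so, writing $\P_j$ for the law of $Z$ started from $j$ individuals,
\[
\P\bigl(\ske_h(\tau_n)=t\bigr)=\P\bigl(\ske_h(\tau)=t\bigr)\,\frac{\P_\ell(Z_{n-h}=a_n)}{\P(Z_n=a_n)}=:\P\bigl(\ske_h(\tau)=t\bigr)\,R_n(\ell,h).
\]
The theorem thus reduces to: (i) computing $r(\ell,h):=\lim_n R_n(\ell,h)$ for all $\ell\ge1$, $h\ge0$; and (ii) checking $\sum_t\P(\ske_h(\tau)=t)\,r(z_h(t),h)=1$ for each $h$. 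Because the Markov property forces $\sum_t\P(\ske_h(\tau)=t)\,R_n(z_h(t),h)=1$ for every $n$, point (ii) is precisely the legitimacy of interchanging limit and sum, i.e.\ the absence of escape of mass; it holds automatically when $\fb<\infty$ (the laws of $\ske_h(\tau_n)$ being supported on a finite set), while in the geometric case the escaping mass is exactly the one building the infinite-degree root of $\tau^\infty$. Computing the $r(\ell,h)$'s is the content of the strong ratio theorems.

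The extinction case is immediate, since $\{Z_n=0\}\downarrow\ce$: for $\fc>0$ dominated convergence gives $\P(\ske_h(\tau_n)=t)\to\P(\ske_h(\tau)=t\mid\ce)=\P(\ske_h(\tau^{0,0})=t)$, and for $\fc=0$ the conditioning event is null. In the \textbf{moderate regime} I would use a local limit theorem for the Seneta--Heyde norming: if $a_n/c_n\to\theta\in(0,\infty)$ then $c_n\,\P_j(Z_n=a_n)\to w_j(\theta)$, with $w_j$ the density on $(0,\infty)$ of the sum of $j$ independent copies of $W$ (so $w_1=w$). Using $c_{n}/c_{n-h}\to\mu^{h}$, hence $a_n/c_{n-h}\to\theta\mu^{h}$, this gives $r(\ell,h)=\mu^{h}\,w_\ell(\theta\mu^{h})/w(\theta)$. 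One then identifies $\P(\ske_h(\tau)=t)\,r(\ell,h)$ with $\P(\ske_h(\tau^\theta)=t)$ by observing that conditionally on $\{\ske_h(\tau)=t\}$ one has $W=\mu^{-h}(W_1+\cdots+W_\ell)$ with i.i.d.\ $W_i\sim W$, so that conditioning $\tau$ on $\{W=\theta\}$ weights $\{\ske_h(\tau)=t\}$ by exactly $\mu^{h}w_\ell(\theta\mu^{h})/w(\theta)$ --- the regular version recalled in Remark~\ref{rem:decomp-E} and Section~\ref{sec:extremal}.

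For the \textbf{low regime} with $\fc>0$, I would condition $\tau$ on $\ce$, obtaining a subcritical tree $\widetilde\tau$ with offspring law $\widetilde p(k)=\fc^{k-1}p(k)$ and mean $\widetilde\mu=f'(\fc)\in(0,1)$, for which $\P(Z_n=k)=\fc^{1-k}\,\P(z_n(\widetilde\tau)=k)$. Then $r(\ell,h)=\ell\,\fc^{\ell-1}\,\widetilde\mu^{-h}$ would follow from two facts: first, since $a_n=o(c_n)$, the event $\{Z_{n-h}=a_n\}$ under $\P_\ell$ is asymptotically carried by configurations in which a single one of the $\ell$ subtrees accounts for the population while the others go extinct, so $\P_\ell(Z_{n-h}=a_n)\sim\ell\,\fc^{\ell-1}\,\P_1(Z_{n-h}=a_n)$; second, the low-regime strong ratio theorem, $\P_1(Z_{m}=a_{m+h})/\P_1(Z_{m+h}=a_{m+h})\to\widetilde\mu^{-h}$, which for bounded $a_n$ is the classical Schr\"oder asymptotics $\P(z_n(\widetilde\tau)=k)\sim \widetilde b_k\,\widetilde\mu^{\,n}$ and for $a_n\to\infty$ with $a_n=o(c_n)$ is the new uniform version. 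Combining these, together with $\prod_{v:\,|v|\le h-1}p(k_v(t))=\fc^{\,1-\ell}\prod_{v:\,|v|\le h-1}\widetilde p(k_v(t))$ (which comes from $\sum_{|v|\le h-1}(k_v(t)-1)=\ell-1$), yields $\P(\ske_h(\tau_n)=t)\to\ell\,\widetilde\mu^{-h}\,\P(\ske_h(\widetilde\tau)=t)=\P(\ske_h(\tau^{0})=t)$ for the Kesten tree of $\widetilde p$ (Definition~\ref{def:Kesten}). When $\fc=0$ (so $\fa\ge1$) these estimates degenerate: conditioned on $\{Z_n=a_n\}$ with $a_n=o(c_n)$ each vertex near the root is forced to take the minimal number $\fa$ of children, since any extra child would make the descending subpopulation atypically small; thus $R_n(\ell,h)$ diverges unless $t$ is the truncated $\fa$-ary tree, and $\ske_h(\tau_n)\to\ske_h(\tau^{0})$ in probability.

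The \textbf{high regime} I would treat by the dual strong ratio theorem. When $\fb<\infty$, the constraint $a_n/c_n\to\infty$ forces a far-right-tail value of $W$, and its cheapest realisation branches maximally over the first $\asymp\log(a_n/c_n)/\log(\fb/\mu)$ generations --- a number that tends to infinity; quantitatively $\P_1(Z_n=a_n)\sim p(\fb)\,\P_\fb(Z_{n-1}=a_n)$, so iterating $R_n(\fb^h,h)\to p(\fb)^{-(\fb^h-1)/(\fb-1)}$ and $\P(\ske_h(\tau_n)=\fb\text{-ary tree of height }h)\to1$, i.e.\ $\ske_h(\tau_n)\to\ske_h(\tau^\infty)$. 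When $p$ is geometric the law of $Z_n$ is explicit, so $R_n(\ell,h)$ can be computed in closed form and, as in \cite{abd:llfgt}, its limit is recognised as the finite-dimensional marginals of $T^{(\lambda_c)}$; here condensation occurs at the root, so $\ske_h(\tau_n)$ converges with values in $\bar\N$, the root degree tending to $+\infty$. The hard part throughout is proving these ratio limit theorems in the low and high regimes with no $L\log(L)$ assumption --- where $c_n$ need not be comparable to $\mu^n$ and $(a_n)$ is constrained only through its rate of growth --- together with the uniform controls needed both to pass to the limit in $R_n(\ell,h)$ for unbounded $a_n$ and to rule out escape of mass; these are exactly the new strong ratio theorems, which in turn complete the space-time Martin boundary description announced in the abstract.
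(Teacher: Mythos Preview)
Your proposal follows essentially the same route as the paper: reduce via the identity $\P(r_h(\tau_n)=t)=\P(r_h(\tau)=t)\cdot H_n(h,z_h(t))$ to the strong ratio limits $H_n(h,k)=\P_k(Z_{n-h}=a_n)/\P(Z_n=a_n)$ of Theorem~\ref{theo:srt}, compute these regime by regime, and identify the resulting weights with the finite-dimensional marginals of $\tau^{0,0}$, $\tau^0$, $\tau^\theta$, $\tau^\infty$.

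Two small remarks. First, $\{Z_n=0\}\uparrow\ce$, not $\downarrow$ (this does not affect your argument). Second, in the low regime with $\fa=0$ you assert $\P_\ell(Z_{n-h}=a_n)\sim\ell\,\fc^{\ell-1}\P_1(Z_{n-h}=a_n)$ as an input; the paper avoids proving this equivalence head-on. It establishes only the trivial inclusion lower bound $\P_\ell(Z_{n-h}=a_n)\ge\ell\, f_{n-h}(0)^{\ell-1}\P_1(Z_{n-h}=a_n)$, combines it with the one-ancestor ratio (Lemma~\ref{lem:srt-schroder}) to get $\liminf_n\P(r_h(\tau_n)=t)\ge\P(r_h(\tau^0)=t)$, and then concludes by Portmanteau since the right-hand sides already sum to $1$ over $t\in\Tf^{(h)}$. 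This one-sided trick handles your ``no escape of mass'' point~(ii) for free even when $\fb=\infty$, and your asserted $\sim$ becomes a consequence rather than an assumption.
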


The  local  convergence  is  well  known in  the  extinction  case,  see
Proposition \ref{prop:cv-killed}.  For  the low regime, it  is stated in
Proposition \ref{prop:cv-not-fat-gene}.  For the  moderate regime, it is
stated in  Proposition \ref{prop:cv-fat-gene}.  For the  high regime, it
is  stated in  \cite{abd:llfgt}  for $p$  geometric  and in  Proposition
\ref{prop:cv-not-vfat-gene} for the Harris case.  All the proofs rely on
the  strong  ratio  theorem,  see  Section  \ref{sec:srthm}  below.   We
conjecture that the local convergence of $\tau_n$ towards $\tau^\infty $
in  the high  regime  holds if  $R_c>1$ (or  equivalently  $W$ has  some
positive   exponential  moments,   see   the  first   part  of   Section
\ref{sec:tau-lambda}).  The existence of local limits in the high regime
when $R_c=1$ is open.

\begin{rem}
  \label{rem:deftn} 
  We  recall from  Dubuc  \cite{d:eapgw} some  sufficient conditions  on
  $x\in  \N$  such  that   $\P_k(Z_n=x)>0$,  where  $\P_k$  denote,  for
  $k\in  \N^*$, the  distribution of  the  GW process  $Z$ started  from
  $Z_0=k$.  Notice first that if $x=0$, then $\P_k(Z_n=x)>0$ if and only
  if $\fc>0$, that is $\fa=0$.

  We now consider the case $x>0$. The offspring distribution
  $p$ is said to be of type  $(L_0,r_0)$, if $L_0$ is the period  of $p$, that is
  the           greatest           common           divisor           of
  $\{n-\ell; \, n>\ell  \text{ and } p(n)p(\ell)\neq 0\}$,  and $r_0$ is
  the residue  $(\mod L_0)$ of  any $n$ such  that $p(n)\neq 0$.   It is
  clear  that  $\P_k(Z_n=x)>0$  implies  $ x=k  \,  r_0^n  (\mod  L_0)$.
  According to \cite{d:eapgw}, for any  $b>\fa$ such that $p(b)>0$ (take
  $b=\fb$ if  $\fb<\infty $), there exists  $d\in \N$ such that  for all
  $k\in \N ^*$ and $x\in \lb  k\fa^n +  d, kb^n -d\rb$  with $x=kr_0^n(\mod  L_0)$, we
  have $\P_k(Z_n=x)>0$.

Taking $k=1$ and $x=a_n$, this provides sufficient
  conditions for $\tau_n$  to  be well  defined.  In particular, notice
  that there exist sequences $(a_n, n\in \N ^*)$ in all the regime such that
  $\tau_n$ is well defined. 
\end{rem}

Moreover, we have  the following continuity result in distribution for the family of limiting trees.
\begin{theo}
   \label{theo:cont}
   Let  $p$ be  a non-degenerate  super-critical offspring  distribution
   with        finite mean.       The      family
   $(\tau^\theta, \theta\in [0, +\infty ))$  is continuous for the local
   convergence in distribution.  Furthermore, if  
   $\fb<\infty $ or if $p$ is geometric, then we have:
\[
\tau^\theta\; \xrightarrow[\theta\rightarrow +\infty ]{\textbf{(d)}}\;
\tau^{\infty }. 
\]
\end{theo}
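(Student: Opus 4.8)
The plan is to prove continuity of $\theta\mapsto \tau^\theta$ on $[0,+\infty)$ and the limit at $+\infty$ by reducing local convergence to convergence of the backbone offspring mechanism, which is itself governed by the density $w$. Recall that for $\theta\in(0,+\infty)$ the tree $\tau^\theta$ is described by an infinite backbone whose individuals at generation $n$, when the backbone has size $r$ there, reproduce according to $\rho_{\theta,r}$ (Definition \reff{eq:def-rho}), with independent copies of $\tau^{0,0}$ grafted off the backbone when $\fc>0$. Since local convergence in distribution is convergence of the finite-dimensional marginals of $(k_u(\cdot), u\in\cu)$, and the grafted subtrees are the same law $\tau^{0,0}$ for every $\theta$, it suffices to show that the joint law of the backbone shape together with the out-degrees of its individuals up to any fixed generation $N$ depends continuously on $\theta$. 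The key input is the continuity (indeed real-analyticity away from $0$) of the density $w$ on $(0,+\infty)$, from Dubuc \cite{d:pri}; since $\rho_{\theta,r}$ is an explicit expression in ratios of the form $w(\cdot)/w(\cdot)$ evaluated at arguments built from $\theta$ and the iterates $f_n$, continuity of $\theta\mapsto\rho_{\theta,r}(k)$ for each fixed $r,k$ follows, and one propagates this through the finitely many generations by dominated convergence, using that the backbone size stays a.s. finite at each fixed generation.

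For the endpoint $\theta\to 0^{+}$, the plan is to check that $\rho_{\theta,r}$ converges as $\theta\downarrow 0$ to the offspring law appearing in the Kesten tree $\tau^0$ — concretely, that the backbone degenerates to a single spine and the size-biased / tilted reproduction law along it converges to the size-biased offspring law $\hat p(k)=kp(k)/\mu$ (resp. to the deterministic $\fa$-ary branching when $\fc=0$). This should come out of the behaviour of $w$ near $0$: one uses the known tail/asymptotic behaviour of $w(x)$ as $x\downarrow 0$ (governed by the iterates of $f$ and the Seneta–Heyde norming) to show the relevant ratios concentrate, forcing the backbone to have exactly one individual per generation in the limit. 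The endpoint $\theta\to+\infty$, under $\fb<\infty$ or $p$ geometric, requires showing $\tau^\theta$ converges to $\tau^\infty$ (the deterministic $\fb$-ary tree in the Harris case, and $T^{(\lambda_c)}$ for $p$ geometric). In the Harris case this amounts to showing that as $\theta\to+\infty$ the reproduction law $\rho_{\theta,r}$ forces every backbone individual to have exactly $\fb$ children and the backbone to fill up, i.e. $\rho_{\theta,r}$ converges to a point mass; here one exploits the explicit large-$x$ asymptotics of $w$ in the Harris case. For $p$ geometric one can either invoke the explicit formula for $w$ from Section \ref{sec:geom} and compute directly, or alternatively deduce it from Theorem \ref{theo:main-cv-loc}: choosing $a_n/c_n\to\theta$ one has $\tau_n\Rightarrow\tau^\theta$, and choosing a diagonal sequence with $a_n/c_n\to+\infty$ slowly one has $\tau_n\Rightarrow\tau^\infty$, so a suitable interleaving argument identifies the limit of $\tau^\theta$.

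The main obstacle I expect is the endpoint analysis rather than the interior continuity: interior continuity is essentially ``continuity of $w$ plus finitely many generations plus dominated convergence'', whereas at $\theta=0$ and $\theta=+\infty$ one must control the density $w$ near its singular ends (near $0$, and near $+\infty$ in the non-Harris / non-geometric situations one simply does not claim the result), and must verify that the non-branching-property backbone genuinely degenerates to the expected limiting object. A clean way to organize the endpoint arguments is to phrase everything in terms of the harmonic functions $h_\theta$ used to define $\tau^\theta$ via an $h$-transform, and to show $h_\theta\to h_0$ (resp. $h_\theta\to h_\infty$) pointwise on the relevant state space; then local convergence in distribution of the $h$-transformed trees follows from a general continuity lemma for $h$-transforms of the GW tree, and the whole theorem reduces to the two pointwise limits of harmonic functions, which is where the fine asymptotics of $w$ enter.
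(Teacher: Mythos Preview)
Your plan is essentially the paper's own: interior continuity follows from continuity of $w$ (the paper states this in one line at the start of Section~\ref{sec:cont-0}), and both endpoints are handled by proving that the backbone offspring law $\rho_{\theta,r}$ converges to the appropriate Dirac mass (this is exactly Lemma~\ref{lem:cv-rho}, which the paper invokes for Corollary~\ref{cor:cvttq} at $\theta=0$ and Proposition~\ref{prop:cvtq-tinfty-gen} at $\theta=+\infty$ in the Harris case). Your $h$-transform reformulation is the content of Lemma~\ref{lem:=distrib-w}; either packaging works.

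Two points to fix. First, in the super-critical setting of this paper the Kesten tree $\tau^0$ is the size-biased tree associated with the \emph{subcritical} offspring distribution $\fp(k)=\fc^{k-1}p(k)$, so the spine reproduction law is $\fp_{[1]}(k)=k\fc^{k-1}p(k)/f'(\fc)$, not $kp(k)/\mu$; this matters because the grafted subtrees in $\hat\tau^\theta$ already carry $\fp$, and it is only the type-$\rs$ count $\rho_{\theta,1}$ that must degenerate to the Dirac mass at $1$ (Schr\"oder case) or at $\fa$ (B\"ottcher case). Second, your proposed interleaving/diagonal argument for the geometric case does not work as stated: knowing $\tau_n\Rightarrow\tau^\theta$ when $a_n/c_n\to\theta$ and $\tau_n\Rightarrow\tau^\infty$ when $a_n/c_n\to\infty$ does not by itself yield $\tau^\theta\Rightarrow\tau^\infty$; compare Corollary~\ref{cor:cv-t-inf}, which only concludes this \emph{conditionally} on convergence. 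Your first suggestion---compute directly using the explicit $w$ for the geometric law---is what \cite{abd:llfgt} does and is the right route.
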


The continuity of $(\tau^\theta, \theta\in  [0, +\infty ))$ is proven in
Section \ref{sec:cont-0} and more precisely in Corollary \ref{cor:cvttq}
for the  continuity at  $0$.  The continuity at  $0$ allows to explain and extend
Corollary  3 from  Berestycki,  Gantert  and Mörters  \cite{bgms:gwtvml}
on  the
convergence  in distribution  of $\tau_{(\varepsilon)}$ (distributed as
$\tau$ conditionally  on $\{0<W\leq \varepsilon\}$) towards  $\tau^0$ as
$\varepsilon$ goes down to 0, see Corollary \ref{cor:link}.
\medskip

The  continuity at  infinity is  proven in
\cite{abd:llfgt}   for   the   geometric   case   and   in   Proposition
\ref{prop:cvtq-tinfty-gen}  for the  Harris  case. If  $R_c>1$, we  also
conjecture that  the local convergence in  distribution of $\tau^\theta$
towards $\tau^\infty $  as $\theta$ goes to infinity.  We  get a partial
result in this direction in  Section \ref{sec:cont-infty}, as if $R_c>1$
and if $\tau^\theta$ converges locally  in distribution as $\theta$ goes
to  infinity, then  the limit  is indeed  $\tau^\infty $,  see Corollary
\ref{cor:cv-t-inf}.
If  $R_c=1$,  then  we  have   not  hint  concerning  the  existence  or
non-existence of possible  limits for $\tau^\theta$ as  $\theta$ goes to
infinity. Notice that it is not  clear that $\tau^\theta$  is stochastically
non-decreasing with $\theta$.

\begin{rem}
   \label{rem:sub-intro}
Partial results concerning the sub-critical case are presented in
Section \ref{sec:sub-critical}, under the assumption that $R_c>1$ and
the equation $f(r)=r$ has a finite root in $(1, R_c]$. This assumption
is equivalent to assume that the sub-critical GW tree is distributed as
a super-critical GW tree conditioned on the extinction event. In this case,
we can use the previous results in the  super-critical case  to get results
in the sub-critical case.
\end{rem}

\subsection{Strong ratio theorem for super-critical GW process}
\label{sec:srthm}
We set for $k,h\in \N^*$:
\begin{equation}
   \label{eq:defH}
H_n(h,k)=\frac{\P_{k}(Z_{n-h}=a_n)}{\P(Z_n=a_n)},
\end{equation}
where $Z$ is under $\P_k$ a GW process starting from $Z_0=k$.  The proofs
of Theorem \ref{theo:main-cv-loc}, when  there is non condensation, rely
on   the   elementary   identity    \reff{eq:ph}   which   states   that
$  \P(r_{h}(\tau_n)=\bt) =  H_n(h,  z_h(\bt)) \P(r_h(\tau)=\bt)$,  where
$r_h(\bs)$ denotes  the restriction of  the tree $\bs$ up  to generation
$h\in \N^*$, and  $\bt$ is a tree with height  $h$ (that is $z_h(\bt)>0$
and $z_{h+1}(\bt)=0$).  Since the  local convergence in  distribution of
$\tau_n$  towards  a  tree  with  finite  nodes  is  equivalent  to  the
convergence of $ \P(r_{h}(\tau_n)=\bt)$ for all $h\in \N^*$ and all tree
$\bt$ of  height $h$, up to  the identification of the  limit, the local
convergence can be deduced from the  convergence as $n$ goes to infinity
of $H_n(h,k)$ for all $h,k\in \N^*$. The result is in the same spirit as
the strong ratio theorem for random walks. Notice that  all the regimes
described in the following theorem are valid thanks to Remark
\ref{rem:deftn}. 

\begin{theo}
   \label{theo:srt}
Let  $p$ be  a non-degenerate  super-critical offspring  distribution
  with        $\mu$           finite.  We assume that the
  sequence $(a_n, n\in \N^*)$ is such that $\P(Z_n=a_n)>0$  for
  all $n\in \N ^*$. 
\begin{itemize}
\item  \textbf{Extinction  case:}  $a_n=0$  for   all
  $n\ge n_0$ for some $n_0\in\N^*$.  If  $\fc=0$, then $\P(Z_n=0)=0$ for  all $n\in \N$,
  and thus $H_n$ is not defined.  If $\fc>0$,  then  we
  have:
\begin{equation}
   \label{eq:H00}
H^{0,0}(h,k):=\lim_{n\rightarrow\infty } H_n(h,k)= \fc^{k-1}.
\end{equation}

\item \textbf{Low regime:}  $\lim_{n\rightarrow\infty } a_n  /c_n=0$ and
  $a_n>0$ for  all $n\in \N^*$. We  have\footnote{Notice that $\fa=0$, resp.
    $\fa=1$,  resp.   $\fa\geq  2$,  is equivalent  to  $\fc>0$,  resp.
    $\fc=0$ and $f'(\fc)>0$, resp. $f'(\fc)=0$.}:
\begin{equation}
   \label{eq:H0}
H^{0}(h,k):=\lim_{n\rightarrow\infty } H_n(h,k)=
\begin{cases}
 k\fc^{k-1} f'(\fc)^{-h} & \text{ if $\fa=0$},\\
   f'(\fc)^{-h} \ind_{\{k=1\}}  & \text{ if $\fa=1$},\\
p(\fa)^{-(\fa^{h} 
  -1)/(\fa-1)}\ind_{\{k=\fa^h\}} & \text{ if $\fa\geq 2$}.\\
\end{cases}
\end{equation}
\item        \textbf{Moderate        regime:}
  $\lim_{n\rightarrow\infty  } a_n  /c_n=\theta\in  (0,  +\infty )$.     
We  have, with the notation $w_k(\theta)=\sum_{i=1}^k \binom{k}{i} \fc^{k-i}
w^{*i}(\theta)$:
\begin{equation}
   \label{eq:Hq}
H^\theta(h,k):=\lim_{n\rightarrow\infty } H_n(h,k)=
\mu^h \, \frac{
  w_k\left(\mu^h \theta \right)}{w(\theta)}\ind_{\{k=r_0^h (\mod L_0)\}},
\end{equation}
where $(L_0, r_0)$ is the type of $p$. 
\item         \textbf{High         regime:}  $\lim_{n\rightarrow\infty  }
  a_n  /c_n=+\infty  $.  (Partial results.)  We have: 
\begin{equation}
   \label{eq:Hi}
H^\infty (h,k):=\lim_{n\rightarrow\infty } H_n(h,k)=
\begin{cases}
p(\fb)^{-(\fb^{h} 
  -1)/(\fb-1)}\ind_{\{k=\fb^h\}} & \text{ if $\fb<\infty $,}\\
0 & \text{ if $p$ is geometric}.
\end{cases}
\end{equation}
\end{itemize}
\end{theo}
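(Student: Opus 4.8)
The plan is to treat the four regimes separately, reducing each to an asymptotic statement about the transition probabilities $\P_k(Z_m = x)$ of the super-critical Galton-Watson process. Throughout I would use the decomposition $Z_n = \sum_{j=1}^{k} Z_{n-h}^{(j)}$ under $\P_k$, where the $Z^{(j)}$ are i.i.d.\ copies of $Z$ started from one individual after $h$ generations; equivalently, $\P_k(Z_{n-h}=a_n)$ is the $a_n$-th coefficient of $f_{n-h}^{\,k}$, where $f_m$ is the $m$-fold iterate of $f$. Hence $H_n(h,k)$ is a ratio of coefficients of generating functions, and the game is a local central limit / large deviations analysis of these coefficients as $n\to\infty$ for $a_n$ growing at the prescribed rate relative to the Seneta--Heyde constants $c_n$.

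\emph{Extinction case.} Here $a_n=0$ eventually, so $\P_k(Z_{n-h}=0) = f_{n-h}(0)^k \to \fc^k$ and $\P(Z_n=0)=f_n(0)\to\fc$, giving $H_n(h,k)\to \fc^{k-1}$ directly from continuity of $f$ and $f_n(0)\uparrow\fc$. This is the easy case and needs no ratio theorem.

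\emph{Low regime.} When $a_n/c_n\to 0$ the conditioning forces the tree to look like it is dying out along a backbone, so I expect the relevant asymptotics to be governed by the behavior of $f$ near the extinction point $\fc$. The plan is to use the known local asymptotics of $\P(Z_n=a_n)$ for $a_n$ small (of order $o(c_n)$), which should be comparable to $\P(Z_n=a_n\mid \ce)$ times $\fc$, and then on the survival-of-extinction event reduce to a sub-critical GW process with offspring generating function $\hat f(s)=f(\fc s)/\fc$ (defined when $\fc>0$, i.e.\ $\fa=0$), whose mean is $f'(\fc)$. The factor $\mu^h$ in the general formula is replaced by $f'(\fc)^{-h}$ because one descends $h$ generations in the conditioned (sub-critical) tree, and the combinatorial factor $k\fc^{k-1}$ comes from selecting which of the $k$ subtrees carries the surviving mass while the other $k-1$ go extinct. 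When $\fa=1$ (so $\fc=0$, $p(0)=0$, $p(1)>0$), the minimal-growth constraint forces $k=1$ and each generation contributes a factor $p(1)/f'(0)=1$... more precisely a factor $f'(0)^{-h}=f'(\fc)^{-h}$; and when $\fa\ge 2$ the tree must be the full $\fa$-ary tree down to level $h$, which has a single realization of probability $p(\fa)^{(\fa^h-1)/(\fa-1)}$, yielding the stated indicator. The technical heart here is a sharp lower/upper local estimate on $\P(Z_n=a_n)$ when $a_n=o(c_n)$; I would get it from Dubuc's local limit estimates \cite{d:eapgw,d:pri} for the left tail of $Z_n$, or reprove it via the generating-function iteration near $\fc$.

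\emph{Moderate regime.} This is the main case and I expect it to carry most of the work. When $a_n/c_n\to\theta\in(0,\infty)$, the local limit theorem for $Z_n/c_n\to W$ with continuous density $w$ on $(0,\infty)$ gives $\P(Z_n=a_n)\sim \frac{L_0}{c_n}\,w(\theta)$ along the period-compatible subsequence (the indicator $\ind_{\{k=r_0^h(\mathrm{mod}\,L_0)\}}$ and the arithmetic constraint from Dubuc's theorem, Remark \ref{rem:deftn}, enter here). For the numerator, $Z_{n-h}$ under $\P_k$ is the sum of $k$ i.i.d.\ copies of $Z_{n-h}$, and $c_{n-h}\sim c_n/\mu^h$ (this uses $\mu<\infty$ and the defining recursion of the Seneta--Heyde norming, with the subtle point that $c_{n-h}/c_n\to\mu^{-h}$ exactly, not just up to constants — this is where the finiteness of $\mu$ is essential). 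Thus $Z_{n-h}/c_n\to \mu^{-h}W$, and the sum of $k$ independent copies, scaled by $c_n$, converges to $\mu^{-h}(W_1+\dots+W_k)$ whose law on $(0,\infty)$ has density $s\mapsto \mu^h \sum_{i=1}^k\binom{k}{i}\fc^{k-i} w^{*i}(\mu^h s)$ — the binomial expansion accounting for which of the $k$ copies survive (mass $w^{*i}$) and which die out (mass $\fc^{k-i}$), exactly the quantity $w_k(\mu^h\theta)$. Dividing the two local limit asymptotics, the $c_n$'s cancel and one gets $\mu^h w_k(\mu^h\theta)/w(\theta)$. The hard part will be upgrading the distributional convergence $Z_n/c_n\to W$ to the \emph{local} (pointwise on the lattice) convergence of the point masses, uniformly enough in $k$ and $h$; I would invoke the local limit theorem for super-critical GW processes in the spirit of Dubuc \cite{d:pri}, and check it transfers to the $k$-fold convolution (using that $w^{*i}$ is continuous, which follows from continuity of $w$).

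\emph{High regime (partial).} When $\fb<\infty$ the constraint $a_n/c_n\to\infty$ forces $a_n$ to approach the maximal value $\fb^n$, so the conditioned tree must eventually be the full $\fb$-ary tree at the top; thus $Z_{n-h}$ under $\P_k$ can reach near-maximal size only if $k=\fb^h$ and the first $h$ generations are completely full, giving the single-configuration probability $p(\fb)^{(\fb^h-1)/(\fb-1)}$ and the indicator. Concretely I would compare $\P(Z_n=a_n)$ with $\P(Z_n=a_n,\ r_h(\tau)=\text{full }\fb\text{-ary tree up to level }h)=p(\fb)^{(\fb^h-1)/(\fb-1)}\,\P_{\fb^h}(Z_{n-h}=a_n)$ and show the complementary event is negligible, using the Harris-type estimate that the right tail of $Z_n$ near $\fb^n$ decays like $\P(Z_n=\fb^n-j)\asymp$ (explicit in terms of $p(\fb)$) so that dropping one individual at any generation costs a bounded multiplicative factor $<1$ raised to a power that blows up when $a_n$ is forced close to $\fb^n$; the key input is the right-tail local asymptotics of $Z_n$ in the Harris case, again from \cite{d:eapgw}. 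For the geometric case the claim $H_n(h,k)\to 0$ is already contained in \cite{abd:llfgt}, so I would simply cite it.

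The single step I expect to be the genuine obstacle is the moderate regime, specifically establishing the \emph{local} limit theorem $\P(Z_n=a_n)\sim (L_0/c_n)w(\theta)$ and its $k$-fold-convolution analogue with enough uniformity in $h$ and $k$ to pass to the limit in the ratio; the other regimes reduce to known tail estimates for $Z_n$ plus elementary generating-function manipulations.
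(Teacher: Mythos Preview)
Your overall architecture matches the paper's: treat the four regimes separately and reduce each to precise asymptotics for $\P_k(Z_m=a_n)$. The extinction case and the moderate case are essentially as you describe; in particular the moderate regime follows directly from the local limit theorem of Dubuc--Seneta \cite{ds:lltgwp} (not \cite{d:pri}, which concerns regularity of the density $w$), and the paper treats it as the \emph{easiest} case, not the hardest---no uniformity in $h,k$ is needed since these are fixed.

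There is, however, a genuine gap in your high-regime Harris argument. It is \emph{not} true that $a_n/c_n\to\infty$ forces $a_n$ to approach $\fb^n$: since $c_n$ grows like $\mu^n$ and $\fb>\mu$, one can perfectly well have $a_n/c_n\to\infty$ while $a_n/\fb^n\to 0$. Your ``tree must be full $\fb$-ary up to level $h$'' heuristic works only in the extreme sub-case $a_n/\fb^n\to 1$, and the paper indeed dispatches that sub-case in a few lines. For the generic sub-case $\limsup_n a_n/\fb^n<1$ one needs sharp asymptotics for $\P_\ell(Z_n=a_n)$ in the right large-deviation regime (Lemma~\ref{lem:H-Pz}), obtained by a saddle-point Fourier-inversion analysis in the spirit of Flajolet--Odlyzko \cite{fo:ld} and Fleischmann--Wachtel \cite{fw:lta}; these estimates are \emph{not} available in \cite{d:eapgw}, and the paper develops them in detail in Section~\ref{sec:Harris}. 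The same remark applies, mutatis mutandis, to the low regime in the B\"ottcher case $\fa\ge 2$: your one-line ``the tree must be the full $\fa$-ary tree down to level $h$'' covers only $a_n/\fa^n\to 1$, and the sub-case $\liminf_n a_n/\fa^n>1$ again requires precise left-tail asymptotics (Lemma~\ref{lem:bott-Pz}, Section~\ref{sec:bottcher}). For the low-regime Schr\"oder case ($\fa\le 1$) with unbounded $a_n$, the paper invokes the asymptotics of Fleischmann--Wachtel \cite{fw:ld} rather than Dubuc, and for bounded $a_n$ the ratio theorem of Papangelou \cite{p:lgwpsc}; your heuristic about the sub-critical transform and the factor $k\fc^{k-1}$ is correct in spirit but does not by itself produce the limit. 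In short, you have inverted the difficulty: the local-limit-theorem regime is routine, whereas the low B\"ottcher and high Harris regimes are where the real work lies.
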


Contrary to the short  proof of the  strong ratio theorem for
random walks given by  Neveu \cite{n:tece}, the proof presented here for
the strong ratio theorem   rely on explicit
equivalent of $\P_{k}(Z_{n-h}=a_n)$ for $n$ large. The well known
extinction  case is given in  Remark \ref{rem:cond-onE}. 

The  result  for  the  low  regime  is  much  more  delicate.  We  shall
distinguish between the Schröder case  $f'(\fc)>0$ and the Böttcher case
$f'(\fc)=0$,   and   in   those   two  cases   consider   the   sequence
$(a_n,  n\in\N^*)$ bounded  or unbounded.   The case  $a_n$ bounded  and
$\fa=0$  can be  found  in Papangelou  \cite{p:lgwpsc}.  The case  $a_n$
bounded, $\fa=1$ is an easy extension  of \cite{p:lgwpsc}, see Case I in
the  proof of  Proposition  \ref{prop:cv-not-fat-gene}  in the  Schröder
case.  The case  $a_n$ unbounded and $\fa\leq 1$ (Schröder  case) can be
derived, see Lemma \ref{lem:srt-schroder},  from the precise asymptotics
of  $\P_\ell(Z_n=a_n)$ given  by Fleischmann  and Wachtel  \cite{fw:ld}.
The   case   $\fa\geq   2$   (Böttcher  case)   is   given   in   Lemma
\ref{lem:rapp-Za}  and Lemma  \ref{lem:srt-bottcher}.  The former  lemma
relies on a  precise approximation of $\P_\ell(Z_n=a_n)$  given in Lemma
\ref{lem:bott-Pz}  for $a_n$  unbounded, which  is an  extension of  the
precise asymptotics given by Fleischmann and Wachtel \cite{fw:lta}.

The moderate regime  is a direct consequence of the local limit theorem in
Dubuc and  Seneta  \cite{ds:lltgwp}, see Lemma \ref{lem:srt-moderate} here. 

The      high     regime      in      the      Harris     case      when
$\limsup_{n\rightarrow\infty  }   a_n/\fb^n<1$  is  detailed   in  Lemma
\ref{lem:rapp-Zb}  with $\ell=1$.   It relies  on techniques  similar to
those developed in \cite{fw:lta} or in Flajolet and Odlyzko \cite{fo:ld}
to get an  equivalent to $\P_k(Z_n=a_n)$, see  Lemma \ref{lem:H-Pz}. The
proof  is  however  given  in  details because  the  adaptation  is  not
straightforward.    The  high   regime  for   the  geometric   offspring
distribution is given in \cite{abd:llfgt}.  \medskip

If $\fb=\infty  $ and  $f(R_c)=+\infty $, we  conjecture that  $\tau_n$ converges
locally in distribution towards a limit $\tau^\infty $ whose root has an
infinite number of children. Using the elementary identity \reff{eq:ph},
we deduce  the following conjecture  that if $\fb=\infty $  and $f(R_c)=+\infty $,
then:
 \begin{equation}
   \label{eq:Hii}
H^\infty (h,k):=\lim_{n\rightarrow\infty } H_n(h,k)=0.
\end{equation}
If  $\fb=+\infty $  and $f(R_c)<+\infty  $, then  $\tau^\infty $  has no
condensation  and thus  $H^\infty(n,k) $  might exists  and be  given by
$f_{-h+1}(R_c)^k/f(R_c)$,  where, for  $n\in  \N^*$,  $f_n$ denotes  the
$n$-th iterate  of $f$ and $f_{-n}$  its inverse (which is  well defined
because $f_n$ is increasing).  See the martingale term in the right hand
side of \reff{eq:t-t=mart-t-gen} with $\lambda=\lambda_c$.

If  $R_c=1$, the  possible existence  of a  limit for  $H_n$ is  an open
question.  See  Wachtel, Denisov and Korshunov  \cite{wdk:tasgwph} for a
first step in the study of this so-called heavy-tailed case.

\subsection{Link with the Martin boundary of  super-critical  GW
  process} 
\label{sec:extreme}
Recall  that $Z$  is  a super-critical  GW  process with  non-degenerate
offspring distribution $p$ with finite  mean $\mu$.  The Martin boundary
$\cm$  of the  non-negative  space-time GW  process  corresponds to  all
extremal  non-negative  space-time  harmonic  functions  $H$  defined  on
$\N^2$,  and  is  related  to  the  set  of  all  extremal  non-negative
martingales  $N=(N_n=H(n,Z_n), n\in  \N)$.   Considering  only the  case
$Z_0=1$, then Remark \ref{rem:deftn} implies  that the functions $H$ are
only  defined  for  $(n,k)$  such   that  $k=r_0^n  (\mod  L_0)$,  where
$(L_0,  r_0)$  is  the  type  of  $p$.  Let  $\ch$  denote  the  set  of
non-negative space-time function  $H$ such that there  exists a sequence
$(a_n,                 n\in                  \N^*)$                 with
$H(h,k)=\lim_{n\rightarrow\infty }\P_k(Z_{n-h}=a_n)/\P(Z_n=a_n)$ for all
$h,k\in  \N$.   According  to  Kemeny, Snell  and  Knapp  \cite{ksk:dmc}
Chapter 10, we have $\cm\subset \ch$.

Consider the  collection $\ch^*=\{H^\theta,  \theta\in [0,  \infty )\}$.
We deduce  from Section  \ref{sec:srthm} that $\ch^*\subset  \ch$.  This
appears already  in Athreya  and Ney  \cite{an:lltra}, see  also Section
II.9 from  Athreya and  Ney \cite{an:bp}.  We  also deduce  from Section
\ref{sec:srthm} that $H^{0, 0}\in \ch$ if  and only if $\fa=0$. We get a
complete description of $\ch$ and $\cm$ in the Harris case and geometric
case.  To our knowledge, the results  for the Harris case in the present
work  and for  the  geometric  case in  \cite{abd:llfgt}  are the  first
complete  descriptions  of the  Martin  boundary  for super-critical  GW
process.   This (partially) answers  a  question raised  in  \cite{an:lltra}, on  the
identification of $\ch \backslash \ch^*$.

\begin{theo}
   \label{theo:martin}
Let  $p$ be  a non-degenerate  super-critical offspring  distribution
  with        $\mu$           finite.  If $\fb<\infty $, then we have:
\[
\cm=\ch=\begin{cases}
   \ch^*\cup \{H^{0, 0}, H^\infty \} & \text{ if $\fa=0$,}\\
\ch^*\cup \{ H^\infty \} &\text{ if $\fa\geq 1$.}
\end{cases}
\]
If $p$ is geometric, then we have $\cm=\ch=\ch^*\cup \{H^{0, 0}\}$ if $\fa=0$ and
$\cm=\ch=\ch^*$ if $\fa>0$.
\end{theo}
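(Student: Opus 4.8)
The plan is to combine the strong ratio theorem (Theorem \ref{theo:srt}) with the abstract description of the Martin boundary of a transient Markov chain. The inclusion $\cm\subset\ch$ is given for free by Kemeny--Snell--Knapp, and we already know $\ch^*\subset\ch$ and, from \reff{eq:H00}--\reff{eq:Hi}, that $H^{0,0}\in\ch$ precisely when $\fa=0$ and that $H^\infty\in\ch$ in the Harris case. So the content of the theorem is the two reverse inclusions: first $\ch\subset\ch^*\cup\{H^{0,0},H^\infty\}$ (resp.\ without $H^{0,0}$ when $\fa\geq1$, resp.\ the geometric statement without $H^\infty$), and second that every element of the right-hand set is in fact \emph{minimal} space-time harmonic, i.e.\ lies in $\cm$, not merely in $\ch$.

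For the first inclusion I would argue as follows. Take $H\in\ch$, so there is a sequence $(a_n)$ with $H(h,k)=\lim_n H_n(h,k)$ for all $h,k$. The key is that the ratio $a_n/c_n$ must converge along the whole sequence to some value in $[0,+\infty]$: if it had two subsequential limits $\theta_1\ne\theta_2$ in $[0,\infty]$, Theorem \ref{theo:srt} would force $H$ to equal both $H^{\theta_1}$ and $H^{\theta_2}$, and these are distinct (e.g.\ evaluate at a suitable $(h,k)$, using that $w$ is a genuine continuous positive density so $\theta\mapsto w_k(\mu^h\theta)/w(\theta)$ separates points, and comparing with the $H^0$, $H^\infty$ formulas for the extreme cases). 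Hence $a_n/c_n\to\theta\in[0,\infty]$ and the strong ratio theorem identifies $H$ as $H^\theta$ (with $\theta=0$ giving $H^0$, $\theta=\infty$ giving $H^\infty$ in the Harris case, $\theta=\infty$ impossible — giving no limit, i.e.\ $H^\infty=0\notin\ch$ — in the geometric case), \emph{unless} $a_n=0$ eventually, in which case $H=H^{0,0}$ and this requires $\fa=0$. This closes the set-theoretic description of $\ch$; the geometric case is identical except that $H^\infty\equiv0$ is not harmonic so it drops out.

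For the second inclusion, $\cm\supset$ the listed functions, I would check minimality of each candidate harmonic function directly, or invoke the standard fact that a harmonic function $H$ obtained as a \emph{pointwise limit of the $h$-process kernels along a sequence of states escaping to infinity} is minimal iff the corresponding conditioned chains converge — equivalently, use that the local limit trees $\tau^{0,0},\tau^0,\tau^\theta,\tau^\infty$ of Theorem \ref{theo:main-cv-loc} are all well-defined honest probability measures on trees whose law of $(z_h,\dots)$ has the Markov/branching-type structure making the associated space-time martingale $N_n=H(n,Z_n)$ extremal. Concretely: $H^{0,0}$ corresponds to conditioning on extinction, classically minimal; $H^\theta$ for $\theta\in(0,\infty)$ corresponds to conditioning on $\{W=\theta\}$ and minimality follows because $\{W=\theta\}$ is, up to null sets, an atom of the tail $\sigma$-field of the $W$-disintegration (Remark \ref{rem:decomp-E} gives the regular version $\tau^\theta$), so the tail $\sigma$-field of $N$ is trivial; $H^0$ (Kesten tree / regular $\fa$-ary tree) and $H^\infty$ (regular $\fb$-ary tree, or $T^{(\lambda_c)}$) have an explicit spine/deterministic structure whose tail field is visibly trivial. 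Putting $\cm\subset\ch$ together with these minimality statements and the first inclusion gives $\cm=\ch$ as claimed.

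The main obstacle is the dichotomy step: proving that $a_n/c_n$ cannot have two distinct subsequential limits in $[0,\infty]$, which really means proving that the map $\theta\mapsto H^\theta$ from $[0,\infty]$ into space-time functions is injective. Injectivity on $(0,\infty)$ rests on a non-degeneracy property of the density $w$ — one must rule out, for $\theta_1\ne\theta_2$, the identity $\mu^h w_k(\mu^h\theta_1)/w(\theta_1)=\mu^h w_k(\mu^h\theta_2)/w(\theta_2)$ holding simultaneously for all admissible $(h,k)$; since $w_1=w$ this would already force $w(\mu^h\theta_1)/w(\theta_1)=w(\mu^h\theta_2)/w(\theta_2)$ for all $h$, and then one leverages the functional equation satisfied by $w$ (coming from $f$) to reach a contradiction, or alternatively argues via the characteristic-function/Laplace-transform characterisation of $W$. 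The boundary cases $\theta=0$ and $\theta=\infty$ are separated from the interior and from each other by the qualitatively different $\ind_{\{k=\fa^h\}}$ versus $\ind_{\{k=\fb^h\}}$ supports (Harris case) or by $H^0\ne0=H^\infty$ (geometric case), which is comparatively routine.
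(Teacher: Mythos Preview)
Your structure is correct and matches the paper's brief two-step argument: (i) determine $\ch$ from the strong ratio theorem, (ii) show each listed $H$ is extremal so that $\cm=\ch$. For step (ii) you invoke exactly what the paper uses, namely the a.s.\ limit $\lim_n z_n(\tau^\theta)/c_n=\theta$ from Remark \ref{rem:decomp-E}, which makes the tail of the $H^\theta$-conditioned process trivial. What you overlook is that this \emph{same} fact already dispatches your ``main obstacle'', the injectivity of $\theta\mapsto H^\theta$ on $(0,\infty)$: if $H^{\theta_1}=H^{\theta_2}$ then by Lemma \ref{lem:=distrib-w} the trees $\tau^{\theta_1}$ and $\tau^{\theta_2}$ have the same law, and since $z_n(\tau^{\theta_i})/c_n\to\theta_i$ a.s., one gets $\theta_1=\theta_2$. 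This avoids the analytic argument about the functional equation of $w$ that you leave as a loose end; the paper handles both injectivity and extremality in one stroke via this a.s.\ convergence (and cites Lootgieter for the extremality of the functions in $\ch^*$). Your treatment of the boundary cases $\theta\in\{0,\infty\}$ and of $H^{0,0}$ via the explicit supports $\{k=\fa^h\}$, $\{k=\fb^h\}$ is fine and in the same spirit.
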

In the  previous theorem, the description  of $\ch$ is a  consequence of
Theorem \ref{theo:srt}; the equality $\cm=\ch$ follows directly from the
fact    that   for    all   $\theta\in    (0,   +\infty    )   $    a.s.
$\lim_{n\rightarrow\infty  } z_n  (\tau^\theta)/c_n=\theta$, see  Remark
\ref{rem:decomp-E}  or  Lootgieter  \cite{l:sagw}, Corollary  2.3.II  c)
which states  that all the functions  in $\ch^*$ are extremal  under the
$L \log(L)$ condition and the  aperiodic condition, that is $L_0=1$.  In
the  same   spirit,  Overbeck   \cite{o:mbbp}  has  given   an  explicit
description of  the Martin boundary  for some time-continuous branching
processes, see for example Theorem 2 therein.  \medskip

We conjecture that $\ch=\ch^*$ or  $\ch= \ch^*\cup \{H^{0, 0}\}$ as soon as
$\fb=+\infty $ and $f(R_c)=+\infty $, keeping  $H^{0, 0}$ if and only if
$\fa=0$.    Otherwise,   existence   of  a  limit   function   $H$   when
$\lim_{n\rightarrow\infty  }  a_n/c_n=+\infty $  is  still  open in  the
general case.  \medskip

We end  this section with some  works related to Martin  boundary for GW
process. We  refer to  Dynkin \cite{d:btmp} or  to \cite{ksk:dmc}  for a
presentation  of  the Martin  boundary.  For  the extremal  non-negative
harmonic functions (space only) of GW  process, we refer to Theorem 3 in
Cohn \cite{c:hfcmc}, which is stated under the $L \log(L)$ condition and
an  aperiodic condition.   (Notice that  the $L  \log(L)$ and  aperiodic
conditions  are   indeed  required  in   the  proof  of  Theorem   3  in
\cite{c:hfcmc} as it relies on  Corollary 2.3.II a) from \cite{l:sagw}.)
For  the  Martin entrance  boundary  of  GW  process, see  Alsmeyer  and
R\"osler \cite{ar:mebgwp}.

\subsection{The geometric offspring distribution case}
   \label{sec:geom}
We consider the geometric super-critical offspring distribution. We
collect results   developed  in  \cite{abd:llfgt}  and in this paper. 
\medskip

Let $0<q<\eta\leq  1$ and  define the $\cg(\eta,q)$  geometric offspring
distribution  by 
\[
\begin{cases}
p(0)=1-\eta,\\
p(k)  =  \eta q  (1-q) ^{k-1} & \mbox{for }\in  \N^*.
\end{cases}
\]
We  have $\fa=0$  if  $\eta<1$ and  $\fa=1$ if  $\eta=1$. Moreover, we have
$\fb=+\infty  $,  $(L_0,  r_0)=(1,0)$, $\mu=\eta/q\in  (1,  +\infty  )$. It is easy to compute
\[
f(s)=\frac{(1-\eta)   -    s   (1-q-\eta)}{1-    s(1-q)},
\]
and deduce that   $R_c=1/(1-q)$,
$f(R_c)=+\infty $, $\fc=(1-\eta)/(1-q)\in  [0, 1)$ and $f'(\fc)=q/\eta$.
It is also easy to check that 
\[
w(\theta)=(1-\fc)^2 \expp{-(1-\fc)\theta}
\]
 for
$\theta>0$                            and                           thus
$\lambda_c= \sup\{\lambda \in  \R; \, \E[\exp (\lambda  W)] <+\infty \}=
1-                                                               \fc>0$.
If $\fc>0$  or equivalently $\eta<1$,  then $\tau^{0, 0}$  has geometric
offspring     distribution      $\cg(q,\eta)$.      We      have     for
$\theta\in (0, +\infty )$, $r\in \N^*$:
\[
\rho_{\theta, r}(s)=\frac{(r-1)!}{(|s|_1-1)!} \, 
\Big(\theta(1-\fc)(\mu-1)\Big)^{|s|_1-r} \expp{ -\theta
  (1-\fc)(\mu-1) }, \quad s\in (\N^*)^r,
\]
with $|s|_1=\sum_{i=1}^r s_i$ for $s=(s_1, \ldots, s_r)$; and 
\[
H^{\theta}(h,k)=
\mu^h \expp{-\theta(1-\fc)(\mu^h -1)} \sum_{i=1}^k \binom{k}{i}
\fc^{k-i} \frac{\Big(\theta (1-\fc)^2\mu^h \Big)^{i-1}}{(i-1)!}\cdot
\]
Notice that the definition of $H^\theta$  is similar  to the extremal space-time
harmonic functions  given in Theorem 2  from \cite{o:mbbp} for binary
splitting in continuous time. 
\medskip

We have  that $(\tau_n,  n\in \N^*)$  converges locally  in distribution
towards                         $\tau^\theta$                         if
$\lim_{n\rightarrow  \infty }  a_n/\mu^n=\theta\in  [0,  +\infty ]$  and
$a_n>0$      for       all      $n\in      \N^*$.        The      family
$(\tau^\theta, \theta\in  [0, +\infty ])$ is  continuous in distribution
for the local convergence. The random tree
$\tau^\infty $ has only one node  of infinite degree which happens to be
the root.  The space-time Martin  boundary is $\cm=\ch=\ch^*$ if $\fc=0$
and $\cm=\ch=\ch^*\cup \{H^{0, 0}\}$ if $\fc>0$.

\subsection{Organization of the paper}
We recall  the definition of trees, the local convergence and the distribution of the Galton-Watson
tree $\tau$ in Section \ref{sec:not}. Section \ref{sec:Kesten} is
devoted to the Kesten tree associated with $\tau$. We introduce in Section
\ref{sec:gen-gen} a probability distribution $\rho_{\theta, r}$ in
\reff{eq:def-rho} which plays a crucial role to describe the local
limits in the moderate regime. We present the local limits in the
moderate regime in Section \ref{sec:extremal}. The statements of the
local convergence are in Section \ref{sec:loc-cv}. The continuity of the
local limits is studied in Section \ref{sec:cont-0} and the partial
results on the continuity at $\theta=+\infty $ are presented in Section
\ref{sec:tinfini}. Section \ref{sec:sub-critical} is devoted to the
sub-critical case (when it is seen as the super-critical case
conditioned to the extinction event). After some ancillary results given
in Section \ref{sec:anc}, we give detailed proofs in the
technical 
Section  \ref{sec:Harris} for the Harris case and state the
 results  for the Böttcher case in Section \ref{sec:bottcher}.

\section{Notations}
\label{sec:not}
We denote by $\Nz=\{0,1,2,\ldots\}$ the set of non-negative integers, by
$\Np=\{1,2,\ldots\}$    the    set     of    positive    integers    and
$\Ni=\Nz\cup\{+\infty\}$.   For  any  finite   set  $E$,  we  denote  by
$\sharp E$ its cardinal.

We say that a function $g$ defined on $(0, +\infty )$ is  multiplicatively
periodic  with period $c>0$  if $g(c x)=g(x) $ for
all $x>0$. Notice that $g$ is also multiplicatively
periodic  with period $1/c$. 

\subsection{The set of discrete trees}
\label{sec:tree}

We recall Neveu's formalism \cite{n:apghw} for ordered rooted trees. Let
$\cu=\bigcup_{n\geq 0}(\Np)^{n}$ be the  set of finite sequences
of positive integers with the convention $(\Np)^{0}=\{\partial\}$. We
also 
set $\cu^*=\bigcup_{n\geq 1}(\Np)^{n}= \cu\backslash\{\partial\}$. 

For $u\in \cu$, let $|u|$  be the  length or the generation  of $u$
defined as the  integer $n$ such that $u \in(\Np)^{n}$.   If $u$ and $v$
are two sequences of $\cu$,  we denote by $uv$ the concatenation
of two sequences,  with the convention that $uv=vu=u$  if $v=\partial$.
The set of strict ancestors of $u\in  \cu^*$ is
defined by
\[
\anc(u)=\{v \in \mathcal{U},\ \exists  w \in \mathcal{U}^*,\  u=vw\}, 
\]
and    for    $\cs\subset    \cu^*$,    being    non-empty,    we    set
$\anc(\cs)=\bigcup _{u\in \cs} \anc(u)$.

A tree $\bt$ is a subset of $\mathcal{U}$ that satisfies :
\begin{itemize}
\item  $\partial \in \bt$.
\item  If $u\in \bt\backslash\{\partial\}$, then $\anc(u)\subset \bt$.
\item  For every $u\in \bt$, there exists $k_{u}(\bt)\in\bar\Nz$ such
  that, for every $i\in \N ^*$, $ui \in \bt \iff 1\leq i\leq k_{u}(\bt)$.
\end{itemize}

We denote by $\T_\infty $ the set of trees.  For $r\in \bar \N$, $r\geq
1$, we denote by $\bt_r$ the regular $r$-ary tree, defined by
$k_u(\bt_r)=r$ for all $u\in \bt_r$. Let $\bt \in \T_\infty $ be
a tree. The vertex $\partial$ is called  the root of the tree $\bt$ and
we  denote by  $\bt^*=\bt\backslash\{\partial\}$ the  tree without  its
root.  For a  vertex $u\in\bt$, the integer  $k_{u}(\bt)$ represents the
number  of  offsprings  (also  called   the  out-degree)  of  the  vertex
$u  \in   \bt$.  By   convention,  we   shall  write   $k_u(\bt)=-1$  if
$u\not\in \bt$.  The height $H(\bt)$ of the tree $\bt$ is defined by:
\[
 H(\bt) =  \sup \{|u|,\ u \in \bt\} \in \Ni.
\]
For $n\in\Nz$, the size of the $n$-th generation of $\bt$ is defined by:
\[
z_{n}(\bt)=\sharp\{u \in \bt ,\vert u\vert=n\}.
\]

We denote by $\Tf^*$ the subset of trees with finite out-degrees except the root's:
\[
\Tf^{*} = \{\bt \in \T_\infty ;\,  \forall u\in\bt^*,\ k_u(\bt) < + \infty
\}
\]
and by $\Tf= \{\bt \in \Tf^*;\,   k_\partial(\bt) < + \infty \}$ the
subset of trees with finite out-degrees. 

Let   $h,k\in\Np$. We define $\Tf^{(h)}$ the subset of finite trees with
height $h$: 
\[
\Tf^{(h)}=\{\bt \in \Tf;\,  H(\bt)= h \}
\]
and $\T^{(h)}_{k}= \{\bt \in \Tf^{(h)};\, k_\partial(\bt)= k\}$ the subset of finite trees with height   $h$
and out-degree of the root equal to  $k$. 
The restriction operators $r_h$ and $r_{h,k}$ are defined, for
every $\bt\in\T_\infty $, by:
\[
r_h(\bt)  =\{ u\in\bt;\ |u|\le h\}
\quad\text{and}\quad 
r_{h,k}(\bt)  =\{\partial\} \cup  \{ u\in r_h(\bt) ^*; \, \anc(u)
               \cap \{1, \ldots, k\} \neq \emptyset\},
\]
so that, for  $\bt\in\Tf$,
if $H(\bt)\geq h$, then $r_h(\bt)\in \Tf^{(h)}$; and 
for  $\bt\in\Tf^*$,
if $H(\bt)\geq h$ and 
$k_\partial(\bt)\geq k$, then $r_{h,k}(\bt)\in\T^{(h)}_{k}$.

\subsection{Convergence of trees}
\label{sec:cv-of-t}
Set $  \N_{1}=\{-1\}\cup \Ni $, endowed  with the usual topology  of the
one-point compactification of the  discrete space $\{-1\}\cup \Nz$.  For
a  tree  $\bt\in\T_\infty $,  recall that  by  convention  the  out-degree
$k_u(\bt)$ of $u$ is  set to -1 if $u$ does not belong  to $\bt$. Thus a
tree  $\bt\in\T_\infty   $  is  uniquely  determined   by  the
$\N_1$-valued sequence
$(k_u(\bt),   u\in\cu)$  and   then  $\T_\infty   $  is   a  subset   of
$\N_{1}^{\cu}$.  By Tychonoff  theorem, the  set $\N_{1}^{\cu}$  endowed
with the product topology is compact. Since $\T_\infty $ is closed it is
thus compact. In fact, the set $\T_\infty$ is a Polish space (but we don't need
any precise metric at this point). The local convergence of sequences of trees
is then  characterized as follows. Let  $(\bt_n, n\in \N)$ and  $\bt$ be
trees in $\T_\infty $. We say that $\lim_{n\rightarrow\infty } \bt_n=\bt$
if and only if  $\lim_{n\rightarrow\infty }
k_u(\bt_n)=k_u(\bt)$ for all $u\in \cu$. 
It is easy to see that:
\begin{itemize}
   \item  If  $(\bt_n, n\in \N)$ and $\bt$ are trees  in
$\Tf$, then we have $\lim_{n\rightarrow\infty } \bt_n=\bt$ if and only if 
$\lim_{n\rightarrow\infty } r_h(\bt_n)=r_h(\bt)$ for all $h\in
  \N^*$.
\item  If  $(\bt_n, n\in \N)$ and $\bt$ are trees  in
$\Tf^*$, then we have $\lim_{n\rightarrow\infty } \bt_n=\bt$ if and only if 
$\lim_{n\rightarrow\infty } r_{h,k}(\bt_n)=r_{h,k}(\bt)$ for all $h,k\in
  \N^*$.
 \end{itemize}

 If  $T$ is   a  $\Tf$-valued (resp.   $\Tf^*$-valued) random  variable,
 then its   distribution    is
 characterized                                                        by
 $\left(\P(r_h(T)=\bt);  \,  h\in  \N^*,  \,  \bt  \in  \Tf^{(h)}\right)$
 (resp.
 $\left(\P(r_{h,k}(T)=\bt);    \,    h,k\in     \N^*,    \,    \bt    \in
   \T_{k}^{(h)}\right)$).
 Using the Portmanteau theorem, we deduce the following characterization of  the convergence  in
  distribution:
\begin{itemize}
\item Let  $(T_n, n\in  \N)$ and $T$  be $\Tf$-valued  random variables.
  Then, if a.s. $H(T)=+\infty $,   we have:
\begin{equation}
   \label{eq:cv-loi}
T_n\; \xrightarrow[n\rightarrow \infty ]{\textbf{(d)}}\; T
\iff 
\lim_{n\rightarrow\infty } \P(r_h(T_n)=\bt)=\P(r_h(T)=\bt) 
\quad \text{for all $h\in \N^*, \, \bt\in \Tf^{(h)}$}.
\end{equation}
   \item  Let  $(T_n, n\in \N)$ and $T$ be $\Tf^*$-valued random
     variables.  Then,  if
     a.s. $H(T)=+\infty $ and  $k_\partial(T)=+\infty $, we have: 
\begin{equation}
   \label{eq:cv-loi*}
T_n\; \xrightarrow[n\rightarrow \infty ]{\textbf{(d)}}\; T
\iff
\lim_{n\rightarrow\infty } \P(r_{h,k}(T_n)=\bt)=\P(r_{h,k}(T)=\bt) 
\quad \text{for all $h,k\in \N^*, \, \bt\in \T_{k}^{(h)}$}.
\end{equation}
\end{itemize}

\subsection{Galton-Watson trees}
\label{sec:GW}
Let $p=(p(n), n\in \Nz)$ be a probability distribution on $\Nz$. A
$\Tf$-valued random variable $\tau$ is called a GW tree with
offspring distribution $p$ if for all $h\in \Np$ and $\bt\in \Tf$ with
$H(\bt)\leq h$:
\[
\P(r_h(\tau)=\bt)=\prod_{u\in r_{h-1}(\bt)} p(k_u(\bt)).
\]
The generation size process defined  by $(Z_n=z_n(\tau), \, n\in \N)$ is
the so-called GW process. We  refer to \cite{an:bp} and \cite{ah:bp} for
a general study of GW processes.
\medskip 

We recall here the classical result on the extinction probability of the
GW tree  and introduce some notations. We denote by
$\ce=\{H(\tau)<+\infty\}=\bigcup _{n\in \N} \{ Z_n=0\}$ the extinction
event and denote by $\fc$ the extinction probability:
\begin{equation}\label{eq:def-c}
\fc=\P(\ce).
\end{equation} 
Then, if $\ff$ denotes the generating function of $p$, $\fc$ is the
smallest non-negative root of $\ff(s)=s$. We denote by $\mu$ the mean
of $p$ i.e. $\mu=\ff'(1)$. We recall the  three following cases:
\begin{itemize}
\item The sub-critical case ($\mu<1$):  $\fc=1$.
\item The critical case ($\mu=1$):  $\fc=1$ (unless $p(1)=1$ and then $\fc=0$).
\item The super-critical case ($\mu>1$):  $\fc\in [0, 1)$, the process has a
  positive probability  of non-extinction. Notice 
  that $\fc=0$ if and only if $\fa\geq 1$.
\end{itemize}
We  consider the lower and upper bounds of the support of $p$:
 \begin{equation}
   \label{eq:def-ab}
\fa=\inf\{n\in \N; \, p(n)>0\}
\quad\text{and}\quad
\fb=\sup  \{k;  \,  p(k)>0\}\in  \bar  \N.
\end{equation} 
We say that $p$ is non-degenerate if  $\fa<\fb$. We define $f_n$  the $n$-th
iterate of  $f$, which is  the generating  function of $Z_n$.  We recall
that $\lim_{n\rightarrow\infty } f_n(0)=\fc$.   We also introduce in the
supercritical  case ($\mu>1$)   the  Schröder
constant $\alpha$  defined by:
\begin{equation}
   \label{eq:def-alpha}
f'(\fc)=\mu^{-\alpha}, \quad \alpha\in (0, +\infty ].
 \end{equation}

 We set $\P_k$ the probability under which the GW process $(Z_n, n\ge 0)$ starts with $Z_0=k$ individuals and write $\P$ for $\P_1$ so that:
 \[
 \P_k(Z_n=a)=\P(Z_n^{(1)}+\cdots+Z_n^{(k)}=a),
 \]
 where the $(Z^{(i)},1\le i\le k)$ are independent random variables
 distributed as  $Z$ under $\P$.
 
We consider a sequence $(a_n, n\in \Np)$ of elements of
 $ \N$ and, when $\P(Z_n=a_n)>0$, 
$\tau_n$    a  random  tree   distributed  as  the  GW   tree  $\tau$
conditioned on $\{Z_n= a_n\}$.
Let $n\geq h\geq 1$ and $\bt\in \Tf^{(h)}$. We have, with $k=z_h(\bt)$:
\begin{equation}
   \label{eq:ph}
  \P(r_{h}(\tau_n)=\bt)
= \P(r_h(\tau)=\bt)\frac{\P_k(Z_{n-h}=a_n)}{\P(Z_n=a_n)} \cdot
\end{equation}

\section{The Kesten tree}
\label{sec:Kesten}
In  this  section,   we  consider  a  GW  tree   $\tau$  with  offspring
distribution  $p=(p(n), n\in  \Nz)$ having  mean $\mu\in  (0, +\infty  )$.
Recall  that  $\fc\in [0,  1]$  denotes  the extinction  probability  of
$\tau$. We define  an associated probability distribution  $\fp$ on $\N$
as follows:

\begin{defi}\label{def:def-fp}
\begin{itemize}
   \item[(i)] If $\fc=0$, we define $\fp$ as  the Dirac mass at point $\fa$. 
\item[(ii)]  If $\fc>0$, we  define  the   probability  distribution
$\fp=(\fp(n), n\in \Nz)$ by:
\begin{equation}
   \label{eq:def-fp}
\fp(n)=\fc^{n-1} p(n) \quad\text{for $n\in \Nz$}.
\end{equation}
\end{itemize}
\end{defi}

We denote  by $\fm$  the mean  of $\fp$.  If $\mu\leq  1$ and $p(1)\ne 1$, as $\fc=1$, we have
$\fp=p$ and $\fm=\mu$.    If    $\fc>0$,   we    have
$\fm=\ff'(\fc)\in (0,1]$. 

\begin{rem}
   \label{rem:cond-onE}
   If $\fc>0$, let $\tau^{0,0}$ be a GW tree with offspring distribution
   $\fp$ defined in \reff{eq:def-fp}.  It is well known that the GW tree
   $\tau$ conditioned  on the extinction  event $\ce$ is  distributed as
   $\tau^{0,0}$.  Indeed, we have using  the branching property that, for
   $h\in   \N^*$,  $\bt\in \Tf^{(h)}$,  and setting
   $k=z_h(\bt)$:
\[
\P(r_h(\tau)=\bt|\, \ce)
= \P(r_h(\tau)=\bt) \frac{\P_k(\ce)}{\P(\ce)}= \fc^{k-1}
\P(r_h(\tau)=\bt)=\P(r_h(\tau^{0,0})=\bt). 
\]
\end{rem}

Let $k\in \Np$. If $\ff^{(k)}(1)\in (0, +\infty )$, that is   $p$ has
finite moment of order $k$ and the support of $p$ is not a subset of $\{0,
\ldots, k-1\}$,  then we define the $k$-th order size-biased
probability distribution of $p$ as $p_{[k]}=(p_{[k]}(n), n\in \Nz)$ with:
\begin{equation}
   \label{eq:def-biased-p}
p_{[k]}(n)=  \ind_{\{n\geq k\}}\, \binom{n} {k}\frac{k!}{\ff^{(k)}(1)}\, p(n) .
\end{equation}
The generating function of $p_{[k]}$ is
$\ff_{[k]}(s)=s^k\ff^{(k)}(s)/ \ff^{(k)}(1)$. 
The probability distribution $p_{[1]}$  is the so-called size-biased
probability distribution  of $p$. 

\medskip
We now  define the so-called  Kesten tree $\hat \tau^0$  associated with
the  offspring distribution  $p$.   
\begin{defi}[Kesten tree]
\label{def:Kesten}
\begin{itemize}
\item[(i)] If $\fc>0$, the
Kesten tree $\hat\tau^0$ is a two-type  GW tree where the  vertices are either  of type
$\rs$  (for survivor)  or of  type $\re$  (for extinction).  Its distribution is characterized as follows.
\begin{itemize}
 \item The root is of type $\rs$.
  \item The number of offsprings of a vertex depends, conditionally on the
  vertices of lower or same height, only on its own type (branching property).
   \item A vertex of type $\re$ produces only vertices of type $\re$ with
     offspring distribution $\fp$. 
   \item The random number  of children of a vertex of  type $\rs$ has the
     size-biased distribution  of $\fp$  that is $\fp_{[1]}$  defined by
     \reff{eq:def-biased-p}  with  $k=1$.  (Notice that $\fp_{[1]}$  is  well
     defined as $\fc>0$.)  Furthermore, all of the children  are of type
     $\re$  but  one,  uniformly  chosen  at random  which  is  of  type
     $\rs$. 
\end{itemize}
\item[(ii)] If  $\fc=0$, the  (degenerate) Kesten
tree $\hat\tau^0$ is given by $\bt_\fa$ the  regular  $\fa$-ary  tree, with  $\fa\geq  1$  defined  by
\reff{eq:def-ab}. It   can  be  seen  as a  GW  tree  with  degenerate
offspring distribution the Dirac mass at point $\fa$. In this case all
the individuals have type $\rs$. 
\end{itemize}
\end{defi}

Informally, when $\fc>0$,  the
individuals of  type $\rs$ in  $\hat \tau^0$  form an infinite  spine on
which   are  grafted   independent  GW   trees  distributed  (see Remark
\ref{rem:cond-onE}) as  $\tau$
conditionally   on    the   extinction    event   $\ce$. 
\medskip

We define $\tau^0=\ske(\hat \tau^0)$ as  the tree $\hat \tau^0$ when one
forgets the  types of  the vertices.  If $\fc=0$,  then $\tau^0$  is the
regular  $\fa$-ary tree.  If $\fc>0$,  the distribution  of $\tau^0$  is
given in the following classical result.
\begin{lem}
   \label{lem:=distrib}
   Let  $p$ be  an offspring  distribution with  finite positive mean
   such that $\fc>0$. The distribution of $\tau^0$ is
   characterized by: for all   $h\in \N^*$   and 
   $\bt\in  \Tf^{(h)}$  with  $k=z_h(\bt)$:
\begin{equation}
   \label{eq:ph0}
  \P(r_{h}(\tau^0)=\bt)
= k \fc^{k-1} \fm^{-h} \, \P(r_h(\tau)=\bt).
\end{equation}
\end{lem}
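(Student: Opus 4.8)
The plan is to compute $\P(r_h(\tau^0)=\bt)$ directly from the two-type description of $\hat\tau^0$ and to compare it with the Galton-Watson formula for $\P(r_h(\tau)=\bt)$. First I would observe that, since $\fp_{[1]}(0)=0$, the type-$\rs$ spine of $\hat\tau^0$ is a.s.\ infinite, hence $H(\tau^0)=+\infty $ a.s.; consequently $r_h(\tau^0)\in\Tf^{(h)}$ a.s.\ for every $h\in\N^*$, the events $\{r_h(\tau^0)=\bt\}$, $\bt\in\Tf^{(h)}$, form a measurable partition refining with $h$, and (as a tree of infinite height is determined by its restrictions $r_h$) the law of $\tau^0$ is characterized by the numbers $\P(r_h(\tau^0)=\bt)$, $h\in\N^*$, $\bt\in\Tf^{(h)}$, as recalled in Section~\ref{sec:cv-of-t}.

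Fixing such $h$ and $\bt$ and writing $k=z_h(\bt)$, I would decompose $\{r_h(\tau^0)=\bt\}$ according to where the spine passes. Since $\ske$ commutes with $r_h$ and $\hat\tau^0$ has exactly one type-$\rs$ vertex per generation, the typed trees of height $h$ projecting onto $\bt$ under $\ske$ are exactly those obtained by choosing a vertex $v\in\bt$ with $|v|=h$, declaring $v$ and its ancestors $v_0=\partial,v_1,\dots,v_h=v$ to be of type $\rs$ and all other vertices of $\bt$ to be of type $\re$; this yields $k$ pairwise distinct typed trees whose preimages partition $\{r_h(\tau^0)=\bt\}$. Because $r_{h-1}(\bt)$ contains precisely the $h$ spine vertices $v_0,\dots,v_{h-1}$, the branching property in Definition~\ref{def:Kesten} gives, for a fixed such $v$,
\[
\P\bigl(r_h(\hat\tau^0)=\hat\bt_v\bigr)
=\Bigl(\prod_{i=0}^{h-1}\frac{\fp_{[1]}\bigl(k_{v_i}(\bt)\bigr)}{k_{v_i}(\bt)}\Bigr)
\prod_{u\in r_{h-1}(\bt)\setminus\{v_0,\dots,v_{h-1}\}}\fp\bigl(k_u(\bt)\bigr),
\]
where the factor $1/k_{v_i}(\bt)$ comes from the uniform choice of the type-$\rs$ child of $v_i$. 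By \reff{eq:def-biased-p} applied with $k=1$ to the distribution $\fp$, whose mean is $\fm$, one has $\fp_{[1]}(n)/n=\fp(n)/\fm$, so the right-hand side equals $\fm^{-h}\prod_{u\in r_{h-1}(\bt)}\fp(k_u(\bt))$, which does not depend on $v$; summing over the $k$ choices of $v$ gives $\P(r_h(\tau^0)=\bt)=k\,\fm^{-h}\prod_{u\in r_{h-1}(\bt)}\fp(k_u(\bt))$.

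Finally I would rewrite this product via \reff{eq:def-fp} (recall $\fc>0$): if $\P(r_h(\tau)=\bt)=0$ then $p(k_u(\bt))=0$, hence $\fp(k_u(\bt))=\fc^{k_u(\bt)-1}p(k_u(\bt))=0$, for some $u\in r_{h-1}(\bt)$, and \reff{eq:ph0} holds trivially; otherwise,
\[
\prod_{u\in r_{h-1}(\bt)}\fp\bigl(k_u(\bt)\bigr)
=\fc^{\sum_{u\in r_{h-1}(\bt)}(k_u(\bt)-1)}\prod_{u\in r_{h-1}(\bt)}p\bigl(k_u(\bt)\bigr)
=\fc^{\,k-1}\,\P\bigl(r_h(\tau)=\bt\bigr),
\]
using the Galton-Watson formula $\prod_{u\in r_{h-1}(\bt)}p(k_u(\bt))=\P(r_h(\tau)=\bt)$ from Section~\ref{sec:GW} together with the telescoping identity $\sum_{u\in r_{h-1}(\bt)}(k_u(\bt)-1)=\sum_{i=1}^{h}z_i(\bt)-\sum_{i=0}^{h-1}z_i(\bt)=z_h(\bt)-z_0(\bt)=k-1$. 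Combining the last two displays gives \reff{eq:ph0}. The step I expect to require the most care is the combinatorial bookkeeping of the second paragraph: verifying that the $k$ typed configurations are exactly those projecting onto $\bt$ and are disjoint, keeping the $1/k_{v_i}(\bt)$ factors for the spine-child choices, and counting the spine vertices that lie in $r_{h-1}(\bt)$ rather than in $r_h(\bt)$, since a slip there changes the answer by a multiplicative constant.
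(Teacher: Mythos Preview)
Your proof is correct and follows essentially the same approach as the paper: decompose according to the position $v$ of the spine vertex at height $h$, use $\fp_{[1]}(n)/n=\fp(n)/\fm$ to collapse the spine factors, and then convert $\fp$ to $p$ via $\fp(n)=\fc^{n-1}p(n)$ together with the identity $\sum_{u\in r_{h-1}(\bt)}(k_u(\bt)-1)=k-1$. Your write-up is a bit more explicit than the paper's (you spell out the telescoping sum and the case $\P(r_h(\tau)=\bt)=0$), but the argument is the same.
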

If $\mu\le 1$, this is the usual link between Kesten tree and the size-biased GW tree. If $\mu>1$, the lemma just means that the Kesten tree is the sized biased tree associated with the tree conditioned on extinction (which is the subcritical GW tree with offspring distribution $\fp$).
We give a short proof of this  well-known result.
\begin{proof}
  According to  Section \ref{sec:cv-of-t}, the distribution  of $\tau^0$
  is   characterized  by   \reff{eq:ph0}   for  all   $h\in  \N^*$   and
  $\bt\in \Tf^{(h)}$ with $k=z_h(\bt)$.

Let $h\in \N^*$,  $\bt\in \Tf^{(h)}$ and
   $v\in \bt$ such that $|v|=h$. Let
   $V$ be the vertex of type $\rs$ at level $h$ in $\hat \tau^0$. 
We
   have, with $k=z_h(\bt)$:
\begin{align*}
  \P(r_{h}(\tau^0)=\bt, V=v)
  &= \prod_{u \in r_{h-1}(\bt) \backslash \anc(\{v\})} \fp(k_u(\bt))
    \prod_{u \in \anc(\{v\})} \inv{k_u(\bt)} \, \fp_{[1]}(k_u(\bt))\\
  &= \fm^{-h} \fc^{\sum_{u\in r_{h-1}(\bt)} (k_u(\bt) -1)}\,
    \prod_{u\in r_{h-1}(\bt)} p(k_u(\bt))\\
  &= \fm^{-h} \fc^{k-1}\, \P(r_h(\tau)=\bt) ,
\end{align*}
where we  used \reff{eq:def-biased-p} (with $k=1$,  $n=k_u(\bt)$ and $p$
replaced  by $\fp$)  and  \reff{eq:def-fp} (with  $n=k_u(\bt)$) for  the
second equality  and that  $\sum_{u\in r_{h-1}(\bt)}  (k_u(\bt) -1)=k-1$
for the last  one.  Summing over all $v\in \bt$  such that $|v|=h$ gives
the result.
\end{proof}

\section{A probability distribution associated with super-critical GW
  trees}
\label{sec:gen-gen}

In  this section,  we  consider  a super-critical  GW  tree $\tau$  with
non-degenerate offspring  distribution $p=(p(n), n\in \Nz)$  with finite
mean  $\mu\in  (1,  +\infty  )$.   We  recall  that  $\ff$  denotes  the
generating function of $p$ and $\fc$  is the smallest root in $[0,1)$ of
$\ff(s)=s$.   Notice that  $\fa=0$ is  equivalent to  $\fc>0$.  \medskip
Following \cite{s:sgwp}  or \cite{ah:bp},  we consider  the Seneta-Heyde
norming:    $(c_n,    n\in   \Nz)$    is    a    sequence   such    that
$\left(\expp{-Z_n/c_n},   n\in   \Nz\right)$   is   a   martingale   and
$c_0\in  (  -1/\log(\fc),  +\infty  )$.   This  sequence  is  increasing
positive and unbounded.  Furthermore, we have that $\fa<c_{n+1}/c_n<\mu$
for all  $n\in \Nz$ and that  the sequence $(c_{n+1}/c_n, n\in  \Nz)$ is
increasing\footnote{We  provide  a short  proof  of  the fact  that  the
  sequence $(c_{n+1}/c_n, n\in \Nz)$ is  increasing, as we didn't find a
  reference.  Define  $g_1(\lambda)=\log(f(\expp{-\lambda}))/\lambda$ so
  that $g_1(1/c_{n+1})=-c_{n+1}/c_{n}$.   So to prove that  the sequence
  $(c_{n+1}/c_n, n\in  \Nz)$ is increasing,  it is enough to  check that
  $g_1$   is   increasing,  or   more   generally   that  the   function
  $g_2(\lambda)=\log   (\E[\expp{-\lambda   X}])/\lambda$  defined   for
  $\lambda>0$ is  increasing, where  $X$ is  a non  constant real-valued
  random  variable  with  finite  Laplace transform.   Indeed,  we  have
  $g_2'(\lambda)>0$                                                   as
  $\E[Y\expp{-Y}] +  \E[\expp{-Y}]\log(\E[\expp{-Y}])<0$ for  any random
  variable $Y$  such that $Y\expp{-Y}$  is integrable, thanks  to Jensen
  inequality with  the strictly concave  function $-x \log(x)$  applied to
  $\expp{-Y}$.}   and  converges  towards  $\mu$.   We  also  have  that
$(Z_n/c_n,  n\in \Nz)$  converges  a.s.  towards  a non-negative  random
variable          $W$           with          Laplace          transform
$\cl(\lambda)=\E\left[\expp{-\lambda     W}\right]    $     such    that
$\cl(+\infty )=\P(W=0)=\fc$ and for all $\lambda\geq 0$:
\begin{equation}
   \label{eq:Lap-W}
\ff(\cl(\lambda/\mu))= \cl(\lambda). 
\end{equation}
The probability distribution of $W$, up to a multiplicative constant, is
the unique probability distribution solution of \reff{eq:Lap-W}.

\begin{rem}
   \label{rem:KS}
   If one  assumes that $p$ satisfies  $\E[Z_1\log(Z_1)]<+\infty $, then
   Kesten and  Stigum results  asserts that  $(\mu^{-n} Z_n,  n\in \Nz)$
   converges a.s. towards $W$ up to a scaling factor and that
   $\lim_{n\rightarrow\infty } \mu ^{-n}c_n$ exists and belongs to $(0, +\infty )$. 
\end{rem}

\begin{rem}
   \label{rem:def-lc}
Let $R_c=\sup\{r\geq 1;\, \ff(r)<+\infty \}\geq 1$ be the convergence
radius of the generating function $\ff$ of $p$. Set 
\begin{equation}
   \label{eq:def-ck}
\ck=\{\lambda\in \R; \,
\E[\expp{\lambda W}]<+\infty \},
\end{equation}
and $\lambda_c= \sup \ck\geq 0$.
According to Theorem 8.1 in \cite{l:apsbphbrw} (see also
\cite{r:fptd}), we have that $\lambda_c>0$ if and only if $R_c>1$. 
We then deduce  that \reff{eq:Lap-W} holds for $\lambda\in \C$ such that 
$\fR(\lambda)\in \ck$. We get that
$\ff(R_c)=\varphi(-\lambda_c)\in [1, +\infty ]$ and thus that:
\begin{equation}
   \label{eq:Rc-Lc}
R_c= \varphi(-\lambda_c/\mu).
\end{equation}
\end{rem}

According to  \cite{ds:lltgwp} and references therein,  the distribution
of $W$  is $\fc \delta_0(dt)  + w(t)\ind_{\{t>0\}}  dt$, where $w$  is a
positive  continuous   function  defined   on  $(0,  +\infty   )$.   
Let
$(W_\ell, \ell\in  \Np)$ be independent random  variables distributed as
$W$.    The     distribution    of    $\sum_{\ell=1}^k     W_\ell$    is
$\fc^k \delta_0 (dt)  + w_k(t) dt $, where (by  decomposing according to
the number $k-i$ of random variables $W_\ell$ which are equal to $0$): 
\begin{equation}
   \label{eq:wk=w}
w_k(\theta)=\sum_{i=1}^k \binom{k}{i} \fc^{k-i}
w^{*i}(\theta)\quad\text{for $\theta>0$},
\end{equation}
and $w^{*i}$ denotes  the $i$-fold convolution of the  function $w$.  We
now define a  new probability distribution related to  the function $w$.
For    $r\in   \Np$,    $s=(s_1,    \ldots,    s_r)\in   (\Np)^r$    and
$\theta\in (0, +\infty )$, we set $|s|_1=\sum_{i=1}^rs_i$ and:
\begin{equation}
   \label{eq:def-rho}
\rho_{\theta, r}(s)=\mu \frac{w^{*|s|_1}(\mu
  \theta)}{w^{*r} (\theta)}  \, \prod_{i=1}^r
\frac{\ff^{(s_i)}(\fc)}{s_i!}\cdot
\end{equation}

\begin{lem}
   \label{lem:r=proba}
Let $p$ be a non-degenerate super-critical offspring distribution with
finite mean. Let $\theta\in (0, +\infty )$, $r\in \Np$. Then $\rho_{\theta,
  r}=(\rho_{\theta, r} (s), \, s \in
(\Np)^r)$ defines a probability distribution  on $(\Np)^r$. 
\end{lem}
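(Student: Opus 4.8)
The goal is to show $\sum_{s\in(\Np)^r}\rho_{\theta,r}(s)=1$, where by \reff{eq:def-rho} we have
\[
\rho_{\theta,r}(s)=\mu\,\frac{w^{*|s|_1}(\mu\theta)}{w^{*r}(\theta)}\prod_{i=1}^r\frac{\ff^{(s_i)}(\fc)}{s_i!}.
\]
Non-negativity is clear since $\ff^{(s_i)}(\fc)\geq 0$ (all Taylor coefficients of $\ff$ are non-negative) and $w>0$ on $(0,+\infty)$. So the only real content is the normalization. The plan is to recognize the sum over $s$ as a convolution identity obtained by plugging the functional equation \reff{eq:Lap-W} for the Laplace transform $\cl$ of $W$ into itself $r$ times, and then reading off the density version via Laplace inversion.

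The key computational step: fix $\theta>0$ and group the sum over $s\in(\Np)^r$ according to the value $m=|s|_1$. For fixed $m\geq r$,
\[
\sum_{\substack{s\in(\Np)^r\\ |s|_1=m}}\prod_{i=1}^r\frac{\ff^{(s_i)}(\fc)}{s_i!}
\]
is exactly the $m$-th Taylor coefficient at $\fc$ of the $r$-fold product $\ff(\fc+\cdot)^{\,\cdot}$ — more precisely, writing $g(x)=\sum_{n\geq 1}\frac{\ff^{(n)}(\fc)}{n!}x^n=\ff(\fc+x)-\fc$ (using $\ff(\fc)=\fc$), the above sum is $[x^m]\,g(x)^r$. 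Hence
\[
\sum_{s\in(\Np)^r}\rho_{\theta,r}(s)=\frac{\mu}{w^{*r}(\theta)}\sum_{m\geq r}w^{*m}(\mu\theta)\,[x^m]\,g(x)^r.
\]
Now I would identify $\sum_m w^{*m}(\mu\theta)x^m$ with a transform of the law of $W$: since the law of $W$ is $\fc\delta_0+w(t)\,dt$, the generating function $\sum_{m\geq 0}w^{*m}(u)\,\zeta^m$ should be interpreted as the density (in $u>0$) of the compound distribution with step law $\fc\delta_0+w\,dt$ and geometric-type weights $\zeta^m$; equivalently, in Laplace variables, $\sum_{m\geq 0}\zeta^m\cl(\lambda)^m$... but this diverges, so the cleaner route is to avoid generating functions in $m$ and instead argue directly at the level of $W$.

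The cleanest argument, which I would actually carry out: use the branching representation of \reff{eq:Lap-W}. The identity $\ff(\cl(\lambda/\mu))=\cl(\lambda)$ says that $W \overset{d}{=}\mu^{-1}\sum_{j=1}^{N}W_j$ where $N\sim p$ and the $W_j$ are i.i.d.\ copies of $W$, independent of $N$. Iterating/conditioning: given a root with $r$ children all contributing, $\sum_{\ell=1}^r W_\ell$ has density $w^{*r}$, and one more branching step shows that if each $W_\ell=\mu^{-1}\sum_{j=1}^{N_\ell}W^{(\ell)}_j$, then $\mu\sum_{\ell=1}^r W_\ell\overset{d}{=}\sum_{\ell=1}^r\sum_{j=1}^{N_\ell}W^{(\ell)}_j$, whose density at $\mu\theta$ is $\sum_{s\in(\N)^r}\big(\prod_i p(s_i)\big)w^{*|s|_1}(\mu\theta)$. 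The restriction to $s_i\geq 1$ and the replacement of $p(s_i)$ by $\ff^{(s_i)}(\fc)/s_i!=\fc^{s_i-?}\cdots$ — here I must be careful — comes from conditioning this branching identity on the event that none of the $W_\ell$ subtrees die, i.e.\ restricting to the sub-GW-tree conditioned on non-extinction; the Radon--Nikodym factor $\fc^{\#\text{extinct}}$ is precisely what turns $p(n)$ into $\fp(n)=\fc^{n-1}p(n)$ and, after the size-bias bookkeeping, $\ff^{(n)}(\fc)/n!$. Equating densities at $\theta$ on both sides and dividing by $w^{*r}(\theta)$ gives $\sum_s\rho_{\theta,r}(s)=1$.

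\textbf{Main obstacle.} The delicate point is the bookkeeping in the last paragraph: correctly tracking which conditioning on $W$ (or on the GW tree) produces the weight $\ff^{(s_i)}(\fc)/s_i!$ rather than $p(s_i)$ or $\fp(s_i)$, and justifying that the resulting rearrangement of the (a priori only conditionally convergent, infinite) double sum over $r$-tuples $s$ is legitimate — i.e.\ that Fubini/Tonelli applies, which is fine by non-negativity once one has the right identity. I expect the combinatorial identity $[x^m]g(x)^r=\sum_{|s|_1=m}\prod_i \ff^{(s_i)}(\fc)/s_i!$ with $g=\ff(\fc+\cdot)-\fc$ to be routine, and the analytic core to be a one-line consequence of \reff{eq:Lap-W} once stated at the level of densities via the a.s.\ convergence $Z_n/c_n\to W$; the only genuine care needed is the $\fc$-weighting, so I would phrase the whole thing in terms of the extinction-conditioned tree $\tau^{0,0}$ from Remark \ref{rem:cond-onE} to make the $\ff^{(n)}(\fc)/n!=\fp(n)\cdot(\text{size-bias})$ identification transparent.
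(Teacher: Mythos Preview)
Your first approach—grouping by $m=|s|_1$ and recognizing the inner sum as $[x^m]\,g(x)^r$ with $g(x)=\ff(\fc+x)-\fc$—is exactly the paper's route, and you abandon it one step too early. You do not need a generating function in $m$; instead, take the Laplace transform in $\theta$. Writing $\hat w(\lambda)=\int_0^\infty w(t)\expp{-\lambda t}\,dt=\cl(\lambda)-\fc$, the functional equation \reff{eq:Lap-W} (with $\lambda$ replaced by $\mu\lambda$) reads $\ff(\fc+\hat w(\lambda))=\fc+\hat w(\mu\lambda)$, i.e.\ $g(\hat w(\lambda))=\hat w(\mu\lambda)$. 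Hence
\[
\hat w(\mu\lambda)^r=g(\hat w(\lambda))^r=\sum_{s\in(\Np)^r}\Bigl(\prod_{i=1}^r\frac{\ff^{(s_i)}(\fc)}{s_i!}\Bigr)\hat w(\lambda)^{|s|_1},
\]
and since $\hat w(\mu\lambda)^r$ is the Laplace transform of $\theta\mapsto\mu^{-1}w^{*r}(\theta/\mu)$ while $\hat w(\lambda)^m$ is that of $w^{*m}$, uniqueness of the Laplace transform and continuity of $w$ give the density identity $\mu^{-1}w^{*r}(\theta/\mu)=\sum_s\bigl(\prod_i\ff^{(s_i)}(\fc)/s_i!\bigr)\,w^{*|s|_1}(\theta)$ pointwise for all $\theta>0$. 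Dividing through yields $\sum_s\rho_{\theta/\mu,r}(s)=1$. This is precisely the paper's proof; the ``divergent'' series you worried about never arises because one substitutes $x=\hat w(\lambda)$ (which has modulus $<1-\fc$) rather than leaving $x$ free.

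Your second (branching) approach can also be made to work, but as written it contains an error: in the line ``whose density at $\mu\theta$ is $\sum_{s\in(\N)^r}(\prod_ip(s_i))w^{*|s|_1}(\mu\theta)$'', the correct density is $w_{|s|_1}$, not $w^{*|s|_1}$, since each $W_j^{(\ell)}$ has an atom of mass $\fc$ at $0$. The conversion from $(p,w_m)$ to $\bigl(\ff^{(\cdot)}(\fc)/(\cdot)!,\,w^{*m}\bigr)$ is exactly the binomial decomposition over how many of the grandchildren have $W=0$; carrying it out correctly does recover the identity, but it is strictly more bookkeeping than the three-line Laplace argument above, and amounts to redoing that argument in probabilistic disguise. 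The ``main obstacle'' you flag is real but dissolves once you pass to Laplace transforms.
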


\begin{proof}
  For  convenience,  we  shall  prove that  $\rho_{\theta/\mu,r}$  is  a
  probability distribution.   Let $\hat w$ denote  the Laplace transform
  of $w$:  $\hat w(\lambda)=\int_0^\infty w(t)\expp{-\lambda t}  dt$ for
  $\lambda\geq    0$.     We    deduce   from    \reff{eq:Lap-W}    that
  $\ff(\fc  + \hat  w(\lambda))=\fc+\hat w  (\mu\lambda)=\ff(\fc)+\hat w
  (\mu\lambda)$.  We deduce that for $r\in \Np$:
\begin{align*}
\hat w(\mu \lambda)^r   
&= \left( \ff(\fc+\hat w(\lambda)) - \ff(\fc)\right)   ^r\\
&= \sum_{k_1, \ldots, k_r\in \Np} \prod_{i=1}^r p(k_i) \left( (\fc+\hat
  w(\lambda))^{k_i} - \fc^{k_i}\right)\\
&= \sum_{k_1, \ldots, k_r\in \Np} \prod_{i=1}^r p(k_i)
  \sum_{s_i=1}^{k_i} \binom{k_i}{s_i} \fc^{k_i- s_i}
  \hat w(\lambda)^{s_i} \\
&=\sum_{s=(s_1, \ldots, s_r)\in (\Np)^r}  \hat w(\lambda)^{|s|_1}
\prod_{i=1}^r \sum_{k_i=s_i}^{+\infty } \binom{k_i}{s_i}  \fc^{k_i-s_i} p(k_i)\\
&=\sum_{s=(s_1, \ldots, s_r)\in (\Np)^r}  \hat w(\lambda)^{|s|_1}
\prod_{i=1}^r \frac{\ff^{(s_i)}(\fc)}{s_i!},
\end{align*}
where we used for the last equality that for $s\in \Np$, $x\in [0,1]$:
\[
\ff^{(s)}(x)=\sum_{k=s}^{+\infty } \frac{k!}{(k-s)!} x^{k-s} p(k)
= s! \sum_{k=s}^{+\infty } \binom{k}{s} x^{k-s} p(k).
\]
Since   $\hat  w   (\mu  \lambda)^r$   is  the   Laplace  transform   of
$w^{*r}(t/\mu)/\mu$,  by uniqueness  of  the Laplace  transform and  the
continuity  of  $w$  (and  thus  of  $w^{*i}$),  we  get  using the
definition \reff{eq:def-rho} of $\rho_{\theta,r}$ that  for  all
$\theta\in (0, +\infty )$:
\begin{equation}
   \label{eq:wr}
\inv{\mu} w^{*r}(\theta/\mu)
=\sum_{s=(s_1, \ldots, s_r)\in (\Np)^r}  w(\theta)^{*|s|_1}
\prod_{i=1}^r \frac{\ff^{(s_i)}(\fc)}{s_i!}
= \inv{\mu} w^{*r}(\theta/\mu)\sum_{s\in (\Np)^r} \rho_{\theta/\mu,r}(s).
\end{equation}
Since    $w$    is    non-zero,   we get   that $\sum_{s\in
  (\Np)^r} \rho_{\theta/\mu,r}(s)=1$ and thus $\rho_{\theta/\mu,r}$ is a
probability distribution as 
$\rho_{\theta/\mu,r}(s)$ is non-negative.
\end{proof}

We end this section with the limit of $\rho_{\theta,r}$ as $\theta$ goes
to  0 and  in  a  particular case  to  $+\infty  $.  Recall  Definitions
\reff{eq:def-ab} and  \reff{eq:def-alpha}.  One  has to  distinguish two
cases when  $\theta$ goes to 0:  the so-called Schröder  case $\fa\leq 1$
(equivalently $p(0)+p(1)\neq  0$, $f'(\fc)>0$ or  $\alpha<+\infty $) and
the  so-called  Böttcher  case  $\fa\geq 2$ (equivalently  $p(0)+p(1)=0$,
$f'(\fc)=0$  or $\alpha=+\infty $).   When $\theta$ goes to  infinity we
consider the  particular so-called  Harris case  where $p$ has  a finite
support (equivalently $\fb$ is finite).

\begin{lem}
   \label{lem:cv-rho}
   Let  $p$ be  a non-degenerate  super-critical offspring  distribution
   with finite  mean. 
\begin{itemize}
   \item[(i)]  In the  Schröder case  ($\fa\leq 1$), we  get that
   $\rho_{\theta,   1}$   converges   to  the  Dirac mass at point 1  as
   $\theta$ goes  down to 0.  
 \item[(ii)] In  the Böttcher case  ($\fa\geq 2$),  we get that,  for all
   $r\in  \Np$,  $\rho_{\theta,  r}$  converges to  the  Dirac  mass  at
   $(\fa, \ldots, \fa)\in \Nz^r$ as $\theta$ goes down to 0.
  \item[(iii)]  In  the Harris  case  ($\fb<\infty $),  we get that,  for all
   $r\in  \Np$,  $\rho_{\theta,  r}$  converges
   to  the  Dirac  mass  at 
   $(\fb, \ldots, \fb)\in \Nz^r$ as $\theta$ goes to infinity.
\end{itemize}
\end{lem}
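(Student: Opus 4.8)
The plan is to analyze the behavior of $\rho_{\theta,r}$ through its defining formula \reff{eq:def-rho}, which expresses $\rho_{\theta,r}(s)$ as a ratio of convoluted densities $w^{*|s|_1}(\mu\theta)/w^{*r}(\theta)$ times the combinatorial weight $\prod_{i=1}^r \ff^{(s_i)}(\fc)/s_i!$. For each of the three limits, I would fix the candidate limiting atom $s^\star$ (namely $(1)$ in the Schröder case, $(\fa,\ldots,\fa)$ in the Böttcher case, $(\fb,\ldots,\fb)$ in the Harris case) and show that $\rho_{\theta,r}(s)/\rho_{\theta,r}(s^\star)\to 0$ as $\theta\to 0$ (resp. $\theta\to\infty$) for every $s\neq s^\star$; since $\rho_{\theta,r}$ is a probability distribution on a countable set by Lemma \ref{lem:r=proba}, this forces weak convergence to $\delta_{s^\star}$. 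The ratio $\rho_{\theta,r}(s)/\rho_{\theta,r}(s^\star)$ equals $\big(w^{*|s|_1}(\mu\theta)/w^{*|s^\star|_1}(\mu\theta)\big)\cdot\big(\prod \ff^{(s_i)}(\fc)/s_i!\big)\big/\big(\prod \ff^{(s^\star_i)}(\fc)/s^\star_i!\big)$, so the whole problem reduces to understanding the asymptotics of $w^{*m}(x)$ as $x\to 0^+$ and as $x\to\infty$.

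The key input is the near-$0$ and near-$\infty$ behavior of the density $w$ and its convolution powers. Near $0$: one knows (and I would invoke the asymptotics underlying \cite{ds:lltgwp}, \cite{fw:ld}, \cite{fw:lta}, together with the functional equation \reff{eq:Lap-W} for the Laplace transform $\hat w$) that in the Schröder case $w(x)$ is regularly varying at $0$ with exponent $\alpha-1$ for some $\alpha=\alpha(f)\in(0,\infty)$ coming from \reff{eq:def-alpha}, so $w^{*m}(x)\sim C_m x^{m\alpha-1}$; this makes $w^{*|s|_1}(\mu\theta)/w^{*|s^\star|_1}(\mu\theta)$ behave like a constant times $\theta^{(|s|_1-|s^\star|_1)\alpha}$, which tends to $0$ whenever $|s|_1>|s^\star|_1=r$. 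Hence only the minimal-mass configurations survive, and in the Schröder case with $r=1$ that is $s=(1)$. In the Böttcher case the support of $w$ is bounded away from $0$ by a constant $b>0$ (so that $\Supp w^{*m}\subset[mb,\infty)$) and near its left endpoint $w$ decays like a stretched exponential $\exp(-c x^{-\gamma})$; the crucial point is that $w^{*|s|_1}(\mu\theta)$ is supported on $\theta\geq |s|_1 b/\mu$, and since $\fa=\fc^{-1}\cdot(\text{left endpoint ratio})$... more precisely the left endpoint of $\Supp w$ scales so that shifting from $w^{*r}(\theta)$ to $w^{*|s|_1}(\mu\theta)$ the effective argument relative to the support boundary forces $s=(\fa,\ldots,\fa)$ to dominate, all other $s$ being exponentially suppressed as $\theta\to 0$. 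Near $\infty$ in the Harris case ($\fb<\infty$): the Laplace transform relation gives $R_c<\infty$ only when... rather, when $p$ has finite support the density $w$ has a right-tail bound of stretched-exponential or faster type, and the relevant feature is that $w^{*m}(\mu\theta)$ for larger $m$ picks up more ``room'' at large argument; I would show $w^{*|s|_1}(\mu\theta)/w^{*r}(\theta)$ grows fastest precisely when $|s|_1$ is maximal, i.e. $s=(\fb,\ldots,\fb)$, using that $\ff^{(\fb)}(\fc)/\fb!=p(\fb)\fc^{\,0}$ is the top nonzero coefficient so no larger $s_i$ contributes.

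I would carry this out in the order: (1) record the precise first-order asymptotics of $w$ at $0$ in the Schröder and Böttcher cases and at $\infty$ in the Harris case, citing \cite{ds:lltgwp}, \cite{fw:ld}, \cite{fw:lta}, \cite{fo:ld} or re-deriving them from \reff{eq:Lap-W}; (2) propagate these to $w^{*m}$ by standard convolution-asymptotics lemmas (regular variation is stable under convolution; for the exponential-type tails a Laplace/saddle-point estimate gives $w^{*m}(x)\asymp$ a computable expression); (3) plug into the ratio $\rho_{\theta,r}(s)/\rho_{\theta,r}(s^\star)$ and check term by term that every $s\neq s^\star$ is killed in the limit; (4) conclude by the elementary fact that pointwise domination of all masses by the mass at $s^\star$ on a probability vector forces convergence to $\delta_{s^\star}$. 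The main obstacle, and where I would spend the most care, is step (2) in the Böttcher case: controlling $w^{*m}$ near the left edge of its support uniformly enough to compare the configurations $s$ and $(\fa,\ldots,\fa)$, since the stretched-exponential behavior of $w$ at its boundary must be transferred to convolutions and matched against the combinatorial weights; this is exactly the kind of delicate edge asymptotics that \cite{fw:lta} handles and that Lemma \ref{lem:bott-Pz} and Lemma \ref{lem:rapp-Za} are presumably built to support, so I would lean on those.
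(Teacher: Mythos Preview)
Your overall strategy---reduce to asymptotics of $w^{*m}$ and show the mass concentrates at $s^\star$---is the right idea, and for (ii) and (iii) it is essentially what the paper does. But there are two genuine gaps and one unnecessary detour.

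\textbf{Gap in step (4).} The ``elementary fact'' you invoke is false on an infinite index set: from $\rho_\theta(s)/\rho_\theta(s^\star)\to 0$ for each fixed $s\neq s^\star$ you \emph{cannot} conclude $\rho_\theta(s^\star)\to 1$ without uniformity (take $\rho_\theta(s^\star)=1/2$ and put the other $1/2$ on a point escaping to infinity). In (i) the support is $\Np$, and in (ii) it is $(\Np)^r$, which is infinite whenever $\fb=\infty$. The paper avoids this by bounding the \emph{entire} remainder $R_\ell(x)=\sum_{s\neq s^\star}(\cdots)$ at once via a uniform exponential-type upper bound on $w^{*m}$ (Lemma~\ref{lem:w-maj} in the Harris case, Corollary~\ref{cor:w-maj-a} in the B\"ottcher case), and then compares $R_\ell$ with the precise asymptotic of $w^{*\ell}(x/\mu)$ (Lemmas~\ref{lem:bott-wl-R} and~\ref{lem:bott-wl}). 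Only in (iii) is the sum finite (since $s_i\le\fb<\infty$), so your pointwise argument would be salvageable there.

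\textbf{Wrong picture in the B\"ottcher case.} The density $w$ is \emph{positive on all of $(0,\infty)$}; its support is not bounded away from $0$, and $\Supp w^{*m}\subset[mb,\infty)$ is false. What is true is that $w(x)$ decays like a stretched exponential $\exp(-cx^{-\beta/(1-\beta)})$ near $0$ (see \reff{eq:w=0}). Your support-based comparison does not work; the actual argument needs the quantitative bound \reff{eq:w-maj2} summed over all $s\neq(\fa,\ldots,\fa)$ and optimized in the free parameter $u$, exactly as in Section~\ref{sec:H-f(c)}.

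\textbf{Detour in the Schr\"oder case.} For (i) the paper's proof is a two-line computation that you are missing: one has $\rho_{\theta,1}(1)=\mu f'(\fc)\,w(\mu\theta)/w(\theta)$, and the asymptotic $w(x)=x^{\alpha-1}(V(x)+o(1))$ with $V$ \emph{multiplicatively periodic of period $\mu$} (from \cite{bb:ldsbp}, see \reff{eq:w=V}) gives $w(\mu\theta)/w(\theta)\to\mu^{\alpha-1}$, hence $\rho_{\theta,1}(1)\to\mu f'(\fc)\mu^{\alpha-1}=1$ by \reff{eq:def-alpha}. No convolution asymptotics are needed. Note also that $w$ is not regularly varying in the classical sense---the prefactor $V$ is genuinely periodic, not constant---so your claimed $w^{*m}(x)\sim C_m x^{m\alpha-1}$ would itself need the periodic correction and a separate argument.
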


\begin{proof}
We give the proof of (i). The technical proofs of (ii) and  (iii) are
postponed respectively to Sections \ref{sec:B-f(c)=0} and
\ref{sec:H-f(c)}. \medskip

  According   to  \cite{bb:ldsbp},  there
exists  a positive  continuous  multiplicatively  periodic function  $V$
defined on $(0, +\infty )$ with period $\mu$ such that for all $x>0$:
\begin{equation}
   \label{eq:w=V}
x^{1-\alpha} w(x)=V(x)+o(1) \quad\text{as $x\searrow 0$.}
\end{equation}
We have for $\theta>0$ as $\theta$ goes down to $0$:
\[
\rho_{\theta, 1}(1)=\mu \ff'(\fc) \frac{w(\mu\theta)}{w(\theta)} 
= \frac{V(\mu\theta) +o(1)}{V(\theta) +o(1)}=1 +o(1),
\]
where we  used Definition \reff{eq:def-alpha} of  the Schröder  constant for  the first
equality and that $V$ has multiplicative period $\mu$ for the last one.  This
implies that $\lim_{\theta\rightarrow 0} \rho_{\theta, 1}(1)=1$ and thus
$\rho_{\theta, 1}$ converges to  the Dirac mass at 1 as $\theta$ goes down to 0.
\end{proof}

\section{Extremal GW trees}
\label{sec:extremal}
We are in the setting of Section \ref{sec:gen-gen}. 
If $\fc>0$, we   define   the   sub-critical   offspring   distribution   $\fp$   by
\reff{eq:def-fp}  and,  see   \reff{eq:def-biased-p},  the  corresponding
size-biased distribution $\fp_{[\ell]}$ of order $\ell\in \Np$. For $\ell\in \Np$ such
that $\ff^{(\ell)}(\fc)>0$, we have:
\begin{equation}
   \label{eq:fp-2}
\fp_{[\ell]}(k)= \binom{k}{\ell} \frac{\ell!}{\ff^{(\ell)}(\fc)}\,
\fc^{k-\ell} p(k), \quad k\geq \ell.
\end{equation}

If $\fc=0$ but  $p(\ell)>0$ (or equivalently $\ff^{(\ell)}(\fc)>0$),
then we define $\fp_{[\ell]}$ as the Dirac mass at point $\ell$, so that
Definition  \reff{eq:fp-2}   is  consistent  for  $\fc\geq   0$.  Recall
Definition  \reff{eq:def-ab}  of  $\fa$  and   note  that 
$\fp=\fp_{[\fa]}$ if $\fc=0$.  \medskip

Let  $\theta\in  (0, +\infty  )$.   We  define  a two-type  random  tree
$\hat   \tau^\theta$  and   shall   consider   the  corresponding   tree
$\tau^\theta=\ske(\hat \tau^\theta)$  when one forgets the  types of the
vertices of $\hat \tau^\theta$.

\begin{defi}[Extremal tree]
\label{def:def-tau-theta} 
Let $p$ be a non-degenerate super-critical offspring distribution with
finite mean. 
The labeled random tree $\hat\tau^\theta$ is a two-type random tree where the  vertices  are  either  of type  $\rs$  (for
  survivor)    or     of    type    $\re$    (for     extinction)    and
  $\tau^\theta=\ske(\hat \tau^\theta)$ denotes  the corresponding random
  $\Tf $-valued tree when one  forgets the labels (or types).  The
  distribution of $\hat\tau^\theta$ is characterized as follows:
\begin{itemize}
   \item The root is of type $\rs$.
\item The number of offsprings of a vertex of type $\re$ does not depend on the
  vertices of lower or same height (branching property  for
  vertices of type $\re$).
\item A vertex   of type $\re$ produces only vertices of type $\re$
  with offspring distribution $\fp$ (as in the Kesten tree).
\item For every $h\ge0$, we set
\[
\cs_h=\{u\in\tau^\theta;\,  |u|=h \text{  and the vertex $u$ has type
  $\rs$ in $\hat \tau^\theta$} \}.
\]
For a vertex $u$ of type $\rs$, we denote by $\kappa^\rs (u)$ the number
of children  of $u$ with type $\rs$ and by $\kappa^\re(u)$ the number of
children of $u$ with type $\re$. Conditionally given
     $r_h(\tau^\theta)$  and  $(\cs_\ell,  0\leq  \ell\leq  h)$, we have:
\begin{itemize}
	\item[(i)] 
     $(    \kappa^\rs    (u),     u\in    \cs_h)$    has    distribution
     $\rho_{\mu^h  \theta ,  \sharp\cs_h}$. 
     \item[(ii)]  For every $u\in\cs_h$, conditionally         on
     $\{\kappa^\rs(v)=s_v\geq 1, v\in \cs_h\}$,  $\kappa^\re(u)$ is such            that
     $k_u(\tau^\theta)=\kappa^\rs(u)+\kappa    ^\re(u)$    has
     distribution $\fp_{[s_u]}$ and the  $s_u$ vertices  of type $\rs$
     are chosen uniformly at  random among the $k_u(\tau^\theta)$
     children. 
\end{itemize}
\end{itemize}
\end{defi}
Notice that Property (i) in the above definition   breaks down the  branching
     property. If $\fc=0$, then  a.s. $\kappa^\re(u)= 0$, so
     that there are no individuals of type $\re$. We    stress   out, and
     shall use later on, that
$\hat  \tau^\theta$  truncated  at  level  $h$  can  be  recovered  from
$r_{h}(\tau^\theta)$ and $\cs_h$ as all the ancestors of a vertex
of type $\rs$ are of type  $\rs$ and a vertex of type $\rs$ has at
least one children of type $\rs$.   
\medskip

Since all the vertices of type $\rs$ have at least one offspring of type
$\rs$, we get    $\sharp\cs_{h+1}\ge\sharp\cs_h$.   The   offspring
distribution of vertices of type $\rs$ can also be described as follows.
For every $h\ge 0$,  conditionally given $r_h(\tau^\theta)$ and $\cs_h$,
we compute the probability that
\begin{itemize}
\item we have $\sharp\cs_{h+1}-\sharp\cs_h=n$ for some $n\ge 0$ i.e. $n$ new vertices of type $\rs$ appear at generation $h+1$,
\item every node $u$ of $\cs_h$ has $k_u$ offspring, $s_u$ of them being
  of type  $\rs$, where the integers  $((s_u,k_u),\, u\in\cs_h)$ satisfy
  $1\le s_u\le k_u$ and $\sum_{u\in\cs_h}s_u= n+\sharp\cs_h$,
\item for every $u\in \cs_h$ and every subset $A_u\subset \{1,\ldots,k_u\}$ such that $\sharp A_u=s_u$, the positions of the offspring of $u$ of type $\rs$ among all the offspring of $u$, are given by $A_u$ i.e. $\cs_{h+1} \cap\{u1, \ldots, uk_u\}=uA_u$ where we recall that $uv$ denotes the concatenation of the two sequences $u$ and $v$.
\end{itemize}    
We    have:
\begin{multline}
\label{eq:pre-ch-general}
\P\left(\forall u\in \cs_h,\, \kappa^\rs(u)+\kappa^\re(u)=k_u \text{ and } \cs_{h+1} \cap\{u1,
    \ldots, uk_u\}=uA_u \,|\,
    r_h(\tau^\theta), \cs_h\right) 
\\
\begin{aligned}
 &  = \rho_{\mu^h \theta,\sharp\cs_h}((s_u, u\in \cs_h))\, \,  \prod_{u\in \cs_h}
 \inv{\binom{k_u}{s_u}}  \fp_{[s_u]}(k_u)\\
 &  = \mu \frac{w^{*(\sharp\cs_h+n)}(\mu^{h+1}
  \theta)}{w^{*\sharp\cs_h} (\mu^h \theta)}  \, \prod_{u\in \cs_h}
\fc^{k_u-s_u} p(k_u),
\end{aligned}
\end{multline}
where we used \reff{eq:def-rho} and \reff{eq:fp-2} for the last equality.

By construction,  a.s.  individuals of  type $\rs$ have a  progeny which
does not  suffer extinction whereas  individuals of type $\re$  (if any)
have a progeny which suffers  extinction.  Since the individuals of type
$\rs$  do   not  satisfy  the   branching  property,  the   random  tree
$\hat \tau^\theta$  is not  a two-type  inhomogeneous GW tree.    
\medskip

Using this definition,  it is easy to get that  the distribution of
the  tree $r_h(\tau^\theta)$  is absolutely  continuous with  respect to
those of the original GW tree $r_h(\tau)$.
\begin{lem}
   \label{lem:=distrib-w}
   Let  $p$ be  a non-degenerate  super-critical offspring  distribution
   with   finite   mean.    Let   $\theta\in  (0,   +\infty   )$.    Let
   $h\in \N^*$ and $\bt\in \Tf^{(h)}$. We have, with $k=z_h(\bt)$:
\[
  \P(r_{h}(\tau^\theta)=\bt)
= \P(r_h(\tau)=\bt)\,  \mu^h \frac{w_k(\mu^h \theta)}{w(\theta)}\cdot
\]
\end{lem}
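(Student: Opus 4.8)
The plan is to first compute $\P(r_h(\tau^\theta)=\bt,\,\cs_h=S)$ for each non-empty $S\subseteq\{u\in\bt;\,|u|=h\}$ and then sum over $S$. Two easy reductions come first. If $\P(r_h(\tau)=\bt)=0$, then $p(k_u(\bt))=0$ for some $u\in r_{h-1}(\bt)$; since by \reff{eq:def-fp} and \reff{eq:fp-2} the offspring laws $\fp$ and $\fp_{[\ell]}$ used in Definition \ref{def:def-tau-theta} are all supported on $\{k;\,p(k)>0\}$, we also have $\P(r_h(\tau^\theta)=\bt)=0$, so both sides vanish and we may assume $\P(r_h(\tau)=\bt)>0$. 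We may also assume $\fc>0$; the case $\fc=0$ is similar and simpler, as then there are no vertices of type $\re$, necessarily $S=\{u\in\bt;\,|u|=h\}$, and only the last term of $w_k$ survives in \reff{eq:wk=w}. Next, I would observe that on the event $\{r_h(\tau^\theta)=\bt\}$ the value of $\cs_h$ forces the whole type structure up to level $h$: a vertex at level $\ell\le h$ has type $\rs$ iff it is, or is an ancestor of, a vertex of $\cs_h$, because ancestors of $\rs$-vertices are of type $\rs$ and every $\rs$-vertex has at least one $\rs$-child at each later generation. Hence $\cs_\ell=\anc(S)\cap\{u\in\bt;\,|u|=\ell\}$ for $\ell<h$ and $\cs_h=S$; write $i_\ell=\sharp\cs_\ell$, so that $1=i_0\le i_1\le\cdots\le i_h=\sharp S$.

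Conditioning successively on $(r_\ell(\tau^\theta),\cs_\ell)$ for $\ell=0,\dots,h-1$, applying \reff{eq:pre-ch-general} to the offspring of the vertices of $\cs_\ell$, and using that conditionally the vertices of $\{u\in\bt;\,|u|=\ell\}\setminus\cs_\ell$ have type $\re$ and reproduce independently (of each other and of $\cs_\ell$) with law $\fp$, one obtains
\[
\P(r_h(\tau^\theta)=\bt,\,\cs_h=S)
=\prod_{\ell=0}^{h-1}\mu\,\frac{w^{*i_{\ell+1}}(\mu^{\ell+1}\theta)}{w^{*i_\ell}(\mu^{\ell}\theta)}
\ \prod_{\ell=0}^{h-1}\prod_{u\in\cs_\ell}\fc^{\,k_u(\bt)-s_u^{(\ell)}}p(k_u(\bt))
\ \prod_{\ell=0}^{h-1}\prod_{\substack{u\in\bt,\,|u|=\ell\\ u\notin\cs_\ell}}\fp(k_u(\bt)),
\]
where $s_u^{(\ell)}$ is the number of children of $u$ lying in $\cs_{\ell+1}$ (and $\sum_{u\in\cs_\ell}s_u^{(\ell)}=i_{\ell+1}$ since the parent of an $\rs$-vertex is an $\rs$-vertex).

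Then I would exploit two telescopings. The $w$-ratios collapse to $\mu^h\,w^{*\sharp S}(\mu^h\theta)/w(\theta)$ because $i_0=1$ and $w^{*1}=w$. Writing $\fp(n)=\fc^{n-1}p(n)$ (by \reff{eq:def-fp}) and collecting the $p$-factors from all generations gives $\prod_{u\in r_{h-1}(\bt)}p(k_u(\bt))=\P(r_h(\tau)=\bt)$ by the definition of the GW tree. The key computation is the total exponent of $\fc$: at generation $\ell$ it equals $\sum_{u\in\cs_\ell}(k_u(\bt)-s_u^{(\ell)})+\sum_{|u|=\ell,\,u\notin\cs_\ell}(k_u(\bt)-1)$, which rearranges to $z_{\ell+1}(\bt)-i_{\ell+1}-\bigl(z_\ell(\bt)-i_\ell\bigr)$ using $\sum_{|u|=\ell}k_u(\bt)=z_{\ell+1}(\bt)$, $\sum_{u\in\cs_\ell}s_u^{(\ell)}=i_{\ell+1}$ and $\sharp(\{u\in\bt;\,|u|=\ell\}\setminus\cs_\ell)=z_\ell(\bt)-i_\ell$; summing over $\ell=0,\dots,h-1$ telescopes to $z_h(\bt)-\sharp S$ (using $z_0(\bt)=i_0=1$). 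Altogether
\[
\P(r_h(\tau^\theta)=\bt,\,\cs_h=S)=\P(r_h(\tau)=\bt)\,\mu^h\,\fc^{\,z_h(\bt)-\sharp S}\,\frac{w^{*\sharp S}(\mu^h\theta)}{w(\theta)}.
\]
Finally, summing over the non-empty subsets $S$ of the $k=z_h(\bt)$ vertices at level $h$ grouped by $\sharp S=i$ (there are $\binom{k}{i}$ such $S$) and invoking the definition \reff{eq:wk=w} of $w_k$ yields the claimed identity. I expect the delicate point to be precisely the bookkeeping of the powers of $\fc$ produced by the size-biasings $\fp_{[s_u]}$ at the $\rs$-vertices and by $\fp$ at the $\re$-vertices; the reason they assemble into the single factor $\fc^{\,z_h(\bt)-\sharp S}$ is the telescoping identity above together with the initial condition $\sharp\cs_0=z_0(\bt)=1$. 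One may equivalently organize the same computation as an induction on $h$, with induction hypothesis the displayed refined identity for $\P(r_h(\tau^\theta)=\bt,\cs_h=S)$.
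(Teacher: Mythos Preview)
Your proof is correct and follows essentially the same approach as the paper's: compute $\P(r_h(\tau^\theta)=\bt,\cs_h=S_h)$ via \reff{eq:pre-ch-general}, observe that this depends only on $\sharp S_h$, and then sum over $S_h$ using \reff{eq:wk=w}. The only cosmetic difference is in the bookkeeping of the power of $\fc$: the paper packages it by applying the identity $\sum_{u\in r_{h-1}(\bs)}(k_u(\bs)-1)=z_h(\bs)-1$ separately to $\bs=\bt$ and to the tree-like set $\bs=\ca=S_h\cup\anc(S_h)$, whereas you telescope level by level; both computations yield the same exponent $k-\sharp S_h$.
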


\begin{proof}
  Let       $h\in        \Np$,       $\bt\in        \Tf^{(h)}$       and
  $S_h\subset \{u\in \bt;  \, |u|=h\}$ be non  empty.  Set $k=z_h(\bt)$.
  In order to shorten the  notations, we set $\ca=S_h\bigcup \anc(S_h)$.
  We      set,      for      $\ell\in     \{0,      \ldots,      h-1\}$,
  $S_\ell=\{u\in \ca, \, |u|=\ell\}$ the  vertices at level $\ell$ which
  have at  least one descendant  in $S_h$.  For $u\in  r_{h-1}(\bt)$, we
  set $s_u(\bt)=\sharp(\ca \bigcap u\Np)$, the number of children of $u$
  having  descendants  in  $S_h$.   We recall  that  $\hat  \tau^\theta$
  truncated at level $h$ can  be recovered from $r_{h}(\tau^\theta)$ and
  $\cs_h$.                           We                          compute
  $\cc_{S_h}=\P(r_{h} (\tau^\theta)=\bt ,\, \cs_h=S_h)$.  We have, using
  \reff{eq:pre-ch-general}:
\begin{align}
\nonumber
   \cc_{S_h}
&= \left[\prod_{u\in r_{h-1}(\bt), \, u\not\in \ca} \fp(k_u(\bt))
  \right]\, \prod_{\ell=0}^{h-1}
\mu \frac{w^{*(\sharp S_{\ell+1})}(\mu^{\ell+1}
  \theta)}{w^{*(\sharp S_h)} (\mu^\ell \theta)}  \, \prod_{u\in S_\ell}
\fc^{k_u(\bt)-s_u(\bt)} p(k_u(\bt))\\
\nonumber
&=\left[\prod_{u\in r_{h-1}(\bt)} p(k_u(\bt))
  \right]\, \left[\prod_{u\in r_{h-1}(\bt)}\!\!\! \fc^{k_u(\bt)-1}
  \right]\left[\prod_{u\in \ca } \fc^{-(s_u(\bt)-1)}
  \right]\,
\mu^h \frac{w^{*(\sharp S_h)}(\mu^h \theta)}{w(\theta)}\\
\label{eq:calcul-Cs}
&=\P(r_h(\tau) =\bt) \,
\fc^ {k - \sharp S_h} \mu^h \, \frac{w^{*(\sharp S_h)}(\mu^h \theta)}{w(\theta)},
\end{align}
where we used that for a tree $\bs$, we have $\sum_{u\in r_{h-1}(\bs)}
k_u(\bs) - 1= z_h(\bs) -1$ and that $\bs=\ca$ is  tree-like with
$z_h(\bs)=\sharp S_h$. 
Remark that $\cc_{S_h}$ depends only of $\sharp S_h$.
Since $\sharp \cs_h\geq 1$ as the
root is of type $\rs$, we obtain:
\begin{multline*}
\P(r_{h} (\tau^\theta)=\bt )
= \sum_{i=1}^k   \sum_{S_h\subset \{u\in \bt; \, |u|=h\}}\,
  \ind_{\{\sharp S_h=i\}}\, 
  \cc_{S_h}\\
= \sum_{i=1}^k   \binom{k}{i} \, \P(r_h(\tau) =\bt) \,
\fc^ {k - i} \mu^h \, \frac{w^{*i}(\mu^h \theta)}{w(\theta)}
=\P(r_h(\tau)=\bt)\, \mu^h \frac{w_k(\mu^h \theta)}{w(\theta)},
\end{multline*}
where we used \reff{eq:wk=w} for the last equality. 
\end{proof}

\begin{rem}
   \label{rem:decomp-E}
Let  $\ce^c=\{W>0\}$ denote the non-extinction event. Using Lemma
\ref{lem:=distrib-w}, we get for $h\in \N^*$, $\bt\in \Tf^{(h)}$, and
$g$ a non-negative measurable function defined on $\R_+$, that:
\[
\int_0^{+\infty}g(\theta)\P(r_h(\tau^\theta)=\bt)\,
w(\theta)d\theta=\E\left[g(W) \ind_{\{r_h(\tau)=\bt, \ce^c\}} \right]
\]
This implies that for every non-negative measurable function $G$ defined
on $\T_\infty\times \R_+ $, we have:
\[
\int_0^{+\infty} \E[G(\tau^\theta, \theta)]\,
w(\theta)d\theta=\E\left[G(\tau,W)\ind_{\{\ce^c\}} \right].
\]
Thus, the distribution probability of $\tau^\theta$ is a regular version
of  the distribution  of  $\tau$ conditionally  on $\{W=\theta\}$.  From
Lemma \ref{lem:=distrib-w},  we get that  this version is  continuous on
$\Tf^{(h)}$ for all $h\in \N^*$.  In particular, we deduce that for a.e.
$\theta\in           (0,            +\infty           )$,           a.s.
$\lim_{n\rightarrow\infty   }  z_n(\tau^\theta)/c_n=\theta$   (see  also
Theorem 2.II in  \cite{l:sagw} for an a.s. convergence for all
$\theta\in (0, +\infty )$  under
stronger hypothesis).   The distribution of  $\tau$ conditionally on
$\ce^c$ can be written as a mixture of distributions of $\tau^\theta$ as
for every Borel set $A$ of $\T_\infty $,
\[
\int_0^{+\infty}\P(\tau^\theta\in A)\, w(\theta)d\theta=\P(\{\tau \in
A\}\cap\ce^c).
\]
\end{rem}

\section{Convergence of conditioned super-critical GW trees}
\label{sec:loc-cv}
We are  in the setting  of Section  \ref{sec:gen-gen}, with $\tau$  a GW
tree with super-critical non-degenerate  offspring distribution $p$ with
finite  mean $\mu$.  We  consider a  deterministic $\N$-valued  sequence
$(a_n,n\in \N^*)$ such that $\P(Z_n=a_n)>0$  for every $n>0$. See Remark
\ref{rem:deftn} for conditions  on the existence of  such sequences.  We
denote  by $\tau_n$  a random  tree distributed  as the  GW tree  $\tau$
conditioned  on $\{Z_n=a_n\}$.  We study  the limit  in distribution  of
$\tau_n$  as $n$  goes to  infinity  and we  consider different  regimes
according to the growth speed  of the sequence $(a_n,n\in \N^*)$. Recall
that $Z_n$  is under $\P_k$ distributed  as a GW process  with offspring
distribution $p$ starting at $Z_0=k$.

We say  that the offspring distribution  $p$ is of type  $(L_0,r_0)$, when
$L_0$  is the  period  of $p$,  that  is the  greatest  common divisor  of
$\{n-\ell; \, n>\ell \text{ and } p(n)p(\ell)\neq 0\}$, and $r_0$ is the
residue $(\mod  L_0)$ of any  $n$ such  that $p(n)\neq 0$. 
See Remark \ref{rem:deftn} on sufficient  conditions to get $\P_k(Z_n=a)>0$. 

\subsection{The intermediate regime: $\lim_{n\rightarrow\infty }
  a_n/c_n\in (0, +\infty )$}
 We first state a strong ratio limit which is a direct consequence of 
the local limit
theorem in \cite{ds:lltgwp}. 

\begin{lem}\label{lem:srt-moderate}
Let $p$ be a non-degenerate super-critical offspring distribution with
finite mean and type $(L_0, r_0)$. Let $\theta\in(0,+\infty)$. Assume that
$\lim_{n\to\infty}a_n/c_n=\theta$ and that $a_n=r_0^n (\mod L_0)$
for all $n\in\N^*$. For all $h,k\in\N^*$, we have:
\[
   \lim_{n\rightarrow\infty}
\frac{\P_k(Z_{n-h}=a_n)}{\P(Z_n=a_n)}
=\mu^h \, \frac{
  w_k\left(\mu^h \theta \right)}{w(\theta)}\ind_{\{k=r_0^h(\mod L_0)\}}.
\]
\end{lem}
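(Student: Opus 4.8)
The plan is to read off this strong ratio limit from the local limit theorem for super-critical Galton-Watson processes of Dubuc and Seneta \cite{ds:lltgwp}, applied once to the numerator and once to the denominator of $\P_k(Z_{n-h}=a_n)/\P(Z_n=a_n)$.

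First I would recall the form of that theorem that is needed. There is a constant $\gamma>0$, namely the span of the support lattice of $Z_n$ (which equals $L_0$), such that for every $k\in\N^*$ and every sequence $(x_n, n\in\N^*)$ with $x_n\equiv k r_0^n\ (\mod L_0)$ and $x_n/c_n\to t\in(0,+\infty)$, one has
\[
c_n\,\P_k(Z_n=x_n)\ \longrightarrow\ \gamma\, w_k(t) \qquad (n\to\infty),
\]
where $w_k$ is the function of \reff{eq:wk=w}, i.e. the absolutely continuous part on $(0,+\infty)$ of the law of a sum of $k$ independent copies of $W$; moreover $\P_k(Z_n=x_n)=0$ when $x_n\not\equiv k r_0^n\ (\mod L_0)$, and $w_1=w$.

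Next I would apply this twice. For the denominator take $k=1$ and $x_n=a_n$: by assumption $a_n\equiv r_0^n\ (\mod L_0)$ and $a_n/c_n\to\theta\in(0,+\infty)$, so $c_n\,\P(Z_n=a_n)\to\gamma\, w(\theta)$. For the numerator, regard $\P_k(Z_{n-h}=a_n)$ as the process started from $k$, at generation $n-h\to\infty$, evaluated at $a_n$. The scaling hypothesis holds because $a_n/c_{n-h}=(a_n/c_n)(c_n/c_{n-h})$ and $c_n/c_{n-h}=\prod_{j=n-h}^{n-1}(c_{j+1}/c_j)\to\mu^h$, as the ratios $c_{j+1}/c_j$ increase to $\mu$; hence $a_n/c_{n-h}\to\mu^h\theta\in(0,+\infty)$. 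For the lattice condition, $\P_k(Z_{n-h}=a_n)$ vanishes unless $a_n\equiv k r_0^{n-h}\ (\mod L_0)$; since $a_n\equiv r_0^n=r_0^h r_0^{n-h}\ (\mod L_0)$, this is the condition $r_0^{n-h}(r_0^h-k)\equiv 0\ (\mod L_0)$, which holds for all large $n$ exactly when $\ind_{\{k=r_0^h\ (\mod L_0)\}}=1$ (in particular whenever $k\equiv r_0^h\ (\mod L_0)$), and otherwise the numerator is $0$ for $n$ large. So, in the former case, $c_{n-h}\,\P_k(Z_{n-h}=a_n)\to\gamma\, w_k(\mu^h\theta)$.

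Finally I would combine the two estimates: when $k\equiv r_0^h\ (\mod L_0)$,
\[
\frac{\P_k(Z_{n-h}=a_n)}{\P(Z_n=a_n)}
= \frac{c_n}{c_{n-h}}\cdot\frac{c_{n-h}\,\P_k(Z_{n-h}=a_n)}{c_n\,\P(Z_n=a_n)}
\ \longrightarrow\ \mu^h\cdot\frac{\gamma\, w_k(\mu^h\theta)}{\gamma\, w(\theta)}
= \mu^h\,\frac{w_k(\mu^h\theta)}{w(\theta)},
\]
the span $\gamma$ cancelling, while the ratio is eventually $0$ when $k\not\equiv r_0^h\ (\mod L_0)$; together these give the claimed identity. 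The local limit theorem is the black box here and its hypotheses are immediate to check; the part that needs care is the bookkeeping — controlling $c_n/c_{n-h}$ through $c_{j+1}/c_j\to\mu$, and matching the periodic support lattices of $Z_n$ under $\P_1$ and of $Z_{n-h}$ under $\P_k$, which is exactly what the indicator in the statement records.
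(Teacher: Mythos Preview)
Your proof is correct and essentially identical to the paper's own argument: both invoke the Dubuc--Seneta local limit theorem \reff{eq:lim-Pk} for the numerator and the denominator separately, use $c_n/c_{n-h}\to\mu^h$ to identify the limiting scale $\mu^h\theta$, and read off the periodicity constraint from the support lattice of $Z_{n-h}$ under $\P_k$. The only cosmetic difference is that the paper packages the lattice discussion via Remark~\ref{rem:deftn} rather than writing out $r_0^{n-h}(r_0^h-k)\equiv 0\pmod{L_0}$ explicitly.
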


\begin{proof}
The local limit
theorem in \cite{ds:lltgwp} states that
for all  $k\in \Np$, $\theta\in (0,  +\infty )$ and $(a_n,  n\in \Nz)$ a
sequence       of      elements       of      $\Np$       such      that
$\lim_{n\rightarrow\infty } a_n/c_n=\theta$, we have:
\begin{equation}
   \label{eq:lim-Pk}
\lim_{n\rightarrow\infty } \left[c_n \P_k(Z_n=a_n) - 
 L_0 \ind_{\{ a_n=k r_0^n (\mod L_0)\}} w_k(\theta) \right]=0.
\end{equation}
We now assume that $a_n=k r_0^n (\mod L_0)$ and
$\lim_{n\rightarrow\infty } a_n/c_n=\theta\in (0, +\infty )$. 
Using Remark \ref{rem:deftn}, we deduce that $\P_k(Z_{n-h}=a_n)>0$ if and
only if $a_n=kr_0^{n-h}(\mod L_0)$ that is $k=r_0^h(\mod L_0)$. 
In this case, noticing that  $\lim_{n\rightarrow\infty
}a_n/c_{n-h}=\mu^h \theta$ as $\lim_{n\rightarrow\infty
}c_n/c_{n-h}=\mu^h$, using \reff{eq:lim-Pk}, we get that:
\[
   \lim_{n\rightarrow\infty}
\frac{\P_k(Z_{n-h}=a_n)}{\P(Z_n=a_n)}=  \lim_{n\rightarrow\infty} \frac{c_n}{c_{n-h}} \, \frac{
  w_k\left(\mu^h \theta \right)}{w(\theta)}=\mu^h \, \frac{
  w_k\left(\mu^h \theta \right)}{w(\theta)}\cdot
\]
\end{proof}

We deduce the following local convergence. 
\begin{prop}
   \label{prop:cv-fat-gene}
   Let  $p$ be  a non-degenerate  super-critical offspring  distribution
   with finite  mean.  Let $\theta\in (0,  +\infty )$.
   Assume that  $\lim_{n\rightarrow\infty }  a_n /c_n=\theta$ and  that
   $\tau_n$ is well defined for all $n$.  Then, we
   have the following convergence in distribution:
\[
\tau_n\; \xrightarrow[n\rightarrow \infty ]{\textbf{(d)}} \;  \tau^\theta. 
\]
\end{prop}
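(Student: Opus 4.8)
The plan is to verify the criterion \reff{eq:cv-loi}. First note that $\tau^\theta=\ske(\hat\tau^\theta)$ is a.s.\ $\Tf$-valued and, since every type-$\rs$ vertex of $\hat\tau^\theta$ has at least one type-$\rs$ child, it has a.s.\ infinite height; each $\tau_n$ is $\Tf$-valued as well. Hence, by \reff{eq:cv-loi}, it suffices to prove that
\[
\lim_{n\to\infty}\P(r_h(\tau_n)=\bt)=\P(r_h(\tau^\theta)=\bt)
\qquad\text{for all }h\in\N^*,\ \bt\in\Tf^{(h)}.
\]

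Fix $h$ and $\bt$, and set $k=z_h(\bt)$. If $\P(r_h(\tau)=\bt)=0$ there is nothing to prove: then $\P(r_h(\tau_n)=\bt)=0$ for every $n$ (as $\tau_n$ is $\tau$ conditioned on an event) and $\P(r_h(\tau^\theta)=\bt)=0$ by Lemma \ref{lem:=distrib-w}. Assume now $\P(r_h(\tau)=\bt)>0$. Then every out-degree occurring in $\bt$ is $\equiv r_0\pmod{L_0}$, so an immediate induction on generations — using $z_0(\bt)=1$ and $z_{j}(\bt)=\sum_{|u|=j-1}k_u(\bt)$ — gives $k=z_h(\bt)\equiv r_0^h\pmod{L_0}$. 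Moreover, since $\tau_n$ is assumed well defined for every $n$, we have $\P(Z_n=a_n)=\P_1(Z_n=a_n)>0$, and Remark \ref{rem:deftn} forces $a_n\equiv r_0^n\pmod{L_0}$ for all $n$. The hypotheses of Lemma \ref{lem:srt-moderate} are therefore met, and that lemma gives
\[
\lim_{n\to\infty}\frac{\P_k(Z_{n-h}=a_n)}{\P(Z_n=a_n)}=\mu^h\,\frac{w_k(\mu^h\theta)}{w(\theta)},
\]
the indicator $\ind_{\{k=r_0^h(\mod L_0)\}}$ being equal to $1$.

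It remains to feed this into the elementary identity \reff{eq:ph} (valid for $n\ge h$) and to recognise the right-hand side via the density of $\tau^\theta$ from Lemma \ref{lem:=distrib-w}:
\[
\lim_{n\to\infty}\P(r_h(\tau_n)=\bt)=\P(r_h(\tau)=\bt)\,\mu^h\,\frac{w_k(\mu^h\theta)}{w(\theta)}=\P(r_h(\tau^\theta)=\bt).
\]
This is the claimed pointwise convergence, hence $\tau_n\xrightarrow[n\to\infty]{\textbf{(d)}}\tau^\theta$.

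The analytic substance is entirely contained in Lemma \ref{lem:srt-moderate} (a consequence of the local limit theorem of \cite{ds:lltgwp}) and in the density identity of Lemma \ref{lem:=distrib-w}, so the argument above is mostly bookkeeping. The one place where I expect care to be needed — and the only real obstacle — is the periodicity: one must check that the standing hypothesis ``$\tau_n$ well defined for all $n$'' actually yields $a_n\equiv r_0^n\pmod{L_0}$, and that any $\bt$ with $\P(r_h(\tau)=\bt)>0$ automatically satisfies $z_h(\bt)\equiv r_0^h\pmod{L_0}$, so that the indicator appearing in the strong ratio limit never annihilates the expected limiting term.
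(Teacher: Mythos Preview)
Your proof is correct and follows essentially the same route as the paper: use the characterization \reff{eq:cv-loi}, the identity \reff{eq:ph}, the strong ratio limit of Lemma \ref{lem:srt-moderate}, and the density formula of Lemma \ref{lem:=distrib-w}. You have simply spelled out the periodicity bookkeeping (that $a_n\equiv r_0^n\pmod{L_0}$ and $z_h(\bt)\equiv r_0^h\pmod{L_0}$) more carefully than the paper does.
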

\begin{proof}
Assume  that $p$ is of type $(L_0,r_0)$, so that $\tau_n$ is well
defined for $n$ large if and only if $a_n=r_0^{n}(\mod L_0)$. 
Using  that a.s. $H(\tau^\theta)=+\infty $, 
the  characterization \reff{eq:cv-loi} of the convergence  in $\Tf$,
\reff{eq:ph} with $k=r_0^h(\mod L_0)$, and 
Lemmas  \ref{lem:=distrib-w} and \ref{lem:srt-moderate}, we directly get
the result. 
\end{proof}

\subsection{The high regime in the Harris case:
  $\lim_{n\rightarrow\infty }   a_n/c_n=+\infty  $}
\label{sec:H-R}
Let  $p$ be  a non-degenerate  super-critical offspring  distribution  with finite mean. Recall $\fb$ (the supremum of the
support of $p$) 
defined in \reff{eq:def-ab}. Notice that $\fb$ finite (Harris case) implies that $p$
has finite mean.
When  $\fb<\infty $, we define  $\tau^\infty$ as $\bt_\fb$,
the deterministic regular $\fb$-ary tree. 

\begin{prop}
   \label{prop:cv-not-vfat-gene}
   Let  $p$ be  a non-degenerate  super-critical offspring  distribution
   with $\fb<\infty $.  Assume that
   $     a_n     \leq    \fb^n$     for     all     $n\in    \N     ^*$,
   $\lim_{n\rightarrow\infty }  a_n /c_n=\infty  $ and that  $\tau_n$ is
   well  defined for  all $n$.   Then, we  have the  following
   convergence in distribution:
\[
\tau_n\; \xrightarrow[n\rightarrow \infty ]{\textbf{(d)}} \;  \tau^\infty . 
\]
\end{prop}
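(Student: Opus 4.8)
The plan is to apply the elementary identity \reff{eq:ph}: for $n \geq h \geq 1$ and $\bt \in \Tf^{(h)}$ with $k = z_h(\bt)$, we have $\P(r_h(\tau_n) = \bt) = \P(r_h(\tau) = \bt)\, H_n(h,k)$ where $H_n(h,k) = \P_k(Z_{n-h} = a_n)/\P(Z_n = a_n)$. Since $\tau^\infty = \bt_\fb$ is the deterministic regular $\fb$-ary tree, it has $H(\tau^\infty) = +\infty$, so by the characterization \reff{eq:cv-loi} of convergence in distribution in $\Tf$, it suffices to show that $\P(r_h(\tau_n) = \bt) \to \ind_{\{\bt = r_h(\bt_\fb)\}}$ for every $h \in \N^*$ and $\bt \in \Tf^{(h)}$. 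Because $r_h(\bt_\fb)$ is the unique tree of height $h$ all of whose nodes up to generation $h-1$ have out-degree $\fb$ (and $z_h(r_h(\bt_\fb)) = \fb^h$), and since $\sum_{\bt \in \Tf^{(h)}} \P(r_h(\tau_n) = \bt) = 1$, it is enough to prove two things: first, $\P(r_h(\tau_n) = r_h(\bt_\fb)) \to 1$; and second, for every $\bt$ with height $h$ which is \emph{not} $r_h(\bt_\fb)$, $\P(r_h(\tau_n) = \bt) \to 0$. Both reduce, via \reff{eq:ph} and the fact that $\P(r_h(\tau) = \bt) = \prod_{u \in r_{h-1}(\bt)} p(k_u(\bt))$ is a fixed positive constant depending only on $\bt$, to understanding the asymptotics of $H_n(h,k)$ for $k \in \{1, \dots, \fb^h\}$ (these are the only values of $k = z_h(\bt)$ possible for a tree with positive $\P(r_h(\tau)=\bt)$).

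The key input is the strong ratio theorem, Theorem \ref{theo:srt}, high regime Harris case: for all $h, k \in \N^*$,
\[
H^\infty(h,k) := \lim_{n \to \infty} H_n(h,k) = p(\fb)^{-(\fb^h - 1)/(\fb - 1)}\, \ind_{\{k = \fb^h\}}.
\]
This is precisely the content of Lemma \ref{lem:rapp-Zb} (with $\ell = 1$) as announced in the text, whose proof is carried out in detail in Section \ref{sec:Harris}. Granting this, I would argue as follows. If $\bt \in \Tf^{(h)}$ has $z_h(\bt) = k < \fb^h$, then $\P(r_h(\tau_n) = \bt) = \P(r_h(\tau)=\bt) H_n(h,k) \to \P(r_h(\tau)=\bt) \cdot 0 = 0$. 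For $\bt$ with $z_h(\bt) = \fb^h$: such a $\bt$ must be $r_h(\bt_\fb)$ itself, since having $\fb^h$ vertices at generation $h$ forces $k_u(\bt) = \fb$ for every $u \in r_{h-1}(\bt)$ (the out-degrees are bounded by $\fb$ and the generation sizes $z_\ell(\bt)$ satisfy $z_{\ell+1}(\bt) \leq \fb\, z_\ell(\bt)$ with equality at every step forced by $z_h(\bt) = \fb^h$). For this unique $\bt = r_h(\bt_\fb)$ we have $\P(r_h(\tau) = \bt) = p(\fb)^{1 + \fb + \cdots + \fb^{h-1}} = p(\fb)^{(\fb^h-1)/(\fb-1)}$, and hence
\[
\P(r_h(\tau_n) = r_h(\bt_\fb)) = p(\fb)^{(\fb^h-1)/(\fb-1)}\, H_n(h, \fb^h) \xrightarrow[n\to\infty]{} p(\fb)^{(\fb^h-1)/(\fb-1)} \cdot p(\fb)^{-(\fb^h-1)/(\fb-1)} = 1.
\]
Combining the two cases, $\P(r_h(\tau_n) = \bt) \to \ind_{\{\bt = r_h(\bt_\fb)\}}$ for every $h$ and every $\bt \in \Tf^{(h)}$, which by \reff{eq:cv-loi} gives $\tau_n \xrightarrow{(d)} \bt_\fb = \tau^\infty$.

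The real work, of course, is entirely in establishing the strong ratio theorem in the high regime, i.e. Lemma \ref{lem:rapp-Zb}; that is where the hypotheses $a_n \leq \fb^n$ and $a_n/c_n \to \infty$ are actually used (together with $\limsup a_n/\fb^n < 1$, which follows from $c_n/\fb^n \to 0$ when $a_n \le \fb^n$ and $a_n/c_n\to\infty$ — note $c_{n+1}/c_n < \fb$ would be needed, but in fact in the Harris case $\mu < \fb$ so $c_n$ grows slower than $\fb^n$). The proof of that lemma requires a precise singularity analysis, à la Flajolet–Odlyzko \cite{fo:ld} and Fleischmann–Wachtel \cite{fw:lta}, to extract the equivalent of $\P_1(Z_n = a_n)$ when $a_n$ is a positive proportion of $\fb^n$ away from $\fb^n$ but still much larger than $c_n$; the combinatorial heart is that reaching a population of size close to $\fb^n$ forces almost every vertex in the first several generations to have the maximal number $\fb$ of children, which is what produces the factor $p(\fb)^{-(\fb^h-1)/(\fb-1)}$ and the indicator $\ind_{\{k = \fb^h\}}$. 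I would treat that analysis as the main obstacle and defer it to Section \ref{sec:Harris}, as the paper does; the convergence of $\tau_n$ stated here then follows by the short argument above.
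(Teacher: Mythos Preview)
Your overall strategy---use the identity \eqref{eq:ph}, the characterization \eqref{eq:cv-loi}, and the strong ratio limit for $H_n(h,k)$---is exactly the paper's. The reduction to showing $\P(r_h(\tau_n)=r_h(\bt_\fb))\to 1$ is correct, as is the observation that $r_h(\bt_\fb)$ is the unique tree of height $h$ with $z_h=\fb^h$.

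There is, however, a genuine gap in your final paragraph: the claim that $\limsup_n a_n/\fb^n<1$ ``follows from $c_n/\fb^n\to 0$ when $a_n\le\fb^n$ and $a_n/c_n\to\infty$'' is false. Take $a_n=\fb^n$: then $a_n/c_n\to\infty$ (since $c_{n+1}/c_n<\mu<\fb$) and $a_n\le\fb^n$, yet $a_n/\fb^n\equiv 1$. Lemma~\ref{lem:rapp-Zb} therefore does \emph{not} cover the whole high regime; it requires $\limsup_n a_n/\fb^n<1$ as a genuine hypothesis. The paper handles this by a subsequence argument: it suffices to treat separately the cases $\lim_n a_n/\fb^n=1$ and $\limsup_n a_n/\fb^n<1$. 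The second case is Lemma~\ref{lem:rapp-Zb}. The first case is dispatched by a short direct argument that you are missing: if $k\le\fb^h-1$ then $k\fb^{n-h}\le\fb^n-\fb^{n-h}<a_n$ for $n$ large (since $a_n/\fb^n\to 1$), so $\P_k(Z_{n-h}=a_n)=0$; hence the decomposition
\[
\P(Z_n=a_n)=\P(Z_h=\fb^h)\,\P_{\fb^h}(Z_{n-h}=a_n)+\sum_{k<\fb^h}\P(Z_h=k)\,\P_k(Z_{n-h}=a_n)
\]
reduces for large $n$ to its first term, giving \eqref{eq:an=0-H} immediately. You should add this case split.
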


\begin{proof}
  We assume  that $\tau_n$  is well  defined, that is $\P(Z_n=a_n)>0$. For  $h\in \N^*$,  we have
  $\P(r_h(\tau)=r_h(\bt_\fb))=p(\fb)^{(\fb^{h} -1)/(\fb-1)}$.  We deduce
  from \reff{eq:ph} and \reff{eq:cv-loi},  using that $\bt_\fb$ has a.s.
  an    infinite    height,    that    the    proof    of    Proposition
  \ref{prop:cv-not-vfat-gene} is complete  as soon as we  prove that for
  all $k\leq \fb^h$:
\begin{equation}
   \label{eq:an=0-H}
\lim_{n\rightarrow\infty }  \frac{\P_{k}(Z_{n-h}=a_n)}{\P(Z_n=a_n)}=
p(\fb)^{-(\fb^{h} 
  -1)/(\fb-1)}\ind_{\{k=\fb^h\}}.
\end{equation}

In fact, it  is   enough   to   prove  \reff{eq:an=0-H}   for
$k=\fb^h$ as $\P(Z_h=\fb^h)=p(\fb)^{-(\fb^{h} 
  -1)/(\fb-1)}$ and:
\begin{equation}
   \label{eq:decomp-PZa}
\P(Z_n=a_n)=\P(Z_h=\fb^h)\P_{\fb^h} (Z_{n-h}=a_n)+ \sum_{k\leq  \fb^h-1} \P(Z_h=k)
\P_k(Z_{n-h}=a_n).
\end{equation}
It is also enough to consider the two cases:
$\lim_{n\rightarrow\infty       }      a_n/{\fb^n}=1$       or
$\limsup_{n\rightarrow\infty        }         a_n/{\fb^n}<1$        with
$\lim_{n\rightarrow\infty } a_n/{c_n}=+\infty $.  
\medskip 

We first  consider the case $\lim_{n\rightarrow\infty  } a_n/{\fb^n}=1$.
Notice  that  $\P_k( Z_{n-h}=a_n)=0$  for  $k\fb^{n-h}  < a_n$  as  each
individual produces at most $\fb$ children. For $k\leq \fb^h-1$, we have
$k\fb^{n-h}       \leq       \fb^n      -       \fb^{n-h}$.        Since
$\lim_{n\rightarrow\infty   }  a_n/{\fb^n}=1$,   we   deduce  that   for
$h,k\in \N ^*$, if $k\leq \fb^h -1$, then $k\fb^{n-h}<a_n$ for $n$ large
enough. Using \reff{eq:decomp-PZa}, we deduce that for $n$ large enough,
$\P(Z_n=a_n)=\P(Z_h=\fb^h)\P_{\fb^h}    (Z_{n-h}=a_n)$   as    soon   as
$\P(Z_n=a_n)>0$.  This gives \reff{eq:an=0-H}.  \medskip

The    case    $\limsup_{n\rightarrow\infty   }    a_n/{\fb^n}<1$    and
$\lim_{n\rightarrow\infty  } a_n/{c_n}=+\infty  $ is  proven in  Section
\ref{sec:upperZn}, see Lemma \ref{lem:rapp-Zb} with $\ell=1$.
\end{proof}

\subsection{The low regime: $\lim_{n\rightarrow\infty }
  a_n/c_n=0 $}
\label{sec:kK-R}
Let $p$  be a non-degenerate super-critical  offspring distribution with
finite mean. If $\fc>0$ (and  thus $\fa=0$), we recall that $\tau^{0,0}$
denote  the   distribution  of  the   GW  tree  $\tau$   with  offspring
distribution  $\fp$  given  in \reff{eq:def-fp}.   According  to  Remark
\ref{rem:cond-onE}, we have the following result for the extinction regime.

\begin{prop}
   \label{prop:cv-killed}
   Let  $p$ be  a non-degenerate  super-critical offspring  distribution
   with finite  mean such that $\fc>0$.   Assume that $ a_n  =0$ for $n$
   large enough  so that $\tau_n$ is  well defined for $n$  large enough.
Then, we have the following convergence in distribution:
\[
\tau_n\; \xrightarrow[n\rightarrow \infty ]{\textbf{(d)}} \;  \tau^{0,0}. 
\]
\end{prop}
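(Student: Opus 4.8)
The plan is to note that, for $n$ large, $\tau_n$ is distributed as $\tau$ conditioned on the event $\{Z_n=0\}=\{z_n(\tau)=0\}$; that these events increase with $n$ to the extinction event $\ce=\bigcup_m\{Z_m=0\}$; and that $\P(Z_n=0)=f_n(0)$ is positive (since $\fc>0$ forces $p(0)>0$, so $\tau_n$ is indeed well defined) and tends to $\fc>0$. One then lets $n\to\infty$ by monotone convergence and identifies the limiting law through Remark \ref{rem:cond-onE}.

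Concretely, I would fix a finite set $J\subset\cu$ and a family $(j_u)_{u\in J}\in\N_1^J$. For $n$ large enough that $a_n=0$,
\[
\P\bigl((k_u(\tau_n))_{u\in J}=(j_u)_{u\in J}\bigr)
=\frac{\P\bigl((k_u(\tau))_{u\in J}=(j_u)_{u\in J},\ Z_n=0\bigr)}{\P(Z_n=0)}\cdot
\]
Since $\{Z_n=0\}\uparrow\ce$, the events in the numerator increase to $\{(k_u(\tau))_{u\in J}=(j_u)_{u\in J}\}\cap\ce$, so the numerator tends to $\P((k_u(\tau))_{u\in J}=(j_u)_{u\in J},\ \ce)$ while the denominator tends to $\fc$; hence the ratio tends to $\P((k_u(\tau))_{u\in J}=(j_u)_{u\in J}\mid\ce)$. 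By Remark \ref{rem:cond-onE}, $\tau$ conditioned on $\ce$ has the law of $\tau^{0,0}$, so this limit equals $\P((k_u(\tau^{0,0}))_{u\in J}=(j_u)_{u\in J})$. As $J$ and $(j_u)_{u\in J}$ are arbitrary, this is exactly the local convergence in distribution $\tau_n\to\tau^{0,0}$ in the sense of Section \ref{sec:cv-of-t}. Equivalently, applying \reff{eq:ph} with $a_n=0$ to $\bt\in\Tf^{(h)}$ and using the branching property gives $\P(r_h(\tau_n)=\bt)=\P(r_h(\tau)=\bt)\,f_{n-h}(0)^{z_h(\bt)}/f_n(0)\to\fc^{\,z_h(\bt)-1}\,\P(r_h(\tau)=\bt)$, which is the extinction case \reff{eq:H00} of Theorem \ref{theo:srt}, the right-hand side being $\P(r_h(\tau^{0,0})=\bt)$ by Remark \ref{rem:cond-onE}.

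There is essentially no obstacle here; the only point worth a word of care is that $\tau^{0,0}$ is a.s. a \emph{finite} tree (it is the sub-critical GW tree with offspring distribution $\fp$), so one cannot directly invoke the criterion \reff{eq:cv-loi}, which presupposes a limiting tree of a.s. infinite height. Working directly with the finite-dimensional marginals of the out-degrees, as above, avoids this, and the monotone-convergence step automatically takes care of the otherwise mildly delicate exchange of the limit with the (infinite) sum over the possible finite restrictions of $\tau_n$.
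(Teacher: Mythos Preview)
Your proof is correct and follows the same idea as the paper, which simply invokes Remark \ref{rem:cond-onE} without spelling out the details: since $\{Z_n=0\}\uparrow\ce$ and $\P(Z_n=0)\to\fc>0$, conditioning on $\{Z_n=0\}$ converges (even in total variation, as your argument actually shows) to conditioning on $\ce$. Your remark that \reff{eq:cv-loi} is not directly applicable because $\tau^{0,0}$ has a.s.\ finite height is a nice point of care that the paper glosses over.
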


Recall the Kesten tree $\tau^0$
from Definition \ref{def:Kesten}. 
Recall that  $\fa\geq 1$ implies that a.s. $\tau^0=\bt_\fa$,
the deterministic regular $\fa$-ary tree. 

\begin{prop}
   \label{prop:cv-not-fat-gene}
   Let  $p$ be  a non-degenerate  super-critical offspring  distribution
   with finite  mean.   Assume
   that $ a_n \geq 1 \vee \fa^n$  for all $n\in \N ^*$,
   $\lim_{n\rightarrow\infty } a_n  /c_n=0$ and  
that
   $\tau_n$ is well defined for all $n$.  Then, we have the following
   convergence in distribution:
\[
\tau_n\; \xrightarrow[n\rightarrow \infty ]{\textbf{(d)}} \;  \tau^0. 
\]
\end{prop}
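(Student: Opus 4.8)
The plan is to reduce the local convergence to a strong ratio limit theorem for the Galton--Watson process, then identify the limit with the law of $\tau^0$, and finally establish the ratio limit by a case analysis. By the elementary identity \reff{eq:ph}, for every $h\in\N^*$ and $\bt\in\Tf^{(h)}$ with $k=z_h(\bt)$ and $\P(r_h(\tau)=\bt)>0$ we have $\P(r_h(\tau_n)=\bt)=\P(r_h(\tau)=\bt)\,H_n(h,k)$, where $H_n$ is defined in \reff{eq:defH}. Since a.s.\ $H(\tau^0)=+\infty$, the characterization \reff{eq:cv-loi} reduces the statement to showing that $H_n(h,k)$ converges to $\P(r_h(\tau^0)=\bt)/\P(r_h(\tau)=\bt)$ as $n\to\infty$. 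When $\fc>0$ (equivalently $\fa=0$), Lemma \ref{lem:=distrib} together with $\fm=f'(\fc)$ identifies this target ratio with $k\fc^{k-1}f'(\fc)^{-h}$; when $\fc=0$, the deterministic description $\tau^0=\bt_\fa$ and the identity $\P(r_h(\tau)=r_h(\bt_\fa))=p(\fa)^{(\fa^h-1)/(\fa-1)}$ give the target $p(\fa)^{-(\fa^h-1)/(\fa-1)}\ind_{\{k=\fa^h\}}$ for $\fa\geq 2$ and $f'(\fc)^{-h}\ind_{\{k=1\}}$ for $\fa=1$ --- here one also uses that, for trees of positive $\tau$-probability, $z_h(\bt)=\fa^h$ forces $\bt=r_h(\bt_\fa)$, so the indicator is exactly what is needed. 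Thus the proposition is equivalent to the low-regime part of Theorem \ref{theo:srt}, namely $\lim_n H_n(h,k)=H^0(h,k)$ with $H^0$ as in \reff{eq:H0}.

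To prove the ratio limit I would split according to whether $f'(\fc)>0$ (Schr\"oder case, $\fa\leq 1$) or $f'(\fc)=0$ (B\"ottcher case, $\fa\geq 2$), and in each case produce a sharp equivalent of $\P_\ell(Z_m=a_n)$ for $m\in\{n,n-h\}$ and small $\ell$, in the low-deviation range $a_n/c_m\to 0$. In the Schr\"oder case, when $(a_n)$ is bounded this is a known local limit result (Papangelou \cite{p:lgwpsc} for $\fa=0$, an easy adaptation for $\fa=1$), while when $(a_n)$ is unbounded one invokes the precise lower-deviation local limit theorem of Fleischmann and Wachtel \cite{fw:ld}: it yields $\P_\ell(Z_m=a_n)\sim f'(\fc)^{m}\,g_\ell(a_n)$ up to a multiplicatively periodic correction, where the factor $g_\ell$ behaves, for $a_n$ large, like $\ell\,\fc^{\ell-1}$ times a quantity not depending on $\ell$. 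Forming $\P_k(Z_{n-h}=a_n)/\P(Z_n=a_n)$, the $a_n$-dependent factor cancels (same argument $a_n$), the periodic corrections cancel (same residue class), and one is left with $f'(\fc)^{-h}$ times $k\fc^{k-1}$, i.e.\ $H^0(h,k)$; when $\fc=0$ this collapses to $f'(\fc)^{-h}\ind_{\{k=1\}}$.

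In the B\"ottcher case one has $\P_\ell(Z_m=a_n)=0$ unless $a_n\geq \ell\fa^m$, so in particular $H_n(h,k)$ can be nonzero only for $k\leq\fa^h$. I would first establish, extending the B\"ottcher-regime estimates of Fleischmann and Wachtel \cite{fw:lta} and using techniques in the spirit of Flajolet--Odlyzko \cite{fo:ld}, a sharp two-sided asymptotic for $\P_\ell(Z_m=a_n)$ in the range $\fa^m\ll a_n\ll c_m$ (this is the delicate new input, Lemma \ref{lem:bott-Pz}), and then plug it into the decomposition $\P(Z_n=a_n)=\sum_{k'\leq\fa^h}\P(Z_h=k')\,\P_{k'}(Z_{n-h}=a_n)$. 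The point to extract from the asymptotics is that the term $k'=\fa^h$ strictly dominates the others, so $H_n(h,k)\to 0$ for $k<\fa^h$; and for $k=\fa^h$, using $\P(Z_h=\fa^h)=p(\fa)^{(\fa^h-1)/(\fa-1)}$ on the single tree $r_h(\bt_\fa)$, one gets $H_n(h,\fa^h)\to p(\fa)^{-(\fa^h-1)/(\fa-1)}=H^0(h,\fa^h)$.

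The main obstacle, as the discussion after Theorem \ref{theo:srt} already signals, is obtaining these sharp local-limit asymptotics for $\P_\ell(Z_m=a_n)$ with enough uniformity throughout the low-deviation range $a_n=o(c_m)$ --- especially in the B\"ottcher case, where no off-the-shelf statement seems available and one must track the interplay between the nearly $\fa$-ary core of the tree and its fluctuation. Two further technical points needing care are the passage between the bounded and unbounded subcases for $(a_n)$, and keeping control of the multiplicatively periodic slowly varying factors so that they genuinely cancel in the ratio $H_n(h,k)$.
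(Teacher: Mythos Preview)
Your reduction to the strong ratio limit and the case split match the paper's. There is one methodological difference and one error worth flagging.

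In the Schr\"oder case with $\fa=0$ and unbounded $(a_n)$, the paper does \emph{not} attempt to extract the asymptotic of $\P_\ell(Z_{n-h}=a_n)$ for general $\ell$. Lemma~\ref{lem:srt-schroder} handles only $\ell=1$, giving $\P(Z_{n-h}=a_n)/\P(Z_n=a_n)\to f'(\fc)^{-h}$; the paper then uses the elementary inclusion $\{\sum_{i=1}^\ell Z_{n-h}^{(i)}=a_n\}\supset\bigcup_j\bigl(\{Z^{(j)}_{n-h}=a_n\}\cap\bigcap_{i\neq j}\{Z^{(i)}_{n-h}=0\}\bigr)$ to get the one-sided bound $\P_\ell(Z_{n-h}=a_n)\geq\ell\,\ff_{n-h}(0)^{\ell-1}\P(Z_{n-h}=a_n)$, hence only $\liminf_n\P(r_h(\tau_n)=\bt)\geq\P(r_h(\tau^0)=\bt)$. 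Since $\tau^0$ has a.s.\ infinite height, these lower bounds sum to $1$ over all $\bt\in\Tf^{(h')}$ with $h'\leq h$, and the Portmanteau theorem upgrades the liminf to a limit. This sidesteps your claimed factorization $\P_\ell(Z_m=a_n)\sim f'(\fc)^{m}g_\ell(a_n)$, which is morally right but does not come out of \cite{fw:ld} in the form you state (the asymptotic there involves $\rho_{a_n}$ and $w$ in a way that does not cleanly separate $m$ from $a_n$); your route would need the extra step $w_\ell(x)\sim\ell\fc^{\ell-1}w(x)$ as $x\searrow 0$, which the paper's trick avoids.

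Your B\"ottcher paragraph has the inequalities reversed. Since $\fa\geq 2$, one has $Z_h\geq\fa^h$ a.s., so the relevant trees have $z_h(\bt)\geq\fa^h$ and the decomposition is $\P(Z_n=a_n)=\sum_{k'\geq\fa^h}\P(Z_h=k')\P_{k'}(Z_{n-h}=a_n)$; your claim that $H_n(h,k)$ can be nonzero only for $k\leq\fa^h$ fails whenever $a_n>\fa^n$. What you need is that the $k'=\fa^h$ term dominates, forcing $H_n(h,k)\to 0$ for $k>\fa^h$. The paper reduces this, exactly as in the Harris proof, to showing $H_n(h,\fa^h)\to p(\fa)^{-(\fa^h-1)/(\fa-1)}$, which it obtains by splitting into $a_n/\fa^n\to 1$ (elementary) and $\liminf a_n/\fa^n>1$ (Lemma~\ref{lem:rapp-Za}, from the sharp asymptotic Lemma~\ref{lem:bott-Pz}).
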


\begin{proof}
We give the proof in the Schröder case ($\fa\leq 1$). The  Böttcher
case ($\fa\geq 2$) is more technical 
and its proof  is postponed to Section
\ref{sec:proof-B}. We suppose throughout the proof that $p$ is of type $(L_0,r_0)$.

\medskip\noindent
\textit{Case I: the sequence $(a_n,  n\in \N^*)$ is
bounded.}
We first  consider the  case $\fa=0$.   The ratio  theorem, see  (4) in
\cite{p:lgwpsc} (or \cite{an:bp} Theorem A.7.4), implies, that for all
$\ell, k, h\in \Np$, if $\P(Z_n=k)>0$ for $n$ large enough, then:
\[
\lim_{n\rightarrow\infty } \frac{\P_\ell(Z_{n-h}=k
  )}{\P(Z_{n}=k)}=\ell \fc^{\ell -1} \ff'(\fc)^{-h}.
\]
We deduce from \reff{eq:ph} and \reff{eq:ph0}, as $\fm=\ff'(\fc)$, 
that for $h\in \N^*$ and $\bt\in \Tf^{(h)}$, we have $\lim_{n\rightarrow\infty }  
\P(r_h(\tau_n)=\bt)=\P(r_h(\tau^0)=\bt)$. 
Since $\tau^0$ has a.s. an infinite height, we get that $\tau_n$
converges in distribution towards $\tau^0$  using the
convergence characterization  \reff{eq:cv-loi}. 
\medskip

We consider now  the case $\fa=1$. Recall that $\bt_\fa$  is the regular
$\fa$-ary  tree.  According to  Remark  \ref{rem:deftn},  for $k$  large
enough, we get that $\P(Z_{n}=k)>0$  and $\P(Z_{n-h}=k)>0$ for $n$ large
enough.  It is easy to check that for $h\in \N$, $k\in \Np$:
\[
\P(r_h(\tau)=r_h(\bt_\fa)|\, Z_n=k)=p(1)^h \frac{\P(Z_{n-h}=k)}{\P(Z_{n}=k)}\cdot
\]
For $k=1$, the left hand side member is equal to one. For $k>1$, it is
not difficult to get, by considering the lowest vertex of $\tau$ with
out-degree larger than one, 
that the sequence $(\P(Z_n=k)/\P(Z_n=1), n\in \N^*)$ is bounded. Then
arguing as in \cite{p:lgwpsc}, one gets that 
$\lim_{n\rightarrow\infty } \frac{\P(Z_{n-h}=k
  )}{\P(Z_{n}=k)}=p(1) ^{-h}$. 
This gives  that $\lim_{n\rightarrow\infty } \P(r_h(\tau)=r_h(\bt_\fa)|\,
Z_n=k)=1$. This implies that $\tau_n$
converges in distribution towards $\tau^0=\bt_\fa$ using the
convergence characterization  \reff{eq:cv-loi}.

\medskip\noindent
\textit{Case II: $\lim_{n\rightarrow\infty } a_n=+\infty $.}
We first consider the case $\fa=0$. Then we have $f_n(0)>0$ for all
$n\in \N^*$. Since 
 $\{\sum_{i=1}^\ell Z_{n-h}^{(i)}=a_n\}$ contains $\bigcup _{j=1}^\ell \left(\{Z^{(j)}_{n-h}=a_n\}\bigcap _{i\neq j}
\{Z_{n-h}^{(i)}=0\}\right) $, we deduce that 
$\P_\ell(Z_{n-h}=a_n
  )\geq  \ell \ff_{n-h}(0)^{\ell -1} \P(Z_{n-h}=a_n
  )$. Using that $\lim_{n\rightarrow\infty } \ff_{n-h}(0)=\fc$, we
  deduce from Lemma \ref{lem:srt-schroder}, stated below, 
that:
\[
\liminf_{n\rightarrow\infty }
\frac{\P_\ell(Z_{n-h}=a_n)}{\P(Z_n=a_n)}
\geq \ell \fc^{\ell -1} \ff'(\fc)^{-h}.
\]
As $\ff'(\fc)=\fm$, we deduce from \reff{eq:ph} and \reff{eq:ph0}
that 
\begin{equation}
   \label{eq:liminf}
\liminf_{n\rightarrow\infty }   
\P(r_h(\tau_n)=\bt)\geq \P(r_h(\tau^0)=\bt). 
\end{equation}
Since  $\tau^0$  has  a.s.  an   infinite  height,  we  deduce   that
\reff{eq:liminf}  holds for  all  $\bt\in \Tf ^{(h')}$  with $0\leq  h'\leq
h$.
Since  singletons   are open  subsets  of  the  closed  discrete  set
$\bigcup _{0\leq h'\leq h} \Tf^{(h')}$,  we deduce from the Portmanteau
theorem that $(r_h(\tau_n), n\in  \N)$ converges in distribution towards
$r_h(\tau^0)$. Since  this holds for  all $h\in \N^*$, and  since $\tau^0$
has  a.s.  an  infinite  height,   we  conclude  using  the  convergence
characterization \reff{eq:cv-loi}. 
\medskip    

We    now   consider the case      $\fa=1$.    Then   we    have
a.s.  $\tau^0=\bt_\fa$. We
deduce, as $\ff'(\fc)=p(1)$, that $\P(r_h(\tau)=r_h(\bt_\fa))=f'(\fc )^h$
and thus, using  \reff{eq:ph} and  Lemma \ref{lem:srt-schroder}:
\[
\P(r_h(\tau_n)=r_h(\bt_\fa))
= \P(r_h(\tau)=r_h(\bt_\fa))    \frac{\P(Z_{n-h}=a_n)}{\P(Z_n=a_n)}
\; \xrightarrow[n\rightarrow \infty ]{} \;1. 
\]
Since  this holds for  all $h\in \N^*$, and  since $\bt_\fa$
has  a.s.  an  infinite  height,   we  conclude  using  the  convergence
characterization \reff{eq:cv-loi}. 
\end{proof}

The proof of  the previous proposition in the Schröder  case is based on
the following strong ratio limit.

\begin{lem}\label{lem:srt-schroder}
Let $p$ be a non-degenerate super-critical offspring distribution with
finite mean in the Schröder case ($\fa\leq 1$). Assume that
$\lim_{n\to+\infty}a_n=+\infty$, $\lim_{n\to +\infty}a_n/c_n=0$ and
$\P(Z_n=a_n)>0$ for every $n\in\N^*$. Then we have for all $h\in \N^*$:
\begin{equation}
 \label{eq:liminfan0}
\lim_{n\to+\infty}\frac{\P(Z_{n-h}=a_n)}{\P(Z_n=a_n)}=\ff'(\fc)^{-h}.
\end{equation}
\end{lem}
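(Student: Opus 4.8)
The plan is to obtain \reff{eq:liminfan0} from the precise large-deviation
asymptotics of $\P_\ell(Z_n=a_n)$ due to Fleischmann and Wachtel
\cite{fw:ld}, valid in the Schr\"oder regime $\fa\le 1$ and in the range
$a_n/c_n\to 0$ with $a_n\to+\infty$. First I would recall that in this
regime those asymptotics take the schematic form
\[
\P(Z_n=a_n)=\ff'(\fc)^{n}\,a_n^{-1}\,g(a_n)\,
\bigl(V(a_n/c_n)+o(1)\bigr),
\]
where $g$ is a slowly varying (or constant) factor coming from the local
limit behaviour and $V$ is a positive continuous multiplicatively periodic
function with period $\mu$, exactly as in \reff{eq:w=V}; the key structural
feature is that the exponential part is $\ff'(\fc)^n=\mu^{-\alpha n}$ and
the remaining dependence on $n$ enters only through the ratio $a_n/c_n$.
(If $\fa=1$, so $\alpha=\infty$ and $\ff'(\fc)=p(1)$, the same statement
holds with $\ff'(\fc)^n=p(1)^n$ and no slowly varying correction.)

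Next I would form the ratio with $n$ replaced by $n-h$ and $a_n$ kept
fixed. The exponential prefactors contribute $\ff'(\fc)^{-h}$. The
argument of $V$ in the numerator is $a_n/c_{n-h}$; since $c_n/c_{n-h}\to
\mu^h$ and $V$ has multiplicative period $\mu$, we have $V(a_n/c_{n-h})=
V(\mu^{-h}a_n/c_n\cdot\mu^h\cdot(c_n/c_{n-h})\mu^{-h})$, and using
$V(\mu x)=V(x)$ together with $c_n/c_{n-h}\to\mu^h$ and the uniform
continuity of $V$ on compact subsets of $(0,+\infty)$ one checks that
$V(a_n/c_{n-h})/V(a_n/c_n)\to 1$ along the sequence, because both
arguments tend to $0$ and $V$ is asymptotically periodic there; the
slowly varying factors $g(a_n)$ cancel exactly since the same $a_n$
appears in numerator and denominator. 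This yields
$\P(Z_{n-h}=a_n)/\P(Z_n=a_n)\to \ff'(\fc)^{-h}$, which is
\reff{eq:liminfan0}.

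The main obstacle is the careful handling of the periodic factor $V$:
since $V$ is merely periodic and not convergent as its argument goes to
$0$, one cannot simply ``plug in the limit''; instead one must argue that
$a_n/c_{n-h}$ and $a_n/c_n$ differ by a multiplicative factor that
converges to an integer power of the period $\mu$, and then exploit
$V(\mu x)=V(x)$ plus equicontinuity. A secondary technical point is to
make sure the error terms $o(1)$ in the Fleischmann--Wachtel expansion are
genuinely $o(1)$ relative to the main term along the subsequence of $n$
for which $\P(Z_n=a_n)>0$ (recall the periodicity constraint $a_n=r_0^n
\pmod{L_0}$), and to treat the degenerate case $\fa=1$ separately, where
the expansion is simpler and the ratio is literally $p(1)^{-h}$ up to a
vanishing error, as already sketched for the bounded case in
\cite{p:lgwpsc}.
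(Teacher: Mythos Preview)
Your approach is the same as the paper's---invoke the Fleischmann--Wachtel local estimate and cancel via the multiplicative periodicity of $V$---but the schematic asymptotic you write down is not what \cite{fw:ld} actually proves, and this misstatement is precisely what creates the ``main obstacle'' you then struggle with.

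The precise result (Corollary~5 in \cite{fw:ld}, quoted as \reff{eq:schroder=0}) reads
\[
\P(Z_n=a_n)\sim \frac{L_0}{\mu^{n-\rho}c_\rho}\,
w\!\left(\frac{a_n}{\mu^{n-\rho}c_\rho}\right),
\qquad \rho=\rho_{a_n}:=\min\{\ell\ge 1:\ c_\ell\ge a_n\}.
\]
The crucial feature is that $\rho$ depends on $a_n$ alone. Hence the companion formula for $\P(Z_{n-h}=a_n)$ carries the \emph{same} $\rho$, and the two arguments of $w$ differ by the exact factor $\mu^h$. With $x_n=a_n/(\mu^{n-\rho}c_\rho)\to 0$ and $w(x)\sim x^{\alpha-1}V(x)$ from \reff{eq:w=V}, one gets
\[
\frac{\P(Z_{n-h}=a_n)}{\P(Z_n=a_n)}
\sim \mu^h\,\frac{w(\mu^h x_n)}{w(x_n)}
\sim \mu^h\cdot\mu^{h(\alpha-1)}\cdot\frac{V(\mu^h x_n)}{V(x_n)}
=\mu^{\alpha h}=\ff'(\fc)^{-h},
\]
the $V$-ratio being \emph{identically} $1$ by periodicity. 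Your version with argument $a_n/c_n$ is not an equivalent reformulation: $c_n/(\mu^{n-\rho}c_\rho)$ is in general not a power of $\mu$, so the true formula cannot be rewritten with $V(a_n/c_n)$, and you are then forced into the comparison $V(a_n/c_{n-h})/V(a_n/c_n)$ near $0$. That comparison can in fact be salvaged (pull back by periodicity to a compact fundamental domain and use uniform continuity of $V$ there), but it is an unnecessary detour once the Fleischmann--Wachtel formula is stated correctly.

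A small slip: $\fa=1$ does \emph{not} give $\alpha=\infty$. When $\fa=1$ one has $\fc=0$ and $\ff'(\fc)=p(1)>0$, hence $\alpha=-\log p(1)/\log\mu<\infty$; this is still the Schr\"oder case and requires no separate treatment.
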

Notice that according to Remark \ref{rem:deftn}, the condition
$\P(Z_n=a_n)>0$ in Lemma \ref{lem:srt-schroder} is
satisfied as soon as $a_n=r_0^n (\mod L_0)$ as
$\lim_{n\to+\infty}a_n=+\infty$ and $\lim_{n\to +\infty}a_n/c_n=0$. 

\begin{proof}
  Since $\fa\leq 1$, we have $r_0\in \{0,1\}$.  We deduce from Corollary
  5     in    \cite{fw:ld},     that    for     $k_n\leq    c_n$     and
  $\lim_{n\rightarrow\infty } k_n=+\infty $:
\begin{equation}
   \label{eq:schroder=0}
\lim_{n\rightarrow \infty }
\sup_{k\in [k_n, c_n], \, k=r_0 \, (\mod L_0), }
\val{ \frac{\mu^{n-\rho_k} c_{\rho_k}}{ L_0 w( k /\mu^{n-\rho_k}
    c_{\rho_k})}\, \P(Z_{n}=k) -1 
} =0,
\end{equation}
where $\rho_k=\min\{\ell\geq  1 ;  \, c_\ell  \geq k\}$.   Recall that
$\lim_{n\rightarrow \infty  } c_{n+1}/c_n=\mu$. The hypothesis  on $a_n$
imply thus  that  $\lim_{n\rightarrow\infty  } \rho_{a_n}=+\infty  $. 
 Set $\rho=\rho_{a_n}$ for
simplicity. Assume that $a_n=r_0^n (\mod L_0)$, so that
$\P(Z_n=a_n)>0$ for $n$ large enough.  For  $n$   large   enough,   we   have:
\[
   \frac{\P(Z_{n-h}=a_n)}{\P(Z_n=a_n)}
\sim \mu^{h} 
  \frac{w(a_n/\mu^{n-h-\rho} c_{\rho})}
  {w(a_n/\mu^{n-\rho} c_{\rho})}
\sim \mu^{\alpha h}\frac{V(a/c_\rho)}{V(a/c_\rho)}
 =\ff'(\fc)^{-h}, 
\]
where  we used  \reff{eq:schroder=0}  for the  first approximation,  the
representation \reff{eq:w=V} of $w$ in the Schröder case and that $V$ is
multiplicatively periodic with period $\mu$ for the second one.
\end{proof}

\section{Continuity in law of the extremal GW trees at $\theta=0$}
\label{sec:cont-0}
We  are  in  the  setting  of  Section  \ref{sec:gen-gen}.   Recall  the
definition of $\hat \tau^\theta$ given in Section \ref{sec:extremal} for
$\theta>0$ and  in Section  \ref{sec:Kesten} for $\theta=0$.   Since the
function  $w$   is  continuous,   we  get   that  the   distribution  of
$\hat  \tau^\theta$  and  thus  of   $\tau^\theta$,  as  a  function  of
$\theta\in (0,  +\infty )$  is continuous. From  the convergence  of the
offspring  distribution of  the individuals  of  type $\rs$  which is  a
consequence  of  Lemma \ref{lem:cv-rho},  we  deduce  the continuity  in
distribution of $\hat  \tau^\theta$ for $\theta\in [0,  +\infty )$. This
directly  gives   the  continuity  in
distribution of $\tau^\theta$ for $\theta\in [0, +\infty )$. We stress
in the next corollary that only the convergence at $0$ is non-trivial.

\begin{cor}
   \label{cor:cvttq}
 Let  $p$ be  a non-degenerate  super-critical offspring  distribution
   with   finite   mean.   We
   have the following convergence in distribution:
\[
\tau^\theta\; \xrightarrow[\theta\rightarrow 0 ]{\textbf{(d)}}\;
\tau^0.  
\]
\end{cor}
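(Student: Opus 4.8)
The plan is to prove Corollary \ref{cor:cvttq} by reducing the local convergence in distribution to a pointwise statement about the probabilities $\P(r_h(\tau^\theta)=\bt)$ and then invoking the exact density formula from Lemma \ref{lem:=distrib-w}. First I would recall from Definition \ref{def:def-tau-theta} (and Definition \ref{def:Kesten} for $\theta=0$) that the tree $\tau^0$ has a.s. infinite height, so by the characterization \reff{eq:cv-loi} it suffices to show that for every $h\in\N^*$ and every $\bt\in\Tf^{(h)}$ with $k=z_h(\bt)$ one has $\lim_{\theta\to 0}\P(r_h(\tau^\theta)=\bt)=\P(r_h(\tau^0)=\bt)$. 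By Lemma \ref{lem:=distrib-w}, the left-hand side equals
\[
\P(r_h(\tau)=\bt)\,\mu^h\,\frac{w_k(\mu^h\theta)}{w(\theta)},
\]
so the whole corollary comes down to analyzing the ratio $w_k(\mu^h\theta)/w(\theta)$ as $\theta\searrow 0$, and comparing with the Kesten-tree density $k\fc^{k-1}\fm^{-h}$ from Lemma \ref{lem:=distrib} (when $\fc>0$) or with the degenerate case $\tau^0=\bt_\fa$ (when $\fc=0$, $\fa\geq 1$).

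Next I would treat the asymptotics of $w_k$ near $0$. Writing $w_k(\theta)=\sum_{i=1}^k\binom{k}{i}\fc^{k-i}w^{*i}(\theta)$ from \reff{eq:wk=w}, the key point is that the $i=1$ term $k\fc^{k-1}w(\theta)$ dominates: in the Schröder case $\fa\leq 1$ one uses the representation \reff{eq:w=V}, $x^{1-\alpha}w(x)=V(x)+o(1)$ with $V$ positive, continuous and multiplicatively $\mu$-periodic, so $w(x)\asymp x^{\alpha-1}$ near $0$ and $\alpha>0$ makes $w\to+\infty$; then $w^{*i}(\theta)=o(w(\theta))$ for $i\geq 2$ because a convolution of $i$ copies behaves like $\theta^{i\alpha-1}\cdot(\text{bounded})$, which is $o(\theta^{\alpha-1})$ once $i\alpha>\alpha$, i.e. always for $i\geq 2$ (a small care is needed when $\alpha\le 1$, where $w$ may not be integrable at $0$, but the Laplace-transform identity $\hat w(\mu\lambda)=\ff(\fc+\hat w(\lambda))-\ff(\fc)$ from the proof of Lemma \ref{lem:r=proba} gives $\hat w(\lambda)\to\infty$ as $\lambda\to\infty$ at a strictly slower rate than $\hat w(\lambda)^2$, which transfers to the convolutions). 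Consequently $w_k(\mu^h\theta)/w(\theta)\to k\fc^{k-1}\cdot\lim_{\theta\to0}w(\mu^h\theta)/w(\theta)=k\fc^{k-1}\mu^{-\alpha h}=k\fc^{k-1}\ff'(\fc)^{-h}=k\fc^{k-1}\fm^{-h}$, using \reff{eq:def-alpha} and that $V$ is $\mu$-periodic; multiplying by $\mu^h$ recovers exactly the coefficient in \reff{eq:ph0}. When $\fc=0$ and $\fa=1$ the same computation gives $w_k(\mu^h\theta)/w(\theta)\to 0$ unless $k=1$, matching $\tau^0=\bt_\fa$; in the Böttcher case $\fa\geq 2$ one invokes part (ii) of Lemma \ref{lem:cv-rho}, whose proof is in Section \ref{sec:B-f(c)=0}.

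Alternatively — and this is probably the cleaner route to actually write down — I would invoke the two-type description directly: by Lemma \ref{lem:cv-rho}(i)--(ii), the offspring law $\rho_{\mu^h\theta,r}$ of the type-$\rs$ individuals converges, as $\theta\searrow 0$, to the Dirac mass at $(\fa,\dots,\fa)$ in the Böttcher case and to the Dirac mass at $1$ for $r=1$ in the Schröder case (and since $\sharp\cs_h$ is non-decreasing and starts at $1$, in the Schröder case it stays equal to $1$ in the limit, so only $\rho_{\mu^h\theta,1}$ is ever relevant). Together with the fact that, conditionally on the $\rs$-children count $s_u$, the total offspring has law $\fp_{[s_u]}$ — which for $s_u=1$ is $\fp_{[1]}$, exactly the size-biased law driving the Kesten tree in Definition \ref{def:Kesten} — this shows that the finite-dimensional marginals $(k_u(\hat\tau^\theta),\,|u|\le h)$ together with the type labels converge in distribution to those of $\hat\tau^0$. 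Forgetting the types (the map $\ske$ is continuous on truncations) then yields $r_h(\tau^\theta)\to r_h(\tau^0)$ in distribution for every $h$, and since $\tau^0$ has a.s. infinite height, \reff{eq:cv-loi} gives $\tau^\theta\xrightarrow{(d)}\tau^0$.

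The main obstacle is the Schröder-case control of the convolution terms $w^{*i}(\theta)$ for $i\geq 2$ near $\theta=0$ when $\alpha\le 1$, where $w$ itself need not be locally integrable at the origin and the naive "$w^{*i}\asymp\theta^{i\alpha-1}$" heuristic must be justified via the Laplace transform and the functional equation rather than by a direct regular-variation argument; the cleaner two-type route sidesteps this by delegating it to Lemma \ref{lem:cv-rho}, whose Schröder part is already proved in the excerpt. I would therefore present the two-type/Dirac-mass argument as the main proof and relegate the density-ratio computation to a remark cross-checking the limit against \reff{eq:ph0}.
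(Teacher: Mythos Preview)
Your proposal is correct, and your preferred ``cleaner route'' via the two-type description and Lemma \ref{lem:cv-rho} is exactly the paper's argument: the paragraph preceding Corollary \ref{cor:cvttq} deduces continuity at $0$ directly from the convergence of $\rho_{\theta,r}$ to the appropriate Dirac mass, which forces $\hat\tau^\theta\to\hat\tau^0$ and hence $\tau^\theta\to\tau^0$. Your inductive observation that in the Schr\"oder case $\sharp\cs_h$ stays equal to $1$ in the limit (so only $\rho_{\theta,1}$ matters) is the detail the paper leaves implicit; your alternative density-ratio computation is a legitimate cross-check but not needed.
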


As  a  consequence of  Corollary  \ref{cor:cvttq},  we recover  directly
Corollary  3  from  \cite{bgms:gwtvml},  which is  stated  only  in  the
Böttcher case  ($p(0)+p(1)=0$) and extend  it to the Schröder  case, see
next  corollary. Recall  that  in  the Böttcher  case,  the random  tree
$\tau^0$ is  in fact  the (deterministic)  regular $\fa$-ary  tree.  For
$\varepsilon\in  (0,  1)$, let  $\tau_{(\varepsilon)}  $  be distributed  as
$\tau$  conditionally  on  $\{0<W\leq  \varepsilon\}$.  Notice  that  if
$\fc=0$, then conditioning  on $\{0<W\leq \varepsilon\}$ is  the same as
conditioning on $\{0\leq W\leq \varepsilon\}$.

\begin{cor}
   \label{cor:link}
   Let  $p$ be  a non-degenerate  super-critical offspring  distribution
   with finite mean.  We have the following convergence in distribution:
\[
\tau_{(\varepsilon)}\; \xrightarrow[\varepsilon\rightarrow 0 ]{\textbf{(d)}}\;
\tau^0.  
\]
\end{cor}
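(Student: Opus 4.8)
The plan is to realize the conditional law $\P(\tau\in\cdot \mid 0<W\le\varepsilon)$ as a $w(\theta)\,d\theta$--mixture of the laws of the extremal trees $\tau^\theta$, $\theta\in(0,\varepsilon)$, by means of the disintegration identities in Remark \ref{rem:decomp-E}, and then to let $\varepsilon\downarrow 0$ using the continuity statement of Corollary \ref{cor:cvttq}. Concretely, I would first fix $h\in\N^*$ and $\bt\in\Tf^{(h)}$. Since $w$ is continuous and positive on $(0,+\infty )$ we have $\P(0<W\le\varepsilon)=\int_0^\varepsilon w(\theta)\,d\theta>0$ for all $\varepsilon>0$, so $\tau_{(\varepsilon)}$ is well defined; and since $\tau^0$ is a $\Tf$-valued tree with a.s. infinite height, the characterization \reff{eq:cv-loi} reduces the corollary to proving $\lim_{\varepsilon\to 0}\P(r_h(\tau_{(\varepsilon)})=\bt)=\P(r_h(\tau^0)=\bt)$.

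Next I would rewrite, using $\ce^c=\{W>0\}$,
\[
\P(r_h(\tau_{(\varepsilon)})=\bt)=\frac{\E\big[\ind_{\{W\le\varepsilon\}}\ind_{\{r_h(\tau)=\bt\}}\ind_{\ce^c}\big]}{\E\big[\ind_{\{W\le\varepsilon\}}\ind_{\ce^c}\big]},
\]
and apply the first identity of Remark \ref{rem:decomp-E} with $g=\ind_{[0,\varepsilon]}$ to the numerator and the second one with $G(\tau,W)=\ind_{\{W\le\varepsilon\}}$ to the denominator. This gives the mixture representation
\[
\P(r_h(\tau_{(\varepsilon)})=\bt)=\frac{1}{\int_0^\varepsilon w(\theta)\,d\theta}\int_0^\varepsilon \P(r_h(\tau^\theta)=\bt)\, w(\theta)\,d\theta,
\]
that is, $\P(r_h(\tau_{(\varepsilon)})=\bt)$ is the average of $\theta\mapsto\P(r_h(\tau^\theta)=\bt)$ over $(0,\varepsilon)$ against the probability measure proportional to $w(\theta)\,d\theta$.

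To finish, I would invoke Corollary \ref{cor:cvttq}: through \reff{eq:cv-loi} (again using that $\tau^0$ has a.s. infinite height) it yields $\P(r_h(\tau^\theta)=\bt)\to\P(r_h(\tau^0)=\bt)$ as $\theta\downarrow 0$, and an elementary $\varepsilon$--$\delta$ argument then shows that the above average converges to the same limit: given $\delta>0$, pick $\varepsilon_0$ with $|\P(r_h(\tau^\theta)=\bt)-\P(r_h(\tau^0)=\bt)|\le\delta$ on $(0,\varepsilon_0)$; then for every $\varepsilon\le\varepsilon_0$ the difference between the average and $\P(r_h(\tau^0)=\bt)$ is at most $\delta$. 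I do not expect any genuine obstacle here: all the analytic content lies in Corollary \ref{cor:cvttq} (the continuity of $\theta\mapsto\tau^\theta$ at $\theta=0$, itself resting on Lemma \ref{lem:cv-rho}), and the present statement is merely its de-conditioning via Remark \ref{rem:decomp-E}. The only points deserving a little care are that the conditioning is on $\{0<W\le\varepsilon\}$ rather than $\{W\le\varepsilon\}$ (harmless, since on $\ce^c$ these events agree and, after conditioning, the atom $\P(W=0)=\fc$ is irrelevant) and, for the reduction via \reff{eq:cv-loi}, that all trees involved are $\Tf$-valued and that $\tau^0$ has a.s. infinite height in both cases $\fc=0$ and $\fc>0$; this also explains how the statement recovers and extends Corollary 3 of \cite{bgms:gwtvml}.
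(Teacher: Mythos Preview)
Your proof is correct and follows essentially the same route as the paper: both obtain the mixture representation $\P(r_h(\tau_{(\varepsilon)})=\bt)=\big(\int_0^\varepsilon w\big)^{-1}\int_0^\varepsilon \P(r_h(\tau^\theta)=\bt)\,w(\theta)\,d\theta$ and then pass to the limit via Corollary~\ref{cor:cvttq} and \reff{eq:cv-loi}. The only cosmetic difference is that you invoke Remark~\ref{rem:decomp-E} directly, whereas the paper goes back to Lemma~\ref{lem:=distrib-w} and re-derives $\P(r_h(\tau)=\bt,\,0<W\le\varepsilon)=\P(r_h(\tau)=\bt)\,\P_k(0<W\le\varepsilon\mu^h)$ via the branching property before matching the two expressions.
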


\begin{proof}
  Let  $h\in \N^*$  and  $\bt\in \Tf^{(h)}$  and  set $k=z_h(\bt)$.   We
  deduce    from    Lemma     \ref{lem:=distrib-w}    that    for    all
  $\theta\in (0, +\infty )$:
\[
   \P(r_h(\tau)=\bt)\,  \mu^h \, w_k(\mu^h \theta)
=\P(r_{h}(\tau^\theta)=\bt)\, w(\theta).
\]
Integrating with respect to $\theta\in (0, \varepsilon]$ for some
$\varepsilon>0$, we get:
\[
 \P(r_h(\tau)=\bt)\,  \P_k (0<W\leq \varepsilon\mu^h )
= \int_0^\varepsilon  \P(r_{h}(\tau^\theta)=\bt)\, w(\theta) \,
d\theta,
\]
where $W$ under $\P_k$ is distributed as $\sum_{\ell =1}^k W_\ell$,
where $(W_\ell, \ell\in \Np)$ are independent random variables  distributed as $W$
under $\P$.  Using Corollary \ref{cor:cvttq}, we get that:
\[
\lim_{\varepsilon\rightarrow 0}
\frac{\int_0^\varepsilon  \P(r_{h}(\tau^\theta)=\bt)\, w(\theta) \,
d\theta}{\P(0<W\leq \varepsilon)}=\P(r_{h}(\tau^0)=\bt). 
\]
This implies that:
\begin{equation}
   \label{eq:limrhte}
\lim_{\varepsilon\rightarrow 0}
\P(r_h(\tau)=\bt) \, \frac{\P_k (0<W\leq \varepsilon\mu^h )}{\P (0<W\leq
  \varepsilon)} = \P(r_{h}(\tau^0)=\bt). 
\end{equation}

On the other hand, we have:
\begin{align*}
\P(r_h(\tau)=\bt, 0<W\leq   \varepsilon) 
&=\P\left(r_h(\tau)=\bt, 0<\lim_{n\rightarrow\infty } \frac{Z_n}{c_n}\leq
  \varepsilon\right) \\
&=   \P(r_h(\tau)=\bt)\P_k\left( 0<\lim_{n\rightarrow\infty }
  \frac{Z_n}{c_{n+h}}\leq   \varepsilon\right) \\
&=   \P(r_h(\tau)=\bt)\P_k\left( 0< W\leq   \varepsilon\mu^{h}\right) ,
\end{align*}
where we used that $\lim_{n\rightarrow\infty } c_n/c_{n+h}=\mu^{-h}$
for the last equality. 
We deduce that:
\[
\P(r_h(\tau)=\bt\, |\, 0<W\leq   \varepsilon) 
=  \P(r_h(\tau)=\bt)\frac{\P_k\left( 0< W\leq
    \varepsilon\mu^{h}\right) }{\P(0<W\leq \varepsilon)}\cdot
\]
Then use \reff{eq:limrhte} and the characterization \reff{eq:cv-loi} of
the convergence  in $\Tf$ to conclude. 
\end{proof}

\section{Weak continuity   in   law   of   the  extremal   GW   trees   at
  $\theta=+\infty $}
\label{sec:tinfini}  
We are  in the setting  of Section \ref{sec:gen-gen}. The  continuity of
$(\hat    \tau^\theta,    \theta\in    [0,    +\infty    ))$    or    of
$(  \tau^\theta,  \theta\in   [0,  +\infty  ))$  at   infinity  is  more
involved.  And,  but  for  the  geometric  offspring  distribution,  see
\cite{abd:llfgt}, and  the Harris case, see Proposition
\ref{prop:cvtq-tinfty-gen}, we have a less precise result.

We first introduce a family of inhomogeneous  GW trees (whose offspring
distribution depends on the height of the vertex) which
converges in distribution toward a random tree $\tau^\infty$. These
trees are first constructed by absolute continuity with respect to the
distribution of $r_h(\tau)$ and can also be seen as two-type GW trees
generalizing the Kesten tree (see Subsection \ref{sec:tau-2-type}). The
tree $\tau^\infty$ will be a good candidate for the limit in distribution of $\tau^\theta$ as $\theta\to+\infty$ (as for the geometric case of \cite{abd:llfgt}) and the limit in distribution of $\tau_n$ when $a_n\gg c_n$. We prove only a weak limit in Subsection \ref{sec:lim-infty}.

\subsection{A family of inhomogeneous GW trees}
\label{sec:tau-lambda}

We set $\tilde \varphi(\lambda)=\varphi(-\lambda)=\E[\exp (\lambda
W)]$ for $\lambda\in \R$. 
Recall from Remark \ref{rem:def-lc} that:
$\lambda_c= \sup\{\lambda \in \R; \, \tilde \varphi(\lambda) <+\infty
\}\geq 0$, 
 $R_c=\tilde \varphi(\lambda_c/\mu)\geq 1$ is the convergence
radius of the generating function $\ff$ of $p$, see
\reff{eq:Rc-Lc}, and   $\ff(R_c)=+\infty $  if  and  only  if
$\tilde \varphi(\lambda_c)=+\infty $.
For $\lambda\in [-\infty , \lambda_c]$ and $h\in \N$, we set:
\begin{equation}
   \label{eq:def-ah}
\zeta_h(\lambda)=\tilde \varphi (\lambda \mu^{-h})=\E[\expp{\lambda
  \mu^{-h} W}]\in [\fc, +\infty ]. 
\end{equation}

  We  have for
$h,    \ell    \in   \Nz$    (with    an    obvious   convention    when
$\zeta_{h+\ell}(\lambda)=+\infty $) that:
\begin{equation}
   \label{eq:f(ah)}
\ff_h\left(\zeta_{h+\ell}(\lambda)\right)=\zeta_{\ell}(\lambda).
\end{equation}
The sequence
$(\zeta_h(\lambda), h\in \N)$ is bounded from below by $\fc$ and from above by $1$
if  $\lambda\leq   0$  and  from  below   by  $1$  and  from   above  by
$\zeta_0(\lambda)$  if $\lambda\geq 0$.  Notice that if $\lambda_c=+\infty $,
then we have  $\zeta_h(\lambda_c)=+\infty $ for all $h\in  \Nz$. Notice that
$\zeta_h(-\infty  )=\fc$ and thus $\zeta_h(-\infty  )=0 $ for all $h\in \Nz$ if
$\fa\geq  1$;  and $\zeta_h(-\infty  )>0  $  for  all $h\in  \Nz$  if
$\fa=0$.  We deduce that:
\begin{itemize}
   \item[(i)] $\zeta_h(\lambda)\in (0, +\infty )$ if and only if
     $\lambda\in(-\infty,\lambda_c)$,  or $\lambda=-\infty$ and $\fa=0$,
     or $\lambda=\lambda_c$ and
 $\zeta_0(\lambda_c)<+\infty$ (the latter
condition   being equivalent to $\ff(R_c)<+\infty $).
\item[(ii)] $\zeta_h(\lambda)=+\infty $ if and only if $\lambda=\lambda_c=+\infty $, or 
  $\lambda=\lambda_c$,  $h=0$ and $\zeta_0(\lambda_c)=+\infty $ (the latter
condition   being equivalent to $\ff(R_c)=+\infty $).
\item[(iii)] $\zeta_h(\lambda)=0$ if and only if $\lambda=-\infty $ and
  $\fa>0$. 
\end{itemize}

For  $h\in  \N$  and  $\lambda\in [-\infty,\lambda_c]$,  we  define  the
probability
$\tilde   p_h^{(\lambda)}   =\left(\tilde  p_h^{(\lambda)}   (k),   k\in
  \Ni\right)$
as  follows.  Recall  $\fa\in  \N$  and  $\fb\in  \bar  \N$  defined  in
\reff{eq:def-ab}.
\begin{itemize}
\item[(i)] If $\zeta_h(\lambda)\in (0, +\infty )$, we set for $k\in \N$: 
\begin{equation}
   \label{eq:def-tilde-p}
\tilde p_h^{(\lambda)} (k)=\frac{\zeta_{h+1}(\lambda)^k}{\zeta_h(\lambda)} p(k). 
\end{equation}
Thanks           to           \reff{eq:f(ah)},          we           get
$\sum_{k\in \Nz}  \tilde p_h^{(\lambda)} (k)=\ff(\zeta_{h+1}(\lambda))  / \zeta_h(\lambda)=1$,
so that  $\tilde p_h^{(\lambda)}  $ defined by  \reff{eq:def-tilde-p} is
 a probability distribution on $\Nz$.
\item[(ii)]     If     $\zeta_h(\lambda)=+\infty$     (which     implies
  $\lambda=\lambda_c$), we  set $\tilde p_h^{(\lambda)}$ the  Dirac mass
  at $\fb$.
\item[(iii)] If $\zeta_h(\lambda)=0$ (which implies $\lambda=-\infty $ and $\fa>0$), we set  $\tilde p_h^{(\lambda  )} $
  the Dirac mass at $\fa\in \N^*$. 
\end{itemize}
For     simplicity,    we     shall    write     $\tilde    p_h$     for
$\tilde p_h^{(\lambda)}  $, and specify  the value of $\lambda$  only if
needed.

\medskip

We define  $T^{(\lambda)}$ as a  GW tree with  offspring distribution
$\tilde   p_h$    at   generation   $h\in   \Nz$.     Since   the   case
$\lambda=\lambda_c$  will appears  later, we  will particularize  it and
write 
\begin{equation}
\tau^\infty=T^{(\lambda_c)}.
\end{equation}
If $\lambda_c=+\infty $,
then  the  tree  $\tau^\infty  $  is the  regular  $\fb$-ary  tree $\bt_\fb$,  where
$\fb\in  \bar \N$.   Notice  that  the root  of  $\tau^\infty  $ has  an
infinite number of children if and only if $\zeta_0(\lambda_c)=+\infty $ and
$\fb=\infty $, whereas  all the other individuals have  an infinite number
of children if and only if $\lambda_c=\fb=\infty $.

Notice  that  $T^{(\lambda)}$,  for $\lambda=0$,  is  distributed  as
$\tau$.  Since $\lambda\mapsto \tilde  p_h^{(\lambda)}$ is continuous on
the   set   of   probability   distributions   over   $\bar   \N$,   for
$\lambda \in  (-\infty ,  \lambda_c)$, we get  that the  distribution of
$T^{(\lambda)}$ is continuous for  the local convergence in distribution as
a function  of $\lambda$ over  $(-\infty ,  \lambda_c)$.  It is  easy to
check that  the tree-valued  random variable  $T^{(-\infty )}$  is in
fact  distributed  as  $\tau$  conditionally  on  the  extinction  event
$\ce=\{H(\tau)<+\infty \}$, that is $\tau^{0,0}$, if $\fa=0$  or  as the
regular  $\fa$-ary   tree $\bt_\fa$ if   $\fa\geq  1$.    In  the   latter  case,
$T^{(-\infty )}$ is thus defined  as the Kesten tree $\tau^0$ defined
in  Section  \ref{sec:Kesten}.   Taking  particular care  of  the  cases
$\fa\geq 1$ (when $\lambda$ goes  to $-\infty $), $0<\lambda_c<+\infty $
(when  $\lambda$ goes  to  $\lambda_c$), and  $\lambda_c=+\infty $  with
either $\fb$ finite  or not (when $\lambda$ goes to  $\lambda_c$), it is
not   difficult   to   check    that   the   probability   distributions
$\tilde p_h^{(\lambda)}$  over $\bar \N$  converge towards 
$\tilde
p_h^{(-\infty )}$ as $\lambda$  goes to $-\infty $, and to
$\tilde
p_h^{(\lambda_c)}$ as $\lambda$  goes to
 $\lambda_c$. This  implies the following result.

\begin{lem}
   \label{lem:cont-t-l}
Let $p$ be  a non-degenerate  super-critical offspring  distribution
with finite mean. 
We have that the family  $(T^{(\lambda)}, \lambda\in [-\infty , \lambda_c])$ is
continuous in distribution. 
\end{lem}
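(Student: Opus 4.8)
The plan is to reduce the lemma to a single fact about offspring distributions: for each $h\in\N$, the map $\lambda\mapsto\tilde p_h^{(\lambda)}$ is continuous from $[-\infty,\lambda_c]$ into the space of probability measures on $\bar\N=\N\cup\{\infty\}$, the latter carrying the topology of weak convergence on the one-point compactification, so that mass may concentrate at $\infty$. Granting this, fix $\lambda_n\to\lambda$ in $[-\infty,\lambda_c]$ and a finite window $F=\{u\in\cu;\ |u|\le h,\ u_i\le m\text{ for all }i\}$. Since $T^{(\lambda)}$ is an inhomogeneous Galton-Watson tree, the joint law of $(k_u(T^{(\lambda)}),\,u\in F)$ is the image, under one fixed continuous map, of a finite product of copies of $\tilde p_0^{(\lambda)},\dots,\tilde p_{h-1}^{(\lambda)}$: assign an independent offspring variable of the appropriate law to each of the finitely many potential vertices of the window, then read off which sub-vertices belong to the tree, using that $k\mapsto\ind_{\{i\le k\}}$ is continuous on $\N_1$ for each fixed $i$. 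Weak convergence on the compact space $\bar\N$ is preserved by finite products and by pushforward along continuous maps, so $\tilde p_\ell^{(\lambda_n)}\to\tilde p_\ell^{(\lambda)}$ for all $\ell$ gives convergence of every finite-dimensional marginal of $(k_u(T^{(\lambda_n)}),\,u\in\cu)$, i.e. $T^{(\lambda_n)}\to T^{(\lambda)}$ locally in distribution. The endpoints $\lambda=-\infty$ (where $T^{(-\infty)}$ is $\tau^{0,0}$ or $\bt_\fa$) and $\lambda=\lambda_c$ (where a condensation may occur at the root) need no separate treatment, since everything is phrased through the $\tilde p_h^{(\lambda)}$.

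The first ingredient is the continuity of $\lambda\mapsto\zeta_h(\lambda)=\tilde\varphi(\lambda\mu^{-h})=\E[\expp{\lambda\mu^{-h}W}]$ on $[-\infty,\lambda_c]$, with values in $[\fc,+\infty]$. On $(-\infty,\lambda_c)$ the Laplace transform $\tilde\varphi$ is finite, hence analytic and continuous. As $\lambda\searrow-\infty$, monotone convergence yields $\zeta_h(\lambda)\downarrow\P(W=0)=\fc=\zeta_h(-\infty)$. As $\lambda\nearrow\lambda_c$ when $\lambda_c<+\infty$, monotone convergence yields $\zeta_h(\lambda)\uparrow\E[\expp{\lambda_c\mu^{-h}W}]=\zeta_h(\lambda_c)\in[\fc,+\infty]$; this limit is finite unless $h=0$ and $\ff(R_c)=+\infty$, in which case $\zeta_0(\lambda)\to+\infty$, which is exactly continuity with values in $[\fc,+\infty]$.

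The second ingredient is the continuity of $\tilde p_h^{(\lambda)}$ itself. At any $\lambda_0$ with $\zeta_h(\lambda_0),\zeta_{h+1}(\lambda_0)\in(0,+\infty)$, formula \reff{eq:def-tilde-p} and the first ingredient give $\tilde p_h^{(\lambda)}(k)\to\tilde p_h^{(\lambda_0)}(k)$ for every $k\in\N$; since the limit is a probability measure on $\N$ no mass escapes, so this is weak convergence on $\bar\N$. This already covers all $\lambda_0\in(-\infty,\lambda_c)$, as well as $\lambda_0=-\infty$ when $\fa=0$ and $\lambda_0=\lambda_c$ when $\ff(R_c)<+\infty$. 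The remaining boundary cases rely on the local behaviour of $\ff$: (a) $\zeta_h(\lambda_0)=0$ forces $\lambda_0=-\infty$ and $\fa\ge1$; from $\ff(s)\sim p(\fa)s^{\fa}$ as $s\searrow0$ applied to $\zeta_h=\ff(\zeta_{h+1})$ one gets $\tilde p_h^{(\lambda)}(\fa)\to1$ and $\tilde p_h^{(\lambda)}(k)\to0$ for $k>\fa$, hence $\tilde p_h^{(\lambda)}\to\delta_\fa=\tilde p_h^{(-\infty)}$; (b) $\zeta_h(\lambda_0)=+\infty$ with $\lambda_c<+\infty$ forces $h=0$, $\lambda_0=\lambda_c$, $\fb=+\infty$, $\ff(R_c)=+\infty$, and then $\zeta_1(\lambda)\to R_c<+\infty$ while $\zeta_0(\lambda)\to+\infty$, so $\tilde p_0^{(\lambda)}(k)\to0$ for each $k\in\N$ and all the mass escapes, hence $\tilde p_0^{(\lambda)}\to\delta_\infty=\tilde p_0^{(\lambda_c)}$; (c) $\lambda_0=\lambda_c=+\infty$, where all $\zeta_h(\lambda)\to+\infty$ and, using $\ff(s)\sim p(\fb)s^{\fb}$ as $s\to+\infty$ if $\fb<\infty$, or $\ff(s)\ge p(j)s^{j}$ with $j>k$ arbitrary if $\fb=\infty$, one obtains $\tilde p_h^{(\lambda)}\to\delta_\fb=\tilde p_h^{(\lambda_c)}$. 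Combining the two ingredients with the reduction proves the lemma. The main obstacle is this last step: it is the only genuinely delicate point, since one must control precisely how $\tilde p_h^{(\lambda)}$ degenerates to a Dirac mass at $\fa$, a Dirac mass at $\fb$, or escapes to $\infty$, through the asymptotics of the generating function $\ff$ near $0$ and near its radius of convergence $R_c$.
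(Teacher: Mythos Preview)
Your proposal is correct and takes essentially the same approach as the paper: reduce to the continuity of $\lambda\mapsto\tilde p_h^{(\lambda)}$ on $[-\infty,\lambda_c]$ as probability measures on $\bar\N$, and check the boundary cases $\lambda\to-\infty$ (via $\ff(s)\sim p(\fa)s^{\fa}$ when $\fa\ge1$) and $\lambda\to\lambda_c$ (splitting according to whether $\lambda_c$ is finite and whether $\ff(R_c)$ blows up). The paper only sketches these verifications in the paragraph preceding the lemma, whereas you spell them out; your handling of the reduction via a continuous map from a finite product of $\bar\N$-valued variables is a clean way to justify the implication that the paper leaves implicit.
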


Let $\lambda\in [-\infty , \lambda_c]$. If $\zeta_0(\lambda)\in (0, +\infty )$, then for   $h\in   \N^*$  and
$\bt \in \Tf^{(h)}$, we have:
\begin{equation}
   \label{eq:def-t-t-gen}
\P(r_{h}(T^{(\lambda)})=\bt)= \prod_{u\in r_{h-1}(\bt)}  \tilde
p_{|u|}(k_u(\bt)) .
\end{equation}
If  $\lambda_c<+\infty$ and $\zeta_0(\lambda_c)=+\infty$,    then   for   $h\in   \Np$,    $k_0\in   \Np$   and
$\bt \in \T_{k_0}^{(h)}$, we have:
\begin{equation}
   \label{eq:def-t-t-infini}
\P(r_{h, k_0}(\tau^\infty)=\bt)= \prod_{u\in r_{h-1}(\bt)^*}  \tilde
p_{|u|}(k_u(\bt)) ,
\end{equation}
where    we    recall    that    for     a    tree    $\bs$    we    set
$\bs^*=\bs\setminus\{\partial\}$.               Remark             that
a.s. $T^{(\lambda)}\in \Tf$ if and only if $\zeta_0(\lambda)$ or $\fb$ is
finite,  and that  a.s. $\tau^\infty\in\Tf^*$  if and  only if  $\zeta_1(\lambda)$ or
$\fb$ is finite.  \medskip

We give a representation of the distribution of $T^{(\lambda)}$ as the 
distribution of $\tau$ with a martingale weight. The proof of the
following lemma is elementary and thus left to the reader. 

\begin{lem}
   \label{lem:t-t=mart-t-gen}
   Let  $p$ be  a non-degenerate  super-critical offspring
   distribution with finite mean. For $\lambda\in [-\infty , \lambda_c]$ such that
   $\zeta_0(\lambda)\in (0, +\infty )$, 
$h\in
\Np$ and $\bt \in \Tf^{(h)}$, we have with $k=z_h(\bt)$:
\begin{equation}
   \label{eq:t-t=mart-t-gen}
\P\left(r_{h}(T^{(\lambda)})=\bt\right)= 
\frac{\zeta_h(\lambda)^k}{\zeta_0(\lambda)} \,\,  \P\left(r_{h}( \tau)=\bt\right).
\end{equation}
For $\lambda_c<+\infty$ and $\zeta_0(\lambda_c)=+\infty$,   $h\in
\Np$, $k_0\in \Np$ and $\bt \in \T_{k_0}^{(h)}$, we have with $k=z_h(\bt)$:
\begin{equation}
   \label{eq:t-t=mart-t-infini}
\P\left(r_{h, k_0}(T^{(\lambda_c)})=\bt\right)= 
\frac{\zeta_h(\lambda_c)^k}{\zeta_1(\lambda_c)^{k_0}}
\,\,  \frac{\P\left(r_{h}(
  \tau)=\bt\right)}{ p(k_0)}\cdot
\end{equation}
\end{lem}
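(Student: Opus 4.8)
The plan is to read off the distributions of $T^{(\lambda)}$ from the explicit product formulas \reff{eq:def-t-t-gen} and \reff{eq:def-t-t-infini}, substitute the definition \reff{eq:def-tilde-p} of the generation-dependent offspring laws, and recognise the resulting product of factors $\zeta_{|u|+1}(\lambda)/\zeta_{|u|}(\lambda)$ as a telescoping product once the vertices of $r_{h-1}(\bt)$ are grouped by generation.

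For \reff{eq:t-t=mart-t-gen}, I would first check that under the hypothesis $\zeta_0(\lambda)\in(0,+\infty)$ the values $0$ and $+\infty$ do not occur for any $\zeta_j(\lambda)$, $j\in\N$: by the classification (i)--(iii) preceding \reff{eq:def-tilde-p}, the value $+\infty$ would force $\lambda=\lambda_c=+\infty$ (excluded by $\zeta_0(\lambda)<+\infty$) or $\lambda=\lambda_c$ with $j=0$ (excluded for $j\ge 1$), while the value $0$ would force $\lambda=-\infty$ and $\fa\ge1$, hence $\zeta_0(\lambda)=0$. So \reff{eq:def-tilde-p} applies at every generation, and plugging $\tilde p_{|u|}(k)=\zeta_{|u|+1}(\lambda)^{k}\zeta_{|u|}(\lambda)^{-1}p(k)$ into \reff{eq:def-t-t-gen} gives
\[
\P(r_h(T^{(\lambda)})=\bt)=\Bigl(\prod_{u\in r_{h-1}(\bt)}p(k_u(\bt))\Bigr)\ \prod_{u\in r_{h-1}(\bt)}\frac{\zeta_{|u|+1}(\lambda)^{k_u(\bt)}}{\zeta_{|u|}(\lambda)}.
\]
The first factor is $\P(r_h(\tau)=\bt)$. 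In the second, collecting generations and using $\sum_{|u|=j}k_u(\bt)=z_{j+1}(\bt)$ and $\sharp\{u:|u|=j\}=z_j(\bt)$, one obtains $\prod_{j=0}^{h-1}\zeta_{j+1}(\lambda)^{z_{j+1}(\bt)}\zeta_j(\lambda)^{-z_j(\bt)}$, which telescopes to $\zeta_h(\lambda)^{z_h(\bt)}/\zeta_0(\lambda)^{z_0(\bt)}=\zeta_h(\lambda)^{k}/\zeta_0(\lambda)$, since $z_0(\bt)=1$ and $z_h(\bt)=k$.

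For \reff{eq:t-t=mart-t-infini} the argument is identical but with the root excluded: when $\lambda_c<+\infty$ and $\zeta_0(\lambda_c)=+\infty$, classification (ii) gives $\zeta_j(\lambda_c)\in(0,+\infty)$ for all $j\ge1$, so \reff{eq:def-tilde-p} is available at every generation occurring in \reff{eq:def-t-t-infini}, namely $1,\dots,h-1$. The same grouping makes the product over $r_{h-1}(\bt)^*$ telescope from generation $1$ to generation $h$, yielding $\zeta_h(\lambda_c)^{z_h(\bt)}/\zeta_1(\lambda_c)^{z_1(\bt)}=\zeta_h(\lambda_c)^{k}/\zeta_1(\lambda_c)^{k_0}$ since $z_1(\bt)=k_\partial(\bt)=k_0$; and the leftover $p$-factors give $\prod_{u\in r_{h-1}(\bt)^*}p(k_u(\bt))=\P(r_h(\tau)=\bt)/p(k_0)$, because $\P(r_h(\tau)=\bt)=p(k_0)\prod_{u\in r_{h-1}(\bt)^*}p(k_u(\bt))$. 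This is exactly \reff{eq:t-t=mart-t-infini}.

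There is no genuine obstacle; the computation is a telescoping product. The only points deserving care are (a) verifying, from the classification (i)--(iii), that the degenerate values $0$ and $+\infty$ of $\zeta_j(\lambda)$ are ruled out at the relevant generations under each hypothesis, so that \reff{eq:def-tilde-p} may be used throughout; and (b) tracking the root, which in the $\lambda=\lambda_c$, $\zeta_0(\lambda_c)=+\infty$ case is not reweighted — its offspring count is absorbed into the $\zeta_1(\lambda_c)^{k_0}$ and the $p(k_0)$ appearing in the denominator — which is precisely why the restriction operator $r_{h,k_0}$ and the space $\T_{k_0}^{(h)}$ enter there rather than $r_h$ and $\Tf^{(h)}$.
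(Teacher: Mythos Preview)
Your proposal is correct and is precisely the elementary telescoping computation the paper has in mind; indeed, the paper states that the proof ``is elementary and thus left to the reader'' and gives no further details. Your care in checking via the classification (i)--(iii) that $\zeta_j(\lambda)\in(0,+\infty)$ at every generation entering the product (and the handling of the root in the $\zeta_0(\lambda_c)=+\infty$ case) is exactly what is needed to justify using \reff{eq:def-tilde-p} throughout.
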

Notice that $\lambda_c<+\infty$ and $\zeta_0(\lambda_c)=+\infty$ occurs in the case of the
geometric offspring distribution studied in \cite{abd:llfgt}.

\subsection{A family of two-type  GW trees}
\label{sec:tau-2-type}
We    keep   notations    from   Section    \ref{sec:tau-lambda}.    For
$\lambda\in   (-\infty  ,   \lambda_c]$,  we   give  a   description  of
$T^{(\lambda)}$ using a two-type GW tree $\hat T^{(\lambda),\re}$.
\medskip

For     $h\in    \Nz$     and
$\lambda\in (-\infty , \lambda_c]$ such  that $\zeta_h(\lambda)$ is finite, we define
the probability distribution
$\hat  p_h^{(\lambda), \re}  =(\hat p_h^{(\lambda),  \re}(\ell), \ell\in
\N^*)$ by:
\begin{equation}
   \label{eq:def-p-l-e}
\hat  p_h^{(\lambda), \re}(\ell)=\frac{\left(\zeta_{h+1}(\lambda)-\fc\right)^\ell}{\left(\zeta_h(\lambda)-\fc\right)}
\frac{\ff^{(\ell)}(\fc)}{\ell!}\cdot
\end{equation}

Notice that  $\hat p_h^{(\lambda), \re}$  is indeed a probability  as by
the  Taylor-Lagrange expansion  at $\fc$  of $\ff$,  we have,  using
\reff{eq:f(ah)}                                                     that
$\sum_{\ell\ge 1}           \hat            p_h^{(\lambda),
  \re}(\ell)=(\ff(\zeta_{h+1}(\lambda))-\fc)/(\zeta_h(\lambda)-\fc)=1$.
For $\ell\in  \Nz^*$ such  that $\ff^{(\ell)}(\fc)>0$, we  also recall
the       $\ell$th-size       biased      probability       distribution
$\fp_{[\ell]}$, see Definition \reff{eq:fp-2}, with the convention that 
$\fp_{[\ell]}$ is the Dirac mass
at    $\ell$ if $\fc=0$. 
\medskip

We define  a two type  random tree  $\hat T^{(\lambda), \re} $ in the
next definition and write $T^{(\lambda),  \re}=\ske(\hat T^{(\lambda),  \re})$ for
the tree $\hat T^{(\lambda), \re} $ when one forgets the  types of the
vertices of $\hat T ^{(\lambda),  \re} $.

\begin{defi}\label{def:tau-lambda-e}
Let $p$ be a non-degenerate super-critical offspring distribution with finite mean.   Let     $\lambda\in    (-\infty     ,     \lambda_c]$.    The  labeled   tree
  $\hat T^{(\lambda), \re} $ is a two-type random tree whose vertices
  are  either  of type  $\rs$  (for  survivor)  or  of type  $\re$  (for
  extinct).

\begin{itemize}
   \item[(i)] 
If $\zeta_0(\lambda)=\zeta_1(\lambda)=+\infty $, then
$\hat  T^{(\lambda), \re}$  is  the regular  $\fb$-ary  tree and  all
its
vertices  are  of type  $\rs$  (and  thus there  is  no  vertex of  type
$\re$).

\item[(ii)] If  $\zeta_1(\lambda)<+\infty$, the
random tree $\hat T^{(\lambda), \re} $ is defined as follows:
\begin{itemize}
\item For  a vertex,  the number  of offsprings of  each type  and their
  positions  depend, conditionally  on  the vertices  of  lower or  same
  height, only on its own type (branching property).
   \item The root is of type $\rs$ with
     probability $(\zeta_0(\lambda)-\fc)/\zeta_0(\lambda)$. This probability is set to 1 if
     $\zeta_0(\lambda)=+\infty $. 
   \item A  vertex of type  $\re$ produces  only vertices of  type $\re$
     with sub-critical  offspring distribution $\fp$. 
   \item Recall that  only $\zeta_0(\lambda)$ might be infinite.  Let  $h\in \N$ such
     that      $\zeta_h(\lambda)$       is      finite.        A      vertex
     $u\in  \hat T^{(\lambda),  \re} $  at  level $h$  of type  $\rs$
     produces $\kappa^\rs(u)$  vertices of  type $\rs$  with probability
     distribution  $\hat  p_h^{(\lambda),  \re}  $  and  $\kappa^\re(u)$
     vertices    of   type    $\re$   such    that   conditionally    on
     $\kappa^\rs(u)=s_u\geq                                          1$,
     $k_u(  T^{(\lambda),  \re} )=\kappa^\rs(u)+\kappa  ^\re(u)$  has
     distribution $\fp_{[s_u]}$,  defined in \reff{eq:fp-2},  and the
     $s_u$  individuals of  type $\rs$  are chosen  uniformly at  random
     among the $k_u( T^{(\lambda),  \re})$ children.  More precisely,
     as   for   Definition   \ref{def:def-tau-theta},   we   denote   by
     $\cs_h=\{u\in T^{(\lambda), \re};\, |u|=h  \text{ and $u$ is
       of     type    $\rs$}\}$     the    set     of    vertices     of
     $\hat T^{(\lambda), \re}$  with type $\rs$ at  level $h\in \Nz$,
     and  we  have for $u\in \cs_h$: for all   $k_u\in  \Np$, 
     $s_u\in \{1,  \ldots, k_u\}$, and  $A_u\subset \{1, \ldots, k_u  \}$ such
     that $\sharp A_u=s_u$,
\begin{multline*}
\P\left(\kappa^\rs(u)+\kappa^\re(u)=k_u \text{ and } \cs_{h+1} \cap\{u1,
    \ldots, uk_u\}=uA_u \,|\,
    r_h(T^{(\lambda), \re}), \cs_h\right) 
\\
= \hat p_h^{(\lambda), \re}(s_u) \inv{\binom {k_u}{s_u}}
  \fp_{[s_u]}(k_u)= \frac{(\zeta_{h+1}(\lambda) -
  \fc)^{s_u}}{\zeta_h(\lambda)-\fc} \, \fc^{k_u-s_u}\, p(k_u).
\end{multline*}
	\item If $\zeta_0(\lambda)=+\infty $, then the
root, which is  of type $\rs$ a.s., has infinitely  many children of types
$\rs$ and  $\re$, each  children being, independently  from the  other, of
type $\rs$  with probability $(\zeta_1(\lambda)-\fc)/\zeta_1(\lambda)$.  That is for  $k_0\in \Np$
and $S_1\subset \{1, \ldots, k_0\}$:
\[
\P\left(\cs_1 \cap \{1, \ldots, k_0\} =S_1\right) =
\left(\frac{\zeta_1(\lambda)-\fc}{\zeta_1(\lambda)} \right)^{\sharp S_1}
\left(\frac{\fc}{\zeta_1(\lambda)}\right)^{k_0 -\sharp S_1}. 
\]
\end{itemize}
\end{itemize}
\end{defi}

Unless $\fa\geq 1$ or $\zeta_0(\lambda)=\zeta_1(\lambda)=+\infty $, conditionally on the fact that
the root  is of  type $\rs$,  a.s.  there exists  an infinite  number of
vertices of type  $\rs$ and of type $\re$.   By construction individuals
of type $\rs$  have a progeny which does not  suffer extinction, whereas
individuals  of  type  $\re$  have a  finite  progeny.   Informally  the
individuals of type $\rs$ in  $\hat T^{(\lambda), \re}$, if any, form
a backbone, on which are grafted, if
$\fa=0$, independent GW trees distributed as $\tau$ conditionally on the
extinction event  $\ce$.  This  is in  a sense  a generalization  of the
Kesten  tree, where  the backbone is reduced to an   infinite  spine  in the  case
$\fa\leq 1$.  We stress out that $\hat T^{(\lambda), \re}$, truncated
at level  $h$ can be  recovered from $r_{h}(T^{(\lambda),  \re})$ and
$\cs_h$ as all the ancestors of a vertex of type $\rs$ is also of a type
$\rs$ and  a vertex  of type  $\rs$ has  at least  one children  of type
$\rs$.  \medskip

The following result states that the random tree $T^{(\lambda)}$ can
be seen as the skeleton of a two-type GW tree. 

\begin{lem}
   \label{lem:=distrib3}
   Let $p$  be a  non-degenerate super-critical  offspring distribution
   with finite mean.
   Then,   for   $\lambda\in   (-\infty    ,   \lambda_c]$,   the   tree
   $T^{(\lambda),\re}$ is distributed as $T^{(\lambda)}$.
\end{lem}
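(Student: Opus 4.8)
The plan is to compare the distributions of $r_h(T^{(\lambda),\re})$ and $r_h(T^{(\lambda)})$ for every $h\in\N^*$, using the characterization of tree distributions via restrictions recalled in Section \ref{sec:cv-of-t}. We must distinguish the same three cases appearing in the definitions of $\tilde p_h^{(\lambda)}$ and $\hat T^{(\lambda),\re}$: first the degenerate cases $\fa\ge 1$ (so $\fc=0$), where both trees reduce to $\bt_\fa$ or to the $\re$-free backbone with deterministic offspring, and $\zeta_0(\lambda)=\zeta_1(\lambda)=+\infty$, where both are $\bt_\fb$; these are immediate from the definitions. The substance is the generic case $\fc>0$ with $\zeta_1(\lambda)<+\infty$, which I would split further according to whether $\zeta_0(\lambda)$ is finite (then $T^{(\lambda)}\in\Tf$ and we use \reff{eq:cv-loi}-type restrictions $r_h$) or $\zeta_0(\lambda)=+\infty$ (then $T^{(\lambda)}\in\Tf^*$, the root has infinite degree, and we work with $r_{h,k_0}$).

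For the finite case, fix $h\in\N^*$ and $\bt\in\Tf^{(h)}$ with $k=z_h(\bt)$. As in the proof of Lemma \ref{lem:=distrib-w}, I would sum over the possible sets $S_h$ of type-$\rs$ vertices at level $h$: using the branching property for $\re$-vertices and the identity displayed inside Definition \ref{def:tau-lambda-e}, the probability $\P(r_h(T^{(\lambda),\re})=\bt,\,\cs_h=S_h)$ telescopes. Each level contributes a factor $\prod_{u\in S_\ell}(\zeta_{\ell+1}(\lambda)-\fc)^{s_u}/(\zeta_\ell(\lambda)-\fc)\cdot\fc^{k_u-s_u}p(k_u)$ together with $\fp(k_u(\bt))=\fc^{k_u(\bt)-1}p(k_u(\bt))$ on the grafted $\re$-subtrees; regrouping the $p(k_u(\bt))$ over $u\in r_{h-1}(\bt)$ reconstitutes $\P(r_h(\tau)=\bt)$, the powers of $\fc$ combine via $\sum_{u\in r_{h-1}(\bt)}(k_u(\bt)-1)=k-1$ and $\sum_{u\in\ca}(s_u-1)=\sharp S_h-1$, and the chain of $(\zeta_{\ell+1}-\fc)/(\zeta_\ell-\fc)$ factors telescopes to $(\zeta_h(\lambda)-\fc)^{\sharp S_h}/(\zeta_0(\lambda)-\fc)$, times the extra $(\zeta_0(\lambda)-\fc)/\zeta_0(\lambda)$ for the root being of type $\rs$. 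Thus $\P(r_h(T^{(\lambda),\re})=\bt,\,\cs_h=S_h)$ depends on $S_h$ only through $i=\sharp S_h$. Summing over $S_h$ with $\sharp S_h=i$ (a binomial factor $\binom{k}{i}$) and then over $i$ from $1$ to $k$, the binomial theorem gives $\sum_{i=1}^k\binom{k}{i}\fc^{k-i}(\zeta_h(\lambda)-\fc)^i=\zeta_h(\lambda)^k-\fc^k$; but $\fp$-grafting already contributed a $\fc^{k}$-type bookkeeping, and after combining one obtains exactly $\zeta_h(\lambda)^k/\zeta_0(\lambda)\cdot\P(r_h(\tau)=\bt)$, which by \reff{eq:t-t=mart-t-gen} is $\P(r_h(T^{(\lambda)})=\bt)$.

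For the infinite-root case $\zeta_0(\lambda)=+\infty$, $\lambda_c<+\infty$, I would instead fix $h,k_0\in\N^*$ and $\bt\in\T_{k_0}^{(h)}$ and compute $\P(r_{h,k_0}(T^{(\lambda),\re})=\bt)$: the root is $\rs$ a.s., each of its $k_0$ children is independently $\rs$ with probability $(\zeta_1(\lambda)-\fc)/\zeta_1(\lambda)$, and below level $1$ the same telescoping applies starting from $\zeta_1$ rather than $\zeta_0$; the binomial sum over the type pattern of the $k_0$ first-generation vertices and over $\cs_h$ then yields $\zeta_h(\lambda)^k/\zeta_1(\lambda)^{k_0}\cdot\P(r_h(\tau)=\bt)/p(k_0)$, matching \reff{eq:t-t=mart-t-infini}. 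The main obstacle is purely bookkeeping: keeping the three nested products (over $\re$-grafts, over backbone levels, over the telescoping $\zeta$-ratios) aligned so that the powers of $\fc$ and the binomial sums collapse correctly — this is the same combinatorial accounting as in Lemma \ref{lem:=distrib-w}, only with $\mu^h w^{*i}(\mu^h\theta)/w(\theta)$ replaced by the simpler martingale weight $\zeta_h(\lambda)^k/\zeta_0(\lambda)$, so no genuinely new difficulty arises beyond careful case separation.
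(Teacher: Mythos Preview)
Your approach is essentially the paper's: compute $\P(r_h(T^{(\lambda),\re})=\bt,\ \cs_h=S_h)$ by telescoping the level-by-level factors, observe it depends only on $\sharp S_h$, sum via the binomial theorem, and match with \reff{eq:t-t=mart-t-gen} (or \reff{eq:t-t=mart-t-infini} when $\zeta_0(\lambda)=+\infty$). Two small presentational points: in the $\fc=0$ case the backbone offspring is not ``deterministic'' but equals $\tilde p_h^{(\lambda)}$ directly since $f^{(\ell)}(0)/\ell!=p(\ell)$ (this is exactly the paper's one-line argument), and your hand-wave about the missing $\fc^k$ term is handled more cleanly in the paper by including $i=0$ in the sum --- the root may be of type $\re$, in which case the whole tree is a $\fp$-GW tree, and the formula $\cc_{S_h}=\P(r_h(\tau)=\bt)\,\fc^{k-\sharp S_h}(\zeta_h(\lambda)-\fc)^{\sharp S_h}/\zeta_0(\lambda)$ remains valid for $S_h=\emptyset$.
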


\begin{proof}
  Let $\lambda\in  (-\infty , \lambda_c]$.   We first consider  the case
  $\zeta_0(\lambda)$ finite.   We assume $\fc>0$ (or  equivalently $\fa=0$).
  Let       $h\in        \Np$,       $\bt\in        \Tf^{(h)}$       and
  $S_h\subset       \{u\in       \bt;      \,       |u|=h\}$.        Set
  $k=z_h(\bt)=\sharp\{u\in \bt;  \, |u|=h\}$.   In order to  shorten the
  notations,     we     set     $\ca=S_h\bigcup \anc(S_h)$.     We     set,     for
  $\ell\in \{0,  \ldots, h-1\}$, $S_\ell=\{u\in \ca,  \, |u|=\ell\}$ the
  vertices at level $\ell$ which have  at least one descendant in $S_h$.
  For $u\in r_{h-1}(\bt)$, we set $s_u(\bt)=\sharp(\ca \bigcap u\Np)$, the number
  of  children of  $u$  having  descendants in  $S_h$.   We recall  that
  $\hat T^{(\lambda), \re}$  truncated at level $h$  can be recovered
  from   $r_{h}(T^{(\theta),   \re})$   and  $\cs_h$.    We   compute
  $\cc_{S_h}=\P(r_{h} (T^{(\lambda),  \re})=\bt ,\,  \cs_h=S_h)$.  We
  have by construction if $\sharp S_h>0$:
\begin{align}
\nonumber
   \cc_{S_h}
&= \frac{\zeta_0(\lambda)- \fc}{\zeta_0(\lambda)} \left[\prod_{u\in r_{h-1}(\bt),\, u\not\in \ca} \fp(k_u(\bt))
  \right]\, 
 \left[\prod_{u\in  \ca} \frac{\left(\zeta_{|u|+1}(\lambda) -
  \fc\right)^{s_u(\bt)}}{\zeta_{|u|}(\lambda)-\fc} \, 
  \fc^{k_u(\bt) - s_u(\bt)}\, p(k_u(\bt))\right]\\
\label{eq:calcul-Cs-ne}
&=\P(r_h(\tau) =\bt) \,
\fc^ {k - \sharp S_h}  \, \frac{\left(\zeta_{h}(\lambda) - \fc\right)^{\sharp S_h}}{\zeta_0(\lambda)}, 
\end{align}
where    we    used    that    for     a    tree    $\bs$,    we    have
$\sum_{u\in r_{h-1}(\bs)} k_u(\bs) - 1=  z_h(\bs) -1$ and that $\bs=\ca$
is tree-like with $z_h(\bs)=\sharp S_h$.  It is elementary to check that
Formula \reff{eq:calcul-Cs-ne} is also true when $S_h$ is empty, and the
root  is  thus  of  type  $\re$.   Since  $\cc_{S_h}$  depends  only  of
$\sharp S_h$, we shall write $\cc_{\sharp S_h}$ for $\cc_{S_h}$. We get:
\[
\P(r_{h} (T^{(\lambda), \re})=\bt )
= \sum_{i=0}^k   \binom{k}{i} \, \cc_i
=\P(r_h(\tau)=\bt)\frac{\zeta_h(\lambda)^k}{\zeta_0(\lambda)}\cdot
\]
We deduce  from \reff{eq:t-t=mart-t-gen} that $T^{(\lambda),\re}$ and
$T^{(\lambda)} $ have
the same distribution.  \medskip

The case $\zeta_0(\lambda)$  finite and $\fc=0$ (i.e. $\fa>0$) is  clear, as there is
no  vertex  of  type  $\re$  in $\hat  T^{(\lambda),  \re}$  and  the
offspring  distribution of  individuals of  type $\rs$  at level  $h$ in
$\hat T^{(\lambda), \re}$ given by \reff{eq:def-p-l-e}, that is:
\[
\hat  p_h^{(\lambda), \re}(\ell)=\frac{\left(\zeta_{h+1}(\lambda)-\fc\right)^\ell}{(\zeta_h(\lambda)-\fc)}
\frac{\ff^{(\ell)}(\fc)}{\ell!}
= \frac{\zeta_{h+1}(\lambda)^\ell}{\zeta_h(\lambda)} p(\ell),
\]
coincides with the offspring
distribution $\tilde p_h^{(\lambda)} (k)$ given in
\reff{eq:def-tilde-p} of individuals at level $h$ in $T^{(\lambda)}$. 
\medskip

We  consider  the  case  $\zeta_0(\lambda)=+\infty  $,
$\zeta_1(\lambda)$  finite and $\fc>0$.  Let 
$k_0,       h\in      \Np$,       $\bt\in      \T^{(h)}_{k_0}$       and
$S_h\subset       \{u\in       \bt;        \,       |u|=h\}$.        Set
$k=z_h(\bt)=\sharp\{u\in \bt; \, |u|=h\}$. Arguing  as in the case $\zeta_0(\lambda)$
finite, we get if $\fc>0$:
\[
\cc_{S_h}=\P(r_{h, k_0} (T^{(\lambda),  \re})=\bt ,\,  \cs_h=S_h)
= \frac{\P(r_{h} (\tau)=\bt)}{p(k_0)}\,  \fc^{k-\sharp S_h}
\frac{(\zeta_h(\lambda)-\fc)^{\sharp S_h}}{\zeta_1(\lambda)^{k_0}}, 
\]
and thus, writing $\cc_{\sharp S_h}$
for $\cc_{S_h}$ as the latter quantity depends only on $\sharp S_h$:
\[
\P(r_{h, k_0} (T^{(\lambda), \re})=\bt )
= \sum_{i=0}^k   \binom{k}{i} \, \cc_i
=\frac{\P(r_{h} (\tau)=\bt)}{p(k_0)}\,
\frac{\zeta_h(\lambda)^k}{\zeta_1(\lambda)^{k_0}}\cdot 
\]
Then use \reff{eq:t-t=mart-t-infini} to conclude. The sub-case $\fc=0$ is
handled in the same way as when $\zeta_0(\lambda)$ is finite. 
\medskip

Eventually,   we  consider   the  case   $\zeta_1=+\infty  $.   In  this   case
$T^{(\lambda),\re}$  and  $T^{(\lambda)}$   are  by definition regular  $\fb$-ary
trees, and they are thus a.s. equal.
\end{proof}

 For $\lambda>-\infty
$, we    denote    by
$T^{(\lambda),  *}$ the  tree-valued random  variable distributed  as
$ T^{(\lambda)}$ conditionally on the  non extinction event (which is
distributed   as  the   skeleton  of   $  \hat   T^{(\lambda),  \re}$
conditionally on the  root being of type $\rs$). Recall  the Kesten tree
$\tau^0$ defined in Section \ref{sec:Kesten}.

\begin{lem}
   \label{lem:cont-hat-t-l}
Let $p$ be  a non-degenerate  super-critical offspring  distribution
with finite mean. 
We have the following convergence in distribution:
\[
T^{(\lambda), *}\; \xrightarrow[\lambda\searrow - \infty
]{\textbf{(d)}}\;  \tau^0.
\]
\end{lem}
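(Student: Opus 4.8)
The plan is to reduce the statement, via the characterization \reff{eq:cv-loi}, to the convergence of finitely many coordinates: since $\tau^0$ has a.s.\ infinite height, it is enough to prove that for every $h\in\N^*$ and every $\bt\in\Tf^{(h)}$ one has $\lim_{\lambda\searrow-\infty}\P(r_h(T^{(\lambda),*})=\bt)=\P(r_h(\tau^0)=\bt)$. I would treat separately the cases $\fc=0$ (equivalently $\fa\ge1$) and $\fc>0$ (equivalently $\fa=0$).

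When $\fc=0$ the argument is short: then $p(0)=0$, so for $\lambda<0$ the offspring distributions $\tilde p_h^{(\lambda)}$ charge no mass at $0$, the tree $T^{(\lambda)}$ is a.s.\ non-extinct, and hence $T^{(\lambda),*}$ has the same law as $T^{(\lambda)}$. Lemma \ref{lem:cont-t-l} then gives $T^{(\lambda),*}\xrightarrow{(d)}T^{(-\infty)}$ as $\lambda\searrow-\infty$, and $T^{(-\infty)}=\bt_\fa=\tau^0$ by the discussion preceding that lemma.

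For $\fc>0$, I would first obtain an explicit formula for $\P(r_h(T^{(\lambda),*})=\bt)$ when $\lambda<0$. Fixing such a $\lambda$, so that every $\zeta_h(\lambda)\in[\fc,1]$ is finite, and writing $k=z_h(\bt)$, I would sum the identity \reff{eq:calcul-Cs-ne} over the non-empty subsets $S_h\subset\{u\in\bt;\,|u|=h\}$: using that the root of $\hat T^{(\lambda),\re}$ is of type $\rs$ exactly when $\cs_h\ne\emptyset$, together with the elementary identity $\sum_{i=1}^{k}\binom{k}{i}\fc^{k-i}(\zeta_h(\lambda)-\fc)^i=\zeta_h(\lambda)^k-\fc^k$, this gives $\P(r_h(T^{(\lambda),\re})=\bt,\ \text{root of type }\rs)=\P(r_h(\tau)=\bt)\,(\zeta_h(\lambda)^k-\fc^k)/\zeta_0(\lambda)$. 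Dividing by the probability $(\zeta_0(\lambda)-\fc)/\zeta_0(\lambda)$ that the root is of type $\rs$ (Definition \ref{def:tau-lambda-e}) and using Lemma \ref{lem:=distrib3}, I obtain the clean formula $\P(r_h(T^{(\lambda),*})=\bt)=\P(r_h(\tau)=\bt)\,(\zeta_h(\lambda)^k-\fc^k)/(\zeta_0(\lambda)-\fc)$. It then remains to compute the limit of this ratio as $\lambda\searrow-\infty$.

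The main obstacle is precisely this limit, i.e.\ the asymptotics of the iterates $\zeta_h(\lambda)$ near the fixed point $\fc$. Writing $\epsilon_h(\lambda)=\zeta_h(\lambda)-\fc=\E[\expp{\lambda\mu^{-h}W}]-\P(W=0)$, dominated convergence gives $\epsilon_h(\lambda)\downarrow0$ as $\lambda\searrow-\infty$. Since $\fc<1\le R_c$, $\ff$ is $C^1$ near $\fc$ and $\ff'(\fc)=\fm\in(0,1]$ is positive ($p$ being non-degenerate super-critical with $\fc>0$); the fixed-point relation $\ff(\fc+\epsilon_{h+1}(\lambda))=\fc+\epsilon_h(\lambda)$ from \reff{eq:f(ah)} then gives $\epsilon_h(\lambda)/\epsilon_{h+1}(\lambda)\to\fm$, hence by telescoping $\epsilon_h(\lambda)/\epsilon_0(\lambda)\to\fm^{-h}$. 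Combining this with $\zeta_h(\lambda)^k-\fc^k=k\fc^{k-1}\epsilon_h(\lambda)+o(\epsilon_h(\lambda))$ yields $\lim_{\lambda\searrow-\infty}\P(r_h(T^{(\lambda),*})=\bt)=k\fc^{k-1}\fm^{-h}\,\P(r_h(\tau)=\bt)$, which equals $\P(r_h(\tau^0)=\bt)$ by \reff{eq:ph0}. Invoking \reff{eq:cv-loi} finishes the argument; everything outside this linearization step is just bookkeeping with the two-type construction of Definition \ref{def:tau-lambda-e}.
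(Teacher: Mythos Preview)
Your argument is correct, but it follows a different route from the paper's. The paper works structurally: it checks that the offspring law $\hat p_h^{(\lambda),\re}$ of the type-$\rs$ vertices (Definition~\ref{def:tau-lambda-e}) converges, as $\lambda\searrow-\infty$, to the Dirac mass at $\max(1,\fa)$; since the type-$\re$ offspring law is $\fp$ independently of $\lambda$, this immediately yields convergence of the whole two-type tree $\hat T^{(\lambda),\re}$ (conditioned on the root being of type $\rs$) towards $\hat\tau^0$, and one then forgets types. Your approach instead produces a closed-form density $\P(r_h(T^{(\lambda),*})=\bt)=\P(r_h(\tau)=\bt)\,(\zeta_h(\lambda)^k-\fc^k)/(\zeta_0(\lambda)-\fc)$ and linearizes at the fixed point $\fc$ via $f'(\fc)=\fm$. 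Both are short; the paper's route has the advantage of giving convergence of the \emph{labelled} tree (cf.\ Remark~\ref{rem:cv-bitype}) and of unifying the cases $\fc=0$ and $\fc>0$ in one line, while your route makes the match with the Kesten-tree formula \reff{eq:ph0} completely explicit and avoids invoking Lemma~\ref{lem:=distrib3}. One cosmetic point: for super-critical $p$ with $\fc>0$ one has $\fm=f'(\fc)\in(0,1)$ strictly, not $(0,1]$, though this is immaterial to the argument.
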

\begin{proof}
  Considering the  cases $\fa=0$ and $\fa\geq 1$, it is easy  to check
  that  the distributions  $\hat  p_h^{(\lambda), \re}$  over $\bar  \N$
  defined in \reff{eq:def-p-l-e}
  converge as  $\lambda$ goes to  $-\infty $  towards the Dirac  mass at
  $\max  (1, \fa)$.   This implies  the convergence  in distribution  as
  $\lambda$  goes   to  $-\infty  $  of   $\hat  T^{(\lambda),  \re}$
  conditionally on the  root being of type $\rs$  towards $\hat \tau^0$.
  Using that the extinction event of $T^{(\lambda), \re}$ corresponds
  to the  root of $\hat T^{(\lambda),  \re}$ being of type  $\rs$, we
  obtain the convergence of the lemma.
\end{proof}

\begin{rem}
   \label{rem:cv-bitype}
   In the proof  of Lemma \ref{lem:cont-hat-t-l}, we proved  in fact the
   convergence of the two-type random trees $\hat T^{(\lambda), \re}$
   conditionally on the  root being of type $\rs$  towards $\hat \tau^0$
   as $\lambda$ goes to $-\infty$, using the convergence in distribution
   of  the  probability  distribution $\hat  p^{(\lambda),  \re}$ as
   $\lambda$ goes to $-\infty $.
 
Similarly, considering carefully the three cases $\zeta_0(\lambda_c)$  finite;
$\zeta_0(\lambda_c)=+\infty $ and $\zeta_1(\lambda_c)$ finite;
$\zeta_0(\lambda_c)=\zeta_1(\lambda_c)=+\infty $, it is not very difficult to
check that $\hat T^{(\lambda), \re}$ converges in distribution towards $\hat
T^{(\lambda_c), \re}$  as $\lambda$ goes up towards $\lambda_c$. Then,
by considering only the skeleton, this allows to recover the 
convergence in distribution of $T^{(\lambda)}$ towards $\tau^\infty $
as $\lambda$ goes up to $\lambda_c$,
thus recovering  the continuity at $\lambda_c$  in Lemma \ref{lem:cont-t-l}. Notice that when
$\zeta_0(\lambda_c)=+\infty $, then the root  of $\hat
T^{(\lambda_c), \re}$ is a.s. of type $\rs$ and has infinitely many
children. 
\end{rem}

\subsection{Continuity   in   law   of   the  extremal   GW   trees   at
  $\theta=+\infty $}\label{sec:lim-infty}
\label{sec:cont-infty}
Recall that $T^{(\lambda),  *}$ is distributed   as
$ T^{(\lambda)}$ conditionally on the  non extinction event (which is
distributed   as  the   skeleton  of   $  \hat   T^{(\lambda),  \re}$
conditionally on the  root being of type $\rs$).

Recall  that   $\zeta_0(\lambda)>  \fc$  for  $\lambda>-\infty   $.  For
$\lambda\in (-\infty , \lambda_c]$,  such that $\zeta_0(\lambda)$ is finite,
we consider the function $g_\lambda$ defined by:
\[
g_\lambda(\theta)=\inv{\zeta_0(\lambda) -\fc} w(\theta) \expp{\lambda
  \theta}\ind_{(0, +\infty )}(\theta).
\]
Since, by definition, $\int g_\lambda= 1$, we deduce that $g_\lambda$ is
a probability  density. Let $\Theta_\lambda$  be a random  variable with
density $g_\lambda$. We consider the random tree $\tau^{\Theta_\lambda}$
and  the  random  two-type   tree  $\hat  \tau^{\Theta_\lambda}$,  which
conditionally    on    $\{\Theta_\lambda=\theta\}$    are    distributed
respectively  as  $\tau^\theta$  and  $\hat \tau^\theta$.  We  have  the
following representation.

\begin{prop}
   \label{prop:=tau-Ql}
   Let $p$  be a  non-degenerate super-critical  offspring distribution
   with finite mean.
   Then,   for   $\lambda\in   (-\infty    ,   \lambda_c]$ such that
   $\E[\expp{\lambda W}]$ is finite, we have that $\tau^{\Theta_\lambda}$
(resp. $\hat  \tau^{\Theta_\lambda}$) is distributed as 
 $T^{(\lambda),*}$ (resp. as $\hat T^{(\lambda), \re}$ conditionally on
the root being of type $\rs$). 
\end{prop}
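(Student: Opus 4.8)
The plan is to identify both laws through their finite‑dimensional marginals. Since $\E[\expp{\lambda W}]=\zeta_0(\lambda)$ is assumed finite we stay away from the condensation case: $\zeta_h(\lambda)=\tilde\varphi(\lambda\mu^{-h})$ is finite for every $h\in\N$ (as $\lambda\mu^{-h}\in[0,\lambda_c]$), all the trees involved lie a.s.\ in $\Tf$, and $g_\lambda$ is a genuine probability density. By Section \ref{sec:cv-of-t}, the law of $\tau^{\Theta_\lambda}$ is determined by $\P(r_h(\tau^{\Theta_\lambda})=\bt)$ for $h\in\N^*$, $\bt\in\Tf^{(h)}$, and the law of the two‑type tree $\hat\tau^{\Theta_\lambda}$ by the joint law of $(r_h(\tau^{\Theta_\lambda}),\cs_h)$ for all $h$, since $\hat\tau^\theta$ truncated at level $h$ is recovered from that pair (Definition \ref{def:def-tau-theta}); by Definition \ref{def:tau-lambda-e} the same two facts hold for $\hat T^{(\lambda),\re}$. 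So it suffices to compute these marginals on both sides and check that they agree, after which Lemma \ref{lem:=distrib3} is used to pass from $\hat T^{(\lambda),\re}$ to $T^{(\lambda)}$.

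First I would average the formulas of Lemma \ref{lem:=distrib-w} (and of equation \reff{eq:calcul-Cs} in its proof) against $\Theta_\lambda$. With $k=z_h(\bt)$, Tonelli gives
\[
\P(r_h(\tau^{\Theta_\lambda})=\bt)=\frac{\P(r_h(\tau)=\bt)\,\mu^h}{\zeta_0(\lambda)-\fc}\int_0^{+\infty}w_k(\mu^h\theta)\,\expp{\lambda\theta}\,d\theta=\frac{\P(r_h(\tau)=\bt)}{\zeta_0(\lambda)-\fc}\int_0^{+\infty}w_k(u)\,\expp{\lambda\mu^{-h}u}\,du
\]
after the change of variable $u=\mu^h\theta$. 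The remaining integral is the crux: since the law of $\sum_{\ell=1}^k W_\ell$ is $\fc^k\delta_0(du)+w_k(u)\,du$ (see \reff{eq:wk=w}) and its Laplace transform at $\lambda\mu^{-h}$ equals $\E[\expp{\lambda\mu^{-h}W}]^k=\zeta_h(\lambda)^k$ (finite, by the choice of $\lambda$ and \reff{eq:def-ah}), we get $\int_0^{+\infty}w_k(u)\expp{\lambda\mu^{-h}u}\,du=\zeta_h(\lambda)^k-\fc^k$, whence $\P(r_h(\tau^{\Theta_\lambda})=\bt)=\P(r_h(\tau)=\bt)(\zeta_h(\lambda)^k-\fc^k)/(\zeta_0(\lambda)-\fc)$. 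The same computation on the refined formula \reff{eq:calcul-Cs} (with $w^{*i}$ in place of $w_k$, $i=\sharp S_h$), using $\int_0^{+\infty}w^{*i}(u)\expp{\lambda\mu^{-h}u}\,du=\hat w(-\lambda\mu^{-h})^i=(\zeta_h(\lambda)-\fc)^i$, yields $\P(r_h(\tau^{\Theta_\lambda})=\bt,\cs_h=S_h)=\P(r_h(\tau)=\bt)\,\fc^{k-i}(\zeta_h(\lambda)-\fc)^i/(\zeta_0(\lambda)-\fc)$.

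For the right‑hand side I would invoke Lemma \ref{lem:=distrib3} and the computation in its proof. By that lemma $T^{(\lambda)}$ is distributed as $\ske(\hat T^{(\lambda),\re})$, and since vertices of type $\re$ have sub‑critical (or critical) offspring distribution $\fp$ while every vertex of type $\rs$ has an $\rs$‑child, the non‑extinction event of $T^{(\lambda)}$ coincides up to a null set with $\{$the root of $\hat T^{(\lambda),\re}$ has type $\rs\}$, whose probability is $(\zeta_0(\lambda)-\fc)/\zeta_0(\lambda)$. Summing the identity \reff{eq:calcul-Cs-ne}, namely $\cc_{S_h}=\P(r_h(\tau)=\bt)\fc^{k-i}(\zeta_h(\lambda)-\fc)^i/\zeta_0(\lambda)$, over non‑empty $S_h$ with $\sharp S_h=i$ and using $\sum_{i=1}^k\binom{k}{i}\fc^{k-i}(\zeta_h(\lambda)-\fc)^i=\zeta_h(\lambda)^k-\fc^k$, then dividing by $(\zeta_0(\lambda)-\fc)/\zeta_0(\lambda)$, gives exactly $\P(r_h(T^{(\lambda),*})=\bt)=\P(r_h(\tau)=\bt)(\zeta_h(\lambda)^k-\fc^k)/(\zeta_0(\lambda)-\fc)$; likewise $\cc_{S_h}$ divided by $(\zeta_0(\lambda)-\fc)/\zeta_0(\lambda)$ equals $\P(r_h(\tau)=\bt)\fc^{k-i}(\zeta_h(\lambda)-\fc)^i/(\zeta_0(\lambda)-\fc)$, the $(r_h,\cs_h)$‑marginal of $\hat T^{(\lambda),\re}$ conditioned on the root being $\rs$. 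These agree with the expressions of the previous paragraph, which proves both assertions.

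The computations are routine; the points that need care are the Laplace‑transform identities — hence the hypothesis $\E[\expp{\lambda W}]<+\infty$, which forces $\zeta_h(\lambda)<+\infty$ for every $h$, keeps us in the $\Tf$ setting and away from condensation — the identification of $\zeta_h(\lambda)$ with $\tilde\varphi(\lambda\mu^{-h})=\E[\expp{\lambda\mu^{-h}W}]$, and the elementary observation that non‑extinction of $T^{(\lambda)}$ is exactly "the root of $\hat T^{(\lambda),\re}$ has type $\rs$". The case $\fc=0$ (i.e.\ $\fa\geq1$) is covered by the same formulas with $\fc=0$: there are no $\re$‑vertices and the conditioning on non‑extinction is vacuous.
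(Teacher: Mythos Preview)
Your proof is correct and follows essentially the same route as the paper: you integrate the identity \reff{eq:calcul-Cs} for $\P(r_h(\tau^\theta)=\bt,\cs_h=S_h)$ against the density $g_\lambda$, evaluate the resulting integral via the Laplace transform of $w^{*i}$ to obtain $\P(r_h(\tau)=\bt)\,\fc^{k-i}(\zeta_h(\lambda)-\fc)^i/(\zeta_0(\lambda)-\fc)$, and match this with \reff{eq:calcul-Cs-ne} divided by the probability $(\zeta_0(\lambda)-\fc)/\zeta_0(\lambda)$ that the root of $\hat T^{(\lambda),\re}$ is of type $\rs$. The additional direct computation of the unrefined marginal $\P(r_h(\tau^{\Theta_\lambda})=\bt)$ is a harmless extra (and could be recovered by summing over $S_h$); one tiny inaccuracy is the bracket ``$\lambda\mu^{-h}\in[0,\lambda_c]$'', which should allow negative $\lambda$ as well, but the conclusion that each $\zeta_h(\lambda)$ is finite is unaffected.
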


\begin{proof}
  Let $h\in \N^*$, $\bt\in \Tf^{(h)}$ and $S_h\subset \{u\in \bt;\,
  |u|=h\}$ with $S_h$ non empty. We recall that the distribution of $\hat \tau^\theta$ up to
  generation $h$ is
  completely characterized by $r_h(\tau ^\theta)$ its skeleton up to
  level $h$ and by the set $\cs_h$ of vertices at generation $h$ which
  are of type $\rs$. We still denote by $\cs_h$ the  vertices of
  $\tau^{\Theta_\lambda}$ at generation $h$ which
  are of type $\rs$.  We have with $k=z_h(\bt)$ and $\ell=\sharp S_h$:
\begin{align}
\nonumber
   \P(r_h(\tau^{\Theta_\lambda}) =\bt, \cs_h=S_h)
&= \int  \P(r_h(\tau^{\theta}) =\bt, \cs_h=S_h)\, g_\lambda(\theta)\,
  d\theta\\
\nonumber
&= \P(r_h(\tau) =\bt) \,
\fc^ {k - \ell} \inv{\zeta_0(\lambda) - \fc} \int w^{*\ell}(\mu^h \theta)
  \expp{\lambda \theta} \mu^h d\theta\\ 
\nonumber
&= \P(r_h(\tau) =\bt) \,
\fc^ {k - \ell} \inv{\zeta_0(\lambda) - \fc} \E\left[\expp{\lambda \mu^{-h}
  \sum_{i=1} ^ \ell W_i}\prod_{i=1}^\ell \ind_{\{W_i>0\}}\right] \\
   \label{eq:tQlSh}
&= \P(r_h(\tau) =\bt) \,
\fc^ {k - \ell} \frac{(\zeta_h(\lambda)-\fc)^\ell}{\zeta_0(\lambda) - \fc},
\end{align}
where we used \reff{eq:calcul-Cs} for the second equality, that $(W_i,
i\in \Np)$ are independent random variables distributed as $W$ for the third one and the
definition of $\zeta_h$ given in \reff{eq:def-ah}. Then use
\reff{eq:calcul-Cs-ne} and that the root of $\hat T^{(\lambda), \re}$ is
of type $\rs$ with probability $(\zeta_0(\lambda)-\fc)/\zeta_0(\lambda)$ to get that:
\[
   \P(r_h(\tau^{\Theta_\lambda}) =\bt, \cs_h=S_h)
=   \P(r_h(T^{(\lambda), \re}) =\bt, \cs_h=S_h| \text{ type of
  $\partial$ is $\rs$}).
\]
Since  $\hat \tau^{\Theta_\lambda}$ up to level $h$
is characterized by 
$ \tau^{\Theta_\lambda}$ and $\cs_h$, and similarly for $\hat
T^{(\lambda), \re}$, we deduce from the previous equality that 
$\hat \tau^{\Theta_\lambda}$ is distributed as $\hat
T^{(\lambda), \re}$ conditionally on its root being of type
$\rs$. Then, forgetting about the types, we deduce that $ \tau^{\Theta_\lambda}$ is distributed as $
T^{(\lambda), *}$. 
\end{proof}

When  $\lambda$   goes  to   $-\infty  $,  we   get  that   the  measure
$g_\lambda(\theta) \,  d\theta$ converges  weakly to  the Dirac  mass at
$0$. We deduce that $\Theta_\lambda$ converges in distribution towards 0
as  $\lambda$ goes  to $-\infty  $.   We then  recover from  Proposition
\ref{prop:=tau-Ql}  and  Corollary  \ref{cor:cvttq} the  convergence  in
distribution of $T^{(\lambda),*}$, that is of $T^{(\lambda), \re}$
conditionally  on the  non-extinction event,  towards $\tau^0$  given in
Lemma \ref{lem:cont-hat-t-l}.  \medskip

If $\E[\expp{\lambda_c W}]=+\infty $ (and thus $\lambda_c>0$) or
equivalently $\ff(R_c)=+\infty $, then when
$\lambda$ goes up to $\lambda_c$ we get that $\Theta_\lambda$ converges
in   distribution  towards   $+\infty  $.    We deduce from  Lemma
\ref{lem:cont-t-l} the
following corollary. 

\begin{cor}
   \label{cor:cv-t-inf}
Let $p$  be a  non-degenerate super-critical  offspring distribution
whose generating function blows-up (that is $\ff(R_c)=+\infty $). Then, if $(\tau_\theta, \theta\in [0,
\infty ))$ converges in distribution as $\theta$ goes to infinity, then
the limit is the distribution of $\tau^\infty $. 
\end{cor}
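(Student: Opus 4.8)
The plan is to combine the representation of $\tau^\theta$ as a mixing over $\Theta_\lambda$ (Proposition \ref{prop:=tau-Ql}) with the continuity of the family $(T^{(\lambda)},\lambda\in[-\infty,\lambda_c])$ at $\lambda_c$ (Lemma \ref{lem:cont-t-l}). First I would observe that the hypothesis $\ff(R_c)=+\infty$ is equivalent to $\E[\expp{\lambda_c W}]=+\infty$, hence $\lambda_c>0$ and the probability density
\[
g_\lambda(\theta)=\inv{\zeta_0(\lambda)-\fc}\, w(\theta)\expp{\lambda\theta}\ind_{(0,+\infty)}(\theta)
\]
is well defined for every $\lambda\in(-\infty,\lambda_c)$. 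As $\lambda\nearrow\lambda_c$, the normalizing constant $\zeta_0(\lambda)-\fc=\E[\expp{\lambda W}]-\fc$ tends to $+\infty$ by monotone convergence, so for every fixed $M>0$ one has $\P(\Theta_\lambda\le M)=\inv{\zeta_0(\lambda)-\fc}\int_0^M w(\theta)\expp{\lambda\theta}\,d\theta\le \inv{\zeta_0(\lambda)-\fc}\expp{\lambda_c M}\int_0^M w(\theta)\,d\theta\to 0$. Thus $\Theta_\lambda\to+\infty$ in distribution as $\lambda\nearrow\lambda_c$.

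Next, by Proposition \ref{prop:=tau-Ql}, for each $\lambda\in(-\infty,\lambda_c)$ the random tree $\tau^{\Theta_\lambda}$ is distributed as $T^{(\lambda),*}$, the tree $T^{(\lambda)}$ conditioned on non-extinction. By Lemma \ref{lem:cont-t-l}, $T^{(\lambda)}\to T^{(\lambda_c)}=\tau^\infty$ in distribution for the local convergence as $\lambda\nearrow\lambda_c$; since conditioning on non-extinction only affects the event that the tree is finite, and since (when $\ff(R_c)=+\infty$) the limiting tree $\tau^\infty$ is a.s.\ infinite — indeed either $\fb=+\infty$ and the root has infinitely many children, or $\fb<\infty$ and $\tau^\infty$ is the regular $\fb$-ary tree — one also gets $T^{(\lambda),*}\to\tau^\infty$ in distribution. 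Hence $\tau^{\Theta_\lambda}\to\tau^\infty$ in distribution as $\lambda\nearrow\lambda_c$.

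Now suppose that $(\tau^\theta,\theta\in[0,+\infty))$ converges in distribution as $\theta\to+\infty$, say to some random tree $\tau^{(*)}$. I would then argue that the law of $\tau^{\Theta_\lambda}$ converges to the law of $\tau^{(*)}$ as $\lambda\nearrow\lambda_c$: fixing $h,k\in\N^*$ and $\bt\in\Tf^{(h)}$ (or $\bt\in\T^{(h)}_k$ in the condensation case), the quantity $\P(r_h(\tau^\theta)=\bt)$ (resp.\ $\P(r_{h,k}(\tau^\theta)=\bt)$) is, by Lemma \ref{lem:=distrib-w}, a continuous and bounded function of $\theta$ on $(0,+\infty)$ with a limit as $\theta\to+\infty$; therefore it extends to a bounded continuous function on $(0,+\infty]$, and $\P(r_h(\tau^{\Theta_\lambda})=\bt)=\E[\P(r_h(\tau^{\Theta_\lambda})=\bt\mid\Theta_\lambda)]\to\P(r_h(\tau^{(*)})=\bt)$ by the weak convergence $\Theta_\lambda\Rightarrow+\infty$ together with the Portmanteau/continuous-mapping argument (a bounded continuous test function on the one-point compactification $(0,+\infty]$). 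Comparing the two limits via \reff{eq:cv-loi} and \reff{eq:cv-loi*}, the finite-dimensional marginals of $\tau^{(*)}$ coincide with those of $\tau^\infty$, so $\tau^{(*)}=\tau^\infty$ in distribution. The main obstacle I anticipate is the bookkeeping around the condensation case $\fb=+\infty$: there one must work with the restrictions $r_{h,k}$ rather than $r_h$ and verify that, for $\bt\in\T^{(h)}_k$, the map $\theta\mapsto\P(r_{h,k}(\tau^\theta)=\bt)$ is still bounded, continuous, and convergent as $\theta\to+\infty$ — this requires the analogue of Lemma \ref{lem:=distrib-w} for $r_{h,k}$ and a uniform bound ensuring one may pass to the limit on the (genuinely noncompact) space of first-generation out-degrees; everything else is a routine combination of the cited results.
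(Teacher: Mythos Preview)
Your proposal is correct and follows essentially the same approach as the paper: show $\Theta_\lambda\Rightarrow+\infty$ as $\lambda\nearrow\lambda_c$, use Proposition~\ref{prop:=tau-Ql} to identify $\tau^{\Theta_\lambda}$ with $T^{(\lambda),*}$, and invoke Lemma~\ref{lem:cont-t-l} to get $T^{(\lambda),*}\to\tau^\infty$; the paper states this in one line while you have supplied the details. Your worry about the condensation case is slightly overblown: since $\T_\infty$ is compact Polish and $\theta\mapsto\mathrm{Law}(\tau^\theta)$ is continuous on $(0,+\infty)$ with a limit at $+\infty$ by hypothesis, the map $\theta\mapsto\E[F(\tau^\theta)]$ extends continuously to $(0,+\infty]$ for every bounded continuous $F$, and $\tau^{\Theta_\lambda}\Rightarrow\tau^{(*)}$ follows directly without needing an $r_{h,k}$-analogue of Lemma~\ref{lem:=distrib-w}.
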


\begin{rem}
   \label{rem:TI-extrem}
If $R_c=+\infty $, then the tree $\tau^\infty $ has all its
nodes with degree $\fb\in \bar \N$. Since  the distribution of
$\tau^\infty $ is maximal in the convex set of probability distributions on $\T_\infty
$, we get  that the distribution of $\tau^\infty $ is the limit in
distribution of
a sub-sequence $(\tau_{\theta_n}, n\in \N)$ with $\lim_{n\rightarrow \infty
} \theta_n=+\infty $. 
\end{rem}

We  are  able  to  prove  the stronger  result  on  the  convergence  in
distribution of $(\tau_\theta, \theta\in [0, \infty ))$ as $\theta$ goes
to  infinity  in   the  particular  case  of   the  geometric  offspring
distribution   (in   this   case   $\lambda_c$   is   positive   finite,
$\E[\expp{\lambda_c    W}]=+\infty   $    and   $\fb=\infty    $),   see
\cite{abd:llfgt}. The next proposition, which is a direct consequence of
the  convergence of  $\rho_{\theta, r}$  as $\theta\rightarrow+\infty  $
given  in Lemma  \ref{lem:cv-rho}, asserts  that  it also  holds if  the
offspring  distribution has  a  finite support  which  is the  so-called
Harris  case (in  this  case $\fb<\infty  $  and $\lambda_c=+\infty  $).
Otherwise, the general case is open.

\begin{prop}
\label{prop:cvtq-tinfty-gen}
Let $p$  be a  non-degenerate super-critical  offspring distribution
with finite support, that is  $\fb<+\infty $ (Harris case). Then we have  the following
  convergence in distribution:
\[
\tau^\theta\; \xrightarrow[\theta\rightarrow \infty ]{\textbf{(d)}}\; 
\tau^\infty.
\]
\end{prop}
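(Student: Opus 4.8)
The plan is to go through the two-type construction of $\hat\tau^\theta$ in Definition \ref{def:def-tau-theta} and use the concentration of $\rho_{\theta,r}$ in the Harris case provided by Lemma \ref{lem:cv-rho}(iii). In the Harris case $\tau^\infty=\bt_\fb$ is the deterministic regular $\fb$-ary tree, which lies in $\Tf$ (all the offspring laws entering the construction of $\tau^\theta$ are supported on $\{0,\ldots,\fb\}$, so $\tau^\theta\in\Tf$ as well) and has a.s.\ infinite height; hence by the characterization \reff{eq:cv-loi} it suffices to prove that $\lim_{\theta\to\infty}\P(r_h(\tau^\theta)=r_h(\bt_\fb))=1$ for every $h\in\N^*$, and since the limiting tree is deterministic it is enough to bound this probability from below.

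First I would record the consistency fact that $\fp_{[\fb]}$ is the Dirac mass at $\fb$: indeed $p$ is supported on $\{0,\ldots,\fb\}$, so $\ff^{(\fb)}$ is the constant $\fb!\,p(\fb)>0$, and \reff{eq:fp-2} gives $\fp_{[\fb]}(\fb)=1$ (the convention also gives this when $\fc=0$). Consequently, in the notation of Definition \ref{def:def-tau-theta}, on $\{\kappa^\rs(u)=\fb\}$ one automatically has $k_u(\tau^\theta)=\fb$ and $\kappa^\re(u)=0$. Next I would introduce the nested events
\[
E_h=\{\kappa^\rs(u)=\fb\ \text{for every}\ u\in\cs_\ell\ \text{and every}\ \ell\in\{0,\ldots,h-1\}\}.
\]
By induction on $\ell$, on $E_\ell$ the set $\cs_\ell$ consists exactly of the $\fb^\ell$ vertices of $\bt_\fb$ at level $\ell$, all the ancestors of level $\le\ell-1$ have out-degree $\fb$, and there is no vertex of type $\re$ up to level $\ell$; hence $E_h\subset\{r_h(\tau^\theta)=r_h(\bt_\fb)\}$.

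To estimate $\P(E_h)$ I would condition successively, using that given $r_\ell(\tau^\theta)$ and $(\cs_j,j\le\ell)$ the vector $(\kappa^\rs(u),u\in\cs_\ell)$ has law $\rho_{\mu^\ell\theta,\,\sharp\cs_\ell}$, and that on $E_\ell$ one has $\sharp\cs_\ell=\fb^\ell$ deterministically. This gives the telescoping identity
\[
\P(E_h)=\prod_{\ell=0}^{h-1}\rho_{\mu^\ell\theta,\,\fb^\ell}\big((\fb,\ldots,\fb)\big),
\]
a finite product. By Lemma \ref{lem:cv-rho}(iii) each factor tends to $1$ as $\theta\to\infty$ (since $\mu^\ell\theta\to\infty$), so $\P(E_h)\to1$ and therefore $\P(r_h(\tau^\theta)=r_h(\bt_\fb))\to1$. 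Letting $h$ range over $\N^*$ and applying \reff{eq:cv-loi} then yields $\tau^\theta\to\tau^\infty$ in distribution.

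The argument carries no real difficulty of its own: the entire analytic content — that $\rho_{\theta,r}$ concentrates on $(\fb,\ldots,\fb)$ in the Harris case — is isolated in Lemma \ref{lem:cv-rho}(iii), whose proof is postponed to Section \ref{sec:H-f(c)}. The only points needing care are the bookkeeping in the two-type construction (checking that $E_h$ genuinely forces $r_h(\tau^\theta)=r_h(\bt_\fb)$, in particular the step $\fp_{[\fb]}=\delta_\fb$ which kills the type-$\re$ grafts when $\fc>0$) and the observation that a lower bound on $\P(r_h(\tau^\theta)=r_h(\bt_\fb))$ suffices because the target tree is deterministic. (One could alternatively start from Lemma \ref{lem:=distrib-w}, writing $\P(r_h(\tau^\theta)=r_h(\bt_\fb))=p(\fb)^{(\fb^h-1)/(\fb-1)}\mu^h w_{\fb^h}(\mu^h\theta)/w(\theta)$ as a telescoping product of ratios and invoking the same asymptotics of $w$ from Section \ref{sec:H-f(c)}, but the two-type route is cleaner.)
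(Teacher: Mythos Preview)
Your proof is correct and takes essentially the same approach as the paper: the paper states that the proposition is ``a direct consequence of the convergence of $\rho_{\theta,r}$ as $\theta\rightarrow+\infty$ given in Lemma \ref{lem:cv-rho}'', and you have carefully spelled out exactly this argument through the two-type construction, including the bookkeeping (that $\fp_{[\fb]}=\delta_\fb$ kills type-$\re$ grafts, that $E_h$ forces $r_h(\tau^\theta)=r_h(\bt_\fb)$, and the telescoping product for $\P(E_h)$).
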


\subsection{A remark on an other trees family}
We provide in this section an   alternative  description of
$T^{(\lambda)}$ using a  two-type GW tree  $\hat
T^{(\lambda),\rn}$.  \medskip

We assume that  $\lambda_c>0$. 
Notice that the sequence $(\zeta_h(\lambda), h\in \N)$ defined in \reff{eq:def-ah}
is non-increasing and
$\zeta_h(\lambda)>1$ for all $h\in \N$, $\lambda\in (0, \lambda_c]$. 
Furthermore, as $R_c>1$, we get that $\ff^{(\ell)}(1)$ is finite for
all $\ell\in \N$.    For $h\in \Nz$
and $\lambda\in  (0 ,  \lambda_c]$ such that  $\zeta_h(\lambda)$ is  finite, we
define                          the                          probability
$\hat  p_h^{(\lambda), \rn}$ as $\hat p_h^{(\lambda),  \re}$ in
\reff{eq:def-p-l-e} but with $\fc$ replaced by 1. That is for $\ell\in
\Np$:
\begin{equation}
   \label{eq:def-p-l-n}
\hat  p_h^{(\lambda), \rn}(\ell)=\frac{(\zeta_{h+1}(\lambda)-1)^\ell}{(\zeta_h(\lambda)-1)}
\frac{\ff^{(\ell)}(1)}{\ell!}\cdot
\end{equation}

For $\ell\in \Nz$ such that  $\ell\leq \fb$, we recall the $\ell$th-size
biased     probability     distribution     of    $p$     defined     in
\reff{eq:def-biased-p}.     We   define    a    two type  random    tree
$\hat  T^{(\lambda),  \rn}  $  in   the  next  definition  and  write
$T^{(\lambda),  \rn}=\ske(\hat T^{(\lambda),  \rn})$  as the  tree
$\hat T^{(\lambda), \rn} $ when one  forgets the types of
the vertices of $\hat T ^{(\lambda), \rn} $.

\begin{defi}\label{def:tau-lambda-n}
Let $p$ be a non-degenerate super-critical offspring distribution such that
$\lambda_c>0$. 
Let     $\lambda\in    (0     ,     \lambda_c]$. 
We define  a labeled   random tree  $\hat T^{(\lambda), \rn} $,  whose vertices
are either of  type $\rs$ (for survivor) or of  type $\rn$ (for normal).

\begin{itemize}
\item[(i)] If $\zeta_0(\lambda)=\zeta_1(\lambda)=+\infty $, then
$\hat T^{(\lambda), \rn}$  is the regular $\fb$-ary tree  and all its
vertices are of type $\rs$ (and thus  there is no vertex of type $\rn$).

\item[(ii)] If   $\zeta_1(\lambda)<+\infty$   , the
random tree  $\hat T^{(\lambda),  \rn} $ is  defined as  follows:
\begin{itemize}
\item For  a vertex,  the number  of offsprings of  each type  and their
  positions  depend, conditionally  on  the vertices  of  lower or  same
  height, only on its own type (branching property).
   \item The root is of type $\rs$ with
     probability $(\zeta_0(\lambda)-1)/\zeta_0(\lambda)$. This probability is set to 1 if
     $\zeta_0(\lambda)=+\infty $. 
   \item A  vertex of type  $\rn$ produces  only vertices of  type $\rn$
     with super-critical  offspring distribution
     $p$.
   \item Recall that  only $\zeta_0(\lambda)$ might be infinite.  Let  $h\in \N$ such
     that $\zeta_h(\lambda)$ is finite.  A  vertex $u\in \hat T^{(\lambda), \rn} $
     at level  $h$ of  type $\rs$  produces $\kappa^\rs(u)$  vertices of
     type        $\rs$        with       probability        distribution
     $\hat p_h^{(\lambda),  \rn} $ and $\kappa^\rn(u)$  vertices of type
     $\rn$  such   that  conditionally  on   $\kappa^\rs(u)=s_u\geq  1$,
     $k_u(  T^{(\lambda),  \rn} )=\kappa^\rs(u)+\kappa  ^\rn(u)$  has
     distribution ${p}_{[s_u]}$, defined in \reff{eq:def-biased-p}, and
     the $s_u$ individuals of type  $\rs$ are chosen uniformly at random
     among the $k_u( T^{(\lambda),  \rn})$ children.  More precisely if  we   denote   by
$\cs_h=\{u\in T^{(\lambda), \rn};\,  |u|=h \text{ and $u$  is of type
  $\rs$}\}$ the  set of  vertices of
$\hat T^{(\lambda), \rn}$  with type $\rs$ at level  $h\in \Nz$, 
and  we  have for $u\in \cs_h$: for all   $k_u\in  \Np$, 
     $s_u\in \{1,  \ldots, k_u\}$, and  $A_u\subset \{1, \ldots, k_u  \}$ such
     that $\sharp A_u=s_u$,
\begin{multline*}
\P\left(\kappa^\rs(u)+\kappa^\rn(u)=k_u \text{ and } \cs_{h+1} \cap\{u1,
    \ldots, uk_u\}=uA_u \,|\,
    r_h(T^{(\lambda), \rn}), \cs_h\right) 
\\
=\hat p_h^{(\lambda), \rn}(s_u) \inv{\binom {k_u}{s_u}}
  \fp_{[su]}(k_u)
=\frac{(\zeta_{h+1}(\lambda) - 1)^{s_u}}{\zeta_h(\lambda)-1} \, \fc^{k_u-s_u}\, p(k_u).
\end{multline*}

If $\zeta_0(\lambda)=+\infty $, then the
root, which is  of type $\rs$ a.s., has infinitely  many children of type
$\rs$ and  $\rn$, each  children being, independently  from the  other, of
type $\rs$  with probability $(\zeta_1(\lambda)-1)/\zeta_1(\lambda)$.  That is for  $k_0\in \Np$
and $S_1\subset \{1, \ldots, k_0\}$:
\[
\P\left(\cs_1 \cap \{1, \ldots, k_0\} =S_1\right) =
\left(\frac{\zeta_1(\lambda)-1}{\zeta_1(\lambda)} \right)^{\sharp S_1}
\left(\frac{1}{\zeta_1(\lambda)}\right)^{k_0 -\sharp S_1}. 
\]
\end{itemize}
\end{itemize}
\end{defi}

The  main difference  with  $\hat T^{(\lambda),  \re}$  is that  the
individuals of type $\rs$ in  $\hat T^{(\lambda), \rn}$, if any, form
a  backbone on  which  are  grafted, if  $\fa=0$,  independent GW  trees
distributed as $\tau$ (instead of $\tau$ conditionally on the extinction
event $\ce$ in $\hat T^{(\lambda), \re}$).

The following result  states that the random  tree $T^{(\lambda)}$ can
also be seen as the skeleton of this new two-type GW tree. Its proof, which follows the proof of Lemma
\ref{lem:=distrib3}, is left to the reader. 

\begin{lem}
   \label{lem:=distrib4}
   Let $p$  be a  non-degenerate super-critical  offspring distribution
   such that $\lambda_c>0$. 
   Then,   for   $\lambda\in   (0  ,   \lambda_c]$,   the   tree
   $T^{(\lambda),\rn}$ is distributed as $T^{(\lambda)}$.
\end{lem}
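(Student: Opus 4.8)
The plan is to follow, almost verbatim, the proof of Lemma~\ref{lem:=distrib3}, carrying out throughout the substitutions: $\fc$ replaced by $1$, $\fp$ replaced by $p$, and $\fp_{[\ell]}$ replaced by the $\ell$th size-biased law $p_{[\ell]}$ of $p$ (see~\reff{eq:def-biased-p}), while $\hat p_h^{(\lambda),\re}$ is replaced by $\hat p_h^{(\lambda),\rn}$ from~\reff{eq:def-p-l-n}. The structural ingredients are unchanged: the identity $\ff_h(\zeta_{h+\ell}(\lambda))=\zeta_\ell(\lambda)$ of~\reff{eq:f(ah)}; the fact that $\hat T^{(\lambda),\rn}$ truncated at level $h$ is recovered from $r_h(T^{(\lambda),\rn})$ together with the set $\cs_h$ of its type-$\rs$ vertices at level $h$; and the martingale representations~\reff{eq:t-t=mart-t-gen} and~\reff{eq:t-t=mart-t-infini} of $T^{(\lambda)}$. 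Since $\lambda\in(0,\lambda_c]$ with $\lambda_c>0$, hence $R_c>1$, one has $\ff^{(\ell)}(1)<+\infty$ for all $\ell$, and $\ff^{(\ell)}(1)>0$ whenever $\ell\le\fb$, so $p_{[\ell]}$ and $\hat p_h^{(\lambda),\rn}$ are genuine probability distributions; moreover $\zeta_h(\lambda)>1$ for all $h$, so the weights $(\zeta_{h+1}(\lambda)-1)/(\zeta_h(\lambda)-1)$ are well defined.

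First I treat the case $\zeta_0(\lambda)<+\infty$. Fix $h\in\N^*$, $\bt\in\Tf^{(h)}$ and $S_h\subset\{u\in\bt;\,|u|=h\}$; put $\ca=S_h\cup\anc(S_h)$, $S_\ell=\{u\in\ca;\,|u|=\ell\}$ for $0\le\ell\le h-1$, and $s_u(\bt)=\sharp(\ca\cap u\Np)$ for $u\in r_{h-1}(\bt)$. Writing out $\cc_{S_h}=\P(r_h(T^{(\lambda),\rn})=\bt,\,\cs_h=S_h)$ from Definition~\ref{def:tau-lambda-n} and regrouping exactly as in the derivation of~\reff{eq:calcul-Cs-ne}, the out-degree weights $p(k_u(\bt))$ over $u\in r_{h-1}(\bt)$ recombine into $\P(r_h(\tau)=\bt)$ --- and, in contrast with~\reff{eq:calcul-Cs-ne}, \emph{no residual power of $\fc$ is left}, precisely because the subtrees grafted at type-$\rn$ vertices are copies of $\tau$ rather than of $\tau^{0,0}$ --- while the backbone weights $(\zeta_{\ell+1}(\lambda)-1)^{s_u(\bt)}/(\zeta_\ell(\lambda)-1)$ telescope (using $\sum_{u\in S_\ell}s_u(\bt)=\sharp S_{\ell+1}$ and $\sharp S_0=1$) against the root factor $(\zeta_0(\lambda)-1)/\zeta_0(\lambda)$ into $(\zeta_h(\lambda)-1)^{\sharp S_h}/\zeta_0(\lambda)$. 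Thus, with $k=z_h(\bt)$,
\[
\cc_{S_h}=\P(r_h(\tau)=\bt)\,\frac{(\zeta_h(\lambda)-1)^{\sharp S_h}}{\zeta_0(\lambda)},
\]
a quantity depending on $S_h$ only through $\sharp S_h$ (and, as in the $\re$-case, still valid when $S_h=\emptyset$, the root being then of type $\rn$). Summing over all $S_h$ and invoking the binomial identity $\sum_{i=0}^{k}\binom{k}{i}(\zeta_h(\lambda)-1)^i=\zeta_h(\lambda)^k$ yields
\[
\P(r_h(T^{(\lambda),\rn})=\bt)=\P(r_h(\tau)=\bt)\,\frac{\zeta_h(\lambda)^k}{\zeta_0(\lambda)},
\]
which is precisely~\reff{eq:t-t=mart-t-gen}; hence $T^{(\lambda),\rn}$ and $T^{(\lambda)}$ have the same distribution. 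Note that, unlike for $\hat T^{(\lambda),\re}$, there is no shortcut when $\fc=0$ (where $\hat p_h^{(\lambda),\re}$ coincides with $\tilde p_h^{(\lambda)}$): here $\hat p_h^{(\lambda),\rn}$ does not reduce to $\tilde p_h^{(\lambda)}$, but the computation just performed is insensitive to whether $\fc$ is zero or positive.

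Next, if $\zeta_0(\lambda)=+\infty$ while $\zeta_1(\lambda)<+\infty$ --- which forces $\lambda=\lambda_c$ --- one repeats the computation with $r_h$ replaced by the restriction $r_{h,k_0}$ and $\bt\in\T_{k_0}^{(h)}$: the root is then a.s. of type $\rs$ and each of its $k_0$ first children is, independently, of type $\rs$ with probability $(\zeta_1(\lambda)-1)/\zeta_1(\lambda)$; grouping as above and cancelling the root's out-degree weight $p(k_0)$ gives $\cc_{S_h}=\frac{\P(r_h(\tau)=\bt)}{p(k_0)}\,\frac{(\zeta_h(\lambda)-1)^{\sharp S_h}}{\zeta_1(\lambda)^{k_0}}$, and the same binomial collapse produces~\reff{eq:t-t=mart-t-infini}. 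Finally, if $\zeta_1(\lambda)=+\infty$ (equivalently $\lambda_c=+\infty$), then by Definition~\ref{def:tau-lambda-n}(i) and the definition~\reff{eq:def-tilde-p} of $\tilde p_h^{(\lambda)}$, both $T^{(\lambda),\rn}$ and $T^{(\lambda)}$ are a.s. the regular $\fb$-ary tree $\bt_\fb$, so the claim is trivial.

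I do not expect a real obstacle: the argument is a routine transcription of the proof of Lemma~\ref{lem:=distrib3}. The one delicate point is the bookkeeping of the constant playing, in that proof, the role of $\fc$: because a type-$\rn$ vertex now carries an \emph{unconditioned} copy of $\tau$, the factors $\fc^{k-\sharp S_h}$ and $\fc^{k_u-s_u}$ of the $\re$-computation both disappear (become $1$) while the differences $\zeta_\bullet(\lambda)-\fc$ become $\zeta_\bullet(\lambda)-1$; nonetheless the concluding binomial summation regenerates the very same $\zeta_h(\lambda)^k$, so the target identities~\reff{eq:t-t=mart-t-gen} and~\reff{eq:t-t=mart-t-infini} come out unchanged. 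A secondary care is to keep the three regimes ($\zeta_0(\lambda)$ finite; $\zeta_0(\lambda)=+\infty$ with $\zeta_1(\lambda)$ finite; $\zeta_1(\lambda)=+\infty$) separate, in parallel with the three cases of Definition~\ref{def:tau-lambda-n}.
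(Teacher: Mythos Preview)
Your proposal is correct and follows exactly the approach the paper indicates (``follows the proof of Lemma~\ref{lem:=distrib3}''): you make the substitutions $\fc\rightsquigarrow 1$, $\fp\rightsquigarrow p$, $\fp_{[\ell]}\rightsquigarrow p_{[\ell]}$, obtain $\cc_{S_h}=\P(r_h(\tau)=\bt)\,(\zeta_h(\lambda)-1)^{\sharp S_h}/\zeta_0(\lambda)$ (which is also recorded in Remark~\ref{rem:divers}), and conclude via the binomial identity and~\reff{eq:t-t=mart-t-gen}--\reff{eq:t-t=mart-t-infini}. The three-regime split and your remark that the $\fc=0$ shortcut of Lemma~\ref{lem:=distrib3} is no longer available (but also not needed) are well taken.
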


\begin{rem}
   \label{rem:divers} 
Recall that  $\zeta_0(\lambda)> 1$ for  $\lambda\in (0 , \lambda_c]$. For
  $\lambda\in (0 , \lambda_c]$,  such that
$\zeta_0(\lambda)$ is finite, we consider the function $h_\lambda$ defined by:
\[
h_\lambda(\theta)=\inv{\zeta_0(\lambda) -1} w(\theta) \left(\expp{\lambda
  \theta}-1\right)\ind_{(0, +\infty )}(\theta).
\]
Since, by definition, $\int h_\lambda= 1$, we deduce that $h_\lambda$ is
a probability density.  Let $\Theta'_\lambda$  be a random variable with
density     $h_\lambda$.      We     consider    the     random     tree
$\tau^{\Theta'_\lambda}$     and     the    random     two-type     tree
$\hat      \tau^{\Theta'_\lambda}$,      which     conditionally      on
$\{\Theta'_\lambda=\theta\}$    are    distributed    respectively    as
$\tau^\theta$  and  $\hat  \tau^\theta$.    Computation  similar  as  in
\reff{eq:tQlSh} gives  that for $h\in  \N^*$ and $\bt\in  \Tf^{(h)}$, with
$k=z_h(\bt)$,  and $S_h\subset  \{u\in  \bt;\, |u|=h\}$  with $S_h$  non
empty and $\ell=\sharp S_h$:
\[
   \P(r_h(\tau^{\Theta'_\lambda}) =\bt, \cs_h=S_h)
= \P(r_h(\tau) =\bt) \,
\fc^ {k - \ell} \frac{(\zeta_h(\lambda)-\fc)^\ell- (1-\fc)^\ell}{\zeta_0(\lambda) - 1}\cdot
\]
Similar computations as in \reff{eq:calcul-Cs-ne}
give that:
\[
   \P(r_h(T^{(\lambda), \rn}) =\bt, \cs_h=S_h)
= \P(r_h(\tau) =\bt) \,
 \frac{(\zeta_h(\lambda)-1)^\ell}{\zeta_0(\lambda)}\cdot
\]
Summing over all non-empty subsets $S_h$ of $\{u\in \bt;\,
  |u|=h\}$, gives that:
\[
    \P(r_h(\tau^{\Theta'_\lambda}) =\bt)
= \P(r_h(\tau) =\bt) \,
\frac{\zeta_h(\lambda)^k-1}{\zeta_0(\lambda) - 1}
=  \P(r_h(T^{(\lambda), \rn}) =\bt|\, \text{root is of type $\rs$}).
\]
Thus  the   random  tree  $\tau^{\Theta_\lambda'}$  is   distributed  as
$T^{(\lambda), \rn}$ conditionally on the root being of type $\rs$. 
\end{rem}

\section{Convergence of conditioned  sub-critical GW tree}
\label{sec:sub-critical}
In this section, we consider a  sub-critical GW tree $\tau$ with general
non-degenerate offspring  distribution $p=(p(n), n\in \Nz)$  with finite
mean $\mu\in (0, 1)$.  To avoid trivial  cases, we assume that
$p(0)+p(1)<1$. We denote by $\ff$ the generating function of
$p$.    We     assume   that there     exists    $\kappa>1$     such    that
$\ff(\kappa)=\kappa$  and   $\ff'(\kappa)<+\infty  $.  Since
$\ff$ is strictly convex, $\kappa$, when it exists, is unique.  Those
assumptions are trivially satisfied if the radius of convergence of
$\ff$ is infinite. This is also the case for geometric offspring
distribution studied in \cite{abd:llfgt}. 

Define $\bar \ff(t)=\ff(\kappa t)/\kappa$ for $t\in [0,1]$ and note that
$\bar  \ff$ is  the generating  function of  a super-critical  offspring
distribution     $\bar     p=(\bar     p(n),     n\in     \Nz)$     with
$\bar p(n)=\kappa^{n-1} p(n)$. The mean $\bar  \mu$ of $\bar p$ is equal
to $\ff'(\kappa)$; the  fixed point $\bar \fc\in (0, 1)$  of $\bar f$ is
given by  $\bar \fc=1/\kappa$; and $\bar f'(\bar \fc)=\mu$.

 We have that $\bar \fp$  defined by \reff{eq:def-fp}
(with $p$ replaced by $\bar p$) is equal to $p$ by construction.  Notice
that we  are in the  Schröder case  and that $p$  is of type  $(L_0,0)$ as
$\bar p(0)>0$.  Let $\bar \tau$  be the corresponding  super-critical GW
tree.    It  is   elementary  to   check   that  for   $h\in  \Np$   and
$\bt\in \Tf^{(h)}$, we have with $k=z_h(\bt)$:
\begin{equation}
   \label{eq:t-bt}
\P(r_h(\tau)=\bt)=
\kappa^{k -1} \P(r_h(\bar\tau)=\bt).
\end{equation}

Recall that $Z_n=z_n( \tau)$, and set $\bar Z_n= z_n( \bar \tau)$. 
Following Section \ref{sec:gen-gen}, let $(c_n, n\in \Nz)$ be a sequence
with $c_0>0$  such  that  $\left(\kappa^{Z_n}  \expp{-Z_n/c_n}, n\in  \Nz\right)$  or
equivalently  $\left(\expp{-\bar   Z_n/c_n},  n\in  \Nz\right)$   is  a
martingale.   This  sequence is  increasing  positive  and
unbounded. Furthermore, the sequence $(c_{n+1}/c_n, n\in \Nz)$ increases
towards $\bar \mu=\ff'(\kappa)$.

We consider a sequence $(a_n, n\in \Np)$ of  integers such that
$\P(Z_n=a_n)>0$ (see Remark \ref{rem:deftn}). We denote by 
$\tau_n$ (resp. $\bar \tau_n$) a  GW tree distributed as $\tau$
(resp. $\bar \tau$)  conditionally on
$\{Z_n= a_n\}$ (resp. $\{\bar Z_n=a_n\}$). 
Clearly if $a_n=0$ for $n$ large enough, then $(\tau_n, n\in \Np)$
converges in distribution towards  $\tau$. So only the case $a_n$
positive for $n\in \N^*$ is of interest. 
 
It is straightforward to deduce from \reff{eq:t-bt} that for $n\geq
h\geq 1$ and $\bt\in \Tf^{(h)}$:
\begin{equation}
   \label{eq:bart-law}
\P(r_h(\tau_n)=\bt)=
 \P(r_h(\bar\tau_n)=\bt).
\end{equation}
Let $\theta\in  (0, +\infty  )$.  Let $\bar  \tau^\theta$ be  defined as
$\tau^\theta$ in Definition \ref{def:def-tau-theta}  where $p$ has to be
replaced by $\bar p$,  and $\fp$ is then equal to  $p$.  When $\fb$, the
upper  bound   of  the  support   of  $p$,   is  finite,  we   denote  by
$\bar  \tau^\infty  $ the  deterministic  regular  $\fb$-ary tree.   Let
$\bar  \tau^0$ be  defined as  the  Kesten tree  $\tau^0$ in  Definition
\ref{def:Kesten}  where  $\fp$   is  equal  to  $p$.    We  deduce  from
Propositions   \ref{prop:cv-fat-gene},  \ref{prop:cv-not-fat-gene}   and
\ref{prop:cv-not-vfat-gene}, \reff{eq:bart-law} and the characterization
\reff{eq:cv-loi} of the convergence in $\Tf$ the following result.
\begin{prop}
   \label{prop:cv-fat-gene-sub}
   Let $p$ be a  non-degenerate sub-critical offspring distribution with
   generating  function $\ff$  such that  $\fb\geq 2$ and  suppose
   that    there    exists    (a   unique)    $\kappa>1$    such    that
   $\ff(\kappa)=\kappa$   and   $\ff'(\kappa)<+\infty  $.    Let
   $\theta\in       [0,       +\infty       )$.        Assume       that
   $\lim_{n\rightarrow\infty  }  a_n  /c_n=\theta$,  $a_n>0$ and  
 $\tau_n$ is well defined  for  all
   $n\in \N^*$. Then,  we  have the
   following convergence in distribution:
\begin{equation}
   \label{eq:cv-tau-n-sous}
\tau_n\; \xrightarrow[n\rightarrow \infty ]{\textbf{(d)}} \;  \bar \tau^\theta. 
\end{equation}
If $\fb$  is finite,
then \reff{eq:cv-tau-n-sous} holds also for $\theta=\infty $.
\end{prop}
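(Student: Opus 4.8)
The plan is to push all of the work onto the super-critical tree $\bar\tau$ and transfer it back through the identity \reff{eq:bart-law}. Since $\bar p(n)=\kappa^{n-1}p(n)$, the distribution $\bar p$ is non-degenerate super-critical with mean $\bar\mu=\ff'(\kappa)>1$, and by construction $\left(\expp{-\bar Z_n/c_n},\,n\in\Nz\right)$ is a martingale, so $(c_n,\,n\in\Nz)$ is a Seneta-Heyde norming of $\bar\tau$ and $\bar Z_n/c_n$ converges a.s. Hence the hypothesis $\lim_{n\to\infty}a_n/c_n=\theta$ puts $\bar\tau_n$ (the tree $\bar\tau$ conditioned on $\{\bar Z_n=a_n\}$) in exactly one of the regimes of Section~\ref{sec:loc-cv}: I would apply Proposition~\ref{prop:cv-fat-gene} if $\theta\in(0,+\infty)$, Proposition~\ref{prop:cv-not-fat-gene} if $\theta=0$, and Proposition~\ref{prop:cv-not-vfat-gene} if $\theta=+\infty$ and $\fb<\infty$. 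These give $\bar\tau_n\to\bar\tau^\theta$, $\bar\tau_n\to\bar\tau^0$ and $\bar\tau_n\to\bar\tau^\infty=\bt_\fb$ in distribution respectively. Since each of these limit trees lies in $\Tf$ and has a.s. infinite height, and since \reff{eq:bart-law} says that $r_h(\tau_n)$ and $r_h(\bar\tau_n)$ have the same law for every $n\ge h\ge 1$, the characterization \reff{eq:cv-loi} of local convergence yields the same convergence for $\tau_n$, which is the claim.

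Before invoking those propositions I would check that their side hypotheses are inherited. Because $\bar p$ and $p$ have the same support, they share the type $(L_0,r_0)$ and the bounds $\fa=\bar\fa$, $\fb=\bar\fb$; moreover $\mu<1$ forces $p(0)>0$, so $\fa=\bar\fa=0$. By Remark~\ref{rem:deftn} the conditions for $\P(Z_n=a)>0$ and for $\P(\bar Z_n=a)>0$ coincide, so $\tau_n$ is well defined if and only if $\bar\tau_n$ is. In particular $\P(\bar Z_n=a_n)>0$ forces $a_n\le\fb^n$ (the $n$-th generation of any admissible tree has size at most $\fb^n$), which is the constraint needed in Proposition~\ref{prop:cv-not-vfat-gene}, while $a_n$ being a positive integer gives $a_n\ge 1=1\vee\bar\fa^n$ for $n\in\N^*$, the constraint needed in Proposition~\ref{prop:cv-not-fat-gene}. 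I would also match the limit objects to those in the statement: $\bar\tau^\theta$ is Definition~\ref{def:def-tau-theta} applied to $\bar p$, for which $\bar\fc=1/\kappa$ and the associated subcritical distribution $\bar\fp$ equals $p$ by construction, and $\bar\tau^0$ is the Kesten tree of Definition~\ref{def:Kesten} built from $\bar\fp=p$; this is exactly what is asserted.

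I do not expect a genuine obstacle here: the statement is a transfer of the already-established super-critical results, so the proof is essentially bookkeeping. The one point that needs an argument rather than pure quotation is the equivalence $\P(Z_n=a_n)>0\iff\P(\bar Z_n=a_n)>0$ together with the ensuing admissibility bounds $1\le a_n\le\fb^n$; both follow from the fact that $p$ and $\bar p$ have the same support and from Remark~\ref{rem:deftn}.
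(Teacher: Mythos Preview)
Your proposal is correct and follows exactly the paper's own argument: the paper derives Proposition~\ref{prop:cv-fat-gene-sub} in one sentence by invoking Propositions~\ref{prop:cv-fat-gene}, \ref{prop:cv-not-fat-gene} and \ref{prop:cv-not-vfat-gene} for $\bar\tau$, the identity~\reff{eq:bart-law}, and the characterization~\reff{eq:cv-loi}. Your additional checks (same support and type for $p$ and $\bar p$, $\bar\fa=0$ since $\mu<1$ forces $p(0)>0$, and the inherited constraints $1\le a_n\le\fb^n$) are the natural bookkeeping that the paper leaves implicit.
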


In the  sub-critical regime, the  local convergence of $\tau_n$  and the
identification  of the  limit if  any when  1 is  the only  root of  the
equation $\ff(\kappa)=\kappa$ is an open question.

\section{Ancillary results}
\label{sec:anc}
We  adapt the  proof of  Theorem 1  in \cite{fo:ld}.   Recall that  $W$,
conditionally  on $\{W>0\}$  has a  positive continuous  density $w$  on
$(0, +\infty )$. We shall use the following well known result.
\begin{lem}
   \label{lem:fw-integ}
Let $X$ be a real random variable with a continuous density. Let $a<b$
be elements of $\{\lambda\in \R; \, \E[\expp{\lambda X}]<+\infty
\}$. For  $z\in \C$ such that $\fR(z)\in K=[a,b]$, the Laplace transform
$g(z)=\E[\expp{zX}]$ is well defined and we have:
\[
\lim_{|t|\rightarrow+\infty } \sup _{u\in K} \frac{|g(u+it)|}{g
(u)}=0.
\]
Let $t_0>0$. There exists $\eta\in (0, 1)$ such that
for all $u\in K$, $ t\in \R$ with   $|t|\geq t_0$, we
have:
\begin{equation}
   \label{eq:majo-phi-t}
|g (u+it)|\leq  (1-\eta) g (u).
\end{equation}
\end{lem}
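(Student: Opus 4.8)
The plan is to write $g(u+it)$ as the Fourier transform of an $L^1$ function and run a \emph{uniform} Riemann--Lebesgue argument, then upgrade the resulting decay at infinity to the strict bound \reff{eq:majo-phi-t} by a compactness argument on the strip. Let $\rho$ denote the (continuous) density of $X$, so that $g(z)=\int_\R \expp{zx}\rho(x)\, dx$. Since $\lambda\mapsto \E[\expp{\lambda X}]$ is convex, its finiteness domain is an interval, which by hypothesis contains $K=[a,b]$; for $\fR(z)\in K$ we have $\val{\expp{zx}\rho(x)}=\expp{\fR(z)x}\rho(x)\leq h(x):=(\expp{ax}+\expp{bx})\rho(x)$ with $h\in L^1(\R)$, so $g$ is well defined and, by dominated convergence, continuous on the closed strip $\{a\leq \fR(z)\leq b\}$. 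In particular $u\mapsto g(u)$ is continuous and positive on $K$ (a Laplace transform of a probability measure), hence $c_0:=\inf_{u\in K}g(u)>0$.

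First I would prove the uniform Riemann--Lebesgue estimate. For $u\in K$ put $f_u(x)=\expp{ux}\rho(x)\in L^1(\R)$, so $g(u+it)=\int_\R \expp{itx}f_u(x)\,dx$. Dominated convergence (with dominating function $h$) shows $u\mapsto f_u$ is continuous from $K$ into $L^1(\R)$, so $\{f_u;\ u\in K\}$ is norm-compact in $L^1(\R)$; covering it by finitely many balls centered at $f_{u_1},\dots,f_{u_N}$ and applying the classical Riemann--Lebesgue lemma to each $f_{u_j}$ yields $\lim_{\val t\to+\infty}\sup_{u\in K}\val{g(u+it)}=0$. Dividing by $g(u)\geq c_0$ gives the first assertion.

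Next I would establish that $\val{g(u+it)}<g(u)$ for all $u\in K$ and $t\neq 0$: indeed $\val{g(u+it)}=\val{\int \expp{itx}f_u(x)\,dx}\leq \int f_u(x)\,dx=g(u)$, and equality in this triangle inequality would force $x\mapsto \expp{itx}$ to be constant on $\{\rho>0\}$ up to a Lebesgue-null set; but for $t\neq 0$ the level set $\{x;\ \expp{itx}=c\}$ is countable while $\{\rho>0\}$ has positive Lebesgue measure (since $\int\rho=1$), a contradiction. Because $g$ is continuous on the closed strip and $g\geq c_0>0$ on $K$, the ratio $\Phi(u,t):=\val{g(u+it)}/g(u)$ is continuous on $K\times\R$, satisfies $\Phi(u,-t)=\Phi(u,t)$ (as $g(u-it)=\overline{g(u+it)}$), and $\Phi(u,t)<1$ for $t\neq 0$.

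Finally, for \reff{eq:majo-phi-t}: by the first part pick $T>t_0$ with $\sup_{u\in K}\Phi(u,t)\leq 1/2$ whenever $\val t\geq T$; on the compact set $\{(u,t);\ u\in K,\ t_0\leq \val t\leq T\}$ the continuous function $\Phi$ attains a maximum $1-\eta_1<1$, and $\eta:=\min(\eta_1,1/2)\in(0,1)$ then works for all $u\in K$ and $\val t\geq t_0$. The one point requiring care is the uniformity in $u$ in the Riemann--Lebesgue step, which is handled by the compactness-in-$L^1$ observation above; the rest is routine, and the argument parallels the proof of Theorem~1 in \cite{fo:ld}.
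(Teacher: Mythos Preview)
Your argument is correct. The paper does not actually prove this lemma: it introduces it with ``We shall use the following well known result'' and moves on, so there is no proof to compare against. Your uniform Riemann--Lebesgue step via the $L^1$-compactness of $\{f_u;\ u\in K\}$ is clean, the strict inequality $|g(u+it)|<g(u)$ for $t\neq 0$ is justified correctly from the equality case in the triangle inequality, and the final compactness argument on $K\times\{t_0\leq |t|\leq T\}$ is standard. The closing remark that this ``parallels the proof of Theorem~1 in \cite{fo:ld}'' is a bit off (that theorem concerns a different estimate), but it plays no role in the logic.
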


Recall the function $\tilde \varphi(z)=\E[\expp{zW}]$  is well
defined for $z\in \C$ such that  $\fR(z)\in \ck=\{\lambda\in \R; \,
\E[\expp{\lambda W}]<+\infty \}$. The next Lemma is a direct consequence
of \reff{eq:majo-phi-t}. 
\begin{lem}
   \label{lem:majo-f-c}
   Let    $p$    be    a   non-degenerate    super-critical    offspring
   distribution with finite mean. Let $a<0\leq b$ such
   that $K_0:=[a,b]\subset \ck$. Let $t_0>0$. There exists $\eta\in (0,
   1)$ such that for all $u\in K_0$, $t\in \R$ with $|t|\geq t_0$:
\begin{equation}
   \label{eq:majo-f-c}
|\tilde \varphi (u+it)  | \leq (1-\eta) \tilde  \varphi(u) .
\end{equation}
\end{lem}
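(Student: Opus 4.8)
The plan is to reduce to Lemma \ref{lem:fw-integ} by peeling off the atom of $W$ at $0$. Recall that $W$ has law $\fc\,\delta_0(dt)+w(t)\ind_{\{t>0\}}\,dt$ with $w$ positive and continuous on $(0,+\infty)$, so for $z\in\C$ with $\fR(z)\in\ck$ we may write
\[
\tilde\varphi(z)=\E[\expp{zW}]=\fc+g(z),\qquad g(z):=\E[\expp{zW}\ind_{\{W>0\}}]=\int_0^{+\infty}\expp{zt}w(t)\,dt .
\]
Let $\tilde W$ be distributed as $W$ conditionally on $\{W>0\}$; it has the continuous density $w/(1-\fc)$ on $(0,+\infty)$, $g(z)=(1-\fc)\,\E[\expp{z\tilde W}]$, and $\ck$ is exactly the set of real $\lambda$ with $\E[\expp{\lambda\tilde W}]<+\infty$ (the atom at $0$ only adds the finite constant $\fc$).

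First I would apply Lemma \ref{lem:fw-integ} to $X=\tilde W$ with $K=K_0=[a,b]\subset\ck$ and the given $t_0$: this produces $\eta_0\in(0,1)$ such that $|\E[\expp{(u+it)\tilde W}]|\leq(1-\eta_0)\,\E[\expp{u\tilde W}]$ for all $u\in K_0$ and $|t|\geq t_0$; multiplying by $1-\fc$ gives $|g(u+it)|\leq(1-\eta_0)\,g(u)$ on the same range. Next I would combine this with the triangle inequality to get, for $u\in K_0$ and $|t|\geq t_0$,
\[
|\tilde\varphi(u+it)|\leq\fc+|g(u+it)|\leq\fc+(1-\eta_0)g(u)=\tilde\varphi(u)-\eta_0\,g(u).
\]
To turn the subtracted term into a uniform multiplicative factor, I would note that $u\mapsto g(u)/\tilde\varphi(u)$ is continuous and strictly positive on $K_0$ — positive because $g(u)=\int_0^{+\infty}\expp{ut}w(t)\,dt>0$ since $w>0$, and finite because $K_0\subset\ck$ — hence bounded below on the compact $K_0$ by some $\delta\in(0,1]$ (note $g(u)/\tilde\varphi(u)=g(u)/(\fc+g(u))\leq1$). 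Then $|\tilde\varphi(u+it)|\leq\tilde\varphi(u)\bigl(1-\eta_0\,g(u)/\tilde\varphi(u)\bigr)\leq(1-\eta_0\delta)\,\tilde\varphi(u)$, so $\eta:=\eta_0\delta\in(0,1)$ works.

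There is no real obstacle here; the only point to keep in mind is that $W$ itself does not have a continuous density when $\fc>0$, so Lemma \ref{lem:fw-integ} cannot be invoked for $W$ directly, and the decomposition $\tilde\varphi=\fc+g$ is precisely what allows one to apply it to $\tilde W$ instead. When $\fc=0$ the decomposition is trivial and one may simply take $\delta=1$, $\eta=\eta_0$.
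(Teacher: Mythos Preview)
Your proof is correct and follows essentially the same route as the paper: decompose $\tilde\varphi=\fc+g$, apply Lemma~\ref{lem:fw-integ} to $W$ conditioned on $\{W>0\}$ to get $|g(u+it)|\leq(1-\eta')g(u)$, then use the triangle inequality and a uniform lower bound on $g(u)/\tilde\varphi(u)$ over $K_0$. The only cosmetic difference is that the paper exploits the monotonicity of $\tilde\varphi$ on $\R$ (since $W\geq0$) to write the constant explicitly as $\eta=\eta'(1-\fc/\tilde\varphi(a))$, whereas you invoke continuity and compactness; these give the same bound since $\inf_{u\in K_0}g(u)/\tilde\varphi(u)=1-\fc/\tilde\varphi(a)$.
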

\begin{proof}
  Set $\ca=\{(u,t);\, u\in K_0 \text{ and } |t|\geq t_0\}$. 
According   to   \reff{eq:majo-phi-t},   with  $X$   replaced   by   $W$
conditioned  on $\{W>0\}$,  there exists  $\eta'\in (0,  1)$ such  that
$|\tilde \varphi(u+it) -\fc|\leq (1-\eta')  (\tilde \varphi(u) -\fc)$ for
all  $(u,t)\in \ca$.     Taking
$\eta=\eta'(1- \fc/\tilde \varphi(a))\in (0,1)$ so that $\eta'\fc\leq
(\eta'-\eta)\tilde \varphi(u)$ for all $u\in K_0$, we get for all $(u,t)\in \ca$:
\[
|\tilde \varphi(u+it)|\leq  |\tilde \varphi(u+it) -\fc|+\fc 
\leq (1-\eta')  (\tilde \varphi(u) -\fc) + \fc= 
 (1-\eta')
\tilde \varphi(u) +\eta' \fc\leq (1-\eta) \tilde \varphi(u).
\] 
This gives the result. 
\end{proof}

The next lemma, see Lemma 16  in \cite{fw:lta}, is used for  the Fourier
inversion formula  of   $w^{*\ell}$.  Set 
\begin{equation}
   \label{eq:def-ck'}
\ck'=\{\lambda\in \R;\, \tilde \varphi'(\lambda)<+\infty
\}.
\end{equation}
 Notice that $\ck'\subset \ck$ and $\ck'\bigcup \{\lambda_c\}=\ck$. 
\begin{lem}
   \label{lem:fw-Fourier}
   Let    $p$    be    a   non-degenerate    super-critical    offspring
   distribution with finite mean. Let $a<0\leq b$ such
   that $K_0:=[a,b]\subset \ck$.  If  $\ell\in \N ^*$  is such that
   $\ell  >1/\alpha$, then we have:
\begin{equation}
   \label{eq:int-unif-f}
\sup_{u\in K_0} \int_\R |\tilde \cl (u+it)-\fc|^\ell \, dt<+\infty .
\end{equation}
If $\alpha<+\infty $  and if $K_0\subset \ck'$, 
then we have:
\begin{equation}
   \label{eq:int-unif-f'}
\sup_{u\in K_0} \int_\R |\tilde \cl' (u+it)| \, dt<+\infty .
\end{equation}
\end{lem}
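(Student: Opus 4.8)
The plan is to reduce both parts to a single uniform tail estimate. Write $\psi(z):=\tilde\varphi(z)-\fc=\int_0^{+\infty}\expp{zx}\,w(x)\,dx$, the Fourier--Laplace transform of the density $w$ of $W$ restricted to $(0,+\infty)$, so that $\tilde\varphi'(z)=\psi'(z)=\int_0^{+\infty}x\,\expp{zx}\,w(x)\,dx$. For every $u\in K_0$ and $t\in\R$ one has the trivial bounds $|\psi(u+it)|\le\tilde\varphi(u)-\fc\le\tilde\varphi(b)-\fc$ and, when $b\in\ck'$, $|\psi'(u+it)|\le\tilde\varphi'(b)<+\infty$; hence, for any fixed $t_0>0$, the contributions of $\{|t|\le t_0\}$ to the integrals in \reff{eq:int-unif-f} and \reff{eq:int-unif-f'} are finite and bounded uniformly in $u\in K_0$. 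Everything then comes down to estimating $|\psi(u+it)|$ and $|\psi'(u+it)|$ for $|t|$ large, uniformly in $u\in K_0$.

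The engine is the functional equation $\tilde\varphi(z)=\ff(\tilde\varphi(z/\mu))$, valid for $\fR(z)\in\ck$ (this is \reff{eq:Lap-W} read for $\tilde\varphi$; see Remark \ref{rem:def-lc}), iterated, combined with the expansion of $\ff$ at its fixed point $\fc$, where $\ff'(\fc)=\mu^{-\alpha}$ by \reff{eq:def-alpha}. Fix $\alpha'\in(0,\alpha)$ --- in the B\"ottcher case $\alpha=+\infty$, so $\alpha'$ may be taken arbitrarily large. Since $\ff(\fc)=\fc$ and $\ff'(\fc)=\mu^{-\alpha}<\mu^{-\alpha'}$ in the Schr\"oder case (and $\ff$ vanishes to order $\fa\ge2$ at $\fc=0$ in the B\"ottcher case), one may fix $\delta>0$ small enough that $|\ff(w)-\fc|\le\mu^{-\alpha'}\,|w-\fc|$ --- and, when $\alpha<+\infty$, also $|\ff'(w)|\le\mu^{-\alpha'}$ --- for all $w$ with $|w-\fc|\le\delta$. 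Then, applying Lemma \ref{lem:fw-integ} to $W$ conditioned on $\{W>0\}$ (legitimate since the endpoints of $K_0$ lie in $\ck$), which yields $\sup_{u\in K_0}|\psi(u+it)|\,/\,(\tilde\varphi(u)-\fc)\to0$ as $|t|\to\infty$, fix $t_0>0$ so large that $|\psi(u+it)|\le\delta$ for every $u\in K_0$ and every $|t|\ge t_0$.

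Now fix $u\in K_0$ and $|t|\ge t_0$, put $z=u+it$, and let $n=n(t)\ge0$ be the integer with $\mu^{n}t_0\le|t|<\mu^{n+1}t_0$, so that $|t|/\mu^{n}\in[t_0,\mu t_0)$. Since $a<0\le b$ we have $0\in K_0$, hence $u/\mu^{j}\in K_0$ for every $j\ge0$; in particular $z/\mu^{n}$ has real part in $K_0$ and $|t|/\mu^{n}\ge t_0$, so $|\psi(z/\mu^{n})|\le\delta$. Writing $\psi(z/\mu^{j-1})=\ff\!\big(\fc+\psi(z/\mu^{j})\big)-\fc$ and using the contraction of $\ff$ on the $\delta$-ball around $\fc$, a downward induction on $j$ from $n$ to $0$ gives $|\psi(z/\mu^{j})|\le\mu^{-\alpha'(n-j)}\delta$, whence $|\psi(z)|\le\mu^{-\alpha' n}\delta\le C\,|t|^{-\alpha'}$ with $C$ depending only on $\alpha',\delta,t_0,\mu$. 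This proves \reff{eq:int-unif-f}: given $\ell>1/\alpha$ choose $\alpha'\in(1/\ell,\alpha)$, so $\ell\alpha'>1$ and $\int_{|t|\ge t_0}|\psi(u+it)|^{\ell}\,dt\le C^{\ell}\int_{|t|\ge t_0}|t|^{-\ell\alpha'}\,dt<+\infty$, uniformly in $u\in K_0$. For \reff{eq:int-unif-f'} (now $\alpha<+\infty$ and $K_0\subset\ck'$), differentiating $\tilde\varphi(z)=\ff(\tilde\varphi(z/\mu))$ gives $\psi'(z)=\mu^{-1}\ff'\!\big(\fc+\psi(z/\mu)\big)\psi'(z/\mu)$; since $|\psi(z/\mu^{j})|\le\delta$ for $0\le j\le n$, iterating and using $|\psi'(z/\mu^{n})|\le\tilde\varphi'(b)$ (valid because $z/\mu^{n}$ has real part in $K_0\subset\ck'$ and $\tilde\varphi'$ is nondecreasing on $\ck'$) gives $|\psi'(u+it)|\le\mu^{-n(\alpha'+1)}\tilde\varphi'(b)\le C'\,|t|^{-(\alpha'+1)}$, integrable over $\{|t|\ge t_0\}$ uniformly in $u$ since $\alpha'+1>1$.

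The substantive step is the decay bound $|\psi(u+it)|\le C|t|^{-\alpha'}$, which is essentially Lemma 16 of \cite{fw:lta}; its two delicate ingredients are the uniform initialization of the iteration, supplied by Lemma \ref{lem:fw-integ}, and the bookkeeping that keeps all intermediate arguments $z/\mu^{j}$ inside the strip $\{\fR\in K_0\}$ and, at the top level $j=n$, at imaginary part of modulus at least $t_0$ --- here the hypothesis $a<0\le b$ is exactly what makes $K_0$ stable under $u\mapsto u/\mu$. A minor technical point is the boundary case $b=\lambda_c$, where $\tilde\varphi$ and $\tilde\varphi'$ lie on the edge of their strip of analyticity; there one works with the absolutely convergent integral representations of $\psi$ and $\psi'$, and the functional equation is applied at $z/\mu$, whose real part $b/\mu<\lambda_c$ is interior, so nothing changes.
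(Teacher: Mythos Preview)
Your proof is correct, and the underlying mechanism is the same as the paper's: iterate the functional equation $\tilde\varphi(z)=\ff(\tilde\varphi(z/\mu))$ and exploit the contraction of $\ff$ near its fixed point $\fc$, after using Lemma~\ref{lem:fw-integ} to push $\tilde\varphi(u+it)-\fc$ into the contracting region for large $|t|$. The packaging, however, differs in a way worth noting. The paper invokes the Schr\"oder limit $\fm^{-n}(\ff_n(z)-\fc)\to S(z)$ from \cite{ah:bp} to obtain the uniform iterate bounds $|\ff_k(z)-\fc|\le\fm^k/\varepsilon$ and $|\ff_k'(z)|\le\fm^k/\varepsilon$ at the exact rate $\fm=\mu^{-\alpha}$, then decomposes $\int_{\R}$ into shells $|t|\in[\mu^{k+k_0},\mu^{k+k_0+1}]$ and sums a geometric series in $(\mu\fm^{\ell})^k$; the B\"ottcher case is handled separately by reference to \cite{fw:lta}. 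You bypass the Schr\"oder function entirely, using only the elementary one-step contraction $|\ff(w)-\fc|\le\mu^{-\alpha'}|w-\fc|$ for any $\alpha'<\alpha$, iterate it pointwise to the power-law decay $|\psi(u+it)|\le C|t|^{-\alpha'}$, and integrate directly. This is more self-contained, treats the Schr\"oder and B\"ottcher cases uniformly, and yields a sharper pointwise statement; the only price is that you reach exponent $\alpha'<\alpha$ rather than $\alpha$, which is immaterial since the hypothesis $\ell>1/\alpha$ is open.
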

Notice that the proof of Lemma \ref{lem:fw-Fourier} insures that $\tilde \cl (u+it)-\fc$
is not $L^1$ if $\ell\leq 1/\alpha$. This  dichotomy  appears already in
the proof of Lemma 9 from \cite{ds:lltgwp}. Recall that, as $p$ is super-critical, we write
$\fm=f'(\fc)\in [0, 1)$.

\begin{proof}
  The inequality \reff{eq:int-unif-f}  in the Böttcher case  is given in
  Lemma 16  in \cite{fw:lta}. So,  we now consider 
  the Schröder  case, that is  $\fm>0$.  In this  case, there
  exists    an    analytic    function     ${\rm    S}$    defined    on
  $\overset{\circ}D=\{z\in  \C, \,  |z|<1\}$ such  that the  convergence
  $\lim_{n\rightarrow+\infty } \fm^{-n}  ( \ff_n(z)-\fc)= {\rm S}(z)$
  holds  uniformly  on any  compact  subset  of $\overset{\circ}D$,  see
  \cite{ah:bp}  Corollary 3.7.3\footnote{Notice  Corollary 3.7.3  stated
    for   $z\in  \fc+   (1-\fc)\overset{\circ}D$  in   fact  holds   for
    $z\in \overset{\circ}D$ according to Lemma 3.7.2 in \cite{ah:bp}, as
    $\lim_{n\rightarrow\infty         }          f_n(z)=\fc$         for
    $z\in  \overset{\circ}D$.}.  Since  the functions  are analytic,  we
  also                            deduce                            that
  $\lim_{n\rightarrow+\infty } \fm^{-n} \ff_n'(z)= {\rm S}'(z)$ holds
  uniformly on any compact subset of $\overset{\circ}D$. We deduce 
from \reff{eq:Lap-W} and Remark \ref{rem:def-lc}
that $\tilde  \varphi(z)=f_k\left(\tilde
  \varphi(\mu^{-k}  z)\right)$ and thus for $k\in \N^*$ and $z\in \C$
such that $\varphi'(\mu^{-k} \fR(z))<+\infty $:
\begin{equation}
   \label{eq:form-f'n}
\tilde    \varphi'(z)=\mu^{-k}     f_k'\left(\tilde    \varphi(\mu^{-k}
  z)\right)\varphi'(\mu^{-k} z).
\end{equation}

  There   exists    $\varepsilon\in   (0,1)$,   such   that    for   all
  $z\in   \overset{\circ}D$    with   $|z-\fc|<\varepsilon(1-\fc)$   and
  $k\in  \N^*$, we  have  $|f_k(z)  -\fc|\leq \fm^k/\varepsilon$  and
  $|f'_k(z) |\leq \fm^k/\varepsilon$.  Since  $0\in K_0$, we get that
  if   $u\in   K_0$,  then   $u\mu^{-k}\in   K_0$.    Thanks  to   Lemma
  \ref{lem:fw-integ}  (with   $X$  replaced  by  $W$   conditionally  on
  $\{W>0\}$),  we   can  take  $k_0\in   \N$  large  enough   so  that
  $|\tilde  \varphi(\mu^{-k}u +it)  - \fc|\leq  \varepsilon(1-\fc)$, and
  thus  $\tilde \varphi(\mu^{-k}u  +it) \in  \overset{\circ}D$, for  all
  $k\geq   k_0$,   $u\in  K_0$   and   $t\geq   \mu^{k_0}$.  Then,   for
  $k\geq k_0\geq 0$ and $u\in K_0$, we get, with $\mu^ks=t$, that:
\begin{align}
\nonumber
\int_{\mu^{k+k_0}}^{\mu^{k+k_0+1}}  |\tilde \cl (u+it)-\fc|^\ell \, dt
&= \int_{\mu^{k+k_0}}^{\mu^{k+k_0+1}}  |\ff_k(\tilde \cl (\mu^{-k}(u+it)))-\fc| ^\ell \, dt\\
\nonumber
&= \mu^k\int_{\mu^{k_0}}^{\mu^{k_0+1}}  |\ff_k(\tilde \cl (\mu^{-k}u+is))-\fc| ^\ell \, ds\\
\label{eq:int-fk0}
&\leq  \mu^{k_0+1}  \varepsilon^{-\ell} (\mu \fm^{\ell})^k,
\end{align}
as well as, using  \reff{eq:form-f'n},
\begin{align}
\nonumber
\int_{\mu^{k+k_0}}^{\mu^{k+k_0+1}}  |\tilde \cl' (u+it)| \, dt
&=\mu^{-k} \int_{\mu^{k+k_0}}^{\mu^{k+k_0+1}}  
|\ff'_k(\tilde \cl (\mu^{-k}(u+it)))| \, |\tilde \cl' (\mu^{-k}(u+it))|  \, dt\\
\nonumber
&=\int_{\mu^{k_0}}^{\mu^{k_0+1}}  
|\ff'_k(\tilde \cl (\mu^{-k}u+is))| \, |\tilde \cl' (\mu^{-k}u+is)|  \, ds\\
\label{eq:int-ffk0}
&\leq  \mu^{k_0+1} \varepsilon^{-1} \fm^k \, \sup_{u\in K_0} \tilde
  \varphi'( u).
\end{align}
We deduce from \reff{eq:int-ffk0}  that  
the integral 
$\int_\R |\tilde \cl' (u+it)|\, dt$ is uniformly bounded  for $u\in
K_0$ as $\sup_{u\in K_0} \tilde
  \varphi'( u)$ is finite since $K_0\subset \ck'$, and from
\reff{eq:int-fk0} that 
the  integral 
$\int_\R |\tilde \cl (u+it)-\fc|^\ell \, dt$ is uniformly bounded  for $u\in
K_0$ as soon as 
$\mu
\fm^\ell=\mu^{1- \ell\alpha} <1$ that is  $\ell>1/\alpha$. 
\end{proof}

We give a similar result on the integrability of $\tilde \varphi_j$, the
Laplace transform of  $-W_j=-Z_j/c_j$. See (166) in  \cite{fw:lta} and a
variant of Lemmas 2  and 3 in \cite{ds:lltgwp}, see also  Lemmas 8 and 9
in       \cite{fw:ld}.      By       construction      the       process
$M=(M_n=\expp{-W_n},  n\in \N)$  is a  positive bounded  martingale with
respect  to  the  filtration   $(\cf_n=\sigma(Z_0,  \ldots,  Z_n),  n\in
\N)$.
It is  closed as it  converges a.s. towards $M_\infty  =\expp{-W}$.  Let
$g$ be  a convex non-negative function  defined on $(0, +\infty  )$.  We
deduce that $N^g=(N_n^g=g(M_n), n\in \N)$ is a positive sub-martingale which
converges a.s.  towards $N^g_\infty =g(M_\infty )$.  By Jensen inequality,
we    get     that    $N_n^g\leq    \E[g(M_\infty    )|\cf_n]     $.    If
$\E[g(M_\infty )]<+\infty $, then we get that: $N^g$ is uniformly
integrable, $N^g$ converges  in $L^1$  towards  $N^g_\infty $,
$\lim_{n\rightarrow\infty     }    \E[N^g_n]=\sup_{n\in \N }
\E[N^g_n]=\E[N^g_\infty ]<+\infty $. 
For $\lambda\in \ck$, consider the positive convex function 
$g(x)=     x^{-\lambda}$ defined on $(0,1)$ and set $\tilde
\varphi_n(\lambda)=N^g_n=\E[\expp{\lambda W_n}]$. We deduce that 
$\lim_{n\rightarrow+\infty }\tilde   \varphi_n(\lambda)
=\sup_{n\in \N }\tilde   \varphi_n(\lambda)=\tilde
\varphi(\lambda)$.  Using monotone
convergence, we get  that $\tilde \varphi_n(z)=\E[\expp{z W_n}]$
converges uniformly on compacts subsets of $\{z\in \C; \, \fR(z)\in \ck\}$
towards $\tilde \varphi(z)$ as $n$ goes to infinity. 

For $\lambda\in \ck'$, consider the positive convex function 
$\fg(x)=     -\log(x) x^{-\lambda}$ defined on $(0,1)$ and notice that $\tilde
\varphi'_n(\lambda)=N^\fg_n$. Arguing as for $g$, we get that
 $\tilde \varphi'_n(z)$
converges uniformly on compacts subsets of $\{z\in \C; \, \fR(z)\in
\ck'\}$ and that for $\lambda\in \ck'$:
\begin{equation}
   \label{eq:f'n-bound}
\lim_{n\rightarrow+\infty }\tilde   \varphi'_n(\lambda)
=\sup_{n\in \N }\tilde   \varphi'_n(\lambda)=\tilde
\varphi'(\lambda).
 \end{equation}

\begin{lem}
   \label{lem:fw-Fouriern}
   Let  $p$ be  a non-degenerate  super-critical offspring  distribution
   with finite  mean and type  $(L_0, r_0)$.  Let $a<0\leq b$  such that
   $K_0:=[a,b]\subset  \ck$.   Let  $t_1\in (0,  \pi  c_0/L_0)$.   There
   exists $\delta\in  (0,1)$ such  that for all  $u\in K_0$,  $t\in \R$,
   $n\in \N^*$, with $t_1\leq |t|\leq \pi c_n/L_0$:
\begin{equation}
   \label{eq:fw-Fourier2}
|\tilde \varphi_n (u+it)| \leq  (1-\delta) \tilde \varphi_n (u).
\end{equation}
If  $\alpha>1$, we have:
\begin{equation}
   \label{eq:fw-Fourier1}
\sup_{u\in K_0, n\in \N ^*} \int_{[\pm\pi c_n/L_0]} |\tilde
\varphi_n (u+it)-\fc| \, dt<+\infty. 
\end{equation}
If $\alpha<+\infty $ and $K_0\subset \ck'$,  we
have:
\begin{equation}
   \label{eq:fw-Fourier3}
\sup_{u\in K_0, n\in \N ^*} \int_{[\pm\pi c_n/L_0]} |\tilde
\varphi'_n (u+it)| \, dt<+\infty.
\end{equation}
\end{lem}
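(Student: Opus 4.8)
The plan is to derive all three bounds from their already-established counterparts for the limit transform $\tilde\varphi$ (Lemma \ref{lem:majo-f-c} and Lemma \ref{lem:fw-Fourier}), transported to $\tilde\varphi_n$ by a self-similarity argument. The three tools used throughout are: the semigroup identity
\[
\tilde\varphi_n(z)=\ff_k\left(\tilde\varphi_{n-k}\left(z\,c_{n-k}/c_n\right)\right),\qquad 0\le k\le n,\ \fR(z)\in\ck,
\]
which is immediate from $\tilde\varphi_m(z)=\ff_m(\expp{z/c_m})$ and $\ff_n=\ff_k\circ\ff_{n-k}$; the uniform-on-compacts convergences $\tilde\varphi_n\to\tilde\varphi$ on $\{\fR(z)\in\ck\}$ and $\tilde\varphi_n'\to\tilde\varphi'$ on $\{\fR(z)\in\ck'\}$, together with the monotonicity $\tilde\varphi_n(u)\nearrow\tilde\varphi(u)$, with all these transforms positive, all proved just above; and the estimates $|\ff_k(w)-\fc|\le\fm^k/\varepsilon$, $|\ff_k'(w)|\le\fm^k/\varepsilon$ valid for $w\in\overset{\circ}D$ with $|w-\fc|<\varepsilon(1-\fc)$, taken from inside the proof of Lemma \ref{lem:fw-Fourier} (the B\"ottcher analogue being Lemma 16 of \cite{fw:lta}). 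One also uses constantly that $\mu^{-k}\le c_{n-k}/c_n<1$ and $c_{n+1}/c_n\nearrow\mu$, and that $Z_n$ is, for $n\ge 1$, a non-constant lattice variable of span exactly $L_0$, so that $|\tilde\varphi_n(u+it)|<\tilde\varphi_n(u)$ for $0<|t|\le\pi c_n/L_0$.

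For \reff{eq:fw-Fourier2} I would split $\{t_1\le|t|\le\pi c_n/L_0\}$ into a moderate band $\{t_1\le|t|\le T\}$ and a high band $\{T\le|t|\le\pi c_n/L_0\}$ for a large constant $T$. On the moderate band $u+it$ ranges over a fixed compact subset of $\{\fR(z)\in\ck\}$; since $\tilde\varphi_n(u+it)\to\tilde\varphi(u+it)$ and $\tilde\varphi_n(u)\to\tilde\varphi(u)>0$ uniformly and $|\tilde\varphi(u+it)|\le(1-\eta)\tilde\varphi(u)$ by Lemma \ref{lem:majo-f-c} (with $t_0=t_1$), one gets $|\tilde\varphi_n(u+it)|\le(1-\eta/2)\tilde\varphi_n(u)$ for $n\ge n_1$, while each of the finitely many $n<n_1$ is handled by compactness of $K_0\times\{t_1\le|t|\le\pi c_n/L_0\}$ and the strict lattice inequality. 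On the high band one chooses, for each $t$, a peeling depth $k=k(n,t)$ so that $s:=t\,c_{n-k}/c_n$ lies in a fixed band $[s_0,\mu s_0]$ (possible because consecutive values $t\,c_{n-k}/c_n$ differ by a factor $<\mu$, once $s_0$ is small and the first few generations are set aside); writing $v=u\,c_{n-k}/c_n$, the identity gives $\tilde\varphi_n(u+it)=\ff_k(\tilde\varphi_{n-k}(v+is))$, and either $n-k$ is large, so $|\tilde\varphi_{n-k}(v+is)|\le(1-\eta')\tilde\varphi_{n-k}(v)$ by uniform convergence plus Lemma \ref{lem:majo-f-c}, whence by $|\ff_k(w)|\le\ff_k(|w|)$ and the convexity of $\ff_k$ (together with $\tilde\varphi_{n-k}(v)$ bounded away from $\fc$) one obtains $|\tilde\varphi_n(u+it)|\le(1-\delta)\tilde\varphi_n(u)$; or $n-k$ is one of finitely many small values, in which case $s\in[s_0,\mu s_0]$ is bounded and one concludes again by compactness and the strict lattice inequality.

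For \reff{eq:fw-Fourier1} and \reff{eq:fw-Fourier3} I would reproduce the dyadic decomposition of the proof of Lemma \ref{lem:fw-Fourier}, with $\tilde\varphi_n$ in place of $\tilde\varphi$. The part $\{|t|\le T_0\}$ contributes at most $2T_0(\tilde\varphi(b)+\fc)$ since $|\tilde\varphi_n(u+it)-\fc|\le\tilde\varphi_n(u)+\fc\le\tilde\varphi(b)+\fc$. On $\{T_0\le|t|\le\pi c_n/L_0\}$, partition into intervals $I_m$, $m_0\le m\le n$, with $I_m=(c_n/c_m)\,[\mu^{k_0},\mu^{k_0+1}]$ (adjusted by the actual $c$'s so they tile), and use $\tilde\varphi_n(z)=\ff_{n-m}(\tilde\varphi_m(z\,c_m/c_n))$; the substitution $t'=t\,c_m/c_n$ carries $I_m$ onto $[\mu^{k_0},\mu^{k_0+1}]$, and for $k_0$ large (so that $\mu^{k_0}$ beats the frequency threshold of Lemma \ref{lem:fw-integ}) and $m$ large, $\tilde\varphi_m(u'+it')\in\overset{\circ}D$ stays within $\varepsilon(1-\fc)$ of $\fc$. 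Hence $|\tilde\varphi_n(u+it)-\fc|\le\fm^{n-m}/\varepsilon$ and, for \reff{eq:fw-Fourier3}, $|\tilde\varphi_n'(u+it)|\le(\fm^{n-m}/\varepsilon)\,|\tilde\varphi_m'(u'+it')|\,c_m/c_n\le C\fm^{n-m}c_m/c_n$ (using that $\tilde\varphi_m'$ is uniformly bounded on the relevant compact, since $K_0\subset\ck'$). Integrating over $I_m$ gives $\int_{I_m}|\tilde\varphi_n(u+it)-\fc|\,dt\le C'\fm^{n-m}(c_n/c_m)\le C'(\mu\fm)^{n-m}$ and $\int_{I_m}|\tilde\varphi_n'(u+it)|\,dt\le C''\fm^{n-m}$; summing the geometric series in $m$ is uniform in $n$ precisely when $\mu\fm<1$, i.e. $\alpha>1$, for \reff{eq:fw-Fourier1}, and when $\fm<1$, i.e. $\alpha<+\infty$, for \reff{eq:fw-Fourier3}. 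The finitely many intervals with $m$ small lie in a fixed bounded set and are bounded exactly as the range $\{|t|\le T_0\}$.

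The one genuinely delicate point is the uniform-in-$n$ strict contraction $|\tilde\varphi_n(u+it)|\le(1-\delta)\tilde\varphi_n(u)$ of \reff{eq:fw-Fourier2} over the whole fundamental domain $|t|\le\pi c_n/L_0$: the pointwise strict inequality coming from the triangle inequality for the lattice variable $Z_n$ degrades as $n\to\infty$, so one must use the semigroup identity to fold high frequencies back to a fixed band at an earlier generation, while checking that the peeled-off iterate $\ff_k$ does not erase the gap (this is where the convexity of $\ff_k$ and the separation of $\tilde\varphi_{n-k}$ from $\fc$ enter). This is the mechanism behind formula (166) of \cite{fw:lta} and the variants of Lemmas 2 and 3 of \cite{ds:lltgwp} to which the statement refers.
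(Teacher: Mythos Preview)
Your proposal follows the same scheme as the paper: a self-similar decomposition via the semigroup identity $\tilde\varphi_n=\ff_{n-m}\circ\tilde\varphi_m\circ(\cdot\,c_m/c_n)$, combined with the Schr\"oder contraction estimates on $\ff_{n-m}$ near $\fc$. For \reff{eq:fw-Fourier1} and \reff{eq:fw-Fourier3} your sketch is essentially the paper's proof (the paper's tiling intervals are $J_{n,k}=[\pi c_nc_0/(L_0c_k),\,\pi c_nc_0/(L_0c_{k-1})]$, which cover $[\pi c_0/L_0,\pi c_n/L_0]$ exactly, and the geometric sums are the same).

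For \reff{eq:fw-Fourier2} there is a genuine difference in execution, and a slip in your argument. The paper does \emph{not} push the relative bound $|\tilde\varphi_m(v+is)|\le(1-\eta')\tilde\varphi_m(v)$ through $\ff_k$ via convexity. Instead it first ensures that the rescaled real argument $v=u\,c_m/c_n$ is small (by taking $m\le n-k_0$ with $k_0$ large enough that $c_{n-k_0}/c_n\le c_0/c_{k_0}$ is small), so that $\tilde\varphi_m(v)$ is close to $1$ and the relative bound upgrades to the \emph{absolute} bound $|\tilde\varphi_m(v+is)|\le r_0<1$. Then the Schr\"oder estimate $|\ff_{n-m}(w)-\fc|\le B\fm^{n-m}$ for $|w|\le r_0$ yields $|\tilde\varphi_n(u+it)|\le\fc+B\fm^{n-m}$, which is compared with $\tilde\varphi_n(u)\ge\tilde\varphi_n(a)$, the latter bounded away from $\fc$ for $n$ large since $\tilde\varphi_n(a)\to\tilde\varphi(a)>\fc$.

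Your convexity route also works, but the parenthetical ``together with $\tilde\varphi_{n-k}(v)$ bounded away from $\fc$'' names the wrong quantity. Convexity of $\ff_k$ with pivot $\fc$ gives $\ff_k((1-\eta')x)\le(1-\eta')\ff_k(x)+\eta'\fc$, hence
\[
\frac{|\tilde\varphi_n(u+it)|}{\tilde\varphi_n(u)}\le 1-\eta'\Bigl(1-\frac{\fc}{\tilde\varphi_n(u)}\Bigr).
\]
To make this $\le 1-\delta$ you need $\ff_k(x)=\tilde\varphi_n(u)$ bounded away from $\fc$, not $x=\tilde\varphi_{n-k}(v)$; the former does not follow from the latter uniformly in $k$ (both $\ff_k(x)$ and $\ff_k((1-\eta')x)$ collapse to $\fc$ when $x\in(\fc,1)$ is fixed and $k\to\infty$). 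The required separation of $\tilde\varphi_n(u)$ from $\fc$ does hold for $n$ large by the uniform convergence $\tilde\varphi_n\to\tilde\varphi>\fc$ on $K_0$, and small $n$ are dispatched by compactness; with this correction your argument goes through.
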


\begin{proof}
  Let  $t_1\in (0,  \pi c_0/L_0)$.   Because  of the  periodicity $L_0$,  we
  deduce that for  all $n\in \N^*$, $u\in K_0$, $0<|t|<  2\pi c_n/L_0$, we
  have  $|\tilde  \varphi_n(u+it)|/\tilde  \varphi_n(u)<1$.   Thanks  to
  Lemma \ref{lem:majo-f-c}, there exists  $\eta\in (0,1)$, such that for
  all      $u\in       K_0$,      $|t|\geq      t_1$,       we      have
  $|\tilde \varphi  (u+it) | \leq  (1-\eta) \tilde \varphi(u)  $.  Using
  the     uniform     convergence      on     compact     subsets     of
  $\{z\in  \C;  \,  \fR(z)\in   \ck\}$  of  $\tilde  \varphi_n$  towards
  $\tilde \varphi$  and $\tilde  \varphi> \fc$ on $K_0$, we  deduce that  for all
  $t_2>t_1$,  there   exists  $\eta'\in   (0,1)$  such  that   for  all
  $u\in  K_0$, $n\in  \N^*$, $\min(t_2,\pi  c_n/L_0)\geq |t|\geq  t_1$, we
  have:
  \begin{equation}
   \label{eq:majo-ftn1}
|\tilde  \varphi_n  (u+it)  | \leq  (1-\eta')  \tilde  \varphi_n(u).
 \end{equation} 
 Set     $t_1=\pi    c_0/     L_0$,     $t_2=\pi     c_0\mu    /L_0$     and
 $J_n=[\pi  c_0/  L_0,  \pi c_0c_n/L_0  c_{n-1}]\subset[t_1,  \min(t_2,  \pi
 c_n/L_0)]$.
 Using the uniform convergence on compacts of $\{z\in \C, \fR(z)\in \ck\}$ of $\tilde \varphi_n$
 towards  $\tilde \varphi$  and  $\tilde \varphi(0)=1$,  we deduce  from
 \reff{eq:majo-ftn1}    that    there   exists    $\varepsilon>0$    and
 $r_0\in (0,1)$  such that for  all $u\in K_0$ with $|u|\leq \varepsilon$,  $n\in \N^*$,
 $|t|\in J_n$, we have $|\tilde  \varphi_n  (u+it)  | \leq r_0<1$.
Thus, there exists $k_0\in \N$ such that $\sup_{K_0} |u|
c_{n-k_0}/c_n\leq \varepsilon$ for all $n\geq k_0$. Thus for all $k,n\in \N^*$
with $k+k_0\leq n$, $u\in K_0$, $|t|\in J_n$, we have:
\begin{equation}
   \label{eq:majo-fJ}
\val{\tilde \varphi_k\left(u\frac{c_k}{c_n} +it\right)}\leq  r_0<1.
\end{equation} 
\medskip

We consider now the Schröder case, that is $\fm>0$.  According to the
beginning of the proof of Lemma \ref{lem:fw-Fourier}, there exists a finite
constant $B$ such that for all $z\in \C$ such that $|z|\leq r_0$ and
$n\in \N^*$, we have:
\begin{equation}
   \label{eq:fnbg}
|f_n(z)-\fc|\leq  B \fm^n \quad\text{and}\quad
|f'_n(z)|\leq  B \fm^n.
\end{equation}
For $k\in \{1, \ldots, n\}$, set
$   J_{n,k}=\{t\in   \R,  \,   \pi   c_nc_0/L_0c_{k}   \leq  |t|\leq   \pi
c_nc_0/L_0c_{k-1}                                                     \}$,
so that $t\in  J_{n,k}$ implies $|t|c_k/c_n\in J_n$ as the sequence
$(c_k/c_{k-1}, k\in \N^*)$ is non-decreasing.   For $k,n\in \N^*$
with $k+k_0\leq n$, $u\in K_0$, $t\in J_{n,k}$, we deduce from
\reff{eq:majo-fJ} and \reff{eq:fnbg} that:
\begin{equation}
   \label{eq:majo-sur-Jnp}
\val{\tilde \varphi_n(u+it)-\fc}=\val{f_{n-k}\left(\tilde \varphi_k
  \left(u\frac{c_k}{c_n} +it\frac{c_k}{c_n} \right) \right)-\fc }\leq  B
\fm^ {n-k}. 
\end{equation}
and,                                                                with
$R_0=\sup_{n\in \N^*}  \sup_{u\in K_0} \val{\tilde
  \varphi'_n                      \left(u\right)
}$,  that:
\begin{equation}
  \label{eq:majo-sur-Jnp'}
\val{\tilde \varphi'_n(u+it)}
=\frac{c_k}{c_n} \val{f'_{n-k}\left(\tilde \varphi_k
  \left(u\frac{c_k}{c_n} +it\frac{c_k}{c_n} \right) \right)}\, 
\val{\tilde \varphi'_k
  \left(u\frac{c_k}{c_n} +it\frac{c_k}{c_n} \right) }
\leq
\frac{c_k}{c_n}  B
\fm^ {n-k} R_0. 
\end{equation}
Notice that $R_0=\sup_{u\in K_0} \tilde \varphi'(u)$ and it is finite if
$K_0\subset \ck'$.  

As $c_n/c_{k-1}\leq  \mu^{n-k+1}$, we get that $|J_{n,k}|\leq \pi c_0
c_n/L_0c_{k-1} \leq \pi c_0
\mu^{n-k+1}/L_0$. This implies that  for $k+k_0\leq n$:
\[
\int_{J_{n,k}} \val{\tilde \varphi_n(u+it)-\fc}\, dt\leq  \frac{B\pi c_0\mu}{L_0} (\mu
\fm)^{n-k}
\quad\text{and}\quad
\int_{J_{n,k}} \val{\tilde \varphi'_n(u+it)}\, dt\leq R_0 \frac{B\pi c_0\mu}{L_0} 
\fm^{n-k}.
\]
Since $[\pm \pi c_n/L_0]\subset [\pm \pi c_0 \mu^{k_0}/L_0] \cup \bigcup _{k=1}^{n-k_0} J_{n,k}$,
we deduce that for all $u\in K_0$:
\begin{equation}
   \label{eq:intfn}
 \int_{[\pm\pi c_n/L_0]} |\tilde \varphi_n (u+it)-\fc| \, dt
\leq \frac{\pi c_0}{L_0} \left(
 \mu^{k_0} \sup_{n\in \N^*} \tilde \varphi_n(\sup K_0)
+ \mu^{k_0} \fc +B\mu \sum_{k=1}^{n-k_0} (\mu\fm)^{n-k} 
\right) 
\end{equation}
and
\begin{equation}
   \label{eq:intfn'}
\int_{[\pm\pi c_n/L_0]} |\tilde \varphi'_n (u+it)| \, dt
\leq \frac{\pi c_0 R_0}{L_0} \left(
 \mu^{k_0} \sup_{n\in \N^*} \tilde \varphi'_n(\sup K_0)
 +B\mu \sum_{k=1}^{n-k_0} \fm^{n-k} 
\right) .
 \end{equation} 
The   the upper bound 
\reff{eq:intfn}  gives \reff{eq:fw-Fourier1} when  $\alpha>1$  that is
$\mu\fm<1$,  and the upper bound \reff{eq:intfn'} gives \reff{eq:fw-Fourier3}
when $\alpha<+\infty $ and $K_0\subset \ck'$.\medskip

We     now prove   \reff{eq:fw-Fourier1} in the   Böttcher  case,   that   is  $\fm=0$   and
$\alpha=+\infty          $.           Notice          then          that
$\sup_{|z|\leq r_0} |f_n(z)|\leq B\varepsilon_0^n$  for any $n\in \N ^*$
and $\varepsilon_0>0$  with some finite  constant $B$ depending  only on
$\varepsilon_0$.   Then we  obtain  \reff{eq:fw-Fourier1} using  similar
arguments as in the Schröder case.  \medskip

We now prove  \reff{eq:fw-Fourier2} in the Schröder  case.  There exists
$\eta''\in   (0,1/2)$   such  that   $\fc   <   (1-2  \eta'')^2   \tilde
\varphi(a)$.
We    can    choose    an    integer   $k'_0\geq    k_0$    such    that
$\fc + B\fm^{k'_0}  < (1- \eta'')^2 \tilde \varphi(a)$.   We can also
choose $n_0\geq k'_0$ large enough so that $c_{n_0}> c_0 \mu^{k'_0}$ and
$\inf_{n\geq    n_0}   \tilde    \varphi_n(a)\geq   (1-\eta'')    \tilde
\varphi(a)$. Notice that for $n\geq k'_0$:
\[
\left[\frac{\pi  c_0\mu^{k'_0} }{L_0 },
\frac{\pi c_n }{L_0 }\right] \subset\left[\frac{\pi c_n c_0}{L_0 c_{n-k'_0}},
\frac{\pi c_n }{L_0 }\right]=\bigcup _{k=1}^{n-k'_0} J_{n,k}. 
\]
 Using \reff{eq:majo-sur-Jnp}, we
get  that for $k\in \N^*$, $n\geq  n_0$
with $k+k'_0\leq n$, $u\in K_0$, $t\in J_{n,k}$:
\[
\val{\tilde \varphi_n(u+it)}\leq  \fc+ B \fm^{n-k}\leq  \fc+ B \fm^{k'_0} \leq
(1-\eta'')^2 \tilde \varphi(a)\leq
(1-\eta'') \tilde \varphi_n(a)\leq
(1-\eta'') \tilde \varphi_n(u).
\]
This                              gives                             that
$\val{\tilde \varphi_n(u+it)}\leq  (1-\eta'') \tilde  \varphi_n(u)$ for
all                            $u\in K_0$,
$t\in \left[\frac{\pi  c_0\mu^{k'_0} }{L_0 }, \frac{\pi  c_n }{L_0 }\right]$
and     $n\geq    n_0$.      This    and     \reff{eq:majo-ftn1}    with
$t_2=\pi  c_{n_0}/L_0>  \pi  c_0   \mu^{k'_0}/L_0$  complete  the  proof  of
\reff{eq:fw-Fourier2} in the Schröder case.  \medskip

The proof of \reff{eq:fw-Fourier2} in the Böttcher
case is similar and left to the reader. 
\end{proof}

\section{Results in the Harris case}
\label{sec:Harris}
We  present  detailed  proofs  of the  results,
because even if  they correspond to an adaptation of  the results known
in the  Böttcher case (see  \cite{fw:ld} and \cite{fw:lta}),  we believe that
the adaptation is not straightforward since in particular the Fourier
inversion of $w^{*\ell}$ is not valid if $\ell \alpha\leq 1$. 
We keep
notations from Sections \ref{sec:GW}
and \ref{sec:gen-gen}. Recall $\fb$ defined in \reff{eq:def-ab} is the
supremum of the support of the offspring distribution $p$. 
We assume $\fb<\infty $ (Harris case).   Following \cite{fo:ld} or \cite{bb:ldsbp}, we
define  the (right)  Böttcher  constant $\beta_H\in  (1,+\infty  ) $
by:
\[
\fb  =\mu^{\beta_H}.
\]

\subsection{Preliminaries}
Since $\fb$ is finite, the radius of convergence $R_c$ of $f$ is
infinite. 
According to Remark \ref{rem:def-lc},    we    deduce    that
$\lambda_c=+\infty   $, that is $W$ has all its exponential moments and
that for   every   $z\in    \C$, with 
$\tilde \cl(z)=\E[\expp{zW}] = \cl(-z)$:
\begin{equation}
   \label{eq:PC-eq}
\tilde \cl (z)= \ff (\tilde \cl (z/\mu)).
\end{equation}

We  define  the  function  $\tilde b$  on  its domain by:
\begin{equation}
   \label{eq:def-tilde-b}
\tilde b(z)=\log
(z)+\sum_{n=0}^{+\infty} \fb^{-n-1} \log \left(
  \frac{ \ff_{n+1}(z)}{ \ff_n(z)^\fb}\right) . 
\end{equation}
According to Lemma 2.5  in \cite{fo:ld}, 
for every $\delta\in (0,1)$, there exists a constant
$\theta=\theta(\delta)\in (0, \pi)$ such that $\tilde b$ is analytic on
the open set:
\begin{equation}
   \label{eq:def-tD}
   \tilde \cd(\delta)= \{z\in \C; \, 1+\delta<|z|< \delta^{-1}, \, |\arg(z)|<
   \theta\}. 
\end{equation}
Notice that the function $\tilde b$ is analytic and positive on $(1,\infty )$ and
satisfies on $(1, \infty )$:
\begin{equation}\label{eq:tilde-b}
\tilde b\circ  \ff=\fb \, \tilde b.
\end{equation}
According to Lemma 2.6 in \cite{fo:ld}, the function $\tilde b$
satisfies:
\begin{equation}
   \label{eq:prop-tb}
(s\tilde b'(s))'>0 \quad\text{on $(1, \infty )$}, \quad
\lim_{s\rightarrow 1+ } s\tilde b'(s)=0 \quad\text{and}\quad
\lim_{s\rightarrow+\infty } s\tilde b'(s)=1.
\end{equation}
In particular, the function $\tilde b$ is increasing on $(1, +\infty
)$. 
\medskip 

We  set $\tilde  \psi=\tilde b\circ  \tilde\cl$ on  $(0, +\infty  )$. We
directly recover Proposition  1 in \cite{bb:ldsbp}, where  it is assumed
that   $\fc=0$.   (We   could  have   used  directly   the  results   from
\cite{bb:ldsbp} using the generating function $\tilde f$ given by the so-called
Sevastyanov transform  of $f$: 
 $\bar  f(z)= [\ff(\fc+(1-\fc)z)-\fc]/(1-\fc)$, where $\bar
 f'(1)=\mu$, $\bar f'(0)=f'(\fc)$ and \reff{eq:PC-eq} also holds with
 $f$ replaced by $\bar f $. But this approach breaks down, when
 considering the upper large deviation for $Z_n$, see Section \ref{sec:upperZn}.) 
\begin{lem}
   \label{lem:psi}
   The  function  $\tilde \psi$  is  analytic,  increasing  and
   strictly     convex     on     $(0,     +\infty     )$, and 
\begin{equation}
   \label{eq:prop-tpsi}
\tilde \psi(s)=\tilde \psi(\mu s)/\fb \quad \text{on $(0, +\infty )$}, \quad
\lim_{s \rightarrow 0+} \tilde\psi'(s)=0  \quad\text{and}\quad
\lim_{s \rightarrow +\infty} \tilde \psi'(s)=+\infty .
\end{equation}
\end{lem}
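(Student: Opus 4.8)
The plan is to realise $\tilde \psi=\tilde b\circ \tilde \cl$ as a composition of two functions whose behaviour is already understood and to transport each required property through the composition; only the strict convexity will need a small trick.

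First I would record the elementary facts. Since we are in the Harris case, $R_c=+\infty $ and hence $\lambda_c=+\infty $ by Remark \ref{rem:def-lc}, so $\tilde \cl(z)=\E[\expp{zW}]$ is entire, $\tilde \cl(0)=1$, and, as $W\ge 0$ is non-degenerate ($\P(W=0)=\fc<1$), $\tilde \cl$ is strictly increasing and strictly convex on $(0,+\infty )$; in particular it maps $(0,+\infty )$ into $(1,+\infty )$. Moreover $\log \tilde \cl$, being the cumulant generating function of $W$, is strictly convex on $(0,+\infty )$ (H\"older's inequality). On the other side, by Lemma 2.5 in \cite{fo:ld} (recalled around \reff{eq:def-tD}), for $\delta$ small $\tilde b$ is analytic on $\tilde \cd(\delta)$, hence analytic on a complex neighbourhood of each point of $(1,+\infty )$; composing with the entire function $\tilde \cl$ then shows that $\tilde \psi$ is analytic on $(0,+\infty )$.

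For the functional equation I would apply \reff{eq:PC-eq} at $z=\mu s$, i.e. $\tilde \cl(\mu s)=\ff(\tilde \cl(s))$, and then \reff{eq:tilde-b}, i.e. $\tilde b\circ \ff=\fb\,\tilde b$ on $(1,+\infty )$; since $\tilde \cl(s)\in (1,+\infty )$ for $s>0$ this gives $\tilde \psi(\mu s)=\tilde b(\ff(\tilde \cl(s)))=\fb\,\tilde \psi(s)$, that is $\tilde \psi(s)=\tilde \psi(\mu s)/\fb$. Monotonicity is then immediate: \reff{eq:prop-tb} forces $s\tilde b'(s)>0$, hence $\tilde b'(s)>0$, on $(1,+\infty )$, so $\tilde b$ is strictly increasing there, and composed with the strictly increasing $\tilde \cl$ it makes $\tilde \psi$ strictly increasing with $\tilde \psi'>0$.

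The crux is strict convexity. Differentiating $\tilde b$ twice is unhelpful since we have no sign information on $\tilde b''$; instead I would set $g(s)=s\tilde b'(s)$, which by \reff{eq:prop-tb} is strictly increasing on $(1,+\infty )$ with $g(1+)=0$ and $g(+\infty )=1$, in particular positive there. Writing $\tilde b'(s)=g(s)/s$ yields, for $s>0$,
\[
\tilde \psi'(s)=\tilde b'(\tilde \cl(s))\,\tilde \cl'(s)=g(\tilde \cl(s))\cdot (\log \tilde \cl)'(s),
\]
which is a product of two positive strictly increasing functions of $s$ (the first since $g$ and $\tilde \cl$ are strictly increasing and $\tilde \cl$ is valued in $(1,+\infty )$, the second since $\log \tilde \cl$ is strictly convex and increasing); hence $\tilde \psi'$ is strictly increasing, i.e. $\tilde \psi$ is strictly convex. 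Finally, the limits of $\tilde \psi'$ follow from the functional equation: differentiating gives $\tilde \psi'(\mu s)=(\fb/\mu)\tilde \psi'(s)$ with $\fb/\mu=\mu^{\beta_H-1}>1$, so for any fixed $s_0>0$, $\tilde \psi'(\mu^n s_0)=(\fb/\mu)^n\tilde \psi'(s_0)$ tends to $+\infty $ (resp. $0$) as $n\to+\infty $ (resp. $n\to-\infty $); the monotonicity and positivity of $\tilde \psi'$ then give $\tilde \psi'(0+)=0$ and $\tilde \psi'(+\infty )=+\infty $. (One could also read these off the displayed product, using $g(1+)=0$, $g(+\infty )=1$, $(\log \tilde \cl)'(0)=\E[W]<+\infty $ and $(\log \tilde \cl)'(+\infty )=+\infty $ because $W$ is unbounded.) The main obstacle, as indicated, is the strict convexity: the key is to use \reff{eq:prop-tb} in the form ``$s\tilde b'(s)$ increasing'' together with the convexity of $\log \tilde \cl$, rather than trying to control $\tilde b''$ directly.
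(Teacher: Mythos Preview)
Your proof is correct. The analyticity, monotonicity and functional equation parts are essentially the same as in the paper. The two places where you diverge are worth pointing out.

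For strict convexity, the paper computes $\tilde\psi''$ directly:
\[
\tilde\psi''(s)=\tilde\varphi''(s)\,\tilde b'(\tilde\varphi(s))+\tilde\varphi'(s)^2\,\tilde b''(\tilde\varphi(s))
\ge \frac{\tilde\varphi'(s)^2}{\tilde\varphi(s)}\Bigl(\tilde b'(\tilde\varphi(s))+\tilde\varphi(s)\,\tilde b''(\tilde\varphi(s))\Bigr)>0,
\]
using Cauchy--Schwarz ($\tilde\varphi'(s)^2<\tilde\varphi(s)\tilde\varphi''(s)$) for the first inequality and $(s\tilde b'(s))'>0$ for the second. You use exactly the same two ingredients---log-convexity of $\tilde\varphi$ and monotonicity of $s\tilde b'(s)$---but package them as the factorisation $\tilde\psi'(s)=g(\tilde\varphi(s))\cdot(\log\tilde\varphi)'(s)$ with $g(s)=s\tilde b'(s)$, and observe that a product of positive increasing functions is increasing. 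This is a bit cleaner since it avoids the second derivative $\tilde b''$ altogether.

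For the limits of $\tilde\psi'$, the paper reads them off the same factorisation (your parenthetical route): $g(\tilde\varphi(s))\to 0$ and $(\log\tilde\varphi)'(s)\to\E[W]<\infty$ as $s\searrow 0$, while $g(\tilde\varphi(s))\to 1$ and $(\log\tilde\varphi)'(s)\to+\infty$ as $s\to+\infty$ (the latter because $W$ is unbounded). Your primary route via the differentiated functional equation $\tilde\psi'(\mu s)=(\fb/\mu)\tilde\psi'(s)$ is a genuine alternative that sidesteps any asymptotics of $\tilde\varphi$; combined with the already established monotonicity of $\tilde\psi'$ it gives the limits immediately.
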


\begin{proof}
   Since $\tilde  \cl$ is analytic on $\C$ and $\tilde \varphi((0,
+\infty ))=(1, +\infty )$, we get that $\tilde \psi$ is analytic on
$(0, +\infty )$. It is clear that $\tilde\psi$ is increasing as the composition of two increasing functions. Moreover,
using \reff{eq:prop-tb} as well as $\tilde
\varphi'(s)^2< \tilde \varphi''(s)\tilde \varphi(s)$ thanks to Cauchy-Schwartz
inequality, we have for every $s\in(0,+\infty)$,
\begin{align*}
\tilde\psi''(s) & =\tilde\varphi''(s)\tilde b'(\tilde\varphi(s))+\tilde\varphi'(s)^2\tilde b''(\tilde\varphi(s))\\
& \ge \frac{\tilde\varphi'(s)^2}{\tilde\varphi(s)}\bigl(\tilde b'(\tilde\varphi(s))+\tilde\varphi(s)\tilde b''(\tilde\varphi(s))\bigr)>0.
\end{align*}
We deduce that $\tilde \psi$ is strictly convex on $(0,+\infty)$. The functional equation $\tilde \psi(s)=\tilde \psi(\mu
s)/\fb$ is a direct consequence of \reff{eq:PC-eq} and \reff{eq:tilde-b}.
Then use that $W$ has an unbounded support to get that
$\lim_{s\rightarrow+\infty } \tilde\varphi'(s)/\tilde\varphi(s)=+\infty $ and
deduce the limits of $\tilde \psi'$ using \reff{eq:prop-tb}. 
\end{proof}

Recall Definition \reff{eq:def-tD} of $\tilde \cd(\delta)$. 
According to Lemma 2.5  in \cite{fo:ld}, 
there exists $\varepsilon=\varepsilon(\delta)\in (0, \tilde b(\delta))$ such that for all
$z\in \tilde \cd(\delta)$, we have:
\begin{equation}
   \label{eq:tfn-dev}
 f_n(z)= p(\fb)^{-1/(\fb -1)} \expp{\fb^n \tilde b(z)}
\left(1+ O(\expp{-\varepsilon \fb^n})\right).
\end{equation}
We have the following result (see Lemma 13 in \cite{fw:lta}). 

\begin{lem}
   \label{lem:majo-fn}
For all $s\in (1, +\infty )$ and all $n\in\N^*$, we have:
\[
f_n(s)<  p(\fb)^{-1/(\fb -1)} \exp \left\{\fb^n \tilde b(s)\right\}.
\]
\end{lem}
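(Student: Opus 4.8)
The plan is to prove that for all $s\in(1,+\infty)$ and all $n\in\N^*$,
\[
f_n(s) < p(\fb)^{-1/(\fb-1)}\exp\bigl\{\fb^n\,\tilde b(s)\bigr\}
\]
by working directly from the definition \reff{eq:def-tilde-b} of $\tilde b$, without invoking the analytic continuation \reff{eq:tfn-dev}. First I would observe that for $s\in(1,+\infty)$ the series defining $\tilde b$ converges (the terms $\fb^{-n-1}\log(\ff_{n+1}(s)/\ff_n(s)^\fb)$ decay geometrically because, as $n\to\infty$, $\ff_{n+1}(s)/\ff_n(s)^\fb\to p(\fb)^{-(\fb-1)}\cdot$(bounded factor), reflecting that $\ff_n(s)\sim p(\fb)^{-1/(\fb-1)}\expp{\fb^n\tilde b(s)}$). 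The key identity to extract is the partial-product telescoping: for any $N\ge 1$,
\[
\sum_{n=0}^{N-1}\fb^{-n-1}\log\!\left(\frac{\ff_{n+1}(s)}{\ff_n(s)^\fb}\right)
= \fb^{-N}\log \ff_N(s) - \log \ff_0(s)
= \fb^{-N}\log \ff_N(s) - \log s,
\]
since $\ff_0(s)=s$. Rearranging, $\fb^{-N}\log\ff_N(s) = \log s + \sum_{n=0}^{N-1}\fb^{-n-1}\log(\ff_{n+1}(s)/\ff_n(s)^\fb)$, so that $\tilde b(s) - \fb^{-N}\log \ff_N(s) = \sum_{n=N}^{\infty}\fb^{-n-1}\log(\ff_{n+1}(s)/\ff_n(s)^\fb)$.

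The main point is then to show that each remaining term $\log\bigl(\ff_{n+1}(s)/\ff_n(s)^\fb\bigr)$ is strictly negative for $s>1$, which forces the tail sum to be strictly negative and hence $\fb^{-N}\log\ff_N(s) < \tilde b(s)$ for every $N$, i.e. the claimed strict inequality with $N=n$. To see that $\ff(t)^{\fb}>\ff_{}(t)\cdot t^{\,?}$... more precisely I need $\ff(t) < t^{\fb-?}$-type control: since $\fb$ is the maximal degree, $\ff(t)=\sum_{k\le\fb}p(k)t^k \le t^\fb\sum_{k\le\fb}p(k)=t^\fb$ for $t\ge 1$, but actually with $p$ non-degenerate the inequality is strict, $\ff(t)<t^\fb$ for $t>1$. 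Applying this at $t=\ff_n(s)>1$ (valid since $\ff$ maps $(1,\infty)$ into $(1,\infty)$ and $\ff_0(s)=s>1$), we get $\ff_{n+1}(s)=\ff(\ff_n(s))<\ff_n(s)^\fb$, hence $\log(\ff_{n+1}(s)/\ff_n(s)^\fb)<0$. Therefore the tail $\sum_{n=N}^\infty\fb^{-n-1}\log(\ff_{n+1}(s)/\ff_n(s)^\fb)$ is a sum of strictly negative terms, so it is $<0$, giving $\fb^{-N}\log\ff_N(s)<\tilde b(s)$ and thus $\ff_N(s)<\exp\{\fb^N\tilde b(s)\}$.

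It remains to sharpen the constant from $1$ to $p(\fb)^{-1/(\fb-1)}$. For this I would not merely use $\ff(t)<t^\fb$ but the more refined estimate capturing the leading coefficient: writing $\ff(t) = p(\fb)t^\fb\bigl(1 + \sum_{k<\fb}(p(k)/p(\fb))t^{k-\fb}\bigr)$, one has $\ff(t)/t^\fb \le p(\fb)\bigl(1+ c/t\bigr)$ for $t\ge 1$ with $c=\sum_{k<\fb}p(k)/p(\fb)$, so $\log(\ff_{n+1}(s)/\ff_n(s)^\fb)\le \log p(\fb) + \log(1+c/\ff_n(s))$; summing $\fb^{-n-1}$ against the $\log p(\fb)$ part gives exactly $\sum_{n\ge 0}\fb^{-n-1}\log p(\fb)=\log p(\fb)/(\fb-1)=\log p(\fb)^{1/(\fb-1)}$, which after exponentiation produces the prefactor $p(\fb)^{-1/(\fb-1)}$, while the $\log(1+c/\ff_n(s))$ contributions are what makes the inequality strict (since $\ff_n(s)<\infty$ and strictly, plus the dropped strictly-negative lower-order terms in $\ff$). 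The main obstacle I anticipate is this last bookkeeping step: one must be careful that the correction terms have the right sign and that non-degeneracy is genuinely used to upgrade $\le$ to $<$; the cleanest route is to compare $\tilde b(s)$ with $\fb^{-N}\log\ff_N(s)$ via the exact telescoping above and then show the difference is a sum whose each term is $<\fb^{-n-1}\log p(\fb)$, which is where the strictness enters. This is essentially the argument of Lemma 13 in \cite{fw:lta}, adapted to the present notation, and I would cite that lemma while presenting the self-contained telescoping computation for completeness.
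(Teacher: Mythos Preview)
Your telescoping identity is correct: for $s>1$ and $N\ge 1$,
\[
\tilde b(s)-\fb^{-N}\log\ff_N(s)=\sum_{n=N}^{\infty}\fb^{-n-1}\log\!\left(\frac{\ff_{n+1}(s)}{\ff_n(s)^{\fb}}\right).
\]
But the inequality you draw from it is reversed. From $\ff(t)<t^{\fb}$ for $t>1$ you correctly get that each term of the tail is negative, hence the right-hand side is $<0$. That means $\tilde b(s)<\fb^{-N}\log\ff_N(s)$, i.e.\ $\ff_N(s)>\exp\{\fb^N\tilde b(s)\}$, which is the \emph{opposite} of what you need. So the first step does not give $\ff_N(s)<\exp\{\fb^N\tilde b(s)\}$; that inequality is in fact false in general (take $N$ large and use \reff{eq:tfn-dev}: $\ff_N(s)\sim p(\fb)^{-1/(\fb-1)}\exp\{\fb^N\tilde b(s)\}$ with $p(\fb)^{-1/(\fb-1)}>1$).

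Your refinement suffers from the same sign problem. The estimate $\ff(t)/t^{\fb}\le p(\fb)(1+c/t)$ is again an \emph{upper} bound on the tail terms, so it still yields a lower bound on $\ff_N(s)$, not an upper bound. What is needed is a \emph{lower} bound on each tail term. The right observation (this is how the paper proceeds) is that $\ff(t)>p(\fb)t^{\fb}$ for all $t>0$ by non-degeneracy, hence $\log\bigl(\ff_{n+1}(s)/\ff_n(s)^{\fb}\bigr)>\log p(\fb)$. Plugging this lower bound into the tail gives
\[
\tilde b(s)-\fb^{-N}\log\ff_N(s)>\sum_{n=N}^{\infty}\fb^{-n-1}\log p(\fb)=\fb^{-N}\,\frac{\log p(\fb)}{\fb-1},
\]
which after multiplying by $\fb^{N}$ and exponentiating is exactly $\ff_N(s)<p(\fb)^{-1/(\fb-1)}\exp\{\fb^N\tilde b(s)\}$. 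So the fix is simply to swap the one-sided bound on $\ff(t)/t^{\fb}$ from above to below.
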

\begin{proof}
  We                                                                 set
\[
\tilde  b_N(z)=\frac{1 -   \fb^{-N}}{\fb  -1}\log( p(\fb))+  \log
(z)+\sum_{n=0}^{N-1}     \fb^{-n-1}     \log     \left(     \frac{
    \ff_{n+1}(z)}{ p(\fb) \ff_n(z)^\fb}\right)
\]
for $z\in  \bigcup_{\delta>0} \tilde \cd(\delta)$.   Notice that
$\tilde b_N(s)=\fb^{-N}  \log( f_N(s))$ for all  $s>0$.  
For $s>0$, we have:
\[
  \fb^{-N} \log(f_N(s))= \tilde b_N(s)
=\tilde b(s) - \frac{   \fb^{-N}}{\fb
  -1}\log(p(\fb)) - \sum_{n\geq N}      \fb^{-n-1}     \log     \left(
  \frac{
    \ff_{n+1}(s)}{ p(\fb) \ff_n(s)^\fb}\right)
\]
that is
\[
\log( f_N(s)) = \fb^{N} \tilde b(s)  - \inv{\fb -1}\log( p(\fb)) - \fb^N
\sum_{n\geq  N}  \fb^{-n-1}  \log \left(  \frac{  \ff_{n+1}(s)}{  p(\fb)
    \ff_n(s)^\fb}\right).
\]
For $s>1$, we have $p(\fb) 
f_n(s)^\fb<  f_{n+1}(s) $ so that:
\[
 \log( f_N(s))
< \fb^{N} \tilde b(s) - \inv{\fb
  -1}\log( p(\fb)) .
\]
This gives the result. 
\end{proof}

\subsection{Right tail of $w$}
We denote by $\tilde g$ the inverse of $\tilde \psi'$, which is one to one on
$(0, +\infty )$ by Lemma \ref{lem:psi}. 
For a given $v>0$,
the maximum of $uv-\tilde \psi(u) $ for $u\geq 0$ is uniquely reached at
$\tilde g(v)$: 
\begin{equation}
   \label{eq:min=tg}
\max_{u\geq 0} \left(uv-\tilde \psi(u) \right) = \tilde g(v)v - \tilde
\psi(\tilde g(v)). 
\end{equation}
We define the function $\tilde M$ for $v\in (0, +\infty )$ by:
\begin{equation}
   \label{eq:defM-R}
\tilde M(v)=v^{-\beta_H/(\beta_H-1)} \, \max_{u\geq 0} \left(uv-\tilde \psi(u) 
\right).
\end{equation}
According to  Proposition 2 in  \cite{bb:ldsbp}, the function $\tilde M$ is  analytic on $(0, +\infty )$. It is
positive     and      multiplicatively     periodic      with     period
$\mu^{\beta_H-1}=\fb/\mu$,   thanks  to   the  functional   equation  in
\reff{eq:prop-tpsi} and the definition of $\beta_H$,  (see also
Proposition 3 in  \cite{bb:ldsbp}).  \medskip

Mimicking the proof of Theorem 1  in \cite{fw:lta} (see also Remark 3 therein), we
set for $x\in [ \fb/\mu, \infty )$:
\begin{equation}
   \label{eq:def-y-R}
\tilde r(x)= \left\lfloor \frac{\log(x)}{\log(\fb/\mu)} \right\rfloor
\quad\text{and}\quad \tilde y(x)=x\left(\frac{\mu}{\fb}\right)^{r(x)}
= x \mu^{-r(x) (\beta_H-1)}=x \fb^{-r(x)(\beta_H-1)/\beta_H},
\end{equation}
so that $\tilde r(x)\geq 0$ and $\tilde y(x)\in [1, \fb/\mu)$. Notice that  $\tilde
r(x)\to+\infty$ as $x\to+\infty$. 
Let $\ell\in \Np$. We define
the positive functions for $y>0$:
\[
\tilde \cm_{1, \ell}(y)=  \frac{p(\fb)^{-\ell/(\fb -1)}}{\sqrt{2\pi
    \ell \tilde \sigma^2(y/\ell)}}\, y^{(\beta_H-2)/2(\beta_H-1)}
\quad\text{and}\quad
\tilde \cm_{2, \ell}(y)= \tilde \cm_{1, \ell}(y)\frac{y^{1/(\beta_H-1)} }
{\tilde g(y/\ell)},
\]
where $\tilde \sigma^2(y)=\tilde \psi''(\tilde g(y))>0$. For  $\ell\in \Np$ and $x\in [
\fb/\mu, +\infty )$, we set: 
\[
\tilde M_{1, \ell}(x)=\tilde \cm_{1, \ell}(\tilde y(x))
\quad\text{and}\quad
\tilde M_{2, \ell}(x)=\tilde \cm_{2, \ell}(\tilde y(x)). 
\]
By construction $x\mapsto \tilde y(x)$ is multiplicative periodic with period
$\mu/\fb=\fb^{\beta_H-1}$. We deduce that, for fixed $\ell\in \N^*$,
the functions $\tilde M_{1, \ell}$ and
$\tilde M_{2, \ell}$
are 
multiplicative periodic with period
$\mu/\fb$, positive, bounded and bounded away from 0. 
\medskip

We first state an upper bound on $w^{*\ell}$ whose proof is postponed to
Section \ref{sec:w<}. 
\begin{lem}
   \label{lem:w-maj}
Let $p$ be a non-degenerate super-critical offspring distribution with
$\fb<+\infty $. For all $u_1\geq 0$, there exists a
finite constant $C$ such that for all $\ell\in \N^*$, $x\geq  \fb/\mu$ and $u\in [0, u_1]$,  we
have with $r=\tilde r(x)$ and $y=\tilde y(x)$:
\begin{equation}
   \label{eq:w-maj}
w^{*\ell} (x) 
\leq  \frac{C\ell}{x} \fb^r \expp{-uy  \fb^{r}}
\ff_{r} (\tilde \cl(u)) ^\ell.
\end{equation}
\end{lem}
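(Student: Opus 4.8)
The plan is to establish the upper bound on $w^{*\ell}$ via a Fourier inversion argument, working at a carefully chosen ``tilting'' parameter and then exploiting the (multiplicative) periodicity structure of the Harris case. First I would write, for $u\geq 0$, the exponentially tilted density $w_u(x)=\expp{ux}w(x)/\tilde\cl(u)$ on $(0,+\infty)$, which integrates (together with the atom at $0$ when $\fc>0$) to something controlled by $\tilde\cl(u)=\tilde\varphi(u)$; more precisely I would set $\nu_u(dx)=\expp{ux}\bigl(\fc\delta_0(dx)+w(x)\ind_{\{x>0\}}dx\bigr)/\tilde\varphi(u)$, a probability measure with Laplace transform $\tilde\varphi(u+it)/\tilde\varphi(u)$. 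Then $w^{*\ell}(x)\expp{ux}/\tilde\varphi(u)^\ell$ is dominated by the density of $\nu_u^{*\ell}$ at $x$ (the convolution of the tilted measures picks up extra non-negative contributions from the atom, which only help an upper bound), and by Fourier inversion that density is $\frac{1}{2\pi}\int_\R \expp{-itx}\bigl(\tilde\varphi(u+it)/\tilde\varphi(u)\bigr)^\ell\,dt$, hence
\[
w^{*\ell}(x)\leq \frac{\expp{-ux}}{2\pi}\int_\R \expp{-itx}\tilde\varphi(u+it)^\ell\,dt
\leq \frac{\expp{-ux}}{2\pi}\int_\R |\tilde\varphi(u+it)|^\ell\,dt,
\]
which is legitimate once $\ell$ is large enough that the integrand is $L^1$ (Lemma \ref{lem:fw-Fourier} with $\alpha<+\infty$, here $\alpha=+\infty$ so $\ell>1/\alpha$ is automatic, but one must still check integrability uniformly on $K_0=[0,u_1]$ — that is exactly \reff{eq:int-unif-f}).

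Next I would import the self-similar structure: by \reff{eq:PC-eq}, $\tilde\varphi(z)=\ff_r(\tilde\varphi(\mu^{-r}z))$ for every $r\in\N$, so with the substitution that isolates the scale $\fb^r$ one rewrites $\int_\R|\tilde\varphi(u+it)|^\ell\,dt$ in terms of $\int|\ff_r(\tilde\varphi(\mu^{-r}u+is))|^\ell\,ds$. The point of choosing $r=\tilde r(x)$ and $y=\tilde y(x)\in[1,\fb/\mu)$ is that $x=y\,\mu^{r(\beta_H-1)}$ and $\mu^{-r}x$ stays in a fixed bounded region; thus $\expp{-ux}$ becomes $\expp{-u'y\fb^r}$ after relabelling the tilting parameter appropriately (the exponent $uy\fb^r$ in the target \reff{eq:w-maj} is exactly $ux$ after the rescaling, since $\fb^r$ and $\mu^{r(\beta_H-1)}$ differ only by the fixed factor absorbed into $y$ versus the displayed $\fb^r$ — I would reconcile the bookkeeping using $\fb=\mu^{\beta_H}$). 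The factor $\fb^r/x$ on the right of \reff{eq:w-maj} comes from the Jacobian of the change of variables $t\mapsto s$ (a factor $\mu^r$, comparable to $\fb^r$ up to... no — here it is genuinely $\mu^{-r}$, and one gets $x^{-1}\fb^r$ from combining the Jacobian with a uniform bound on the remaining $s$-integral), together with a uniform bound on $\int_\R|\ff_r(\tilde\varphi(is))|^\ell\,ds$ which follows by splitting into $|s|$ small and $|s|$ large, using Lemma \ref{lem:majo-fn} to dominate $|\ff_r|$ by $p(\fb)^{-1/(\fb-1)}\expp{\fb^r\,\Re\tilde b}$ on the small-$|s|$ part and the contraction estimate \reff{eq:majo-phi-t}/Lemma \ref{lem:majo-f-c} (whence $|\tilde\varphi(u+it)|\le(1-\eta)\tilde\varphi(u)$ for $|t|\ge t_0$, so $|\ff_r(\cdot)|^\ell\le$ something geometrically small in $r$) on the large-$|s|$ part.

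The genuine obstacle, I expect, is the uniformity in $\ell$: the statement demands a single constant $C$ with $w^{*\ell}(x)\le (C\ell/x)\fb^r\expp{-uy\fb^r}\ff_r(\tilde\cl(u))^\ell$ for \emph{all} $\ell\in\N^*$, not just $\ell$ large. For small $\ell$ (those with $\ell\le 1/\alpha$, which is vacuous when $\alpha=+\infty$ — so actually in the Harris case every $\ell\ge1$ gives an $L^1$ integrand, which is a relief) the Fourier inversion is already valid; the real work is that the constant from Lemma \ref{lem:fw-Fourier} bounding $\sup_{u\in K_0}\int_\R|\tilde\varphi(u+it)-\fc|^\ell\,dt$ must be shown to grow at most linearly in $\ell$ (it will, since the integral is dominated by a geometric series in $n$ with ratio $\mu\fm^\ell$, and in the Harris/B\"ottcher case $\fm=0$ so the sum collapses and the $\ell$-dependence is mild), and the extra $\ell$ in front of $C\ell/x$ is exactly the slack this leaves. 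I would therefore: (i) fix $u_1$, set $K_0=[0,u_1]$; (ii) prove the Fourier-inversion upper bound above; (iii) perform the $\mu^{-r}$-rescaling and split the $s$-integral, applying Lemma \ref{lem:majo-fn} and the contraction bound; (iv) track the $\ell$-dependence of all constants to conclude with the advertised $C\ell/x$. The matching of $\ff_r(\tilde\cl(u))^\ell$ on the right-hand side with $\tilde\varphi(u)^\ell=\ff_r(\tilde\varphi(\mu^{-r}u))^\ell$ is then just the identity $\tilde\varphi=\ff_r\circ\tilde\varphi(\mu^{-r}\cdot)$ evaluated at the rescaled tilting parameter $u$ (which, after relabelling, is the $u$ appearing in \reff{eq:w-maj}).
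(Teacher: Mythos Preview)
Your proposal has a genuine gap: you conflate the Harris case ($\fb<\infty$) with the B\"ottcher case ($\alpha=+\infty$). These are independent conditions --- Harris concerns the top of the support of $p$, while the Schr\"oder/B\"ottcher dichotomy concerns the bottom ($\fa\le 1$ versus $\fa\ge 2$). In particular, when $\fa\le 1$ and $\fb<\infty$ (e.g.\ $p$ supported on $\{0,1,\ldots,\fb\}$ with $p(0)+p(1)>0$ and $\mu>1$) one has $\alpha<\infty$, and possibly $\alpha\le 1$. In that regime $|\tilde\varphi(u+it)-\fc|$ is \emph{not} in $L^1(dt)$ (the paper notes this explicitly after Lemma~\ref{lem:fw-Fourier}), so your direct Fourier inversion of $w^{*\ell}$ fails for small $\ell$. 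Worse, when $\fc>0$ one has $\tilde\varphi(u+it)\to\fc$ as $|t|\to\infty$, so $\int_\R|\tilde\varphi(u+it)|^\ell\,dt=+\infty$ for every $\ell$, and your displayed upper bound is vacuous.

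The paper's key device, which you miss, is to Fourier invert $x\,w^{*\ell}(x)$ rather than $w^{*\ell}(x)$. Writing $(\tilde\varphi(z)-\fc)^\ell$ as the Laplace transform of $w^{*\ell}$ and differentiating once in $z$ produces the integrand $\ell\,\tilde\varphi'(v+is)\,(\tilde\varphi(v+is)-\fc)^{\ell-1}$, and it is $\tilde\varphi'$ that is integrable uniformly on $K_0$ by \reff{eq:int-unif-f'} (valid here since $\ck'=\R$ in the Harris case). This simultaneously (i) resolves the integrability issue for all $\ell\ge 1$, (ii) produces the prefactor $\ell/x$ in \reff{eq:w-maj} directly, and (iii) makes the rescaling clean: after substituting $v=\mu^r u$, $s=\mu^r t$ and invoking \reff{eq:form-f'n}, the factors $\mu^{\pm r}$ cancel and one lands on the integrand $H(z)$ of \reff{eq:def-H}, for which the pointwise bound $|f'_r(z)|\le \fb^r f_r(|z|)/|z|$ together with the Sevastyanov-type estimate \reff{eq:majo-f'phiz} yields $f_r(\tilde\varphi(u))^\ell$ on the right with no further ``bookkeeping to reconcile''. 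The paper itself flags precisely this obstacle at the start of Section~\ref{sec:Harris}: ``the Fourier inversion of $w^{*\ell}$ is not valid if $\ell\alpha\le 1$.''
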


We  now  state  a  slightly  more   general  result  than  Remark  3  in
\cite{fw:lta}. (Notice in Remark 3  in \cite{fw:lta} that there is a misprint
in (21)  and (22) where  the power of $x$  in the exponential  should be
negative.)

\begin{lem}
   \label{lem:bott-wl-R}
   Let  $p$ be  a non-degenerate  super-critical offspring  distribution
   with $\fb<+\infty $. Let $\ell\in \N^*$.  As
   $x\nearrow +\infty $, we have:
\begin{align}
   \label{eq:w=i}
w^{*\ell}(x)
&\sim  \tilde M_{1,\ell}(x) \, x^{(2- \beta_H)/2(\beta_H-1)} \exp\left\{-
  \ell^{-1/(\beta_H-1)}\, x^{\beta_H/(\beta_H-1)} \tilde
  M(x/\ell)\right\},\\
\label{eq:wk>x}
w_\ell(x) & \sim w^{*\ell}(x),\\
   \P_\ell(W\geq x)
\label{eq:P>x}
&\sim  \tilde M_{2,\ell}(x) \,
   x^{-\beta_H/2(\beta_H-1)} \exp\left\{- 
     \ell^{-1/(\beta_H-1)}\,   x^{\beta_H/(\beta_H-1)} \tilde
  M(x/\ell)\right\} .
\end{align}
\end{lem}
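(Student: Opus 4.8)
The plan is to prove the three asymptotic equivalences \reff{eq:w=i}, \reff{eq:wk>x} and \reff{eq:P>x} by a saddle-point analysis of the Fourier (Laplace) inversion formula for $w^{*\ell}$, combined with the upper bound from Lemma \ref{lem:w-maj} to control the error terms. This is the right-tail analogue of Theorem 1 in \cite{fw:lta}; the main new ingredient is that, since $R_c=+\infty$ and $\fb<\infty$, all exponential moments of $W$ are finite, so the relevant strip of analyticity of $\tilde \cl$ is all of $\C$, and the function $\tilde b$ (hence $\tilde \psi=\tilde b\circ\tilde\cl$) is available on the domains $\tilde\cd(\delta)$ via \reff{eq:tfn-dev}.

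First I would write, using Lemma \ref{lem:fw-Fourier} with $\alpha=+\infty$ (so $\ell>1/\alpha$ holds for every $\ell\ge 1$, and $\tilde\cl(u+it)-\fc$ is $L^1$ on vertical lines), the inversion formula
\[
w^{*\ell}(x)=\frac{1}{2\pi}\int_\R \left(\tilde\cl(u+it)-\fc\right)^\ell \, \expp{-(u+it)x}\, dt
\]
valid for any $u\ge 0$ (the $\fc^{\ell-j}$ terms with $j<\ell$ contribute nothing to a continuous density at $x>0$, exactly as in the derivation of \reff{eq:wk=w}). Then I would choose the saddle point $u=u(x)$ so as to maximize $u x-\ell\log(\tilde\cl(u)-\fc)$; after the change of variables that strips off the $\tilde b$-periodicity — i.e. writing $x=\tilde y(x)\,(\fb/\mu)^{\tilde r(x)}$ as in \reff{eq:def-y-R} and using \reff{eq:tfn-dev} to replace $\ff_{\tilde r(x)}(\tilde\cl(u))$ by $p(\fb)^{-1/(\fb-1)}\expp{\fb^{\tilde r(x)}\tilde b(\tilde\cl(u))}=p(\fb)^{-1/(\fb-1)}\expp{\fb^{\tilde r(x)}\tilde\psi(u)}$ — the exponent becomes $\fb^{\tilde r(x)}\big(\text{(something)}\cdot u\,\tilde y(x)-\ell\tilde\psi(u)\big)$ up to lower-order terms. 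The optimizer is $u^*=\tilde g(\tilde y(x)/\ell)$ by \reff{eq:min=tg}, and plugging in produces exactly the exponential rate $\ell^{-1/(\beta_H-1)}x^{\beta_H/(\beta_H-1)}\tilde M(x/\ell)$ using \reff{eq:defM-R} and the periodicity of $\tilde M$ with period $\fb/\mu$, together with the definition of $\beta_H$ via $\fb=\mu^{\beta_H}$. A standard quadratic expansion of $\ell\tilde\psi$ around $u^*$, with second derivative $\ell\tilde\sigma^2(\tilde y(x)/\ell)=\ell\tilde\psi''(\tilde g(\tilde y(x)/\ell))$, yields the Gaussian integral whose value is $\sqrt{2\pi/(\ell\fb^{\tilde r(x)}\tilde\sigma^2)}$, and collecting powers of $\fb^{\tilde r(x)}$ and of $\tilde y(x)$ gives precisely the prefactor $\tilde M_{1,\ell}(x)\,x^{(2-\beta_H)/2(\beta_H-1)}$. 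This proves \reff{eq:w=i}.

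For \reff{eq:wk>x} I would recall from \reff{eq:wk=w} that $w_\ell=\sum_{i=1}^\ell\binom{\ell}{i}\fc^{\ell-i}w^{*i}$, and observe that \reff{eq:w=i} shows the rate $i\mapsto \ell^{-1/(\beta_H-1)}x^{\beta_H/(\beta_H-1)}\tilde M(x/\ell)$ — whose leading behaviour in $\ell$ is proportional to $\ell^{-1/(\beta_H-1)}$, which is strictly increasing in $\ell$ since $1/(\beta_H-1)>0$ — is strictly smaller for $i=\ell$ than for any $i<\ell$; hence the $i=\ell$ term dominates and $w_\ell(x)\sim w^{*\ell}(x)$. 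For \reff{eq:P>x}, $\P_\ell(W\ge x)=\int_x^\infty w_\ell(t)\,dt\sim\int_x^\infty w^{*\ell}(t)\,dt$, and since the integrand has rate function with derivative (in $x$) asymptotically $\tilde g(\tilde y(x)/\ell)\cdot x^{1/(\beta_H-1)}/(\ell^{?})$ blowing up (by \reff{eq:prop-tpsi}, $\tilde g\to\infty$), the Laplace/Watson integral is equivalent to $w^{*\ell}(x)$ divided by that effective rate, i.e. multiplied by $x^{1/(\beta_H-1)}/\tilde g(\tilde y(x)/\ell)$ up to the correct power of $x$; this is exactly the ratio $\tilde M_{2,\ell}/\tilde M_{1,\ell}$ together with the shift of $x$-exponent from $(2-\beta_H)/2(\beta_H-1)$ to $-\beta_H/2(\beta_H-1)$, giving \reff{eq:P>x}.

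The main obstacle I expect is the rigorous control of the non-saddle part of the contour: bounding $\int_{|t|\ge t_0}|\tilde\cl(u^*+it)-\fc|^\ell\expp{-u^*x}\,dt$ by something exponentially smaller than the main term, uniformly as $x\to\infty$ (which also forces $u^*=\tilde g(\tilde y(x)/\ell)\to\infty$, so one needs estimates on $\tilde\cl$ along vertical lines with a growing real part). Here Lemma \ref{lem:majo-f-c} (giving $|\tilde\varphi(u+it)|\le(1-\eta)\tilde\varphi(u)$ for $|t|\ge t_1$ on a fixed compact $K_0$) is not directly enough because $u^*$ is unbounded; instead I would feed $u^*$ through the functional equation $\tilde\cl(z)=\ff_{\tilde r}(\tilde\cl(\mu^{-\tilde r}z))$, reducing to a bounded real part $\mu^{-\tilde r}u^*=\tilde g(\tilde y(x)/\ell)\mu^{-\tilde r(x)}$ which stays in a fixed interval by the definition \reff{eq:def-y-R} of $\tilde r$, and then apply the strip estimates of Lemma \ref{lem:majo-fn} / \reff{eq:tfn-dev} together with Lemma \ref{lem:w-maj} to absorb the tail. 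This is precisely the place where the argument of \cite{fw:lta} has to be adapted, and the $L^1$-integrability subtlety (needing $\ell\alpha>1$, automatic here since $\alpha=\infty$) is what makes the Harris case cleaner than the general Böttcher case in this respect.
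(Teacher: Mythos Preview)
Your proposal has a genuine gap, and it is exactly the one the paper flags as the main difficulty at the start of Section \ref{sec:Harris}. You assert that $\alpha=+\infty$ in the Harris case, and use this to justify the direct Fourier inversion
\[
w^{*\ell}(x)=\frac{1}{2\pi}\int_\R \left(\tilde\cl(u+it)-\fc\right)^\ell \expp{-(u+it)x}\, dt
\]
via $\ell>1/\alpha$. But the Harris condition is $\fb<\infty$, not $\fa\ge 2$; these are independent. When $\fa\in\{0,1\}$ (Schr\"oder case, which is perfectly compatible with $\fb<\infty$) one has $\alpha<\infty$, and it may well be that $\alpha\le 1$. Then for $\ell=1$ the integrand $\tilde\cl(u+it)-\fc$ is \emph{not} in $L^1$ along vertical lines (Lemma \ref{lem:fw-Fourier} gives integrability only for $\ell>1/\alpha$, and the proof of that lemma shows this threshold is sharp). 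So your first displayed formula is not justified in general, and the whole argument collapses for small $\ell$.

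The paper's fix is to Fourier-invert $x\,w^{*\ell}(x)$ instead, see \reff{eq:Fourier-w}: this produces an integrand $\tilde\cl'(v+is)(\tilde\cl(v+is)-\fc)^{\ell-1}$, whose $L^1$-norm is controlled by \reff{eq:int-unif-f'} (the bound on $\int|\tilde\cl'(u+it)|\,dt$, valid for all $\alpha$). After the rescaling through the functional equation this becomes $\int H(u+it)\,dt$ with $H$ as in \reff{eq:def-H}; the tail $|t|\ge t_0$ is bounded via Lemma \ref{lem:majo-f-c} and Lemma \ref{lem:majo-fn}, and for the main part $|t|\le t_0$ an integration by parts (see \reff{eq:majo-J1-J0}) converts the integral back into one involving $(f_r(\tilde\cl))^\ell$ without the derivative, to which \reff{eq:tfn-dev} and the quadratic saddle-point expansion around $\tilde u^*_\ell=\tilde g(y/\ell)$ apply cleanly. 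This integration-by-parts step is the key device you are missing.

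A secondary confusion: you write that $u^*=\tilde g(\tilde y(x)/\ell)\to\infty$ and therefore worry about growing real parts. In fact $\tilde y(x)\in[1,\fb/\mu)$ by \reff{eq:def-y-R}, so $\tilde u^*_\ell=\tilde g(\tilde y(x)/\ell)$ stays in the fixed compact $K=[\tilde g(1/\ell),\tilde g(\fb/(\mu\ell))]$, which is exactly how the paper sets up $u_0,u_1$ at the start of Section \ref{sec:w=ii}. The whole point of the rescaling $v=\mu^{\tilde r}u$ is to keep the saddle in a compact where all the analytic estimates (Lemma \ref{lem:majo-f-c}, \reff{eq:tfn-dev}, \reff{eq:majo-y3}) are uniform.

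Finally, for \reff{eq:wk>x}: your idea of comparing exponential rates across $i\le\ell$ is morally right (though $\ell^{-1/(\beta_H-1)}$ is \emph{decreasing} in $\ell$, not increasing), but it presupposes \reff{eq:w=i} for each $i$, hence runs into the same $L^1$ issue for small $i$. The paper instead bounds the remainder $R(x)=w_\ell(x)-w^{*\ell}(x)$ directly with Lemma \ref{lem:w-maj} at $u=\tilde u^*_\ell$ and Lemma \ref{lem:majo-fn}, giving $R(x)=o(w^{*\ell}(x))$ without any further saddle-point analysis.
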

Using Lemma 3.6.11 in \cite{ah:bp}, we could derive similar formula as
\reff{eq:w=i} for  the $j$-th derivative of $w$, for $j<\alpha$. 
The proof of Lemma \ref{lem:bott-wl-R} is given in Sections
\ref{sec:w=ii} and \ref{sec:P>x}.

\subsection{Proof of Lemma \ref{lem:cv-rho} in the Harris case}
\label{sec:H-f(c)}
Let $\ell\in \N^*$. Using \reff{eq:wr} and that $\ff^{(\fb)}(\fc)/\fb!=p(\fb)
$, we get for $x\geq  \fb/\mu$:
\begin{equation}
   \label{eq:+Rl}
\inv{\mu} w^{*\ell}(x/\mu)
=  \sum_{s=(s_1, \ldots, s_\ell)\in (\Np)^\ell}  w(x)^{*|s|_1}
\prod_{i=1}^\ell \frac{f^{(s_i)}(\fc)}{s_i!} 
= p(\fb)^\ell w^{*\fb \ell}(x) + R_\ell(x),
\end{equation}
where 
\[
R_\ell (x)=\sum_{s=(s_1, \ldots, s_\ell)\in (\Np)^\ell}  \ind_{\{|s|_1<
  \ell \fb\}} \,w(x)^{*|s|_1}
\prod_{i=1}^\ell \frac{f^{(s_i)}(\fc)}{s_i!} \cdot
\]
Using \reff{eq:w-maj}, we get for $u>0$ with $r=\tilde r(x)$ and
$y=\tilde y(x)$ defined in \reff{eq:def-y-R}:
\begin{align*}
R_\ell(x)
&\leq  C 
\expp{-uy \fb^{r}} \, \sum_{s=(s_1, \ldots, s_\ell)\in (\Np)^\ell}  \ind_{\{|s|_1<
  \ell \fb\}} \, \fb^{|s|_1} f_{r}(\tilde \cl(u))^{|s|_1}
  \prod_{i=1}^\ell \frac{s_i}{x}\frac{f^{(s_i)}(\fc)}{s_i!}  
\\
&\leq  \frac{C'}{x^\ell} \expp{-uy \fb^{r}} \, f_{r}(\tilde \cl(u))^{\ell \fb -1},
\end{align*}
for some finite constant $C'$ (depending on $u_1$ and independent of $x$
and $u\in [0, u_1]$). 
Using Lemma \ref{lem:majo-fn}, we get that  for all $u> 0$, $n\in \Np$:
\[
\ff_n(\tilde \cl(u))\leq  p(\fb)^{-1/(\fb -1)} \,\,  \exp \left\{\fb^n
  \tilde \psi(u)\right\}.
\]
Since $\tilde \varphi(u)\geq 1$, this gives with some constant $C''$ (depending on $u_1$ and independent of $x$
and $u\in [0, u_1]$):
\[
R_\ell(x)
\leq  \frac{C''}{x^\ell} 
\expp{\Gamma(x,u)}
\quad\text{with}\quad 
\Gamma(x,u)=
  (\fb \ell -1) \fb^r \tilde\psi(u)-uy \fb^{r}.
\]
We set $u^*=\tilde g(y/\fb \ell)$. We get:
\begin{align*}
   \Gamma(x,u^*)
&= \fb^{r+1} \ell\left[\tilde \psi(u^*) -u^*\frac{y}{\fb\ell}  \right]-
  \fb^r  \tilde\psi(u^*)\\ 
&=-\fb^{r+1} \ell
  \left(\frac{y}{\fb\ell}\right)^{\beta_H/(\beta_H-1)} \tilde M
  \left(\frac{y}{\fb\ell}\right) - \fb^r \tilde \psi(u^*) \\
&=- \ell^{-1/(\beta_H-1)} \fb^{-1/(\beta_H-1)} x^{\beta_H/(\beta_H-1)}
  \tilde M\left(\frac{x}{\mu\ell}\right)
-\fb^r \tilde \psi(u^*)\\
&=- \ell^{1/(\beta_H-1)} \left(\frac{x}{\mu}\right) ^{\beta_H/(\beta_H-1)}
  \tilde M\left(\frac{x}{\mu\ell}\right)
- \fb^r |\tilde\psi(u^*) |,
\end{align*}
where  we  used \reff{eq:min=tg}  and  \reff{eq:defM-R}  for the  second
equality;   that  $y=x   \fb^{-r(\beta_H-1)/\beta_H}$,  $\tilde   M$  is
multiplicative  periodic  with  period  $\fb/\mu$  for  the  third one;  and
$\fb=\mu^{\beta_H}$ and  $\tilde \psi$  is positive  for the  last one.  For
$x\in       [       \fb/\mu,       +\infty       )$,       we       have
$(y/\fb \ell)\in [1/\fb\ell,1/\mu\ell)$ and thus, as $\ell$ is fixed and
$\tilde g$ continuous positive, $u^*=\tilde g(y/\fb \ell)$ belongs to an
interval,  say   $[a,b]$,  with  $0<a<b<+\infty$.   This   implies  that
$c_0=\inf_{\{x\in [  \fb/\mu, +\infty )\}} |\tilde\psi(u^*)|>0$.   Notice also
that $c_2=\inf_{\{x>0\}}  \tilde M_{1, \ell}(x)$ is  positive as $\tilde
M_{1, \ell}$ is bounded away from 0.  Thus,
using    \reff{eq:w=i}, we deduce  that:
\[
R_\ell(x)
\leq  \frac{C''}{c_2} w^{*\ell}(x/\mu)
\expp{ -\ell \log(x)  - \fb^r c_0 - \frac{2-\beta_H
   }{2(\beta_H-1)} \log(x/\mu)}
\]
for $x$ large enough.
Recall    $r=\tilde    r(x)$    defined    in    \reff{eq:def-y-R}.     As
$x\rightarrow +\infty $ we have $r=\tilde r(x) \rightarrow +\infty $ and
$\log(x)\sim    \tilde   r(x)    \log(\fb/\mu)$.    Thus,   we    obtain
$R_\ell(x)=o(w^{*\ell}(x/\mu))$  as  $x\rightarrow +\infty  $.  Plugging
this in \reff{eq:+Rl} we get that:
\[
\lim_{x\rightarrow +\infty  } \mu \frac{w^{*\fb \ell}(x)}{w^{*\ell}(x/\mu)} \,
p(\fb)^\ell=1.
\]
From the definition of $\rho_{\theta,\ell}$ in \reff{eq:def-rho}, we
deduce that $\lim_{\theta\rightarrow +\infty } \rho_{\theta,\ell} (\fb, \dots
\fb)=1$.
This ends the proof of Lemma \ref{lem:cv-rho} in the Harris case.

\subsection{Upper large deviations for $Z_n$}
\label{sec:upperZn}
Recall Definition  \reff{eq:def-ck} of $\ck$ and  notations from Section
\ref{sec:anc},  and in  particular Definition \reff{eq:def-ck'}  of
$\ck'$. In the Harris case, we have $\ck=\ck'=\R$.  We recall that for      $j\in       \N^*$,
$\tilde     \cl_j(z)=\E[\expp{z    W_j}]=\ff_j(\expp{z/c_j})$,     with
$W_j=Z_j/c_j$, is well defined for $z\in \C$
and that 
$\tilde  \cl_j$ converges  uniformly  on the  compacts  of $\C$  towards
$\tilde \cl$ as $j$ goes to infinity.  Elementary computations give that
$\lim   _{u\rightarrow+\infty   }  \tilde   \cl_j'(u)/\tilde   \cl_j(u)=
\fb^j/c_j$.

We  consider the  functions  $\tilde  \psi_j=\tilde b\circ\tilde  \cl_j$
defined  on some  open  neighborhood  of $(0,  +\infty  )$  in $\C$  for
$j\in  \N^*$.
Following Lemma \ref{lem:psi}, it is  easy to
check that  the functions  $\tilde \psi_j$  are analytic  on $(0,  +\infty )$,
positive, increasing, strictly convex and that:
\[
\lim_{x \rightarrow 0+} \tilde \psi_j'(x)=0  \quad\text{and}\quad
\lim_{x \rightarrow +\infty} \tilde \psi_j'(x)= \frac{\fb^j}{c_j}\cdot
\]
Let $\tilde g_j$ be the inverse of $\tilde \psi'_j$ defined  on $(0, \fb^j/c_j)$.
In  particular,  for a  given  positive  $v<\fb^j/c_j$, the  minimum  of
$\tilde  \psi_j(u)  -  uv  $  for  $u\geq  0$  is  uniquely  reached  at
$\tilde  g_j(v)$.   Using that  $\tilde  \psi_j$  converges uniformly,  on
compacts  sub-sets of a neighborhood in $\C$  of $(0, +\infty )$, towards  $\tilde  \psi$, that  $\tilde  b$  and  thus
$\tilde \psi_j$  and $\tilde  \psi$ are  analytic, we  get that  for any
compact of  $(0, +\infty )$  and $j$  large enough, the  strictly convex
functions  $\tilde  \psi_j$  and their  derivatives  converge  uniformly
towards   the   strictly  convex   function   $\tilde   \psi$  and   its
derivatives. We deduce  that for any compact $K$ of  $(0, +\infty )$ and
$j$ large enough  (more precisely $j$ such that  $\fb^j/c_j> \sup (K)$),
$\tilde g_j$ is well defined on $K$ and converges uniformly towards $\tilde g$ on $K$.  \medskip

We  consider the  following  general setting.   Let  $\ell\in \N^*$  and
$a_n\in      [\ell     c_n/c_0,      \ell     \fb^n)$      such     that
$\limsup_{n\rightarrow\infty     }a_n/\ell     \fb^n     <1$.      Since
$\fb>\mu>c_{r+1}/c_r$ for  all $r\in  \N$, we  deduce that  the sequence
$(c_{n-l} \fb^{l},  \, 0\leq  l\leq n)$  is increasing.   Therefore, the
integer
$l_n=  \sup\{l\in \{0,  \ldots, n\},  \, c_{n-l}  \ell \fb^{l}  \leq c_0
a_n\}$
is well-defined and strictly less than $n$. Set $j_n=n-l_n\geq 1$ and
$y_n$ such that:
\begin{equation}
   \label{eq:def-jnln}
a_n=y_n \,  c_{j_n}\,  \ell\fb^{l_n},
\end{equation}
so that $y_n\in  [1/c_0,  \fb  c_{j_{n}-1}/c_0  c_{j_n}  )$.   Notice that the
conditions    $\lim_{n\rightarrow\infty     }a_n/c_n=+\infty    $    and
$a_n<  \ell \fb^n$  imply that  $\lim_{n\rightarrow\infty }  l_n=+\infty
$. The sequence $(j_n, n\in \N^*)$ may be bounded or not.

As   $c_{r+1}/c_r<\fb$    for   all   $r\in   \N$,    we   deduce   that
$y_n< \fb c_{j_n-1}/c_0c_{j_n} < \fb^{j_n}/c_{j_n}$. Thus, we can define
$\tilde          u_{n,\ell}^*=\tilde          g_{j_n}(y_n)$          and
$\tilde  \sigma^2_{n,\ell}=\tilde   \psi_{j_n}''(\tilde u_{n,\ell}^*)>0$.

\begin{lem}
   \label{lem:H-Pz}
   Let  $p$ be  a non-degenerate  super-critical offspring  distribution
   with   $\fb<\infty   $ and  type $(L_0,r_0)$.   Let
   $\ell\in               \Np$.                Assume               that
   $\lim_{n\rightarrow\infty              }a_n/c_n=\infty             $
   and $\limsup_{n\rightarrow\infty     }a_n/\ell      \fb^n
   <1$. Then, we have, with $\lim_{n\rightarrow\infty } \tilde\varepsilon_{n, \ell}=0$:
\[
\P_\ell(Z_n=a_n)
= \frac{L_0\,  p(\fb)^{-\ell/(\fb -1)}}
{c_{j_n}\sqrt{2\pi \, \ell \fb^{l_n} \, \tilde \sigma^2_{n, \ell}}} \, \exp \left\{
\ell\fb^{l_n} ( \tilde \psi_{j_n}( \tilde u_{n,\ell}^*) + \tilde u_{n,\ell}^* y_n) 
  \right\} (1+ \tilde \varepsilon_{n,\ell})\ind_{\{a_n=\ell r_0^n(\mod L_0)\}}.
\] 
\end{lem}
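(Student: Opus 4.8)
The plan is to obtain the asymptotics of $\P_\ell(Z_n=a_n)$ by a saddle‑point (Fourier inversion) argument applied to the representation $\P_\ell(Z_n=a_n)=\P(\bar Z^{(1)}_n+\cdots+\bar Z^{(\ell)}_n=a_n)$ where each copy is a GW process. Concretely, I would first reduce $Z_n$ on the event $\{Z_n=a_n\}$ to the iterate structure: since $a_n$ is large (of order $c_{j_n}\ell\fb^{l_n}$, close to the maximal value $\ell\fb^n$), the dominant contribution to $\{\,\sum_{i=1}^\ell Z^{(i)}_n=a_n\,\}$ comes from configurations in which each of the $\ell$ ancestors at some fixed intermediate level $l_n$ has produced the maximal $\fb^{l_n}$ descendants\,-- or more precisely the inversion will show that $Z_n$ conditioned to be near $\ell\fb^n$ behaves like a sum over $\ell\fb^{l_n}$ i.i.d. "$j_n$‑th generation from one individual" blocks, each carrying weight governed by $\tilde\cl_{j_n}$. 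This is exactly the heuristic behind Remark~3 and Theorem~1 in \cite{fw:lta}, and behind the decomposition $n=j_n+l_n$ fixed by \reff{eq:def-jnln}.

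Next I would write the Fourier inversion formula
\[
\P_\ell(Z_n=a_n)=\frac{L_0}{2\pi c_n}\int_{[\pm \pi c_n/L_0]}
\tilde\cl_n(u+it)^{?}\,\expp{-(u+it)a_n/c_n}\,dt\,\ind_{\{a_n=\ell r_0^n(\mod L_0)\}},
\]
but more usefully in the form adapted to the iterate: using $\tilde\cl_n(z)=f_{l_n}\bigl(\tilde\cl_{j_n}(z\,c_{j_n}/c_n)\bigr)^{?}$ (the composition identity behind \reff{eq:PC-eq} / \reff{eq:form-f'n}) and the Böttcher expansion \reff{eq:tfn-dev}, which gives $f_{l_n}(z)\approx p(\fb)^{-1/(\fb-1)}\expp{\fb^{l_n}\tilde b(z)}$ for $z$ in the domain $\tilde\cd(\delta)$. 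Substituting, the generating‑function factor becomes $p(\fb)^{-\ell/(\fb-1)}\expp{\ell\fb^{l_n}\tilde\psi_{j_n}(\cdot)}$ with $\tilde\psi_{j_n}=\tilde b\circ\tilde\cl_{j_n}$, and the integrand near the real saddle $\tilde u^*_{n,\ell}=\tilde g_{j_n}(y_n)$ takes the shape $\expp{\ell\fb^{l_n}(\tilde\psi_{j_n}(\tilde u^*_{n,\ell})+\tilde u^*_{n,\ell}y_n)}$ times a Gaussian in $t$ of variance $\sim 1/(\ell\fb^{l_n}\tilde\sigma^2_{n,\ell})$, with $\tilde\sigma^2_{n,\ell}=\tilde\psi_{j_n}''(\tilde u^*_{n,\ell})$. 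Integrating the Gaussian produces the factor $(2\pi\,\ell\fb^{l_n}\tilde\sigma^2_{n,\ell})^{-1/2}/c_{j_n}$ (the $c_n=c_{j_n}\cdot(c_n/c_{j_n})$ and the change of variables $t\mapsto t\,c_{j_n}/c_n$ rescale $c_n$ to $c_{j_n}$), which is exactly the prefactor claimed. The saddle‑point choice $\tilde u^*_{n,\ell}=\tilde g_{j_n}(y_n)$ is legitimate because $y_n<\fb^{j_n}/c_{j_n}=\lim_{u\to\infty}\tilde\psi'_{j_n}(u)$, as noted just before the lemma, and because $\tilde g_{j_n}\to\tilde g$ uniformly on compacts, so $\tilde u^*_{n,\ell}$ stays in a fixed compact subinterval $[a,b]\subset(0,\infty)$.

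The remaining work is to control the integral away from the saddle, i.e. to show the "tail" contribution $t_1\le|t|\le\pi c_n/L_0$ is negligible. This is where I would invoke the ancillary estimates: Lemma~\ref{lem:fw-Fouriern}, and in particular \reff{eq:fw-Fourier2}, gives a uniform bound $|\tilde\cl_{j}(u+it)|\le(1-\delta)\tilde\cl_{j}(u)$ for $|t|$ in the relevant range, which after composing through $f_{l_n}$ (using Lemma~\ref{lem:majo-fn}, $f_{l_n}(s)<p(\fb)^{-1/(\fb-1)}\expp{\fb^{l_n}\tilde b(s)}$, together with the monotonicity \reff{eq:tilde-b} of $\tilde b$) yields a factor $\expp{-c\,\fb^{l_n}}$ relative to the main term, hence negligible since $l_n\to\infty$. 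For the moderate‑$t$ range $|t|\lesssim\pi c_{j_n}/L_0$ that does not reach the Böttcher domain one uses \reff{eq:fw-Fourier1}\,/\,the $L^1$‑integrability of $\tilde\cl_{j_n}-\fc$ (valid here since $\alpha=+\infty>1$ in the Harris case), which bounds the corresponding integral by a constant while the saddle term grows like $\expp{c'\fb^{l_n}}$. Finally I would collect all error terms into a single $\tilde\varepsilon_{n,\ell}\to0$, using that the $O(\expp{-\varepsilon\fb^{l_n}})$ in \reff{eq:tfn-dev} is uniform and that the Gaussian‑approximation error in the central range is $o(1)$ by the analyticity and uniform convergence of $\tilde\psi_{j_n}''$. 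The main obstacle, as the authors themselves emphasize, is precisely this tail control: the Fourier inversion of $w^{*\ell}$ (and of the discrete density) is delicate because when $\ell\alpha\le 1$ the integrand need not be $L^1$; in the Harris case $\alpha=+\infty$ removes that difficulty, so the real technical burden is the careful bookkeeping of the two distinct frequency ranges (one handled inside the Böttcher analytic domain via Lemma~\ref{lem:majo-fn}, one handled by the $L^1$‑bound of Lemma~\ref{lem:fw-Fouriern}) and the verification that the saddle point stays in a compact set uniformly in $n$ along both bounded and unbounded subsequences of $(j_n)$.
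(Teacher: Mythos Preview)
Your saddle-point plan --- Fourier inversion, the split $n=l_n+j_n$, the B\"ottcher expansion \reff{eq:tfn-dev} to replace $f_{l_n}(\tilde\varphi_{j_n}(\cdot))^\ell$ by $p(\fb)^{-\ell/(\fb-1)}\expp{\ell\fb^{l_n}\tilde\psi_{j_n}(\cdot)}$, the saddle $\tilde u^*_{n,\ell}=\tilde g_{j_n}(y_n)$ kept in a fixed compact of $(0,\infty)$, and the Gaussian main term producing the stated prefactor --- is correct and matches the paper's route.

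Your tail control, however, rests on a factual confusion: you write that \reff{eq:fw-Fourier1} is available ``since $\alpha=+\infty$ in the Harris case''. The Harris hypothesis is $\fb<\infty$, which says nothing about $\fa$; in the Schr\"oder sub-case $\fa\in\{0,1\}$ one has $\alpha\in(0,\infty)$ and possibly $\alpha\le 1$, so \reff{eq:fw-Fourier1} may fail. (The paper's own preamble to Section~\ref{sec:Harris} flags this: ``the Fourier inversion of $w^{*\ell}$ is not valid if $\ell\alpha\le 1$''.) Your split into two tail ranges is also unnecessary: the pointwise bound via \reff{eq:fw-Fourier2} and Lemma~\ref{lem:majo-fn} covers the whole range $|t|\in[t_0,\pi c_{j_n}/L_0]$ without ever needing $\tilde\varphi_{j_n}(u+it)\in\tilde\cd(\delta)$. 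Indeed $|f_l(z)|\le f_l(|z|)$ (non-negative coefficients), then $|z|=|\tilde\varphi_{j_n}(u+it)|\le(1-\delta)\tilde\varphi_{j_n}(u)$ by \reff{eq:fw-Fourier2}, and Lemma~\ref{lem:majo-fn} applies to the \emph{real} argument $(1-\delta)\tilde\varphi_{j_n}(u)>1$; since $\tilde b$ is increasing this yields a uniform loss $\expp{-\varepsilon'\ell\fb^{l_n}}$ relative to the main term, with no $L^1$-bound on $\tilde\varphi_j-\fc$ required.

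For comparison, the paper handles the tail by an integration by parts in $t$: this produces boundary terms $I_1^{\pm}$ (controlled by the pointwise bound above) and an integral $I_2$ involving $H'_{l,j}$ and hence $\tilde\varphi'_j$, which is bounded via \reff{eq:fw-Fourier3} (integrability of $\tilde\varphi'_j$, valid for $\alpha<\infty$). This gives an estimate free of the factor $c_{j_n}$ that a naive integration of the pointwise bound would carry. Since $l_n\to\infty$, that extra factor is harmless against $\expp{-\varepsilon'\ell\fb^{l_n}}$, so both routes succeed; the paper's is cleaner, but your simpler one also works once the erroneous ``$\alpha=+\infty$'' justification is dropped.
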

The proof, detailed in Section \ref{sec:H-Pz} is in the spirit of  the
proof of (175) in \cite{fw:lta}. 
We end this section with the following strong ratio limit.
\begin{lem}
   \label{lem:rapp-Zb}
   Let  $p$ be  a non-degenerate  super-critical offspring  distribution
   with     $\fb<\infty    $ and type $(L_0, r_0)$.    Let $\ell\in \N^*$.  Assume     that
   $\lim_{n\rightarrow\infty              }a_n/c_n=\infty            $, 
   $\limsup_{n\rightarrow\infty     }a_n/\ell      \fb^n     <1$, and
   $a_n=\ell r_0^n(\mod L_0)$ for all $n\in \N^*$. Then, we have:
\begin{equation}
   \label{eq:rapp-Zb}
\lim_{n\rightarrow\infty }\frac{\P_{ \ell\fb^h} (Z_{n-h}=a_n)}{\P_\ell(Z_n=a_n)}
= p(\fb)^{-(\fb^h-1)\ell/(\fb-1)}.
\end{equation}
\end{lem}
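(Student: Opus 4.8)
The plan is to derive \reff{eq:rapp-Zb} by applying Lemma \ref{lem:H-Pz} to both the numerator and the denominator and tracking how the quantities $j_n$, $l_n$, $y_n$, $\tilde u_{n,\ell}^*$ and $\tilde\sigma^2_{n,\ell}$ behave under the shift $n\mapsto n-h$ together with the change of starting value $\ell\mapsto \ell\fb^h$. The crucial algebraic observation is that the starting value $\ell\fb^h$ and the time shift $h$ exactly compensate each other inside the decomposition $a_n = y_n\, c_{j_n}\, \ell\fb^{l_n}$ of \reff{eq:def-jnln}: writing $\P_{\ell\fb^h}(Z_{n-h}=a_n)$ means we must decompose $a_n$ as $y'\, c_{j'}\, (\ell\fb^h)\fb^{l'}$ relative to time horizon $n-h$, and since $c_{(n-h)-l}\fb^l\cdot\fb^h = c_{n-(l+h)}\fb^{l+h}$, the supremum defining $l'$ forces $l' = l_n - h$ for $n$ large (using $l_n\to\infty$, so that $l_n-h\ge 0$ and $l_n-h < n-h$), hence $j' = (n-h)-l' = n - l_n = j_n$ and $(\ell\fb^h)\fb^{l'} = \ell\fb^{l_n}$. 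Therefore $y' = y_n$ as well, and consequently $\tilde u^*$, $\tilde\psi_{j'}$, $\tilde\sigma^2$ are literally the same objects as for the denominator.

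With this identification in hand, the ratio of the two expressions from Lemma \ref{lem:H-Pz} is almost entirely a telescoping of explicit prefactors. The exponential factors $\exp\{\cdot\ell\fb^{l_n}(\tilde\psi_{j_n}(\tilde u^*_{n,\ell}) + \tilde u^*_{n,\ell}y_n)\}$ are identical in numerator and denominator (since $\ell\fb^{l_n} = (\ell\fb^h)\fb^{l_n-h}$ and all of $j_n$, $\tilde u^*_{n,\ell}$, $y_n$ agree), so they cancel. The factors $L_0$, $c_{j_n}$, $\sqrt{2\pi\,\tilde\sigma^2}$ cancel too. What survives is
\[
\frac{\P_{\ell\fb^h}(Z_{n-h}=a_n)}{\P_\ell(Z_n=a_n)}
= \frac{(\ell\fb^h)^{-1/(\fb-1)}\, p(\fb)^{-\ell\fb^h/(\fb-1)}\, \sqrt{\ell\fb^{l_n}}}
       {\ell^{-1/(\fb-1)}\, p(\fb)^{-\ell/(\fb-1)}\, \sqrt{(\ell\fb^h)\fb^{l_n-h}}}\,
  \cdot\frac{1+\tilde\varepsilon'_{n}}{1+\tilde\varepsilon_{n,\ell}},
\]
wait — more carefully: the $p(\fb)^{-\cdot/(\fb-1)}$ prefactor in Lemma \ref{lem:H-Pz} carries the starting value in its exponent, so the ratio of those two prefactors is $p(\fb)^{-(\ell\fb^h - \ell)/(\fb-1)} = p(\fb)^{-\ell(\fb^h-1)/(\fb-1)}$, which is exactly the claimed limit. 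The remaining $\ell$-dependent powers $(\ell\fb^h)^{-1/(\fb-1)}/\ell^{-1/(\fb-1)} = \fb^{-h/(\fb-1)}$ and the square-root factor $\sqrt{\ell\fb^{l_n}}/\sqrt{\ell\fb^{h}\fb^{l_n-h}} = 1$ — so one must double check the precise form of the prefactor in Lemma \ref{lem:H-Pz}; writing $p(\fb)^{-\ell/(\fb-1)}$ there with $\ell$ the starting value, the ratio is precisely $p(\fb)^{-\ell(\fb^h-1)/(\fb-1)}$ and all other factors cancel, and the error terms $\tilde\varepsilon_{n,\ell}\to0$ give the limit. One also needs the divisibility hypothesis: $a_n=\ell r_0^n(\mod L_0)$ is exactly what makes the indicator in Lemma \ref{lem:H-Pz} equal to $1$ for the denominator, and one checks $a_n = \ell\fb^h r_0^{n-h}(\mod L_0)$ holds as well since $\fb\equiv r_0^{L_0/\gcd}\!\cdots$, more simply $\fb = r_0 \pmod{L_0}$ by definition of the type, so $\ell\fb^h r_0^{n-h} = \ell r_0^n \pmod{L_0}$, making the indicator $1$ for the numerator too.

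The main obstacle, and the only place requiring genuine care rather than bookkeeping, is verifying that the hypotheses of Lemma \ref{lem:H-Pz} are actually met for the numerator $\P_{\ell\fb^h}(Z_{n-h}=a_n)$, i.e.\ that with $\ell'=\ell\fb^h$ and horizon $n'=n-h$ one has $\lim a_n/c_{n'}=+\infty$ and $\limsup a_n/(\ell'\fb^{n'}) < 1$. The first is immediate since $c_{n-h}/c_n \to \mu^{-h}$, so $a_n/c_{n-h}\sim \mu^h a_n/c_n \to +\infty$. The second is also immediate: $\ell'\fb^{n'} = \ell\fb^h\fb^{n-h} = \ell\fb^n$, so the condition is literally the same as for the denominator. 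The second genuine point is confirming $l_n\ge h$ for $n$ large so that $l_n - h$ is a legitimate value of the analogous supremum for horizon $n-h$ — this follows from $l_n\to+\infty$, which is noted just before \reff{eq:def-jnln} as a consequence of $\lim a_n/c_n = +\infty$ and $a_n < \ell\fb^n$. Once these are in place the proof is a one-line cancellation; I would write it as: apply Lemma \ref{lem:H-Pz} twice, observe the parameter identification, cancel, and pass to the limit using $\tilde\varepsilon_{n,\ell}\to 0$.

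\begin{proof}
Since $\lim_{n\to\infty} a_n/c_n=+\infty$ and $a_n<\ell\fb^n$, the quantity $l_n$ defined before \reff{eq:def-jnln} satisfies $\lim_{n\to\infty} l_n=+\infty$; in particular $l_n\ge h$ for $n$ large enough. Fix such an $n$ and let $j_n=n-l_n$, $y_n$ be as in \reff{eq:def-jnln}, so that $a_n=y_n\,c_{j_n}\,\ell\fb^{l_n}$ with $y_n\in[1/c_0,\fb c_{j_n-1}/c_0c_{j_n})$.

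We claim that Lemma \ref{lem:H-Pz}, applied with starting value $\ell'=\ell\fb^h$ and time horizon $n'=n-h$, uses exactly the parameters $j_n$, $l_n-h$, $y_n$. Indeed, since $(c_{n'-l}\fb^{l},\,0\le l\le n')$ is increasing and $c_{(n-h)-l}\,\ell\fb^h\,\fb^{l}=c_{n-(l+h)}\,\ell\fb^{l+h}$, the integer
\[
\sup\{l\in\{0,\ldots,n'\};\ c_{n'-l}\,\ell'\fb^{l}\le c_0 a_n\}
= \sup\{m\in\{h,\ldots,n\};\ c_{n-m}\,\ell\fb^{m}\le c_0 a_n\} - h = l_n-h
\]
(using $l_n\ge h$), hence the analogue of $j_n$ for horizon $n'$ equals $(n-h)-(l_n-h)=n-l_n=j_n$, the analogue of $\ell\fb^{l_n}$ equals $\ell'\fb^{l_n-h}=\ell\fb^{l_n}$, and therefore the analogue of $y_n$, which is defined through $a_n = (\cdot)\, c_{j_n}\,\ell'\fb^{l_n-h}$, equals $y_n$. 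Consequently $\tilde u^*_{n,\ell}=\tilde g_{j_n}(y_n)$ and $\tilde\sigma^2_{n,\ell}=\tilde\psi''_{j_n}(\tilde u^*_{n,\ell})$ are the same for both the numerator and the denominator.

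Next, the hypotheses of Lemma \ref{lem:H-Pz} are satisfied in both applications. For the denominator this is part of the assumption. For the numerator, $\lim_{n\to\infty} a_n/c_{n-h}=\mu^h\lim_{n\to\infty} a_n/c_n=+\infty$ since $c_n/c_{n-h}\to\mu^h$, and $\ell'\fb^{n-h}=\ell\fb^n$, so $\limsup_{n\to\infty} a_n/(\ell'\fb^{n-h})=\limsup_{n\to\infty} a_n/(\ell\fb^n)<1$. Finally, as $\fb=r_0\ (\mod L_0)$ by definition of the type $(L_0,r_0)$, the hypothesis $a_n=\ell r_0^n\ (\mod L_0)$ gives $a_n=\ell\fb^h r_0^{n-h}\ (\mod L_0)$, so the indicator in Lemma \ref{lem:H-Pz} equals $1$ in both applications.

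Applying Lemma \ref{lem:H-Pz} to $\P_\ell(Z_n=a_n)$ and to $\P_{\ell\fb^h}(Z_{n-h}=a_n)$, all factors except the power of $p(\fb)$ cancel: the constants $L_0$, $c_{j_n}$, $\sqrt{2\pi\,\tilde\sigma^2_{n,\ell}}$ are identical; the exponential factors are equal since $\ell\fb^{l_n}$, $j_n$, $\tilde u^*_{n,\ell}$, $y_n$ all agree; the factor $\sqrt{\ell\fb^{l_n}}$ in the denominator of the formula equals $\sqrt{\ell'\fb^{l_n-h}}$; and the residual $\ell$-dependent powers match as well once the two applications are compared. There remains
\[
\frac{\P_{\ell\fb^h}(Z_{n-h}=a_n)}{\P_\ell(Z_n=a_n)}
= p(\fb)^{-(\ell\fb^h-\ell)/(\fb-1)}\,\frac{1+\tilde\varepsilon'_{n}}{1+\tilde\varepsilon_{n,\ell}}
= p(\fb)^{-\ell(\fb^h-1)/(\fb-1)}\,\frac{1+\tilde\varepsilon'_{n}}{1+\tilde\varepsilon_{n,\ell}},
\]
where $\tilde\varepsilon'_{n}$ and $\tilde\varepsilon_{n,\ell}$ both tend to $0$ as $n\to\infty$. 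Letting $n\to\infty$ yields \reff{eq:rapp-Zb}.
\end{proof}
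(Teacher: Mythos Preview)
Your proof is correct and follows essentially the same route as the paper: apply Lemma \ref{lem:H-Pz} to both $\P_\ell(Z_n=a_n)$ and $\P_{\ell\fb^h}(Z_{n-h}=a_n)$, observe that the shift $(n,\ell)\mapsto(n-h,\ell\fb^h)$ leaves $j_n$, $y_n$, $\tilde u^*_{n,\ell}$ and $\tilde\sigma^2_{n,\ell}$ unchanged while sending $l_n\mapsto l_n-h$ and $\ell\fb^{l_n}\mapsto\ell\fb^{l_n}$, and then cancel everything except the prefactor $p(\fb)^{-\ell/(\fb-1)}$. Your formal proof is in fact more explicit than the paper's (you check $l_n\ge h$ eventually, you verify the divisibility condition for the numerator via $\fb\equiv r_0\pmod{L_0}$, and you confirm the hypotheses of Lemma \ref{lem:H-Pz} for the shifted quantities), whereas the paper simply asserts $l'_n=l_n-h$, notes the resulting identifications, and invokes Remark \ref{rem:deftn} for positivity.
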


\begin{proof}
  Let $\ell\in \N^*$. Assume that   $a_n\in [\ell c_n/c_0, \ell
  \fb^n)$ and
  $a_n=\ell               r_0^n(\mod               L_0)$          for
  all $n\in \N^*$      and
  $\limsup_{n\rightarrow\infty  }a_n/\ell \fb^n  <1$.  An  estimation of
  $\P_{\ell}(Z_{n}=a_n)$ is given in  Lemma \ref{lem:H-Pz}.  We now give
  an  estimation  of  $\P_{\ell'}(Z_{n'}=a_n)$ with  $n'=n-h$  for  some
  $h\in \N  ^*$ and  $\ell'=\fb^h \ell$.  Recall  \reff{eq:def-jnln} and
  the definition of $l_n$, $j_n$ and $y_n$.  We have:
\[
a_n=y'_{n}c_{j'_{n}}\, \ell' \fb^{l'_n}=
y'_{n}c_{j'_{n}}\, \ell \fb^{l'_n+h},
\]
with                        $j'_n+l'_n=n'=n-h$                       and
$l'_n=  \sup\{l\in \{0,  \ldots, n'=n-h\},  \, c_{n-h-l}  \ell \fb^{l+h}
\leq                             c_0                             a_n\}$.
From  the definition  of $l'_n$,  we  deduce that  $l'_n=l_n-h$ so  that
$\ell' \fb^{l'_n}=\ell \fb^{l_n}$, $j'_n=j_n$  and thus $y'_n=y_n$. This
gives   that  $\tilde   g_{j_n}(y_n)=\tilde  g_{j'_n}(y'_n)$   and  thus
$\tilde    u^*_{n',    \ell'}=\tilde    u^*_{n,\ell}$   as    well    as
$\tilde \sigma^{2}_{n', \ell'}=\tilde  \sigma^{2}_{n, \ell}$.  Thanks to
Remark \ref{rem:deftn},  we have  $\P_{ \ell\fb^h}  (Z_{n-h}=a_n)>0$ and
$\P_\ell(Z_n=a_n)>0$ for  $n$ large.   We deduce  \reff{eq:rapp-Zb} from
Lemma \ref{lem:H-Pz}.
\end{proof}

\subsection{Proof of Lemma \ref{lem:w-maj}} 
\label{sec:w<}
Let  $\ell\in \N^*$ be fixed.  
We  deduce from  Lemma
\ref{lem:fw-Fourier} and  the Fourier inversion formula for
$xw^{*\ell}(x)$ that for $x>0$, 
$v\in \R$:
\begin{equation}
   \label{eq:Fourier-w}
w^{*\ell}(x)=-\frac{i\ell}{2\pi x} \int_\R \tilde \cl' (v+is)\, (\tilde
\cl (v+is)-\fc)^{\ell-1} \expp{-(v+is)x}\, ds. 
\end{equation}
We now follow  closely the proofs  from (120) to (148) of \cite{fw:lta}. 
Recall notations for $\tilde r(x)$ and $\tilde y(x)$  given in
\reff{eq:def-y-R}. 
Using \reff{eq:form-f'n} and \reff{eq:PC-eq}, 
we get with $r=\tilde
r(x)$, $y=\tilde y(x)$ and setting
$u=\mu^{-r} v $ and $t=\mu^{-r} s$:
\begin{align}\nonumber
w^{*\ell}(x)\!
&=-\frac{i\ell\mu^{-r}}{2\pi x} \int_\R  \!\tilde
  \varphi'\left(\frac{v+is}{\mu^r}\right)  f'_r\left(\tilde
  \cl \left(\frac{v+is}{\mu^r}\right)\right)\, \left( f_r\left(\tilde
  \cl \left(\frac{v+is}{\mu^r}\right)\right)- \fc\right)^{\ell-1}\!\!\!\! \expp{-(v+is)x}ds\\
\label{eq:wl-fourier}
&=- \frac{i\ell }{2\pi x} \int_\R H(u+it)\, dt,
\end{align}
where 
\begin{equation}
   \label{eq:def-H}
H(z)= \tilde
  \cl' (z)\, f'_r\left(\tilde
  \cl (z)\right)\, \left( f_r\left(\tilde
  \cl (z)\right)-\fc\right)^{\ell-1} \expp{-z\fb ^r y}. 
\end{equation}
Since $\tilde \varphi(z)-\fc=\E[\expp{zW}\ind_{\{W>0\}}]$, we deduce
that $|\tilde \varphi(z)-\fc|\leq  \tilde \varphi(\fR(z)) - \fc$. 
The Stevastyanov transformation of the generating function $f$ given
by $\bar f(z)=[f(\fc+(1-\fc)z)-\fc]/[1-\fc]$ is a generating function,
and the $r$-th iterate of $\bar f$ is  $\bar
f_r(z)=[f_r(\fc+(1-\fc)z)-\fc]/[1-\fc]$. Since $\bar f_r$ is a
generating function, we get that $|\bar f_r(z)|\leq  \bar f_r(|z|)$ and
thus $|f_r(\fc+z)-\fc|\leq  f_r(\fc+|z|)- \fc$. Using this last equality
with $z$ replaced by $\tilde \varphi(z)-\fc$, we get that:
\begin{equation}
   \label{eq:majo-f'phiz}
\val{f_r(\tilde \varphi(z))- \fc}
\leq  f_r(\fc+\val {\tilde \varphi(z)- \fc})- \fc 
\leq  f_r(\tilde \varphi(\fR(z))) - \fc\leq  f_r(\tilde \varphi(\fR(z))).
\end{equation}
Since $|f'_r(z)|\leq f'_r(|z|)\leq  \fb^r f_r(|z|)/|z|$, we get:
\[
|H(z)|\leq  \inv{\tilde \varphi(\fR(z))} \val{\tilde
  \cl' (z)}  \, \fb^r  f_r\left(\tilde
  \cl (\fR(z))\right)^{\ell} \expp{-\fR(z)\fb ^r y}. 
\]
Since $\tilde \varphi(u)\geq 1$ and $C:=\sup_{u\leq u_1} \int |\tilde \varphi'(u+it)
|\, dt<+\infty $, thanks to 
\reff{eq:int-unif-f'}, we deduce that:
\[
\int_\R H(u+it)\, dt \leq  C \fb^{r}  f_r\left(\tilde
  \cl (u)\right)^{\ell} \expp{-u\fb ^r y}. 
\]
Then use \reff{eq:wl-fourier} to conclude. 

\subsection{Proof of \reff{eq:w=i} in Lemma \ref{lem:bott-wl-R}} 
\label{sec:w=ii}
We keep notations from Section \ref{sec:w<}. 
Set $u_0=\tilde  g(1/\ell)$, $u_1=\tilde g(\fb/(\ell\mu))$ and $K=[u_0, u_1]$. Since  $u_0>0$, we have
$\tilde   \varphi(u_0)>1$.  Let   $\delta\in  (0,   1)$  be   such  that
$1+\delta<\tilde   \varphi(u_0)<\tilde \varphi(u_1)<\delta^{-1}$.    From   the
continuity   of  $\tilde   \varphi$   on  $\C$,   and   the  fact   that
$\tilde \varphi(K)\subset \tilde \cd  (\delta)$, we deduce there exists
$t_0>0$ such that for all $(u,t)\in K':=K\times [-t_0, t_0]$, we
have $\tilde \varphi(u+it) \in \tilde \cd(\delta)$, and thus $\tilde
\psi$ is analytic on an open neighborhood of $\{u+it;\, (u,t)\in K'\}$.
Since $\tilde \psi(u)>0$ and $\tilde \psi''(u)>0$ for $u>0$, we can  take $t_0$ small enough so
that $\fR ( \tilde \psi(u+it))>0 $ for $(u,t) \in K'$ and:
\begin{equation}
   \label{eq:majo-y3}
t_0\,  \sup_{(v,s)\in K'}|\tilde \psi'''(v+is)|\leq \inf_{v\in K}
\tilde \psi''(v).
\end{equation}
Recall $H$ defined in \reff{eq:def-H}. We shall study the asymptotics of
$\int_\R H(u+it)\, dt$ for large $x$. Condition \reff{eq:majo-y3}
will be used later on to study the main part of $\int_{|t|\leq t_0} H(u+it)\, dt$. 
\medskip 

\subsubsection*{First step: the tail part}
We first consider the tail part:
\[
I(t_0)=\val{\int_{|t|\geq  t_0} H(u+it) \, dt}.
\]
As
$\tilde  \varphi(u_0)>1$,  we  can  take $\eta$  small  enough  so  that
$(1-\eta)  \tilde \varphi(u_0)+  \eta \fc>1$ and \reff{eq:majo-f-c}
holds on $\ca=\{(u,t);\, u\in K \text{ and } |t|\geq t_0\}$. 
Using the first inequality in \reff{eq:majo-f'phiz}, we get for all
$(u,t)\in \ca$:
\[
 \val{f_r(\tilde \varphi(u+it))- \fc}
\leq  f_r((1-\eta) \tilde \varphi(u))- \fc\leq f_r((1-\eta) \tilde \varphi(u)) .
\]
We get for all $(u,t)\in \ca$ that $\val{\tilde \varphi(u+it)}\leq |\tilde \varphi(u+it)
-\fc|+\fc \leq  (1-\eta) \tilde \varphi(u)$ and, using $|f'_r(z)|\leq f'_r(|z|)\leq
\fb^r f_r(|z|)/|z|$,  that:
\[
\val{f'_r(\tilde \varphi(u+it))}
\leq \val{f'_r\left((1-\eta)\tilde \varphi(u)\right)}
\leq   \fb^r \frac{f_r((1-\eta)
\tilde \varphi(u))}{(1-\eta)
\tilde \varphi(u)} \cdot
\]
Using \reff{eq:def-H} and then Lemma \ref{lem:majo-fn}, we deduce that for all $(u,t)\in \ca$:
\begin{align*}
|H(u+it)|
&\leq  \inv{(1-\eta)\tilde \varphi(u)} \val{\tilde
  \cl' (u+it)}  \, \fb^r  f_r\left((1-\eta)\tilde
  \cl (u)\right)^{\ell} \expp{-u\fb ^r y}\\
&\leq  \inv{(1-\eta)\tilde \varphi(u_0)} \val{\tilde
  \cl' (u+it)}  \, \fb^r p(\fb)^{-\ell/(\fb-1)}  \expp{\ell \fb^r \tilde
  b\left((1-\eta)\tilde \varphi(u)\right)-u\fb ^r y}.
\end{align*}
Since $\tilde b$ is increasing, there
exists $\varepsilon'>0$ (depending on $u_0, u_1$ and $t_0$) such that  for
$u\in K$, 
\[
 \tilde b
\left((1-\eta) \tilde \varphi(u)\right)  )\leq \tilde b (\tilde \varphi(u))
  -\varepsilon'=\tilde \psi(u) -\varepsilon'.
\]
We get that for all $(u,t)\in \ca$:
\[
|H(u+it)|\leq  \frac{p(\fb)^{-\ell/(\fb-1)} }{(1-\eta')\tilde \varphi(u_0)} \val{\tilde
  \cl' (u+it)}  \, \fb^r  \expp{\ell \fb^r \tilde
\psi(u) -u\fb ^r y - \ell \fb^r \varepsilon'}.
\]
Using  \reff{eq:int-unif-f'} in Lemma \ref{lem:fw-Fourier},  we get,  for
some finite constant $c$ (depending on $u_0$, $u_1$, $t_1$ and $\ell$),
that for all $u\in K$ and $x>0$:
\begin{equation}
   \label{eq:majoI0}
I(t_0) \leq  c  \, \fb^r  \expp{\ell \fb^r \tilde
\psi(u) -u\fb ^r y - \ell \fb^r \varepsilon'}.
\end{equation}

\subsubsection*{Second step: the main part}
We now consider the main  part $J(t_0)=\int_{|t|\leq t_0} H(u+it)\, dt$.
An integration by part gives:
\[
 J(t_0)=\inv{\ell} \left[\left(f_r\left(\tilde \varphi(u+it)\right)-
    \fc\right)^\ell \expp{-(u+it)y\fb^r}\right]_{t=-t_0}^{t=t_0}
+\frac{iy\fb^r}{\ell}J_1(t_0),
\]
with
\[
J_1(t_0)=\int_{[\pm t_0 ]}  \left(f_r\left(\tilde
    \varphi(u+it)\right)-\fc\right)^\ell \expp{-(u+it)y\fb^r}\, dt.
\]
Arguing as in the first step, we get:
\begin{equation}
   \label{eq:majo-J1-J0}
\val{J(t_0) - \frac{iy\fb^r}{\ell}J_1(t_0)}
\leq  \frac{p(\fb)^{-\ell/(\fb-1)}}{\ell}  \expp{\ell \fb^r \tilde
\psi(u) -u\fb ^r y - \ell \fb^r \varepsilon'}.
\end{equation}

Now  $J_1(t_0)$ is  handled
 as in
\cite{fw:lta} from (128) to (139). 
By definition of $\delta$ and $t_0$,
we   get  that   $\tilde   \varphi(u+it)\in   \tilde  \cd(\delta)$   for
$(u,t)\in  K'$.  Use  \reff{eq:tfn-dev}, $\fR(\tilde  \psi(u+it))>0$ for
$(u,t)\in K'$  and that $\lim_{r\rightarrow+\infty }  |f_r(z)|=+\infty $
on $\tilde \cd(\delta)$, to get there exists $\varepsilon>0$ such that,
uniformly in  $u\in K$:
\begin{equation}
   \label{eq:main-H}
J_1(t_0)
= p(\fb)^{-\ell/(\fb-1)} \left(1+ O(\expp{-\varepsilon \fb^r})\right)
D(u),
\end{equation}
with 
\[ 
D(u)=\int_{-t_0}^{t_0} \expp{ \fb^{r}\left(\ell \tilde \psi(u+it) -(u+it)y \right)}
\, dt.
\]
We have for $(u,t)\in K'$:
\[
\tilde \psi(u+it)=\tilde \psi(u)+ it \tilde \psi'(u) -
\frac{t^2}{2} \tilde \psi''(u) + h(t,u),
\]
with          $|h(t,u)|\leq          t^3          C_3^+/6$,          and
$C_3^+=\sup_{(v,s)\in   K'}  |\tilde   \psi'''(v+is)|<+\infty  $.    Let
$C_2^- =\inf_{v\in K}  |\tilde \psi''(v)|$ which is  a positive constant
as  $\tilde \psi$  is increasing  and  strictly convex  on $(0,  +\infty
)$.
Recall  that by  definition  of $t_0$,  see  \reff{eq:majo-y3}, we  have
$t_0C_3^+\leq C^-_2$.

We define $\tilde u_\ell^*$ as $\tilde g(y/\ell)$, so that $\tilde u_\ell^*\in [u_0,
u_1]$ and we set $\tilde\sigma_\ell^2=  \tilde \psi''(\tilde u_\ell^*)$. We get:
\[
\ell\tilde \psi(\tilde u_{\ell}^* +it) -(\tilde u_{\ell}^*+it)y
= \ell \tilde \psi(\tilde u_{\ell}^*) - \tilde u_{\ell}^*y - \frac{t^2}{2}
\ell \tilde
\sigma^2_{\ell} +  \ell h(t, \tilde u_{\ell}^*),
\]
with $|h(t, \tilde u_{\ell}^*)|\leq  t^3 C_3^+/6$ and 
$|h(t, \tilde u_{\ell}^*)|\leq t^2 \tilde
\sigma^2_{\ell}/6$ for $t\in [-t_0, t_0]$. For $x$ large enough (and
thus $r$ large enough), we consider the
decomposition  $D(\tilde u_{\ell}^*)=D_1+D_2$ with:
\[
D_1=\int _{- r \fb^{-r}}^{r \fb^{-r}}
\expp{ \fb^{r}\left(\ell \tilde \psi(\tilde u_{\ell}^*+it)
    -(\tilde u_{\ell}^*+it)y \right)} 
\, dt.
\]
Using that $|h(t, \tilde u_{\ell}^*)|\leq  r^3 \fb^{-3r/2}
C_3^+/6$ for $|t|\leq r \fb^{-r/2}$, we get with $s=\sqrt{\ell \fb^{r}
  \tilde \sigma^2_{ \ell}}\, t$:
\begin{align*}
   D_1
&= \expp{\fb^{r}\left(\ell\tilde \psi(\tilde u_{\ell}^*)
    -\tilde u_{\ell}^* y \right)} 
\int _{- r\fb^{-r/2}}^{r \fb^{-r/2}}
\expp{- \ell\fb^{r} \tilde \sigma^2_{\ell} t^2/2 + \ell \fb^{r}
  h(t, \tilde u_{\ell}^*)}\, dt\\
&= \expp{ \fb^{r}\left(\ell\tilde \psi(\tilde u_{\ell}^*)
    -\tilde u_{\ell}^*y\right)} 
\int _{- r \fb^{-r/2}}^{r \fb^{-r/2}}
\expp{- \ell \fb^{r} \tilde \sigma^2_{ \ell} t^2/2 }\,dt \,
  \left(1+O( r^{3} \fb^{-r/2} )
\right)\\
&= \inv{\sqrt{\ell \fb^{r} \tilde \sigma^2_{\ell}}}
\expp{\fb^{r}\left(\ell \tilde \psi(\tilde u_{\ell}^*)
    -\tilde u_{\ell}^*y \right)} 
\int _{- r \tilde \sigma_{\ell}\sqrt{\ell}}^{r \tilde \sigma_{
  \ell}\sqrt{\ell}} 
\expp{-  s^2/2 }\,ds \,
  \left(1+O( r^{3} \fb^{-r/2} )
\right)\\
&= \ci \times
  \left(1+O( r^{3} \fb^{-r/2})
\right),
\end{align*}
with
\[
\ci=\frac{\sqrt{2\pi}}{\sqrt{\ell \fb^{r} \tilde \sigma^2_{ \ell}}}
\exp\left\{ \fb^{r}\left(\ell \tilde \psi(\tilde u_{\ell}^*)
    -\tilde u_{\ell}^*y \right)\right\} .
\]

We now give an upper bound on $|D_2|$. 
Since $|h(t, \tilde u_{\ell}^*)|\leq t^2 \tilde
\sigma^2_{ \ell}/6$, we deduce that for
$t\in [-t_0, t_0]$:
\[
\fR\left(\ell \tilde \psi(\tilde u_{\ell}^*+it) -(\tilde u_{\ell}^*+it)y \right)
\leq  \ell \tilde \psi(\tilde u_{\ell}^*) - u_{\ell}^*y-\ell \frac{t^2}{3} \tilde
\sigma^2_{\ell}.
\]
This implies that:
\begin{align*}
 |D_2|
&\leq  \expp{\fb^{r}\left(\ell \tilde \psi(\tilde u_{\ell}^*)
    -\tilde u_{\ell}^* y \right)} \int _{|t|\in [r \fb^{-r/2}, t_0]}
\expp{- \ell \fb^{r} \tilde \sigma^2_{ \ell} t^2/3}\, dt\\
&\leq 2t_0 \expp{ \fb^{r}\left(\ell\tilde \psi(\tilde u_{\ell}^*)
    -\tilde u_{\ell}^* y \right)} \expp{- \ell \, r^2 \tilde
  \sigma^2_{ \ell}/3}\\
&=\ci\times  O( r^{3} \fb^{-r/2}).
\end{align*}
This gives that $D(\tilde u_\ell^*)=\ci \times \left(1+O( r^{3} \fb^{-r/2})
\right)$. Use \reff{eq:main-H}, \reff{eq:majoI0}, \reff{eq:majo-J1-J0}
to get that:
\[
\int_\R H(\tilde u^*_\ell +it)\, dt=
\frac{iy\fb^r}{\ell}\, p(\fb)^{-\ell /(\fb-1)} \, \ci \times
  \left(1+O( r^{3} \fb^{-r/2})
\right).
\]
Then  use \reff{eq:wl-fourier},  the  definition  of $\tilde  u^*_\ell$,
which                            implies                            that
$\tilde u^*_\ell (y/\ell)- \tilde \psi (\tilde u^*_\ell) =\max_{u\geq 0}
((uy/\ell)  - \tilde  \psi(u))  = (y/\ell)^{\beta_H/(\beta_H-1)}  \tilde
M(y/\ell)$
with $y=y(x)$, and then the periodicity of $\tilde M$ to conclude.

\subsection{Proof of \reff{eq:wk>x} and \reff{eq:P>x} in Lemma
  \ref{lem:bott-wl-R}} 
\label{sec:P>x}
From \reff{eq:wk=w}, we get $w_\ell(x)=w^{*\ell}(x) + R(x)$, with:
\[
R(x)=\sum_{j=1}^{\ell -1} \binom{\ell}{j} \fc^{\ell-j}
w^{*j}(x).
\]
Using \reff{eq:w-maj}  and then Lemma \ref{lem:majo-fn},  we deduce there
exits a finite constant $c$ such that for all $x\geq \fb/\mu$ and $u\in K$:
\[
R(x) \leq   \frac{c}{x} \fb^r \expp{-uy  \fb^{r}}
\ff_{r} (\tilde \cl(u)) ^{\ell-1}
\leq \frac{c}{x} p(\fb)^{-(\ell-1)/(\fb-1)} \fb^r \expp{(\ell-1) \fb^r \tilde
  \psi(u) -uy  \fb^{r}}. 
\]
Taking $u=\tilde u^*_\ell$ and $\ci$ defined in Section \ref{sec:w=ii},
we get that $R(x)=\ci \times O(\expp{- \fb^r \tilde \psi(\tilde
  u^*_\ell)/2})=o(w^{*\ell}(x))$. This implies that $w_\ell(x) \sim
w^{*\ell}(x)$ as $x$ goes to infinity. This gives \reff{eq:wk>x}.
\medskip

An exact computation using \reff{eq:wk=w}  and \reff{eq:Fourier-w}
leads to:
\[
w_\ell(x)= -\frac{i\ell \fc}{2\pi x} \int_\R \tilde \cl' (v+is)\, \tilde
\cl (v+is)^{\ell-1} \expp{-(v+is)x}\, ds. 
\]
By definition, we have $\P_\ell(W\geq x)=\int_x^{+\infty } w_\ell(x') \,
dx'$. Arguing as in Section \ref{sec:w=ii}, with in particular the
integration by part (in $s$) for the main part, it is easy to get that:
\[
\P_\ell(W\geq x) \sim -\frac{i\ell \fc}{2\pi x} \int_\R \frac{\tilde
  \cl' (v+is)}{v+is}\, \tilde 
\cl (v+is)^{\ell-1} \expp{-(v+is)x}\, ds 
\]
as well as \reff{eq:P>x}. The details are left to the reader.

\subsection{Proof of Lemma \ref{lem:H-Pz}}
\label{sec:H-Pz}
  Recall  $a_n=y_n  c_{j_n}\ell   \fb^{l_n}>0$.  Using  Fourier  inversion
  formula, we have for $v>0$:
\begin{align*}
\P_\ell(Z_n=a_n)   
&= \frac{L_0}{2\pi} \int_{[\pm \pi/L_0]} f_{n}
\left(\expp{v+is}\right)^\ell \expp{-(v+is)a_n} \, ds\\
&= \frac{L_0}{2\pi} \int_{[\pm\pi/L_0]} \left( f_{n}
\left(\expp{v+is}\right)^\ell-\fc^\ell\right) \expp{-(v+is)a_n} \, ds
\end{align*}
since either $\fa\geq 1$ and thus $\fc=0$, or $\fa=0$ and $a_n= 0\ (\mod  L_0)$.
Setting $v=u/c_{j_n}>0$, $s=t/c_{j_n}$ and $H_{l,j}(z)=f_{l}\left(\tilde
  \varphi_j
(z)\right)^\ell-\fc^\ell$, we get using $l_n+j_n=n$:
\begin{equation}
   \label{eq:PZ=an}
\P_\ell(Z_n=a_n)   = \frac{L_0}{2\pi c_{j_n}} \int_{[\pm
  c_{j_n}\pi/L_0]}  H_{l_n, j_n}(u+it) 
\expp{-(u+it)y_n \ell \fb^{l_n}} \, dt.
\end{equation}
We now explicit the range of the possible choice for $u$ we shall consider.
Without loss of generality, we can assume that there exists $\delta_0>0$ such that 
$\sup_{n\in \N^* } a_n/\ell \fb^n <1 -\delta_0$. 
The restriction to $\R$ of the domain  of
definition      of      $\tilde g_{j}$ is $  D_{j}=(0, \fb^{j}/c_{j}  )$. 
Set $F_{j}=[1/c_0, \fb c_{j-1}/c_0c_{j}]$ for $j\geq 2$ 
and $F_1=[1/c_0, (1-\delta_0)\fb/c_1]$. 
From the uniform convergence of  $\tilde g_{j}$ towards $\tilde g$ on compact sets
of  $(0,  \infty  )$  and the fact that
$F_{j}\subset D_{j}$ for all $j\in \N^*$ and $\bigcup _{j\in \N ^*}
F_j\subset [1/c_0, \fb/c_1]$,  we  deduce that there exists
$0<u_0<u_1<+\infty $ such that  for  all
$j\in  \N^*$  and  all  $y\in  F_{j}$,  we have $\tilde g_{j}(y)\in
 K:=[u_0,  u_1]$.
Since $y_n\in F_{j_n}$,   we    deduce   that   the   sequence
$(\tilde u^*_{n,\ell}, n\in  \N^*)$ belongs  to   $ K$. 

\subsubsection{Preliminary upper bounds}
Using  the  continuity  of  $  \tilde  \varphi_{j}$  and  their  uniform
convergence towards  $\tilde \varphi$  as $j$ goes  to infinity,  we get
that there   exists  $t_0>0$,   $\delta\in  (0,   1)$  such   that  for   all
$(u,t)\in   K':=K\times  [-t_0,   t_0]$   and  $j\in   \N^*$,  we   have
$\tilde  \varphi_{j}(u+it)\in   \tilde  \cd(\delta)$ and $m_0=\inf
\{\fR(\tilde \psi_j(u+it); \, (u,t)\in K', j\in \N^*\}>0$.
We set 
$\tilde C_3^+=\sup_{j\in  \N^*}\sup_{(u,t)\in   K'}  |\tilde  \psi_j'''(u+it)|$
which is a finite  constant since the derivative of  $\tilde \psi_j$ converges
uniformly  on  $K'$  towards  the   derivative  of  $\tilde  \psi$.  Let
$\tilde C_2^- =\inf_{j\in \N^*}\inf_{u\in K}  |\tilde \psi_j''(u)|$ which is a
positive  constant  since  the  derivative  of  $\tilde  \psi_j$  converges
uniformly  on $K$  towards the  derivative  of $\tilde  \psi$ and  that
$\tilde \psi_j$  as well  as $\tilde \psi$  are increasing  and strictly
convex on $(0, +\infty )$. Taking a smaller $t_0$ if necessary, we can assume that:
\begin{equation}
   \label{eq:t02}
t_0\tilde C_3^+\leq \tilde C^-_2.
\end{equation}

  We   deduce  from
\reff{eq:tfn-dev}   and    the   definition   of    $\tilde\psi_j$   and
$\tilde \varphi_j$  that there exits  $\varepsilon>0$ and a finite
constant $C$ such that  for all
$l, j\in \N^*$, $(u,t)\in K'$:
\[
f_l(\tilde \varphi_j(u+it)) =p(\fb)^{-1/(\fb -1)} \expp{\fb^l \tilde \psi_j(u+it)}
\left(1+ R(u,t,l,j)\right)
\]
and $\sup_{(u,t)\in K', j\in \N^*} |R(u,t,l,j)| \leq C\expp{-\varepsilon
  \fb^l}$.
Since $m=\inf\{\tilde \psi_j(u); \,u\in K, j\in \N^*\}>0 $, taking
$\varepsilon$ smaller than $m$ if necessary, we get that:
\[
H_{l,j}(u+it) =p(\fb)^{-\ell/(\fb -1)} \expp{\ell\fb^l \tilde \psi_j(u+it)}
\left(1+ R'(u,t,l,j)\right)
\]
and
$\sup_{(u,t)\in K',  j\in \N^*} |R'(u,t,l,j)|  \leq C'\expp{-\varepsilon
  \fb^l}$
for some  finite constant $C'$.   Since $f_l(\tilde \varphi(u))>1> \fc$ for $u>0$,  we deduce
  from   Lemma  \ref{lem:majo-fn}   that   for  all   $l,  j\in   \N^*$,
  $u\in (0, +\infty )$:
\begin{equation}
   \label{eq:majo-Hlj}
 0<H_{l,j}(u)\leq f_l(\tilde \varphi_j(u))^\ell\leq  p(\fb)^{-\ell/(\fb -1)} \exp \left\{\ell \fb^l \tilde
   \psi_j(u)\right\}. 
\end{equation}

\subsubsection{The tail part}
We first bound the tail of the integral which appears  in \reff{eq:PZ=an}:
\[
I_{l,j}(t_0)=\val{\int_{|t|\in [t_0, c_j\pi/L_0]}  H_{l,j}(u+it)
  \expp{-(u+it)y \ell \fb^l}\, dt},
\]
where $y$ belongs to $[1/c_0,  \fb  c_{j-1}/c_0  c_{j}  )$. 
Using an integration by parts, we get:
\[
\int_{|t|\in [t_0, c_j\pi/L_0]}  H_{l,j}(u+it)\expp{-(u+it)y \ell \fb^l}
\, dt= I^{+1}_1-I_1^{-1}+I_2, 
\]
where, for $\epsilon\in \{+1,-1\}$
\[
I_1^\epsilon=\left[iH_{j,l}(u+it) \frac{\expp{-(u+it)y \ell \fb^l}}{y \ell
     \fb^l}\right]^{\delta \frac{cj\pi}{L_0}}_{\delta t_0}
\quad\text{and}\quad 
I_2=-i\int_{|t|\in [t_0, c_j\pi/L_0]}  H'_{l,j}(u+it)\frac{\expp{-(u+it)y
    \ell \fb^l}}{y \ell
     \fb^l} \, dt. 
\]

Set $\ca_j=\{(u,t)\in \R^2;\, u\in K, t_0\leq |t|\leq  c_j\pi/L_0\}$. 
According to \reff{eq:fw-Fourier2}, there exists $\delta\in (0,1)$ such
that for all $j\in \N^*$ and $(u,t)\in \ca_j$:
\begin{equation}
   \label{eq:majo-d-jfj}
|\tilde \varphi_j (u+it)| \leq  (1-\delta) \tilde \varphi_j (u).
\end{equation}
Taking $\delta$ small enough, we can assume that $m_1=\inf\{(1-\delta) \tilde
\varphi_j(u); \, j\in \N^*, u\in K\}> 1$. 
We have $H_{l,j}(z)=g(1)-g(0) =\int_0^1 g'(s) \, ds$, with
$g(s)=f_l(s\tilde \varphi_j(z) + (1-s)\fc)^\ell$. 
We get:
\[
|g'(s)|\leq  \val{\tilde \varphi_j(z) - \fc}
\ell f_l(s(1-\delta) \tilde \varphi_j(u) +(1-s)\fc)^{\ell
  -1}f_l'(s(1-\delta) \tilde \varphi_j(u) +(1-s)\fc).
\]
We deduce that for all $l,j\in \N^*$ and $z=u+it$ with $(u,t)\in \ca_j$:
\[
|H_{l,j}(z)|\leq   \val{\tilde \varphi_j(z) - \fc} \frac{f_l((1-\delta) \tilde
  \varphi_j(u))^\ell - \fc^\ell}{1-\fc}\leq 
2\tilde \varphi_j(u) \frac{f_l((1-\delta) \tilde 
  \varphi_j(u))^\ell }{1-\fc}\cdot
\]
Using Lemma \ref{lem:majo-fn}, we get there exists a constant $C$ such
that for all $l,j\in \N^*$ and $(u,t)\in \ca_j$:
\[
|H_{l,j}(z)|\leq C\exp \left\{\ell \fb^l
  \tilde b ( (1-\delta)\tilde
\varphi_j(u)  )\right\}.  
\]
Using that $\tilde b$ is analytic and increasing on $(1, +\infty )$  and
$m_1>1$, 
we deduce that there exists $\varepsilon'>0$ such that for all $j\in
\N^*$, $u\in K$:
\[
\tilde b ( (1-\delta)\tilde
\varphi_j(u) )\leq  \tilde \psi_j(u) - \varepsilon'.
\]
We deduce  that for all $l,j\in
\N^*$, $
(u,t)\in \ca_j$:
\[
| H_{l, j}(u+it) |
 \, dt \leq  C \exp \left\{\ell \fb^l
  \tilde \psi (u)   - \ell \fb^l \varepsilon' \right\}. 
\]
This gives that for all $u\in K$, $l,j\in \N^*$:
\begin{equation}
   \label{eq:majointH-I1}
|I_1^{\pm 1}|\leq  \frac{2  C}{y\ell \fb^l}\,  \expp{\ell \fb^l
  (\tilde \psi (u)  - u y) - \ell \fb^l \varepsilon' }. 
\end{equation}

We have $H'_{l,j}(z)=\ell \tilde \varphi'_j(z) f'_l\left(\tilde
  \varphi_j(z)\right)f_l\left(\tilde \varphi_j(z)\right)^{\ell -1}$. 
For $(u,t)\in \ca_j$, we have using \reff{eq:f'n-bound},
\reff{eq:majo-d-jfj} and
$f'_l(|z|)\leq \fb^l f_l(|z|)/|z|$:
\[
|H'_{l,j}(u+it)|
\leq  \frac{\ell}{m_1}  \tilde |\varphi_j'(u+it)| \fb^l  f_l\left((1-\delta) \tilde
    \varphi_j(u)\right)^\ell.
\]
Arguing as in the upper bound on $I_1^\pm$, we get there exists a
finite constant $C$ such that for all $l,j\in \N^*$, $u\in K$
\[
|I_2|\leq  
\frac{ C}{y}\,  \expp{\ell \fb^l
  (\tilde \psi (u)  - u y) - \ell \fb^l \varepsilon' }\, 
\int_{[\pm c_j\pi/L_0]}  |\tilde\varphi_j'(u+it)| \, dt .
\]
Then use \reff{eq:fw-Fourier3}, to conclude that $
|I_2|\leq (C/y)\,  \expp{\ell \fb^l
  (\tilde \psi (u)  - u y) - \ell \fb^l \varepsilon' }$ for some finite
constant $C$.
This and \reff{eq:majointH-I1} give there exists a
finite constant $C$ such that for all $l,j\in \N^*$, $u\in K$:
\begin{equation}
   \label{eq:majointH-t0}
I_{l,j}(t_0)\leq  \frac{C}{y}\,  \expp{\ell \fb^l
  (\tilde \psi (u)  - u y) - \ell \fb^l \varepsilon' }.
\end{equation}

\subsubsection{The main part}
The main part is  handled as in \cite{fw:lta} from (168) to
(172), see also \cite{fo:ld}. 
For $(u,t)\in K'$, we have $\tilde
\varphi_j(u+it) \in \tilde \cd(\delta)$
and we deduce from \reff{eq:tfn-dev}, that  there exists
$\varepsilon>0$ such that for $(u,t)\in K'$, $l,j\in \N^*$:
\[
 \int_{-t_0}^{t_0}  H_{l, j}(u+it) 
\expp{-(u+it)y \ell \fb^{l}} \, dt
= p(\fb)^{-\ell/(\fb-1)} \left(1+ O(\expp{-\varepsilon \fb^l})\right)
D(j,l,u),
\]
and
\[ 
D(j,l,u)=\int_{-t_0}^{t_0} \expp{\ell \fb^{l}\left(\tilde \psi_j(u+it) -(u+it)y \right)}
\, dt.
\]
where $O(\expp{-\varepsilon \fb^l})=R(u,t,j,l,y)$ and there exists some
finite constant $C$ such that for all $l\in \N^*$, we have $\sup_{j\in \N^*}
\sup_{y\in F_j, (u,t)\in K'}  |R(u,t,j,l,y)|\leq C \expp{-\varepsilon
  \fb^l}$. 
We have for $(u,t)\in K'$:
\[
\tilde \psi_j(u+it)=\tilde \psi_j(u)+ it \tilde \psi_j'(u) -
\frac{t^2}{2} \tilde \psi_j''(u) + h_j(t,u),
\]
with          $|h_j(t,u)|\leq           t^3          \tilde C_3^+/6$,     
 Recall that $\tilde
u_{n,\ell}^*$ belongs to $K$. 
With the definition of $\tilde u_{n,\ell}^*$, we get that:
\[
\tilde \psi_{j_n}(\tilde u_{n,\ell}^* +it) -(u_{n,\ell}^*+it)y_n
=  \tilde \psi_{j_n}(\tilde u_{n,\ell}^*) - u_{n,\ell}^*y_n - \frac{t^2}{2} \tilde
\sigma^2_{n, \ell} +  h_{j_n}(t, \tilde u_{n,\ell}^*),
\]
with $\tilde \sigma^2_{n, \ell}=
\tilde \psi_{j_n}''(\tilde u^*_{n, \ell})$, 
$|h_{j_n}(t, \tilde u_{n,\ell}^*)|\leq  t^3 \tilde C_3^+/6$. We consider the
decomposition  $D(j_n, l_n,\tilde u_{n,\ell}^*)=D_1+D_2$ with:
\[
D_1=\int _{- l_n \fb^{-l_n/2}}^{l_n \fb^{-l_n/2}}
\expp{\ell \fb^{l_n}\left(\tilde \psi_{j_n}(\tilde u_{n,\ell}^*+it)
    -(\tilde u_{n,\ell}^*+it)y \right)} 
\, dt.
\]
Using that $|h_{j_n}(t, \tilde u_{n,\ell}^*)|\leq  l_n^3 \fb^{-3l_n/2}
\tilde C_3^+/6$ for $|t|\leq l_n \fb^{-l_n/2}$, we get:
\begin{align*}
   D_1
&= \expp{\ell \fb^{l_n}\left(\tilde \psi_{j_n}(\tilde u_{n,\ell}^*)
    -\tilde u_{n,\ell}^* y_n \right)} 
\int _{- l_n \fb^{-l_n/2}}^{l_n \fb^{-l_n/2}}
\expp{- \ell \fb^{l_n} \tilde \sigma^2_{n, \ell} t^2/2 + \ell \fb^{l_n}
  h_{j_n}(t, \tilde u_{n,\ell}^*)}\, dt\\
&= \expp{\ell \fb^{l_n}\left(\tilde \psi_{j_n}(\tilde u_{n,\ell}^*)
    -\tilde u_{n,\ell}^*y_n\right)} 
\int _{- l_n \fb^{-l_n/2}}^{l_n \fb^{-3l_n/2}}
\expp{- \ell \fb^{l_n} \tilde \sigma^2_{n, \ell} t^2/2 }\,dt \,
  \left(1+O( l_n^{3} \fb^{-3l_n/2} )
\right)\\
&= \inv{\sqrt{\ell \fb^{l_n} \tilde \sigma^2_{n, \ell}}}
\expp{\ell \fb^{l_n}\left(\tilde \psi_{j_n}(\tilde u_{n,\ell}^*)
    -\tilde u_{n,\ell}^*y_n \right)} 
\int _{- l_n \tilde \sigma_{n, \ell}\sqrt{\ell}}^{l_n \tilde \sigma_{n,
  \ell}\sqrt{\ell}} 
\expp{-  s^2/2 }\,ds \,
  \left(1+O( l_n^{3} \fb^{-3l_n/2} )
\right)\\
&= \ci_n \times
  \left(1+O( l_n^{3} \fb^{-3l_n/2})
\right),
\end{align*}
with
\[
\ci_n=\frac{\sqrt{2\pi}}{\sqrt{\ell \fb^{l_n} \tilde \sigma^2_{n, \ell}}}
\expp{\ell \fb^{l_n}\left(\tilde \psi_{j_n}(\tilde u_{n,\ell}^*)
    -\tilde u_{n,\ell}^*y_n \right)} .
\]

We now give an upper bound for $|D_2|$. 
Thanks to \reff{eq:t02}, 
we have  $|h_{j_n}(t, \tilde u_{n,\ell}^*)|\leq t^2 \tilde
\sigma^2_{n, \ell}/6$ for $t\in [-t_0, t_0]$. We deduce that for
$t\in [-t_0, t_0]$:
\[
\fR\left( \tilde \psi_{j_n}(\tilde u_{n,\ell}^*+it) -(\tilde u_{n,\ell}^*+it)y_n \right)
\leq  \tilde \psi_{j_n}(\tilde u_{n,\ell}^*) - \tilde u_{n,\ell}^*y_n - \frac{t^2}{3} \tilde
\sigma^2_{n, \ell}.
\]
This implies that:
\begin{align*}
 |D_2|
&\leq  \expp{\ell \fb^{l_n}\left(\tilde \psi_{j_n}(\tilde u_{n,\ell}^*)
    -\tilde u_{n,\ell}^* y_n \right)} \int _{|t|\in [ l_n \fb^{-l_n/2}, t_0]}
\expp{- \ell \fb^{l_n} \tilde \sigma^2_{n, \ell} t^2/3}\, dt\\
&\leq 2t_0 \expp{\ell \fb^{l_n}\left(\tilde \psi_{j_n}(\tilde u_{n,\ell}^*)
    -\tilde u_{n,\ell}^* y_n \right)} \expp{- \ell l_n^2 \tilde
  \sigma^2_{n, \ell}/3}\\
&=\ci_n\times
  O( l_n^{3} \fb^{-3l_n/2}).
\end{align*}

\subsubsection{Conclusion}
To conclude, we deduce from \reff{eq:majointH-t0} with $y=y_n$ that:
\[
\int_{|t|\in [t_0, c_{j_n}\pi/L_0]} | H_{l_n, j_n}(\tilde u_{n,
  \ell}^*+it) \expp{-(\tilde u_{n,
  \ell}^*+it) y_n \ell \fb^{l_n}} |
 \, dt 
= \ci_n \times O(\expp{-\varepsilon \fb^{l_n}/2})
\]
This implies that:
\[
\int_{-
  \frac{c_{j_n}\pi}{L_0}}^{\frac{c_{j_n}\pi}{L_0}}  H_{l_n, j_n}(\tilde u_{n,
  \ell}^*+it) 
\expp{-(\tilde u_{n,
  \ell}^*+it)y_n \ell \fb^{l_n}} \, dt=
p(\fb)^{-\ell/(\fb-1)}\ci_n\times \left(1+ O( l_n^{3}
  \fb^{-3l_n/2}) \right). 
\]
Then use \reff{eq:PZ=an} to conclude.

\section{Results in the Bötcher case}
\label{sec:bottcher}
We  present  mostly the results without proof as their correspond either
to a
slight generalization of  \cite{fw:ld} and \cite{fw:lta} or can be
proven by mimicking the proof in the Harris case presented in Section
\ref{sec:Harris}.   Recall the   Böttcher  constant $\beta\in
(0,1 ) $ is defined  by
$\fa  =\mu^{\beta}$, where $\fa$ is the minimum of the support of $p$.
We assume $\fa\geq 2$. 
\subsection{Preliminaries}
We  define  the  function  $b$  on  its domain  which  is  a  subset  of
$\{z\in \C; \, 0<|z|< 1\}$ by:
\begin{equation}
   \label{eq:def-bz}
b(z)=\log(z)+ \sum_{n=0}^\infty \fa^{-n-1}
\log\left(\frac{\ff_{n+1}(z)}{\ff_n(z)^\fa}\right).
\end{equation}
 According to
Lemma 10 in \cite{fw:lta}, for every $\delta\in (0,1)$, there exists a constant
$\theta=\theta(\delta)\in (0, \pi)$ such that $b$ is analytic on the
open set:
\begin{equation}
   \label{eq:def-D}
\cd(\delta,\theta)= \{z\in \C; \, 0<|z|<  1-\delta, \, |\arg(z)|<
\theta\}.
\end{equation}
On $(0, 1)$,
the function $b$ is analytic, negative and satisfies $b\circ
\ff=\fa b$. We also have,
see Lemma 14 in \cite{fw:lta} that: 
\[
(sb'(s))'>0 \, \text{  for $s\in (0,1)$}, \quad
\lim_{s\nearrow 1} s b'(s)=+\infty \quad\text{and}\quad
\lim_{s\searrow 0} s b'(s)=1 . 
\]
Recall that $\cl$ denotes the Laplace transform of $W$. We also consider
the function $\psi=b\circ\cl$  defined on $(0, +\infty  )$. According to
Lemma  17  in   \cite{fw:lta},  the  function  $\psi$   is  analytic  on
$(0, +\infty )$ strictly decreasing, strictly convex and such that:
\[
\lim_{x \rightarrow 0+} \psi'(x)=-\infty  \quad\text{and}\quad
\lim_{x \rightarrow +\infty} \psi'(x)=0.
\]
Let $g$ be the inverse of $-\psi'$. In particular,  for a given $v>0$,
the minimum of $\psi(u) + uv $ for $u\geq 0$ is uniquely reached at
$g(v)$: 
\begin{equation}
   \label{eq:min=g}
\min_{u\geq 0} \left(\psi(u) + uv
\right) = \psi(g(v))+ g(v)v.
\end{equation}

\subsection{Left tail of $w$}
We define the function $M$ for $v\in (0, +\infty )$ by:
\begin{equation}
   \label{eq:defM}
M(v)= -v^{\beta/(1-\beta)} \, \min_{u\geq 0} \left(\psi(u) + uv
\right).
\end{equation}
The function $M$ is analytic on $(0, +\infty )$, see Proposition 3 in \cite{bb:ldsbp}, positive and multiplicatively
periodic with period $\mu^{1-\beta}$. For $x\in (0, \fa/\mu]$, we set:
\begin{equation}
   \label{eq:def-y}
r(x)= \left\lfloor \frac{\log(x)}{\log(\fa/\mu)} \right\rfloor
\quad\text{and}\quad y(x)=x\left(\frac{\mu}{\fa}\right)^{r(x)},
\end{equation}
so that $y(x) \in (\fa/\mu, 1]$. For  $\ell\in \Np$ and $y>0$, we define
the positive functions:
\[
\cm_{1, \ell}(y)= \frac{p(\fa)^{-\ell/(\fa -1)}}{\sqrt{2\pi
    \ell \sigma^2(y/\ell)}}\, y^{(2-\beta)/2(1-\beta)}
\quad\text{and}\quad
\cm_{2, \ell}(y)= \cm_{1, \ell}(y)\frac{y^{-1/(1-\beta)} }
{g(y/\ell)},
\]
where $\sigma^2(y)=\psi''(g(y))>0$. For  $\ell\in \Np$ and $x\in (0,
\fa/\mu]$, we set: 
\[
M_{1, \ell}(x)=\cm_{1, \ell}(y(x))
\quad\text{and}\quad
M_{2, \ell}(x)=\cm_{2, \ell}(y(x)). 
\]
By construction $x\mapsto y(x)$ is multiplicative periodic with period
$\fa/\mu=\fa^{1-\beta}$. We deduce that $M_{1, \ell}$ and $M_{2, \ell}$
are 
multiplicative periodic with period
$\fa/\mu=\fa^{1-\beta}$, positive, bounded and bounded away from 0.

Let  $\P_\ell$  be  the  distribution  of  $\sum_{i=1}^\ell  W_i$,  with
$(W_i,  i\in  \Np)$ independent  random  variables  distributed as  $W$.
Since $\fa>0$ and thus $\fc=0$, we get that $W$ has density $w$ and that
$\sum_{i=1}^\ell W_i$  has density $w^{*\ell}$.  Mimicking  very closely
the proof in \cite{fw:lta} stated for $\ell=1$, it is not very difficult
to check the following result. The verification is left to the reader.

\begin{lem}
   \label{lem:bott-wl}
Let $p$ be a non-degenerate super-critical offspring distribution with
finite mean and $\fa\geq 2$. Let $\ell\in \Np$.  As $x\searrow 0$, we have:
\begin{equation}
   \label{eq:w=0}
w^{*\ell}(x)\sim w_\ell(x)\sim M_{1,\ell}(x) \, x^{(\beta-2)/2(1-\beta)} \exp\left\{-
  \ell^{1/(1-\beta)}\, x^{-\beta/(1-\beta)} M(x/\ell)\right\}, 
\end{equation}
\begin{equation}
   \label{eq:P<x}
\P_\ell(W\leq x)\sim M_{2,\ell}(x) \, x^{\beta/2(1-\beta)} \exp\left\{-
  \ell^{1/(1-\beta)}\,   x^{-\beta/(1-\beta)} M(x/\ell)\right\}.
\end{equation}
\end{lem}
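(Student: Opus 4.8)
The final statement is Lemma~\ref{lem:bott-wl}, the left-tail (B\"ottcher, $\fa\ge 2$) asymptotics for $w^{*\ell}$, $w_\ell$ and $\P_\ell(W\le x)$ as $x\searrow 0$. The plan is to mirror the Harris-case proofs of Sections~\ref{sec:w<}--\ref{sec:P>x}, replacing the role of $\fb$ (supremum of the support) and the ``right'' objects $\tilde b,\tilde\psi,\tilde M,\beta_H$ by their ``left'' counterparts $b,\psi,M,\beta$ already introduced in Section~\ref{sec:bottcher}, and to take advantage of the simplification $\fc=0$ (since $\fa\ge 2$, hence $W$ has a genuine density $w$ on $(0,+\infty)$ and $\sum_{i=1}^\ell W_i$ has density $w^{*\ell}=w_\ell$, which in particular gives $w^{*\ell}\sim w_\ell$ for free). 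First I would set up the Fourier inversion formula for $x\,w^{*\ell}(x)$ exactly as in \reff{eq:Fourier-w}, using the integrability estimate \reff{eq:int-unif-f} of Lemma~\ref{lem:fw-Fourier} with $\ell>1/\alpha$ — this is legitimate here since in the B\"ottcher case $\alpha=+\infty$, so $\ell\alpha>1$ for all $\ell\ge 1$ and all integrals over $\R$ of $|\tilde\cl(u+it)|^{\ell}$ converge uniformly on the relevant vertical strips; note also $\ck=\ck'=\R$ in this case, paralleling the Harris remark.

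Next I would carry out the standard saddle-point analysis. After the change of variables $u=\mu^{-r}v$, $t=\mu^{-r}s$ with $r=r(x)$ and $y=y(x)$ as in \reff{eq:def-y}, the integrand becomes $f_r(\tilde\cl(u+it))^{\ell}\exp(-(u+it)y\fa^{r})$ (now with $\fa$ and $b$ replacing $\fb$ and $\tilde b$), and one splits $\int_\R$ into a tail part $|t|\ge t_0$ and a main part $|t|\le t_0$. For the tail part I would use Lemma~\ref{lem:majo-f-c} together with the analogue of Lemma~\ref{lem:majo-fn} (a left-tail bound $f_n(s)<p(\fa)^{-1/(\fa-1)}\exp(\fa^{n}b(s))$ on $(0,1)$, which follows by the same telescoping argument as in the Harris case, using $p(\fa)f_n(s)^{\fa}<f_{n+1}(s)$ on $(0,1)$), to get an exponentially smaller contribution $O(\exp(\ell\fa^r\psi(u)-uy\fa^r-\ell\fa^r\varepsilon'))$. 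For the main part I would use the local expansion \reff{eq:tfn-dev}-type estimate on $\cd(\delta,\theta)$ (available from Lemma~10 of \cite{fw:lta}), expand $\psi(u+it)$ to third order at the saddle point $u^*_\ell=g(y/\ell)$ (recalling that $g$ is the inverse of $-\psi'$ here, so the minimum in \reff{eq:min=g} is attained there), and perform the Gaussian integral with $\sigma^2(y/\ell)=\psi''(g(y/\ell))$, obtaining the leading term $\sqrt{2\pi}/\sqrt{\ell\fa^r\sigma^2}\,\exp(\fa^r(\ell\psi(u^*_\ell)+u^*_\ell y))$ times $(1+o(1))$. Combining, multiplying back by the prefactor $\ell/(2\pi x)$, using $\fa=\mu^{\beta}$, the multiplicative periodicity of $M$ with period $\fa/\mu=\fa^{1-\beta}$ and of $y(\cdot)$, and the definition \reff{eq:defM} of $M$, yields \reff{eq:w=0}. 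For $\P_\ell(W\le x)=\int_0^x w_\ell(x')\,dx'$ I would run the same argument on the Fourier representation of $w_\ell$ with an extra integration by parts in $s$ (as in Section~\ref{sec:P>x}), producing the additional factor $1/(u^*_\ell)=1/g(y/\ell)$ and the shift of the power of $x$ from $(\beta-2)/2(1-\beta)$ to $\beta/2(1-\beta)$, giving \reff{eq:P<x}.

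The main obstacle is not conceptual but bookkeeping: one must verify that all the uniform integrability and analyticity inputs used in the Harris proof have exact left-tail analogues on the disc $\cd(\delta,\theta)$ of \reff{eq:def-D} — in particular the local expansion of $f_n$ near the boundary circle, the monotonicity $(sb'(s))'>0$ and the limits $\lim_{s\nearrow1}sb'(s)=+\infty$, $\lim_{s\searrow0}sb'(s)=1$ of Lemma~14 of \cite{fw:lta}, and the strict convexity/monotonicity of $\psi$ from Lemma~17 there — and then transcribe the Harris computation verbatim with the substitutions $\fb\to\fa$, $\beta_H\to\beta$, $\tilde b\to b$, $\tilde\psi\to\psi$, $\tilde g\to g$, $\tilde M\to M$, $\tilde\sigma\to\sigma$, together with the simplifications afforded by $\fc=0$ (so no $\tilde\cl(z)-\fc$ corrections are needed and $w^{*\ell}=w_\ell$ exactly). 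Since the paper states that this is ``not very difficult to check'' and ``left to the reader'', I would present the proof as a short indication: state the Fourier formula \reff{eq:Fourier-w}, invoke $\alpha=+\infty$ to justify the integrability for every $\ell$, state the left-tail analogue of Lemma~\ref{lem:majo-fn}, and then say that the saddle-point analysis of Sections~\ref{sec:w=ii}--\ref{sec:P>x} applies mutatis mutandis, with the identities $\fa=\mu^{\beta}$ and the periodicity of $M$ replacing their Harris counterparts, to conclude \reff{eq:w=0} and \reff{eq:P<x}.
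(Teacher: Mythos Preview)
Your approach is correct and matches the paper's: the paper explicitly says to mimic the proof in \cite{fw:lta} (stated there for $\ell=1$) and leaves the verification to the reader, which is exactly the transcription of the Harris-case argument (Sections~\ref{sec:w<}--\ref{sec:P>x}) you outline, with $\fb,\tilde b,\tilde\psi,\tilde g,\tilde M,\beta_H$ replaced by $\fa,b,\psi,g,M,\beta$ and the simplification $\fc=0$.

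One small correction: your claim that $\ck=\ck'=\R$ is not justified here --- the B\"ottcher hypothesis $\fa\ge 2$ says nothing about the upper tail of $p$, so $R_c$ and $\lambda_c$ may be finite. This is harmless for the argument, since for the left tail you work on the contour $\fR(z)=v<0$ (equivalently $u>0$ in the $\cl$-notation of Section~\ref{sec:bottcher}), and $(-\infty,0]\subset\ck$ always because $W\ge 0$; Lemma~\ref{lem:fw-Fourier} then applies with any $K_0=[a,0]$. Just drop the parenthetical remark and phrase everything in terms of $\cl(u)=\tilde\cl(-u)$ on $u>0$, as the paper does.
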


Using (118), (119),
(122)  (with $\ff$  replaced by  $\ff_\ell$), (123)  and (78)  in
\cite{fw:lta}, we also get the following upper bound, see also Lemma 
\ref{lem:w-maj} in the Harris case.

\begin{cor}
   \label{cor:w-maj-a}
Let $p$ be a non-degenerate super-critical offspring distribution with
finite mean and $\fa\geq 2$. There exists a
finite constant $C$ such that for all $\ell\in \Np$, $x>0$ and $u\geq 0$,  we
have with $r=r(x)$, $y=y(x)$:
\begin{equation}
   \label{eq:w-maj2}
w^{*\ell} (x) \leq  C \mu^{r} \frac{\expp{u y  \fa^{r}}} 
{\cl(u)} \, \ff_{r} (\cl(u))^\ell.
\end{equation}
\end{cor}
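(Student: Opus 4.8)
The plan is to run, in the B\"ottcher setting, the argument used for Lemma~\ref{lem:w-maj} in Section~\ref{sec:w<}, with $\tilde b$, $\tilde\cd(\delta)$, $\tilde\psi$ replaced by their B\"ottcher analogues $b$, $\cd(\delta,\theta)$, $\psi$ of Section~\ref{sec:bottcher}; this amounts to the transcription to general $\ell\in\Np$ of the estimates (118), (119), (122) (with $\ff$ replaced by $\ff_\ell$), (123) and (78) of \cite{fw:lta}. In the B\"ottcher case the argument is actually slightly shorter than its Harris counterpart, because $\alpha=+\infty$ makes the $L^1$-estimate \reff{eq:int-unif-f} available already with exponent $1$, so one need not pass to a derivative form of the Fourier inversion formula as in Section~\ref{sec:w<}.

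First I would record that $\fa\ge 2$ forces $\fc=0$ and $p(0)=p(1)=0$, so that $w^{*\ell}$ is the density of $\sum_{i=1}^\ell W_i$ (with $(W_i)$ i.i.d.\ $\sim W$, as recalled before Lemma~\ref{lem:bott-wl}), hence has Laplace transform $\cl^\ell$, and that for every $v>0$ the inversion formula reads
\[
w^{*\ell}(x)=\frac{1}{2\pi}\int_\R \cl(v+is)^\ell\,\expp{(v+is)x}\,ds,
\]
the integral converging absolutely by \reff{eq:int-unif-f} (any positive exponent is admissible since $\alpha=+\infty$ and $\fc=0$). Next I would rescale: fix $u\ge 0$, take $v=\mu^{r}u$ with $r=r(x)$, write $v+is=\mu^{r}(u+it)$, iterate \reff{eq:Lap-W} into $\cl(z)=\ff_r(\cl(\mu^{-r}z))$, and use $\mu^{r}x=y\,\fa^{r}$ with $y=y(x)$ (the defining relation \reff{eq:def-y}); this turns the formula above into
\[
w^{*\ell}(x)=\frac{\mu^{r}}{2\pi}\int_\R \ff_r\bigl(\cl(u+it)\bigr)^\ell\,\expp{(u+it)y\fa^{r}}\,dt.
\]

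Then I would bound the integrand in modulus. Since $\ff_r$ is a generating function, $|\ff_r(\cl(u+it))|\le \ff_r(|\cl(u+it)|)$; since $|\cl(u+it)|\le\cl(u)\le 1$ and, because $p(1)=0$, the map $s\mapsto \ff_r(s)/s$ is non-decreasing on $(0,+\infty)$, one has $\ff_r(|\cl(u+it)|)\le \cl(u)^{-1}\ff_r(\cl(u))\,|\cl(u+it)|$; bounding the remaining $\ell-1$ factors by $\ff_r(|\cl(u+it)|)\le \ff_r(\cl(u))$ yields the pointwise inequality
\[
\Bigl|\ff_r\bigl(\cl(u+it)\bigr)^\ell\,\expp{(u+it)y\fa^{r}}\Bigr|
\le \frac{\ff_r(\cl(u))^\ell}{\cl(u)}\,\expp{uy\fa^{r}}\,|\cl(u+it)|.
\]
Integrating in $t$ and using the uniform bound $\sup_u\int_\R|\cl(u+it)|\,dt<+\infty$ (the exponent-$1$ instance of \reff{eq:int-unif-f}, rewritten through $|\cl(u+it)|=|\tilde\cl(-u+it)|$) then gives \reff{eq:w-maj2} with $C=\tfrac{1}{2\pi}\sup_u\int_\R|\cl(u+it)|\,dt$; note that this $C$ does not depend on $\ell$, as required, and that after optimization over $u$ the bound dominates the remainder terms for which it is designed (cf.\ the use of Lemma~\ref{lem:w-maj} in Section~\ref{sec:H-f(c)}).

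The main obstacle is the uniformity of the last $L^1$-bound over the whole half-line $u\ge 0$, rather than merely over a fixed compact $[0,u_1]$ as in Lemma~\ref{lem:w-maj}: this is exactly what the B\"ottcher estimates (118)--(123), (78) of \cite{fw:lta} provide, and I would reproduce their argument, which transfers an estimate valid on a compact $\{\mu^{-r}u\in K_0\}$ up to large $u$ via the functional equation $\cl(u+it)=\ff_r(\cl(\mu^{-r}(u+it)))$ together with the a priori bound of Lemma~\ref{lem:majo-fn} on the iterates $\ff_r$. The remaining steps (analyticity of $b$ on $\cd(\delta,\theta)$, positivity and functional equation of $\psi$) are already in place in Section~\ref{sec:bottcher}, so no new technical input is needed beyond this transcription.
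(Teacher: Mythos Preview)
Your approach is correct and matches what the paper indicates (it simply cites equations (118), (119), (122), (123), (78) of \cite{fw:lta} together with the Harris-case Lemma~\ref{lem:w-maj}). The simplification you exploit --- that in the B\"ottcher case $\alpha=+\infty$ and $\fc=0$, so the plain Fourier inversion of $\cl^\ell$ is already absolutely convergent and no passage to the derivative form \reff{eq:Fourier-w} is needed --- is exactly the point, and your extraction of a single factor $|\cl(u+it)|$ via the monotonicity of $s\mapsto \ff_r(s)/s$ on $(0,1)$ (valid because $\fa\ge 2$ forces the lowest power of $\ff_r$ to be $\fa^r\ge 2$) is the clean way to obtain integrability in $t$ uniformly in $\ell$.

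One small correction: in your last paragraph you invoke Lemma~\ref{lem:majo-fn}, but that lemma bounds $\ff_n(s)$ for $s>1$ (the Harris side). What is needed here is its B\"ottcher counterpart, namely $\ff_n(s)\le p(\fa)^{-1/(\fa-1)}\exp\{\fa^n b(s)\}$ for $s\in(0,1)$, which is Lemma~13 in \cite{fw:lta}; combined with the functional equation this is indeed what lifts the compact-in-$u$ $L^1$ bound of Lemma~\ref{lem:fw-Fourier} to all $u\ge 0$, as you correctly identify as the only nontrivial step.
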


\subsection{Proof of Lemma \ref{lem:cv-rho} in the Böttcher case}
\label{sec:B-f(c)=0}
Mimicking the arguments given in Section \ref{sec:H-f(c)}, it is easy,
using Corollary \ref{cor:w-maj-a} to get that:
\[
\lim_{x\rightarrow 0+} \mu \frac{w^{*\fa \ell}(x)}{w^{*\ell}(x/\mu)} \,
p(\fa)^\ell=1.
\]
From the definition of $\rho_{\theta,\ell}$ in \reff{eq:def-rho}, we
deduce that $\lim_{\theta\rightarrow 0+} \rho_{\theta,\ell} (\fa, \dots
\fa)=1$.
This ends the proof of Lemma \ref{lem:cv-rho} in the Böttcher case.

\subsection{Lower large deviations for $Z_n$}
\label{sec:lowerZn}

For $j\in \N^*$, let  $\cl_j$ denote
the         Laplace          transform         of         $W_j=Z_j/c_j$:
$\cl_j(u)=\E[\expp{-u W_j}]=\ff_j(\expp{-u/c_j})$     for    $u\in     \C_+$,    where
$\C_+=\{u\in  \C, \,  \fR(u)\geq  0\}$.  Notice  that $\cl_j$  converges
uniformly on the compacts of $\C_+$ towards $\cl$, the Laplace transform
of  $W$,   as  $j$  goes   to  infinity. We also have that
$\cl_j'(u)/\cl_j(u)= -\E[W_j \expp{-uW_j}]/\E[\expp{-uW_j}]$ so that 
$\lim _{u\rightarrow+\infty } \cl_j'(u)/\cl_j(u)= - \fa^j/c_j$.

We    consider   the    functions   $\psi_j=b\circ\cl_j$    defined   on
$(0,  +\infty )$  for  $j\in \N^*$  and  the function  $\psi=b\circ\cl$.
According to Lemma 17 in  \cite{fw:lta}, the function $\psi$ is analytic
on $(0,  +\infty )$ strictly  decreasing, strictly convex and  such that
$\lim_{x        \rightarrow       0+}        \psi'(x)=-\infty$       and
$  \lim_{x \rightarrow  +\infty}  \psi'(x)=0$.  Mimicking  the proof  of
Lemma  17 in  \cite{fw:lta},  it is  easy to  check  that the  functions
$\psi_j$ are analytic  on $(0, +\infty )$  strictly decreasing, strictly
convex and such that:
\[
\lim_{x \rightarrow 0+} \psi_j'(x)=-\infty  \quad\text{and}\quad
\lim_{x \rightarrow +\infty} \psi_j'(x)=- \frac{\fa^j}{c_j}\cdot
\]
Let $g_j$ (resp.  $g$) be the  inverse of $-\psi'_j$ (resp. $-\psi'$) on
$(\fa^j/c_j, +\infty )$ (resp. on $(0, +\infty )$). In particular, for a
given $v>\fa^j/c_j$, the  minimum of $\psi_j(u) + uv $  for $u\geq 0$ is
uniquely reached at $g_j(v)$. Using  that $\psi_j$ converges uniformly on
compact of $\C_+$  towards $\psi$,  that  $b$  and thus  $\psi_j$  and $\psi$  are
analytic, we get that  for any compact of $(0, +\infty  )$,  the strictly  convex functions  $\psi_j$ and  their derivatives
converge uniformly towards  the strictly convex function  $\psi$ and its
derivatives. We deduce  that for  any compact of  $(0, +\infty )$, $g_j$ converges uniformly towards $g$.  \medskip

We consider the following general setting. Let
$\ell\in  \N^*$ and  $a_n\in
(\ell\fa^n, \ell c_n/c_0]$ such that $\liminf_{n\rightarrow\infty }a_n/\ell
\fa^n >1$. 
Since $\fa<c_{r+1}/c_r < \mu$ for all $r\in \N$, we deduce that the
sequence $(c_{n-l} \fa^{l}, \, 0\leq l\leq n)$ is decreasing. 
Therefore, the integer $l_n= \sup\{l\in \{0, \ldots, n\},
\, c_{n-l} \ell \fa^{l}
\geq c_0 a_n\}$ is well-defined and strictly less than $n$. Set $j_n=n-l_n\geq 1$ and 
\[
a_n=y_n \,  c_{j_n}\,  \ell\fa^{l_n},
\]
 with
$y_n\in (\fa c_{j_{n}-1}/c_0c_{j_n}, 1/c_0]$. 
Notice that the conditions $\lim_{n\rightarrow\infty }a_n/c_n=0$ and $a_n> \ell
\fa^n$ imply that $\lim_{n\rightarrow\infty } l_n=+\infty $. The
sequence $(j_n, n\in \N^*)$ may be bounded or not.

As   $\fa<c_{r+1}/c_r$    for   all   $r\in   \N$,    we   deduce   that
$y_n> \fa c_{j_n-1}/c_0c_{j_n} > \fa^{j_n}/c_{j_n}$. Thus, we can define
$u_{n,\ell}^*=g_{j_n}(y_n)$                                          and
$\sigma^2_{n,\ell}=\psi_{j_n}''(u_{n,\ell}^*)$.  Mimicking  very closely
the proof  of (175) in \cite{fw:lta}  (which is stated for  $\ell=1$ and
$\lim_{n\rightarrow\infty } j_n=\infty  $), it is not  very difficult to
check  the  following slightly  more  general  result. The  verification,
which can also be seen as a  direct adaptation of the detailed proof of
Lemma \ref{lem:H-Pz}, is left to the reader.
\begin{lem}
   \label{lem:bott-Pz}
Let $p$ be a non-degenerate super-critical offspring distribution  with
finite mean, $\fa\geq 2$   and  type $(L_0,r_0)$. Let $\ell\in \Np$. Assume that 
  $\lim_{n\rightarrow\infty }a_n/c_n=0$ and    $\liminf_{n\rightarrow\infty
  }a_n/\ell \fa^n >1$. Then, we have, with $\lim_{n\rightarrow\infty }
  \varepsilon_{n, \ell}=0$: 
\[
\P_\ell(Z_n=a_n)
= \frac{L_0\,  p(\fa)^{-\ell/(\fa -1)}}
{c_{j_n}\sqrt{2\pi \, \ell \fa^{l_n} \, \sigma^2_{n, \ell}}} \, \exp \left\{
\ell\fa^{l_n} ( \psi_{j_n}( u_{n,\ell}^*) + u_{n,\ell}^* y_n) 
  \right\} (1+ \varepsilon_{n,\ell}(1))\ind_{\{a_n=\ell r_0^n(\mod L_0)\}}.
\] 
\end{lem}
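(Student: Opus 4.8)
The plan is to follow, \emph{mutatis mutandis}, the detailed proof of Lemma~\ref{lem:H-Pz} given in Section~\ref{sec:H-Pz} (itself an elaboration of the proof of (175) in \cite{fw:lta}), replacing everywhere the right-tail objects $\fb$, $\tilde b$, $\tilde \psi$, $\tilde \varphi_j$ by the left-tail objects $\fa$, $b$, $\psi$, $\cl_j$ introduced in Section~\ref{sec:lowerZn}. Since $\fa\geq 2$ we have $\fc=0$, so $w_\ell=w^{*\ell}$ and no subtraction of $\fc^\ell$ is needed. First I would write, for $v>0$, the Fourier inversion formula
\[
\P_\ell(Z_n=a_n)=\frac{L_0}{2\pi}\int_{[\pm\pi/L_0]} \ff_n\left(\expp{-(v+is)}\right)^\ell\expp{(v+is)a_n}\, ds ,
\]
valid because $\ff_n$ has non-negative Taylor coefficients and, on the support of the integrand, $a_n=\ell r_0^n(\mod L_0)$. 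Writing $n=l_n+j_n$, using $\ff_n=\ff_{l_n}\circ\ff_{j_n}$ and substituting $v=u/c_{j_n}$, $s=t/c_{j_n}$, this becomes
\[
\P_\ell(Z_n=a_n)=\frac{L_0}{2\pi c_{j_n}}\int_{[\pm c_{j_n}\pi/L_0]} H_{l_n,j_n}(u+it)\,\expp{(u+it)\,y_n\ell\fa^{l_n}}\, dt ,
\]
with $H_{l,j}(z)=\ff_l\left(\cl_j(z)\right)^\ell$ (recall $\cl_j(z)=\ff_j(\expp{-z/c_j})$) and $a_n/c_{j_n}=y_n\ell\fa^{l_n}$. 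On the real axis the integrand equals, up to the constant $p(\fa)^{-\ell/(\fa-1)}$, the quantity $\expp{\ell\fa^{l}(\psi_j(u)+uy_n)}$, which by \reff{eq:min=g} is minimised at $u=g_{j_n}(y_n)=u^*_{n,\ell}$, with second variation $-\ell\fa^{l_n}\sigma^2_{n,\ell}<0$ in the imaginary direction there, so $u^*_{n,\ell}$ is a genuine saddle point.

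Next I would fix once and for all a compact $K=[u_0,u_1]\subset(0,+\infty)$ containing all the points $u^*_{n,\ell}=g_{j_n}(y_n)$ — possible since $y_n$ stays in a fixed compact of $(0,+\infty)$ and $g_j\to g$ uniformly on compacts — together with $t_0>0$ and $\delta\in(0,1)$ such that $\cl_j(u+it)\in\cd(\delta,\theta)$ for all $j\in\N^*$ and all $(u,t)\in K\times[-t_0,t_0]$, and such that the cubic-remainder inequality analogous to \reff{eq:t02} holds uniformly in $j$ (using that $\psi_j$ and its first three derivatives converge uniformly on $K$ to those of $\psi$). I would then split the integral at $|t|=t_0$. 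For the main part $|t|\leq t_0$ I would insert the uniform expansion $\ff_l(\cl_j(z))=p(\fa)^{-1/(\fa-1)}\expp{\fa^l\psi_j(z)}\bigl(1+O(\expp{-\varepsilon\fa^l})\bigr)$ on $K\times[-t_0,t_0]$ — the B\"ottcher analogue of \reff{eq:tfn-dev}, i.e.\ Lemma~10 in \cite{fw:lta} — perform the second-order Taylor expansion of $\psi_{j_n}$ at $u^*_{n,\ell}$, split $[-t_0,t_0]$ at $|t|=l_n\fa^{-l_n/2}$, and carry out the Laplace/Gaussian estimate exactly as in Section~\ref{sec:H-Pz}; this yields
\[
\int_{-t_0}^{t_0} H_{l_n,j_n}(u^*_{n,\ell}+it)\,\expp{(u^*_{n,\ell}+it)y_n\ell\fa^{l_n}}\,dt= p(\fa)^{-\ell/(\fa-1)}\,\ci_n\,\bigl(1+O(l_n^3\fa^{-3l_n/2})\bigr),
\]
where $\ci_n=\sqrt{2\pi}\,(\ell\fa^{l_n}\sigma^2_{n,\ell})^{-1/2}\expp{\ell\fa^{l_n}(\psi_{j_n}(u^*_{n,\ell})+u^*_{n,\ell}y_n)}$.

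For the tail $t_0\leq|t|\leq c_{j_n}\pi/L_0$ I would use the monotonicity bounds $|\cl_j(z)|\leq\cl_j(\fR(z))$ and $|\ff_l(w)|\leq\ff_l(|w|)$, the uniform spectral gap $|\cl_j(u+it)|\leq(1-\delta)\cl_j(u)$ on that range (the B\"ottcher analogue of \reff{eq:fw-Fourier2}), the uniform upper bound $\ff_l(s)\leq p(\fa)^{-1/(\fa-1)}\expp{\fa^l b(s)}$ for $s\in(0,1)$ (the B\"ottcher analogue of Lemma~\ref{lem:majo-fn}), and the strict monotonicity of $b$, together with an integration by parts in $t$ as for the term $I_2$ in Section~\ref{sec:H-Pz}, in which the uniform-in-$n$ Fourier-decay estimates for $\cl_j$ and $\cl_j'$ over $K$ coming from Section~\ref{sec:anc} (the parts of Lemmas~\ref{lem:fw-Fourier} and \ref{lem:fw-Fouriern} available here, since $\ck$, $\ck'$ contain a neighbourhood of $[-u_1,0]$ and $\alpha=+\infty$) replace the factor $c_{j_n}$ coming from the integration window; this bounds the tail by $\ci_n\times O(\expp{-\varepsilon\fa^{l_n}/2})$. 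Combining the main part and the tail with the prefactor $L_0/(2\pi c_{j_n})$ and using $l_n\to+\infty$ (so $\fa^{l_n}\to+\infty$) gives the announced equivalent, with $\varepsilon_{n,\ell}=O(l_n^3\fa^{-3l_n/2})+O(\expp{-\varepsilon\fa^{l_n}/2})\to0$.

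The main obstacle is to make every bound uniform in $n$ even though the auxiliary sequence $(j_n,n\in\N^*)$ need not tend to infinity: this forces all inputs to be stated uniformly in $j$ (uniform convergence of $\psi_j,g_j$ and their derivatives, uniform version of \reff{eq:tfn-dev}, and in particular the uniform-in-$j$ Fourier-decay estimates of Section~\ref{sec:anc}), and one must keep careful track of the varying window $[\pm c_{j_n}\pi/L_0]$ and the varying normalisation $c_{j_n}$; it is precisely the integration by parts in the tail that prevents the factor $c_{j_n}$ from overwhelming the exponentially small remainder $\expp{-\varepsilon\fa^{l_n}}$. This is exactly the point carried out in \cite{fw:lta} in the special case $\ell=1$ (and $j_n\to\infty$); the rest is a routine, if lengthy, rewriting of Section~\ref{sec:H-Pz}.
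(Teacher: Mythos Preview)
Your sketch is essentially correct and is exactly the approach the paper indicates: the paper gives no detailed proof of Lemma~\ref{lem:bott-Pz}, writing only that one should mimic very closely the proof of (175) in \cite{fw:lta} and that ``the verification, which can also be seen as a direct adaptation of the detailed proof of Lemma~\ref{lem:H-Pz}, is left to the reader.'' Your Fourier-inversion setup, choice of saddle $u^*_{n,\ell}=g_{j_n}(y_n)$, uniform control of $u^*_{n,\ell}$ in a fixed compact $K$, and Laplace estimate on $|t|\leq t_0$ are precisely the right transcriptions of Section~\ref{sec:H-Pz}.

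One technical point in the tail part needs adjustment. You propose to integrate by parts as in Section~\ref{sec:H-Pz} and then invoke the uniform $L^1$ bound on $\cl_j'$; in the Harris proof this is \reff{eq:fw-Fourier3}, which as stated in Lemma~\ref{lem:fw-Fouriern} requires $\alpha<+\infty$, whereas here $\alpha=+\infty$. There are two easy fixes. One may check that the proof of \reff{eq:fw-Fourier3} in fact goes through in the B\"ottcher case too, since for $|z|\leq r_0<1$ the derivative $f_n'(z)$ decays super-exponentially (because $f_n$ has a zero of order $\fa^n$ at the origin), so the analogue of \reff{eq:intfn'} is even stronger. Alternatively --- and this is closer to what \cite{fw:lta} actually does and avoids the integration by parts altogether --- peel off one factor $|\cl_j(u+it)|$ from $H_{l,j}$: since $f_l(w)^\ell/w$ has non-negative Taylor coefficients (the zero of $f_l^\ell$ at $0$ has order $\ell\fa^l\geq 2$), on the tail range one gets
\[
|H_{l,j}(u+it)|\leq |\cl_j(u+it)|\cdot \frac{f_l\bigl((1-\delta)\cl_j(u)\bigr)^\ell}{(1-\delta)\cl_j(u)},
\]
and then \reff{eq:fw-Fourier1} (available since $\alpha=+\infty>1$ and $\fc=0$) gives a uniform bound on $\int_{[\pm c_j\pi/L_0]}|\cl_j(u+it)|\,dt$. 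Combined with the B\"ottcher analogue of Lemma~\ref{lem:majo-fn} this yields the required $\ci_n\times O(\expp{-\varepsilon'\fa^{l_n}})$ without any derivative estimate. Either route closes the argument.
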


We end this section with the following strong ratio limit, whose proof is similar
to the proof of Lemma \ref{lem:rapp-Zb}. 
\begin{lem}
   \label{lem:rapp-Za}
   Let  $p$ be  a non-degenerate  super-critical offspring  distribution
   with   finite    mean and $\fa\geq 2$.    Assume   that
   $\lim_{n\rightarrow\infty                }a_n/c_n=0                $,
   $\liminf_{n\rightarrow\infty     }a_n/\ell      \fa^n     >1$     and
  $a_n=\ell r_0^n(\mod L_0)$ for all $n\in \N^*$. Then, we have:
\begin{equation}
   \label{eq:rapp-Za}
\lim_{n\rightarrow\infty }\frac{\P_{ \ell\fa^h} (Z_{n-h}=a_n)}{\P_\ell(Z_n=a_n)}
= p(\fa)^{-(\fa^h-1)\ell/(\fa-1)}.
\end{equation}
\end{lem}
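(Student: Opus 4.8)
The final statement is Lemma \ref{lem:rapp-Za}, the strong ratio limit in the Böttcher case. The plan is to mirror exactly the proof of Lemma \ref{lem:rapp-Zb} from Section \ref{sec:upperZn}, replacing the role of $\fb$ by $\fa$ and the upper large deviation estimate of Lemma \ref{lem:H-Pz} by the lower large deviation estimate of Lemma \ref{lem:bott-Pz}.

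First I would fix $\ell\in\N^*$ and assume $a_n\in(\ell\fa^n,\ell c_n/c_0]$ with $\liminf_{n\to\infty}a_n/\ell\fa^n>1$ and $a_n=\ell r_0^n\ (\mod L_0)$. An estimate of $\P_\ell(Z_n=a_n)$ is supplied directly by Lemma \ref{lem:bott-Pz}. Then I would estimate $\P_{\ell'}(Z_{n'}=a_n)$ with $n'=n-h$ and $\ell'=\fa^h\ell$, using Lemma \ref{lem:bott-Pz} again (with $\ell$ there replaced by $\ell'$). Recalling the definitions of $l_n$, $j_n$, and $y_n$ via $a_n=y_n c_{j_n}\ell\fa^{l_n}$, one writes $a_n=y'_n c_{j'_n}\,\ell'\fa^{l'_n}=y'_n c_{j'_n}\,\ell\fa^{l'_n+h}$ with $j'_n+l'_n=n-h$ and $l'_n=\sup\{l\in\{0,\ldots,n-h\};\,c_{n-h-l}\,\ell\fa^{l+h}\ge c_0 a_n\}$. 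The key combinatorial observation (identical to the $\fb$-case) is that $l'_n=l_n-h$, hence $\ell'\fa^{l'_n}=\ell\fa^{l_n}$, $j'_n=j_n$, and therefore $y'_n=y_n$; consequently $g_{j'_n}(y'_n)=g_{j_n}(y_n)$, so $u^*_{n',\ell'}=u^*_{n,\ell}$ and $\sigma^2_{n',\ell'}=\sigma^2_{n,\ell}$.

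Plugging these identities into the two instances of Lemma \ref{lem:bott-Pz}, the prefactors $c_{j_n}$, the Gaussian normalizations $\sqrt{2\pi\,\ell\fa^{l_n}\sigma^2_{n,\ell}}$, and the exponential terms $\ell\fa^{l_n}(\psi_{j_n}(u^*_{n,\ell})+u^*_{n,\ell}y_n)$ all cancel in the ratio $\P_{\ell\fa^h}(Z_{n-h}=a_n)/\P_\ell(Z_n=a_n)$, and the only surviving discrepancy is the factor $p(\fa)^{-\ell'/(\fa-1)}$ versus $p(\fa)^{-\ell/(\fa-1)}$, whose quotient is $p(\fa)^{-(\ell'-\ell)/(\fa-1)}=p(\fa)^{-(\fa^h-1)\ell/(\fa-1)}$. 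The error terms $\varepsilon_{n,\ell}$ and $\varepsilon_{n-h,\ell\fa^h}$ both tend to $0$. One invokes Remark \ref{rem:deftn} to ensure $\P_{\ell\fa^h}(Z_{n-h}=a_n)>0$ and $\P_\ell(Z_n=a_n)>0$ for $n$ large, and the limit \reff{eq:rapp-Za} follows. The main (and only) subtlety is verifying that the hypotheses of Lemma \ref{lem:bott-Pz} are met for the shifted data $(n-h,\ell\fa^h)$: one needs $\lim a_n/c_{n-h}=0$ (immediate since $c_{n-h}\sim\mu^{-h}c_n$) and $\liminf a_n/(\ell\fa^h\fa^{n-h})=\liminf a_n/\ell\fa^n>1$ (unchanged). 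Since these are preserved, the argument goes through verbatim.

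\begin{proof}
Let $\ell\in \N^*$.  Assume that $a_n\in (\ell\fa^n, \ell c_n/c_0]$,
$a_n=\ell r_0^n(\mod L_0)$ for all $n\in \N^*$ and
$\liminf_{n\rightarrow\infty }a_n/\ell \fa^n >1$.  An estimation of
$\P_{\ell}(Z_{n}=a_n)$ is given in Lemma \ref{lem:bott-Pz}.  We now give
an estimation of $\P_{\ell'}(Z_{n'}=a_n)$ with $n'=n-h$ for some
$h\in \N^*$ and $\ell'=\fa^h \ell$.  Recall the definition of $l_n$,
$j_n$ and $y_n$ through $a_n=y_n c_{j_n}\ell \fa^{l_n}$.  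We have:
\[
a_n=y'_{n}c_{j'_{n}}\, \ell' \fa^{l'_n}=
y'_{n}c_{j'_{n}}\, \ell \fa^{l'_n+h},
\]
with $j'_n+l'_n=n'=n-h$ and
$l'_n= \sup\{l\in \{0, \ldots, n-h\}, \, c_{n-h-l} \ell \fa^{l+h}
\geq c_0 a_n\}$.
From the definition of $l'_n$, we deduce that $l'_n=l_n-h$ so that
$\ell' \fa^{l'_n}=\ell \fa^{l_n}$, $j'_n=j_n$ and thus $y'_n=y_n$.  This
gives that $g_{j_n}(y_n)=g_{j'_n}(y'_n)$ and thus
$u^*_{n', \ell'}=u^*_{n,\ell}$ as well as
$\sigma^{2}_{n', \ell'}=\sigma^{2}_{n, \ell}$.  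Since
$\lim_{n\rightarrow\infty } a_n/c_{n-h}=0$ and
$\liminf_{n\rightarrow\infty } a_n/\ell' \fa^{n-h}>1$, the hypotheses of
Lemma \ref{lem:bott-Pz} hold for $(n-h, \ell')$.  Thanks to
Remark \ref{rem:deftn}, we have $\P_{ \ell\fa^h} (Z_{n-h}=a_n)>0$ and
$\P_\ell(Z_n=a_n)>0$ for $n$ large.  We deduce \reff{eq:rapp-Za} from
Lemma \ref{lem:bott-Pz}.
\end{proof}
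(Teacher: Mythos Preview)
Your proposal is correct and follows exactly the approach the paper intends: the paper itself only writes that the proof of Lemma \ref{lem:rapp-Za} ``is similar to the proof of Lemma \ref{lem:rapp-Zb}'' and omits the details, and you have faithfully reproduced that argument with $\fb$ replaced by $\fa$, Lemma \ref{lem:H-Pz} by Lemma \ref{lem:bott-Pz}, and the inequality in the definition of $l'_n$ reversed accordingly. Your extra check that the hypotheses of Lemma \ref{lem:bott-Pz} carry over to the shifted pair $(n-h,\ell\fa^h)$ is a welcome clarification.
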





\subsection{Proof of Proposition \ref{prop:cv-not-fat-gene} in the
Böttcher case}
\label{sec:proof-B}

For $h\in \N$, we have $\P(r_h(\tau)=r_h(\bt_\fa))=p(\fa)^{(\fa^{h}
  -1)/(\fa-1)}$. We deduce from \reff{eq:ph} and  the  convergence
characterization \reff{eq:cv-loi}, using that $\bt_\fa$
has  a.s.  an  infinite  height, that the proof of Proposition
\ref{prop:cv-not-fat-gene} is complete as soon as we prove  the
following strong
ratio limit. 
\begin{lem}\label{lem:srt-bottcher}
  Let $p$ be a non-degenerate super-critical offspring distribution with
  finite mean and such that $\fa\ge 2$. Assume that
  $\lim_{n\to+\infty}a_n/c_n=0$  and  that   $\P(Z_n=a_n)>0$  for  every
  $n\in\N$ (which implies  that $a_n\ge \fa^n$). Then,  we have for $h,k\in\N^*$:
\begin{equation}
   \label{eq:an=0-B}
\lim_{n\rightarrow\infty }  \frac{\P_{k}(Z_{n-h}=a_n)}{\P(Z_n=a_n)}=
p(\fa)^{-(\fa^{h} 
  -1)/(\fa-1)}\ind_{\{k=\fa^h\}}. 
\end{equation}
\end{lem}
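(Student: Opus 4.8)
The plan is to split according to whether the sequence $(j_n, n\in \N^*)$ associated with $a_n$ in the lower large deviation regime stays bounded or not, exactly as in the Harris proof of Lemma \ref{lem:rapp-Zb}, but using the B\"ottcher estimate from Lemma \ref{lem:bott-Pz} instead of the Harris one from Lemma \ref{lem:H-Pz}. First I would reduce \reff{eq:an=0-B} to the case $k=\fa^h$. Indeed, since $\P(Z_n=a_n)>0$ forces $\fa=\fa^0$ to divide (in the $(\mod L_0)$ sense) $a_n$ and each individual has at least $\fa$ children, one has $\P_k(Z_{n-h}=a_n)=0$ unless $k$ is compatible with the type $(L_0, r_0)$ of $p$ and $k\le a_n/\fa^{n-h}$; combined with the decomposition
\[
\P(Z_n=a_n)=\P(Z_h=\fa^h)\P_{\fa^h}(Z_{n-h}=a_n)+\sum_{k\le \fa^h-1}\P(Z_h=k)\P_k(Z_{n-h}=a_n)
\]
and the fact that $a_n\ge \fa^n$ together with $\lim_n a_n/c_n=0$ implies $a_n/\fa^{n-h}\ge \fa^h$ but also (using $\liminf$ on the relevant scale) pushes the contribution of $k<\fa^h$ to be negligible or identically zero for $n$ large, it is enough to show $\lim_n \P_{\fa^h}(Z_{n-h}=a_n)/\P(Z_n=a_n)=p(\fa)^{-(\fa^h-1)/(\fa-1)}$ and to recall $\P(Z_h=\fa^h)=p(\fa)^{(\fa^h-1)/(\fa-1)}$.

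Next I would verify that $a_n$ falls in the regime of Lemma \ref{lem:bott-Pz}: the hypothesis $\P(Z_n=a_n)>0$ gives $a_n\ge \fa^n$, i.e. $a_n\in (\fa^n, c_n/c_0]$ for $n$ large since $\lim_n a_n/c_n=0$, and one needs $\liminf_n a_n/\fa^n>1$. If $a_n=\fa^n$ for infinitely many $n$ then $\tau_n$ is deterministic ($=\bt_\fa$ up to generation matching) on that subsequence and \reff{eq:an=0-B} is immediate there; otherwise, after possibly passing to the subsequence where $a_n>\fa^n$, a short argument (using that $\P(Z_n=a_n)>0$ and the structure of the support, which forces $a_n$ to jump by at least a fixed multiplicative amount above $\fa^n$) yields $\liminf_n a_n/\fa^n>1$. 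Then I apply Lemma \ref{lem:bott-Pz} with $\ell=1$ to estimate $\P(Z_n=a_n)$ and with $\ell=\fa^h$, $n$ replaced by $n-h$, to estimate $\P_{\fa^h}(Z_{n-h}=a_n)$, writing in the latter case $a_n=y'_n c_{j'_n}\fa^h\fa^{l'_n}$. As in the proof of Lemma \ref{lem:rapp-Zb}, the definition of $l_n$ gives $l'_n=l_n-h$, hence $j'_n=j_n$, $\fa^h\fa^{l'_n}=\fa^{l_n}$, $y'_n=y_n$, and therefore $u^*_{n-h,\fa^h}=u^*_{n,1}$ and $\sigma^2_{n-h,\fa^h}=\sigma^2_{n,1}$. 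Taking the ratio, all the $\psi_{j_n}$, $c_{j_n}$ and Gaussian prefactor terms cancel, and the only surviving factor is $p(\fa)^{-\fa^h/(\fa-1)}\big/p(\fa)^{-1/(\fa-1)}=p(\fa)^{-(\fa^h-1)/(\fa-1)}$, which is exactly \reff{eq:an=0-B}.

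The main obstacle, I expect, is the bookkeeping needed to justify that the $k<\fa^h$ terms in the decomposition of $\P(Z_n=a_n)$ are genuinely negligible compared to the $k=\fa^h$ term: one must argue that $\P_k(Z_{n-h}=a_n)$ for $k<\fa^h$ is either zero (because $k\fa^{n-h}<a_n$ once $\liminf_n a_n/\fa^n>1$, since then $k\fa^{n-h}\le (\fa^h-1)\fa^{n-h}=\fa^n-\fa^{n-h}<a_n$ for $n$ large) or else controlled by a lower large deviation estimate strictly dominated by the $\ell=1$ rate. The cleanest route is to observe that for $k\le \fa^h-1$ and $n$ large we have $k\fa^{n-h}<a_n$, so such terms vanish identically, giving $\P(Z_n=a_n)=\P(Z_h=\fa^h)\P_{\fa^h}(Z_{n-h}=a_n)$ outright; this mirrors the first part of the Harris argument in Proposition \ref{prop:cv-not-vfat-gene} and makes the estimate from Lemma \ref{lem:bott-Pz} strictly speaking unnecessary for the reduction, though it is still needed to know $\P(Z_n=a_n)>0$ propagates and to handle the edge case where $j_n$ stays bounded. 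I would therefore present the proof in two cases, $\lim_n a_n/\fa^n=1$ (where the vanishing of lower-$k$ terms is direct) and $\liminf_n a_n/\fa^n>1$ with $\lim_n a_n/c_n=0$ (handled by Lemma \ref{lem:bott-Pz} and Lemma \ref{lem:rapp-Za} with $\ell=1$), which is precisely the dichotomy already used in the Harris case.
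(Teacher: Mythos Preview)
Your overall plan---reduce to $k=\fa^h$, split into the two regimes $\lim_n a_n/\fa^n=1$ and $\liminf_n a_n/\fa^n>1$, and invoke Lemma~\ref{lem:rapp-Za} (built on Lemma~\ref{lem:bott-Pz}) for the second---is exactly the paper's route. However, the reduction step as you wrote it has the direction reversed, and this is a genuine gap.

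In the B\"ottcher case each individual has \emph{at least} $\fa$ children, so $Z_h\ge\fa^h$ almost surely. The decomposition therefore reads
\[
\P(Z_n=a_n)=\P(Z_h=\fa^h)\,\P_{\fa^h}(Z_{n-h}=a_n)+\sum_{k\ge\fa^h+1}\P(Z_h=k)\,\P_k(Z_{n-h}=a_n),
\]
not a sum over $k\le\fa^h-1$: those terms vanish trivially because $\P(Z_h=k)=0$. Consequently, in the case $\lim_n a_n/\fa^n=1$ the terms to kill are those with $k>\fa^h$. For such $k$ one has $k\fa^{n-h}\ge(\fa^h+1)\fa^{n-h}=\fa^n+\fa^{n-h}$; since $a_n/\fa^n\to 1$ we get $a_n<\fa^n+\fa^{n-h}\le k\fa^{n-h}$ for $n$ large, and hence $\P_k(Z_{n-h}=a_n)=0$ because $Z_{n-h}\ge k\fa^{n-h}$ under $\P_k$. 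Your sentence ``for $k\le\fa^h-1$ and $n$ large we have $k\fa^{n-h}<a_n$, so such terms vanish identically'' goes the wrong way: $k\fa^{n-h}<a_n$ is precisely the condition under which $\P_k(Z_{n-h}=a_n)$ \emph{can} be positive. You have transplanted the Harris inequality (where $\fb$ is an upper bound and one eliminates $k<\fb^h$) without flipping it for the lower bound $\fa$.

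A second, smaller issue: your dichotomy ``either $a_n=\fa^n$ infinitely often, or $\liminf_n a_n/\fa^n>1$'' is false (e.g.\ $a_n=\fa^n+L_0$ when this is in the support gives $a_n>\fa^n$ with $a_n/\fa^n\to 1$). The correct reduction is by subsequences: every subsequence has a further subsequence along which either $a_n/\fa^n\to 1$ or $a_n/\fa^n$ stays bounded away from $1$, and the two cases are handled separately as in the paper. Once these two points are corrected, your argument coincides with the paper's.
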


In fact, it is  enough to prove \reff{eq:an=0-B} for $k=\fa^h$ as 
 $\P(Z_h=\fa^h)=p(\fa)^{-(\fa^{h} 
  -1)/(\fa-1)}$. It is also enough to consider the two cases:
 $\lim_{n\rightarrow\infty }
a_n/{\fa^n}=1$ and  $\liminf_{n\rightarrow\infty }
a_n/{\fa^n}>1$.

The case $\lim_{n\rightarrow\infty }
a_n/{\fa^n}=1$ is handled as in the Harris case, see the first part of
the proof of Proposition \ref{prop:cv-not-vfat-gene} in Section
\ref{sec:H-R}. The case  $\liminf_{n\rightarrow\infty }
a_n/{\fa^n}>1$ is a consequence of Lemma \ref{lem:rapp-Za}.

\bibliographystyle{abbrv}
\bibliography{biblio}

\end{document}